\newcommand{\tikzmark}[1]{\tikz[baseline,remember picture] \coordinate (#1) {};}
\newtheorem{proposition}{Proposition}[section] 
\newtheorem{lemma}[proposition]{Lemma}
\newtheorem{theorem}[proposition]{Theorem}
\newtheorem{conjecture}{Conjecture}
\newtheorem{corollary}[proposition]{Corollary}
\theoremstyle{definition}
\newtheorem{definition}[proposition]{Definition}
\providecommand{\lmarked}[1]{\color{blue}{#1}}
\providecommand{\nones}[1]{\#_1(#1)}
\providecommand{\pdot}{\mathord{\cdot}}
\providecommand{\ZZf}{\mathbb{Z}_4}
\providecommand{\WWf}{\mathbb{W}_4}
\providecommand{\enc}[1]{\llbracket{#1}\rrbracket}
\providecommand{\taby}{\checkmark}
\providecommand{\tabn}{\times}
\title{Irreducible Subcube Partitions}
\author{Yuval Filmus, Edward A. Hirsch, Sascha Kurz, Ferdinand Ihringer, \\ Artur Riazanov, Alexander Smal, Marc Vinyals}
\begin{document}

\maketitle

\begin{abstract}
A \emph{subcube partition} is a partition of the Boolean cube $\{0,1\}^n$ into subcubes. A subcube partition is \emph{irreducible} if the only sub-partitions whose union is a subcube are singletons and the entire partition. A subcube partition is \emph{tight} if it ``mentions'' all coordinates.

We study extremal properties of tight irreducible subcube partitions: minimal size, minimal weight, maximal number of points, maximal size, and maximal minimum dimension.
We also consider the existence of \emph{homogeneous} tight irreducible subcube partitions, in which all subcubes have the same dimensions. We additionally study subcube partitions of $\{0,\dots,q-1\}^n$, and partitions of $\mathbb{F}_2^n$ into affine subspaces, in both cases focusing on the minimal size.

Our constructions and computer experiments lead to several conjectures on the extremal values of the aforementioned properties.
\end{abstract}

\section{Introduction}
\label{sec:introduction}

A \emph{subcube partition} is a partition of the cube $\{0,1\}^n$ into subcubes, that is, into sets of the form
\[
 \{ x \in \{0,1\}^n : x_{i_1} = b_1, \ldots, x_{i_d} = b_d \}.
\]
Here is an example of a subcube partition of \emph{length} $n = 3$:
\begin{align*}
 S_3 &= \{000\}, \{111\}, \{001, 101\}, \{100, 110\}, \{010, 011\} \\ &=
 000, 111, *01, 1*0, 01*.
\end{align*}
We will usually express our subcubes as strings in $\{0,1,*\}^n$, in which stars stand for unconstrained coordinates.

A subcube partition is \emph{reducible} if it has a proper subset, consisting of more than one subcube, whose union is a subcube. For example,
\[
 0*, 10, 11
\]
is reducible since $10 \cup 11 = 1*$. In contrast, $S_3$ is \emph{irreducible}.

A subcube partition is \emph{tight} if it \emph{mentions} all coordinates, that is, if for every $i \in [n]$, some subcube constrains $x_i$. Both subcube partitions above are tight, but the subcube partition $0*,1*$ is not, since the second coordinate is not mentioned.

Peitl and Szeider~\cite{PeitlSzeider22} enumerated all tight irreducible subcube partitions for $n = 3,4$, and counted the number of nonisomorphic subcube partitions with small \emph{size} (number of subcubes) for $n=5,6,7$. They ask whether there are infinitely many tight irreducible subcube partitions.
In this work, we answer this question in the affirmative, giving many constructions of tight irreducible subcube partitions.

The work of Peitl and Szeider raises many natural questions, such as:
\begin{itemize}
\item How to determine whether a subcube partition is irreducible?
\item What is the minimal size of a tight irreducible subcube partition of length $n$? \\ (This question only makes sense if we impose tightness.)
\item What is the maximal size of an irreducible subcube partition of length $n$?
\item Do there exist irreducible subcube partitions in which all subcubes have the same dimension? \\ (We call such subcube partitions \emph{homogeneous}.)
\end{itemize}

We address these questions in \Cref{sec:subcube-partitions}. We describe an efficient algorithm for testing whether a subcube partition is irreducible in \Cref{sec:testing-irreducibility}, and give two infinite sequences of irreducible formulas in \Cref{sec:kis-construction}.

We conjecture that the minimal size of a tight irreducible subcube partition of length $n$ is $2n-1$. We give a matching construction in \Cref{sec:minimal-subcubes}, and optimize its Hamming weight in \Cref{sec:minimal-weight} (this will be useful later on).

We conjecture that the maximal size of an irreducible subcube partition of length $n \ge 5$ is $\tfrac58 2^n$. We give a matching construction in \Cref{sec:maximal-subcubes}, where we also give a nontrivial upper bound. Our constructions involve $2^{n-2}$ points ($0$-dimensional subcubes) and $3 \cdot 2^{n-3}$ edges ($1$-dimensional subcubes). We conjecture that $2^{n-2}$ is the maximum number of points in an irreducible subcube partition of size $n$. A simple matching construction appears in \Cref{sec:maximal-points}.

We present subcube partitions in which all subcubes have linear dimension in \Cref{sec:maximal-min-dim}. We close off the section with a discussion of homogeneous subcube partitions in \Cref{sec:homogeneous-partitions}, where we describe several constructions, and determine all lengths $n$ and codimensions $k \le 4$ for which there exists a tight irreducible subcube partition of length $n$ whose subcubes have dimension $n - k$. In particular, we describe a construction due to Perezhogin~\cite{Perezhogin05} of irreducible subcube partitions of all length $n \ge 7$ in which all subcubes have dimension~$1$.

\smallskip

\Cref{sec:larger-alphabets} studies subcube partitions of $[q]^n$ for $q \ge 3$. We show how to construct irreducible subcube partitions of $[q]^n$ from irreducible subcube partitions of $\{0,1\}^n$ in \Cref{sec:expansion}, and use this to construct tight irreducible subcube partitions of length $n$ and size $(n-1)q(q-1) + 1$ in \Cref{sec:larger-alphabets-construction}; this uses the subcube partitions of \Cref{sec:minimal-weight}. We conjecture that $(n-1)q(q-1)+1$ is the minimum size of a tight irreducible subcube partition, and prove this for $n = 3$ in \Cref{sec:larger-alphabets-small-n}. We close by showing, in \Cref{sec:covers}, that the minimal size of a tight \emph{minimal cover} in this setting is $(q-1)n + 1$.

\smallskip

Finally, \Cref{sec:affine-vector-space-partitions} briefly studies the linear analog of subcube partitions, in which subcubes are replaced by affine subspaces. We show how to construct irreducible affine vector space partitions from irreducible subcube partitions in \Cref{sec:compression}, and use this to construct tight irreducible affine subspace partitions of length $n$ and size roughly $\tfrac32n$ in \Cref{sec:avsps-minimal-subcubes}. We discuss irreducible affine vector space partitions in more detail in the companion work~\cite{BFIK22}.

\subsection*{Background}
\label{sec:background}

Subcube partitions appear, under various names, in theoretical computer science, as an abstraction of the salient properties of decision trees, and elsewhere. Some examples include Iwama~\cite{Iwama87,Iwama89} (as certain \emph{independent sets of clauses}), Brandman, Orlitsky and Hennessy~\cite{BOH90} (as \emph{nonoverlapping covers}), Berger, Felzanbaum and Fraenkel~\cite{BFF90} (as \emph{disjoint tautologies}), Davydov and Davydova~\cite{DavDav98} (as \emph{dividing formulas}), Friedgut, Kahn and Wigderson~\cite{FKW02} (as \emph{subcube partitions}), Kullmann~\cite{Kullmann04} (as \emph{unsatisfiable hitting clause-sets}), Kisielewicz~\cite{Kis20} (as realizations of \emph{cube tiling codes}), Tarannikov~\cite{Tarannikov22} (as \emph{coordinate partitions}). There are also \emph{orthogonal DNFs}~\cite{CH11}, also known as \emph{disjoint DNFs}~\cite{GwynneKullman13}, which are systems of disjoint subcubes which do not necessarily cover the entire cube. (For the relation between decision trees and subcube partitions, see G\"o\"os, Pitassi and Watson~\cite{GPW18}.)

Irreducible subcube partitions appear in a work of Kullmann and Zhao~\cite{KullmannZhao16} (as \emph{clause-reducibility}), inspired by similar notions in the context of disjoint covering systems of residue classes~\cite{Korec84,BFF90} and motivated by applications to the study of CNFs.

Peitl and Szeider~\cite{PeitlSzeider22} enumerate all tight irreducible subcube partitions for $n = 3,4$, and determine the minimal size of a regular irreducible subcube partition for $n=5,6,7$. Instead of tightness, they use a different notion, \emph{regularity}, which is equivalent to tightness for irreducible subcube partitions when $n \ge 3$. Regularity was introduced by Kullmann and Zhao~\cite{KullmannZhao13} under the name \emph{nonsingularity}, and is defined in \Cref{sec:size-lb}.

\paragraph{Acknowledgements}  We thank Jan Johannsen, Ilario Bonacina, Oliver Kullmann, and Stefan Szeider for introducing us to the topic; Tomáš Peitl and Stefan Szeider for sharing with us the results of the computer search reported in~\cite{PeitlSzeider22}; Zachary Chase for helpful discussions; Yuriy Tarannikov for bringing into our attention the papers~\cite{Perezhogin05,Agievich08,Tarannikov22} and helping us to understand~\cite{Perezhogin05}; Andrzej Kisielewicz for simplifying several of our proofs and for bringing into our attention~\cite{Kisielewicz14,KP08}; and the anonymous reviewer for their careful reading of the manuscript and helpful comments.

This project has received funding from the European Union's Horizon 2020 research and innovation programme under grant agreement No~802020-ERC-HARMONIC.
Ferdinand Ihringer is supported by a postdoctoral fellowship of the Research Foundation -- Flanders (FWO).

\section{Subcube partitions}
\label{sec:subcube-partitions}

We start with a quick recap of the relevant definitions.

\begin{definition}[Subcube partition]
\label{def:subcube-partition}
A \emph{subcube partition} of length $n$ is a partition of $\{0,1\}^n$ into \emph{subcubes}, which are sets of the form
\[
 \{ x \in \{0,1\}^n : x_{i_1} = b_1, \ldots, x_{i_d} = b_d \}.
\]
The parameter $d$ is the \emph{codimension} of the subcube, and $n-d$ is its \emph{dimension}. A subcube of dimension $0$ is called a \emph{point}, and a subcube of dimension $1$ is called an \emph{edge}.

The \emph{size} of a subcube partition is the number of subcubes.
\end{definition}

We identify subcubes with words over $\{0,1,*\}$. For example, $01*$ stands for the subcube $\{ (0,1,0), (0,1,1) \}$. We index the symbols in a word $w$ of length $n$ by $[n] = \{1,\dots,n\}$. If $b \in \{0,1\}$, we use $\bar{b}$ to denote $1-b$.

\begin{definition}[Reducibility]
\label{def:reducible}
A subcube partition $F$ is \emph{reducible} if there exists a subset $G \subset F$, with $1 < |G| < |F|$, such that the union of the subcubes in $G$ is itself a subcube.

A subcube partition is \emph{irreducible} if it is not reducible.
\end{definition}


\begin{definition}[Tightness]
\label{def:tight}
A subcube $s$ \emph{mentions} a coordinate $i \in [n]$ if $s_i \neq *$.

A subcube partition $F$ of length $n$ is \emph{tight} if for every $i \in [n]$, some subcube in $F$ mentions $i$.
\end{definition}

It is coNP-complete to determine whether a given collection of subcubes covers $\{0,1\}^n$ (this problem is just SAT in disguise). In contrast, it is easy to test whether a given collection of subcubes is a partition, as first observed by Iwama~\cite{Iwama89}.

\begin{definition}[Conflicting subcubes]
\label{def:conflicting-subcubes}
Two subcubes $s,t$ of the same length are said to \emph{conflict} if there is a coordinate $i \in [n]$ such that $s_i,t_i \neq *$ and $s_i \neq t_i$.
\end{definition}

\begin{lemma} \label{lem:disjoint-subcubes}
Two subcubes are disjoint if and only if they conflict.
\end{lemma}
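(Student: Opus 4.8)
The plan is to prove the biconditional in both directions, noting that the ``hard'' content is really just unwinding the definitions of subcube and conflict. Recall that a subcube $s \in \{0,1,*\}^n$ denotes the set of points $x \in \{0,1\}^n$ that agree with $s$ on every mentioned coordinate; that is, $x$ belongs to $s$ iff $x_i = s_i$ for all $i$ with $s_i \neq *$.

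First I would prove the easier direction: if $s$ and $t$ conflict, then they are disjoint. Suppose there is a coordinate $i$ with $s_i, t_i \neq *$ and $s_i \neq t_i$. Any point $x$ in $s$ must satisfy $x_i = s_i$, and any point $x$ in $t$ must satisfy $x_i = t_i$. Since $s_i \neq t_i$, no single point $x$ can satisfy both constraints, so $s \cap t = \emptyset$.

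For the contrapositive of the other direction, I would show that if $s$ and $t$ do \emph{not} conflict, then they intersect. The key step is to explicitly construct a common point $x \in s \cap t$. For each coordinate $i$, define $x_i$ as follows: if $s_i \neq *$, set $x_i = s_i$; otherwise, if $t_i \neq *$, set $x_i = t_i$; and if both are $*$, set $x_i$ arbitrarily (say $0$). I then need to check that this $x$ lies in both $s$ and $t$. The only way the construction could fail to be well-defined is if some coordinate $i$ had $s_i \neq *$, $t_i \neq *$, and $s_i \neq t_i$ --- but that is precisely a conflict, which we have assumed does not occur. Hence whenever $s_i \neq *$ we have $x_i = s_i$ by construction, and whenever $t_i \neq *$ we have $x_i = t_i$ (either directly, or because $s_i = t_i$ forced it). Thus $x \in s$ and $x \in t$, so $s \cap t \neq \emptyset$.

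I do not anticipate a genuine obstacle here; the statement is essentially definitional and the entire proof rests on the observation that non-conflicting constraints on disjoint or agreeing coordinate-sets can always be simultaneously satisfied. The only point requiring a modicum of care is verifying well-definedness of the constructed witness $x$, which is exactly where the no-conflict hypothesis is used. Combining the two directions yields the biconditional.
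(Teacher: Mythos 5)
Your proof is correct: both directions are exactly the definitional unwinding needed, and the well-definedness check for the constructed witness $x$ is precisely where the no-conflict hypothesis enters. The paper states this lemma without proof (treating it as immediate), and your argument is the standard one it implicitly relies on, so there is nothing to reconcile.
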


\begin{lemma} \label{lem:testing-partition}
A collection $F$ of disjoint subcubes of length $n$ is a subcube partition if and only if
\[
 \sum_{s \in F} 2^{-\operatorname{codim}(s)} = 1.
\]
\end{lemma}

Similarly, it is easy to check whether a given subcube partition is tight. In contrast, checking whether a subcube partition is irreducible using the definition takes exponential time. We present an efficient algorithm for testing irreducibility in \Cref{sec:testing-irreducibility}.

Following that, we give many examples of irreducible subcube partitions, starting with \Cref{sec:kis-construction}, which describes subcube partitions whose irreducibility can be proved using a simple parity argument. In \Cref{sec:minimal-subcubes,sec:minimal-weight,sec:maximal-points,sec:maximal-subcubes,sec:maximal-min-dim} we describe irreducible subcube partitions which conjecturally optimize various parameters. \Cref{sec:homogeneous-partitions} closes with a discussion of irreducible subcube partitions in which all subcubes have the same dimension.

\subsection{Testing irreducibility}
\label{sec:testing-irreducibility}

In this section we give a polynomial time algorithm that checks whether a given subcube partition $F$ is reducible, and if so, identifies a subset $G \subset F$, with $1 < |G| < |F|$, whose union is a subcube.

The idea behind the algorithm is quite simple. Suppose that $F$ were reducible, say via the subset $G$. If $s,t \in G$ then $\bigcup G$ must contain the \emph{join} $s \lor t$ of $s,t$, which is the smallest subcube containing both $s$ and $t$, given explicitly by
\[
 (s \lor t)_i =
 \begin{cases}
 b & \text{if } s_i = t_i = b \in \{0, 1\}, \\
 * & \text{otherwise}.
 \end{cases}
\]
If $u \in F$ intersects $s \lor t$ (a condition we can check using \Cref{lem:disjoint-subcubes}) then $G$ must contain $u$, and so $\bigcup G$ must contain $s \lor t \lor u$.
Continuing in this way, we are able to recover $G$ (or a subset of $G$ whose union is also a subcube).
The corresponding algorithm appears as \Cref{alg:reducibility}.

\renewcommand{\algorithmicrequire}{\textbf{Input:}}
\renewcommand{\algorithmicensure}{\textbf{Output:}}
\begin{algorithm} 
\begin{algorithmic}
\REQUIRE Subcube partition $F = \{s_1, \dots, s_m\}$
\FOR{$1 \leq i < j \leq m$}
    \STATE $G \gets \{s_i, s_j\}$
    \WHILE{some $s_k \notin G$ intersects $\bigvee G$}
        \STATE{$G \gets G \cup \{s_k\}$}
    \ENDWHILE
    \IF{$G \neq F$}
        \RETURN Reducible: $\bigcup G$ is a subcube
    \ENDIF
\ENDFOR
\RETURN Irreducible
\end{algorithmic}
\caption{Algorithm for checking whether a subcube partition is irreducible}
\label{alg:reducibility}
\end{algorithm}

\begin{theorem} \label{thm:reducibility-algorithm}
\Cref{alg:reducibility} runs in polynomial time, and its output is correct.
\end{theorem}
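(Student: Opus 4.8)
The plan is to split the proof into three parts: soundness (any witness $G$ the algorithm returns really certifies reducibility), completeness (if $F$ is reducible then the algorithm cannot report ``Irreducible''), and a polynomial running-time bound. The whole argument hinges on a single invariant about the greedy closure performed by the inner \textbf{while} loop: at the moment the loop terminates, the union $\bigcup G$ coincides with the join $\bigvee G$, and is therefore a subcube.

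For soundness, I would first record that $\bigcup G \subseteq \bigvee G$ always holds, since $\bigvee G$ is by definition the smallest subcube containing every member of $G$. For the reverse inclusion at termination: the termination condition says that every $s_k \in F \setminus G$ is disjoint from $\bigvee G$ (using \Cref{lem:disjoint-subcubes}, ``intersects'' is decided by the conflict test). Because $F$ partitions the whole cube, the subcube $\bigvee G$ is covered by $F$; but the only members of $F$ meeting $\bigvee G$ lie in $G$, so $\bigvee G \subseteq \bigcup G$. Hence $\bigcup G = \bigvee G$ is a subcube. Since $G$ is initialized with the two distinct subcubes $s_i, s_j$ and grows only by adding members of $F$, we have $2 \le |G| \le |F|$; the explicit test $G \neq F$ guarantees $|G| < |F|$, so $1 < |G| < |F|$ and the returned $G$ satisfies \Cref{def:reducible}. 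In particular, whenever the algorithm returns ``Reducible'' it has exhibited a genuine reducing subset.

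For completeness, suppose $F$ is reducible, witnessed by $H \subset F$ with $1 < |H| < |F|$ and $\bigcup H$ a subcube. Consider the iteration of the outer loop for some pair $s_i, s_j \in H$. I claim the loop maintains the invariant $G \subseteq H$. Initially $G = \{s_i, s_j\} \subseteq H$. If $G \subseteq H$, then $\bigvee G \subseteq \bigvee H = \bigcup H$, the last equality because $\bigcup H$ is already a subcube. Now if some $s_k \in F \setminus G$ intersects $\bigvee G \subseteq \bigcup H$, then $s_k$ meets $\bigcup H$; since the subcubes of the partition $F$ are pairwise disjoint, any $s_k \notin H$ is disjoint from every member of $H$ and hence from $\bigcup H$ — so $s_k \in H$, and the invariant survives the update. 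At termination we therefore have $G \subseteq H \subsetneq F$, so this iteration would return ``Reducible''; and since by soundness any earlier return is also a correct ``Reducible'' verdict, the algorithm never outputs ``Irreducible'' on a reducible input. Combined with soundness, the output is correct in all cases.

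Finally, for the running time there are $\binom{m}{2}$ choices of pair; each inner loop runs at most $m$ times (every pass enlarges $G$); and each pass scans the at most $m$ remaining subcubes, testing intersection with the incrementally maintained join $\bigvee G$ in $O(n)$ time per subcube via \Cref{lem:disjoint-subcubes}. This gives a crude bound of $O(m^4 n)$, polynomial in the input size. I expect the main obstacle to be the soundness invariant $\bigcup G = \bigvee G$: it is the one place where the \emph{partition} hypothesis (that $F$ covers the cube and its parts are disjoint) is essential, and it is precisely what makes the cheap join operation a faithful stand-in for the set union appearing in the definition of reducibility.
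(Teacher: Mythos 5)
Your proof is correct and takes essentially the same approach as the paper's: the same soundness argument that the partition property forces $\bigcup G = \bigvee G$ once no remaining subcube intersects $\bigvee G$, the same inductive invariant $G \subseteq H$ for completeness, and the same crude polynomial bound on the running time. The small extra care you take (observing that any earlier ``Reducible'' return is also correct by soundness) is implicit in the paper's argument.
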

\begin{proof}
We start by showing that the algorithm runs in polynomial time. The outer \textbf{for} loop runs $O(m^2)$ times, and the inner \textbf{while} loop runs at most $m$ times. Each basic operation can be implemented in polynomial time, and so the entire algorithm runs in polynomial time.

Suppose first that the algorithm outputs ``reducible''. By construction, all subcubes in $F \setminus G$ are disjoint from $\bigvee G$. Since $F$ is a subcube partition, this means that $\bigcup G = \bigvee G$, which is a subcube. By construction, $1 < |G| < |F|$, and so $F$ is indeed reducible.

To complete the proof, we show that if $F$ is reducible, then the algorithm outputs ``reducible''.
If $F$ is reducible then there is a subset $H \subset F$, with $1 < |H| < |F|$, such that $\bigcup H$ is a subcube. Let $s_i, s_j \in H$, and consider the $(i,j)$ iteration of the outer \textbf{for} loop.

We prove inductively that at each iteration of the inner \textbf{while} loop, $G$ is contained in $H$. This holds by construction at the very first step. Now suppose that $G \subseteq H$ and that $s_k \notin G$ intersects $\bigvee G$. Since $G \subseteq H$, also $\bigvee G \subseteq \bigvee H$, and so $s_k$ intersects $\bigvee H$. Since $\bigvee H = \bigcup H$ and the subcubes in $F$ are disjoint, necessarily $s_k \in H$. Hence $G \cup \{s_k\} \subseteq H$.

When the \textbf{while} loop ends, all $s_k \notin G$ are disjoint from $\bigvee G$. Since the subcubes in $F$ are disjoint, this means that $\bigvee G = \bigcup G$. Since $G \subseteq H$, necessarily $G \neq F$, and so the algorithm correctly declares that $F$ is reducible.
\end{proof}

\subsection{Parity argument}
\label{sec:kis-construction}

In this section we present two constructions of infinite families of tight irreducible subcube partitions.

\begin{theorem} \label{thm:cubic}
Let $n = 2m + 1 \ge 3$. The following subcubes comprise a tight irreducible subcube partition of size $\Theta(n^3)$:
\begin{itemize}
\item The point $0^n$.
\item All cyclic rotations of $0^m1*^m$.
\item For every $0 \le i,j,k \le m-1$ satisfying $i+j,j+k \le m-1$, the subcube \[ 0^i 1 *^j 0^k 1 *^{m-1-j-k} 0^j 1 *^{m-1-i-j}. \]
\end{itemize}
\end{theorem}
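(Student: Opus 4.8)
The plan is to verify the four assertions in turn---that the listed subcubes form a partition, that it is tight, that its size is $\Theta(n^3)$, and that it is irreducible---with only the last requiring real work. For the partition claim I would first check that every two of the listed subcubes conflict (in the sense of \Cref{def:conflicting-subcubes}), so that \Cref{lem:disjoint-subcubes} gives pairwise disjointness, and then apply \Cref{lem:testing-partition}: the codimensions are $n$ for $0^n$, $m+1$ for each of the $n$ rotations, and $i+j+k+3$ for the generic subcubes, so the total measure is $2^{-n}+n2^{-(m+1)}+\tfrac18\sum_{j}2^{-j}\bigl(\sum_{i}2^{-i}\bigr)\bigl(\sum_{k}2^{-k}\bigr)$, which a direct geometric computation collapses to exactly $1$ (the case $m=1$ reproduces $S_3$ from the introduction, a useful check). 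Tightness is immediate, since the rotation whose single $1$ sits in coordinate $i$ already mentions $i$. For the size, the number of generic subcubes is $\sum_{j=0}^{m-1}(m-j)^2=\tfrac{m(m+1)(2m+1)}6=\Theta(m^3)$, which dominates the remaining $1+n$ subcubes, giving $\Theta(n^3)$.

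The heart of the matter is irreducibility, and this is where the parity argument enters. The key observation is that the partition has exactly two points: $0^n$, of weight $0$, and the degenerate generic subcube with $i=k=m-1$, $j=0$, namely $0^{m-1}10^{m-1}11$, of weight $3$. Now suppose $c=\bigcup G$ is a subcube with $|G|\ge 2$, so that $\dim c\ge 1$ and the all-ones character vanishes on $c$, i.e.\ $\sum_{x\in c}(-1)^{\nones{x}}=0$. Every subcube with a free coordinate contributes $0$ to this sum, so only the two points survive, and the equation forces $G$ to contain both points or neither. I would then treat these two cases separately.

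If $G$ contains both points, then $0^n\in c$, so $c$ fixes only $0$'s. For each free coordinate $\ell$ of $c$ the neighbour $e_\ell$ lies in $c$, and its part must be the rotation $R_\ell$ whose $1$ is in coordinate $\ell$, since no point or generic subcube can contain a weight-$1$ vector; hence $R_\ell\subseteq c$. Writing $W_\ell$ for the $m$ coordinates cyclically preceding $\ell$, containment forces the fixed set of $c$ into $W_\ell$, and ranging over all free $\ell$ yields both $|\operatorname{fix}(c)|\le m$ and, dually, $|\operatorname{fix}(c)|\ge m+1$---a contradiction unless $\operatorname{fix}(c)=\emptyset$, i.e.\ $c=\{0,1\}^n$ and $G=F$. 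If instead $G$ contains neither point, then $0^n\notin c$, so the set $T$ of coordinates that $c$ fixes to $1$ is nonempty; consider the point $x^+\in c$ that is $1$ on every coordinate $c$ does not fix to $0$. Its part $s^+\in G$ is not a point, lies in $c$, and contains $x^+$; tracing the fixed coordinates shows $s^+$ must fix exactly $T$ to $1$ with its zeros exactly those of $c$, which (when $|T|\in\{1,3\}$) forces $s^+=c$, and otherwise admits no valid part at all---either way contradicting $|G|\ge 2$. Thus the only subcube that is a union of at least two parts is the whole cube, so $F$ is irreducible.

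The main obstacle I anticipate is precisely this irreducibility analysis. The clean part is spotting the right invariant---weight parity, whose only witnesses are the two points---which collapses the problem to two cases. The delicate part is the two combinatorial lemmas that finish them: the cyclic ``window'' count ruling out $0^n\in c$, and the extremal-point argument pinning $s^+$ to $c$. Both hinge on using the cyclic geometry of the rotations and the rigid three-$1$ structure of the generic subcubes (especially the bookkeeping of which subcubes can lie inside a candidate union $c$), and getting these containment conditions exactly right is where I expect the real care to be needed.
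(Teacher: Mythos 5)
Your treatment of the partition, tightness, and size claims is sound (the measure computation does collapse to $1$, and the pairwise conflicts can all be checked), and your case (a) of irreducibility---where $G$ contains both points---is correct and complete: the cyclic-window argument forcing $\operatorname{fix}(c)=\emptyset$ is a valid alternative to the paper's argument via explicit joins. The genuine gap is in case (b). Because you replaced the paper's parity machinery (\Cref{lem:parity} applied to an inclusion-minimal star pattern, i.e.\ \Cref{cor:parity}) by the weaker all-ones-character computation, you only conclude that $G$ contains \emph{both points or neither}, and you must kill the ``neither'' case by hand; the paper never faces this case, because \Cref{cor:parity} forces both points into $G$ outright (at the price of the star-pattern uniqueness analysis you skipped). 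In case (b), your key assertion---that the part $s^+$ of the maximal point $x^+$ ``must fix exactly $T$ to $1$''---does not follow from the containments you have. From $x^+\in s^+\subseteq c$ one can only deduce $\operatorname{fix}_0(s^+)=\operatorname{fix}_0(c)$ and $T\subseteq\operatorname{fix}_1(s^+)\subseteq T\cup\operatorname{free}(c)$; nothing stated rules out $s^+$ being a generic subcube whose remaining fixed $1$s sit at coordinates that are \emph{free} in $c$, in which case $s^+\subsetneq c$ and no contradiction ensues. The parenthetical split on $|T|\in\{1,3\}$ versus $|T|=2$ inherits the same problem: without $\operatorname{fix}_1(s^+)=T$, the value of $|T|$ places no constraint on $s^+$ at all.

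The gap is fillable, but it needs a further idea beyond what you wrote. For example, consider also the \emph{minimal} point $z^-$ of $c$ (all free coordinates set to $0$): its part $s^-$ has $\operatorname{fix}_1(s^-)\subseteq T$ (its fixed $1$s must be $1$s of $z^-$) as well as $\supseteq T$, hence exactly $T$; since every part in case (b) has exactly one or exactly three fixed $1$s, this legitimately forces $|T|\in\{1,3\}$. If $|T|=3$, your maximal-point argument now closes: $s^+$ must be generic, its three fixed $1$s contain $T$ and hence equal $T$, and together with $\operatorname{fix}_0(s^+)=\operatorname{fix}_0(c)$ this gives $s^+=c$, contradicting $|G|\ge 2$. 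But if $|T|=1$, so that $s^-$ is a rotation $R_\ell\subseteq c$, you still need a separate step: since $|G|\ge 2$ forces $c\supsetneq R_\ell$, some coordinate $w$ in the $0$-window of $R_\ell$ is free in $c$, and the weight-$2$ point of $c$ with $1$s exactly at $\{\ell,w\}$ cannot be covered by any part of $G$ (generic subcubes contain only points of weight at least $3$, and the only admissible rotation is $R_\ell$ itself, which excludes this point). Alternatively, you can avoid case (b) altogether by doing what the paper does: verify that all star patterns occurring in $F$ are distinct except for the empty pattern shared by the two points, and invoke \Cref{cor:parity}, reducing everything to your case (a).
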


We found this subcube partition by starting with the subcube partition consisting of all rotations of $0^m 1 *^m$ together with all points not covered by them. This subcube partition is reducible, and we can use \Cref{alg:reducibility} to merge together points into subcubes. One can show inductively that the rotations of $0^m1*^m$ never get merged, and so the resulting subcube partition is not trivial. It is precisely the one described in \Cref{thm:cubic}.

Here is the resulting partition for $n = 5$:
\[
\begin{array}{cccccc}
00000 &
001** & *001* & **001 & 1**00 & 01**0 \\ &
11*1* & 1011* & 011*1 & 01011 & 1*101
\end{array}
\]

\begin{theorem} \label{thm:LagariasShor}
Let $n \ge 1$ be odd. The following subcubes comprise a tight irreducible subcube partition of size $F_{n+1} + F_{n-1} + 1$, where $F_n$ is the $n$'th Fibonacci number:
\begin{itemize}
\item The point $1^n$.
\item All subcubes obtained by concatenating blocks of the form $*1$ or $0$ in an arbitrary way, and rotating the result arbitrarily.
\end{itemize}
\end{theorem}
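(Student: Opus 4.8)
The plan is to identify each subcube other than the point $1^n$ with a tiling of the cyclic board $C_n$ by \emph{dominoes} (the blocks $*1$, covering adjacent cells $i,i+1$ with $s_i=*$, $s_{i+1}=1$) and \emph{monominoes} (the blocks $0$, with $s_i=0$); taking all rotations of all linear block-words is the same as taking all such cyclic tilings, and a cell is a star exactly when it is the left half of a domino, so a tiling with $d$ dominoes has codimension $n-d$. I would first prove the partition property by showing every $x\in\{0,1\}^n$ lies in exactly one part. For $x\neq 1^n$, the parts containing $x$ are the tilings in which every monomino sits on a $0$ of $x$ and every domino has its right cell on a $1$ of $x$; scanning each maximal run of $1$'s of $x$ from the right shows this tiling is forced (an even run is paired internally, an odd run additionally swallows the $0$ to its left), giving existence and uniqueness. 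The point $1^n$ has no $0$, so a compatible tiling would be all dominoes, needing $n$ even; since $n$ is odd, $1^n$ is covered only by the extra point, which is exactly where oddness is essential. Tightness is immediate since $0^n$ (all monominoes) is a part and mentions every coordinate. For the size, distinct non-point parts biject with tilings of $C_n$, of which there are the Lucas number $L_n=F_{n+1}+F_{n-1}$, so with the point the size is $F_{n+1}+F_{n-1}+1$. As a parity check matching the role of oddness, summing $2^{-\operatorname{codim}}$ via \Cref{lem:testing-partition} amounts to $\sum_{\text{tilings}}2^{d}+1=2^n$, and a transfer-matrix recurrence gives $\sum_{\text{tilings}}2^{d}=2^n+(-1)^n$, which equals $2^n-1$ precisely when $n$ is odd.

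The real work is irreducibility, for which the convenient reformulation is that $F$ is irreducible iff the only subcubes that are unions of parts are $*^n$ and the individual parts; this is the straddler viewpoint behind \Cref{thm:reducibility-algorithm}, namely that a subcube $c$ is a union of parts iff no part meets $c$ without being contained in it. So I would fix a subcube $c$ that is a union of parts, with fixed-coordinate set $T$ and at least one free coordinate, and force $c$ to be a single part. The key structural observation is that in any part contained in $c$, a coordinate $i$ with $c_i=1$ is the right cell of a domino, so its left neighbour $i-1$ is a star and hence, having to match $c$ on $T$, must be free. Therefore every fixed-$1$ coordinate of $c$ has a free left neighbour, which forces each maximal run of fixed coordinates of $c$ to have the shape ``a single $0$ or $1$ followed by a block of $0$'s''.

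From here I would split into two cases. If $c$ has the periodic shape $(10^{a_1})*(10^{a_2})*\cdots(10^{a_r})*$ --- every fixed run beginning with a $1$ and separated from the next by a single free cell --- then each free cell is forced to be the left half of the domino covering the following $1$, every fixed $0$ is a monomino, the containing tiling is completely determined, and counting stars shows this unique part already fills $c$, so $c$ is a single part. Otherwise there is a fixed run $[p,q]$ whose trailing free cell $q+1$ is \emph{not} immediately followed by a fixed $1$; placing a domino on $(q,q+1)$ puts a star on the fixed coordinate $q$, a genuine straddler, and since no fixed $1$ competes for cell $q+1$, the remaining cells complete to a consistent tiling (each fixed $1$ paired with its free left neighbour, everything else a monomino). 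This straddler shows $c$ is not a union of parts, a contradiction; hence unions of parts are only $*^n$ and single parts, and $F$ is irreducible.

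The hard part will be the last case: verifying that in the non-periodic case the straddling tiling can actually be completed without two dominoes competing for one cell. The periodic/non-periodic dichotomy is engineered precisely so that the only dangerous configuration --- two fixed runs separated by a single free cell with the right run starting in a $1$ --- is absent at the chosen run, and I expect most of the write-up to be the bookkeeping confirming that the completion is a legal tiling and that the chosen free neighbours are pairwise distinct.
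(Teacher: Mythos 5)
Your partition, tightness, and size arguments are sound and essentially coincide with the paper's: the paper's rewriting process (repeatedly starring the predecessor of a $1$ that is not followed by a $1$) is exactly your right-to-left forced scan of each run of $1$s, the paper proves tightness via the point $1^n$ rather than $0^n$, and it leaves the Lucas-number count to the reader. The genuine divergence is in irreducibility. The paper never analyzes which subcubes are unions of parts; instead it notes that a subcube of the second type is reconstructible from its star pattern (every star is followed by a $1$, every position not preceded by a star is a $0$), so the empty star pattern -- shared by $0^n$ and $1^n$ -- is the only repeated one, and then \Cref{cor:parity} (the parity argument of \Cref{lem:parity}) forces any reducing subset to contain both $0^n$ and $1^n$, hence to have union $*^n$. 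Your route instead classifies unions of parts directly in the domino/monomino model and exhibits an explicit straddling part in the non-periodic case. This is self-contained (no parity counting is needed), constructive, and yields structural information about every subcube expressible as a union of parts; the paper's argument is a few lines, but leans on the star-pattern uniqueness observation plus the external parity machinery it shares with \Cref{thm:cubic}.

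There is one concrete hole in your Case B, and it is not the configuration you identified as the only dangerous one. Besides a fixed $1$ at position $q+2$ competing for cell $q+1$ (which your choice of run does exclude), there is a second collision: if the chosen fixed run $[p,q]$ is a \emph{single fixed $1$}, i.e.\ $p=q$ and $c_q=1$, then your completion rule ``pair each fixed $1$ with its free left neighbour'' places a domino on $(q-1,q)$ while your straddling domino occupies $(q,q+1)$, so two dominoes compete for cell $q$. Concretely, for $c=1**$ with $n=3$ the literal recipe places dominoes on $(3,1)$ and $(1,2)$ simultaneously. The repair is immediate: exempt this one fixed $1$ from the pairing rule and let cell $q$ be covered by the star of the straddling domino. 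Since $s_q=*$ is still compatible with $c_q=1$, the resulting part still meets $c$ without being contained in it, and the rest of your overlap bookkeeping does go through (distinct fixed $1$s have distinct free left neighbours, no two fixed $1$s are adjacent by your structural observation, and cell $q+1$ is uncontested by the choice of run). With this patch your proof is complete.
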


This subcube partition appears in \cite[Section 2]{Kisielewicz14}, where it is attributed to Lagarias and Shor~\cite{LagariasShor94}. Here is the partition for $n = 5$:
\[
\begin{array}{cccccc}
00000 & *1000 & 0*100 & 00*10 & 000*1 & 1000* \\
11111 & *1*10 & *10*1 & 0*1*1 & 1*10* & 10*1*
\end{array}
\]

In both cases, we will prove the irreducibility using the following \namecref{lem:parity}, suggested to us by Kisielewicz.

\begin{definition}[Star pattern] \label{def:star-pattern}
The \emph{star pattern} of a subcube $s \in \{0,1,*\}^n$ is $P(s) := \{ i \in [n] : s_i = * \}$.
\end{definition}

\begin{definition}[Parity of a subcube] \label{def:subcube-parity}
The \emph{parity} of a subcube $s \in \{0,1,*\}^n$ is the parity of the number of $1$s in $s$.
\end{definition}

\begin{lemma}[{\cite{Kisielewicz23}}] \label{lem:parity}
Let $F$ be a subcube partition. Let $G$ be a subset of $F$ such that $|G| > 1$ and the union of $G$ is a subcube. Let $S$ be an inclusion-minimal star pattern occurring in $G$ (this means that no star pattern strictly contained in $S$ appears in $G$).

Among subcubes in $G$ whose star pattern is $S$, half have even parity and half have odd parity.
\end{lemma}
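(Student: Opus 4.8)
The plan is to turn the partition identity into a polynomial identity and then read off a single Fourier coefficient. Encode each Boolean coordinate as $y_i = (-1)^{x_i} \in \{\pm 1\}$, so that a subcube $s$ has indicator $[x \in s] = \prod_{i \notin P(s)} \tfrac{1 + (-1)^{s_i} y_i}{2}$. Let $C = \bigvee G$ be the subcube that equals $\bigcup G$. Since the subcubes in $G$ are pairwise disjoint (they lie in the partition $F$) and cover exactly $C$, summing indicators gives $\sum_{s \in G} [x \in s] = [x \in C]$ for every $y \in \{\pm 1\}^n$, hence as an identity of multilinear polynomials. I would then compare the coefficients of a carefully chosen monomial $y_U := \prod_{i \in U} y_i$ on the two sides.

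Let $T = P(C)$ be the set of free coordinates of $C$. Every $s \in G$ satisfies $s \subseteq C$, so $s$ agrees with $C$ on $[n] \setminus T$; in particular every star pattern occurring in $G$ is contained in $T$, and the minimal one satisfies $S \subseteq T$. I claim $S \subsetneq T$: otherwise some $s_0 \in G$ has $P(s_0) = T$, which forces $s_0 = C = \bigcup G$ and hence $G = \{s_0\}$, contradicting $|G| > 1$. Fix therefore a coordinate $j \in T \setminus S$.

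Now I would expand both sides and extract the coefficient of $y_U$ for the choice $U := [n] \setminus S$. A single subcube $s$ contributes to this coefficient only when $U \subseteq [n] \setminus P(s)$, i.e.\ $P(s) \subseteq [n] \setminus U = S$, and then it contributes $2^{-\operatorname{codim}(s)} (-1)^{\sum_{i \in U} s_i}$. On the right-hand side, $[x \in C]$ depends only on the variables $y_i$ with $i \notin T$; since $j \in U \cap T$, the monomial $y_U$ contains $y_j$, so its coefficient is $0$. On the left-hand side, by inclusion-minimality of $S$ the condition $P(s) \subseteq S$ forces $P(s) = S$; each such $s$ then has codimension exactly $|U|$, and $\sum_{i \in U} s_i$ is precisely the number of $1$s of $s$, i.e.\ its parity, since $U = [n] \setminus S$ is exactly the non-star set of $s$. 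Equating coefficients gives $\sum_{s \in G :\, P(s) = S} (-1)^{\sum_{i \in U} s_i} = 0$, which says exactly that among the subcubes of $G$ with star pattern $S$, equally many have even and odd parity.

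The coefficient bookkeeping for the polynomial expansion of one subcube is routine. The step carrying the real content is the use of inclusion-minimality to guarantee that only subcubes with star pattern \emph{exactly} $S$ survive in the chosen coefficient, together with the observation $S \subsetneq T$, which is what makes the monomial $y_{[n] \setminus S}$ involve a free coordinate of $C$ and hence vanish on the right-hand side; the hypothesis $|G| > 1$ enters the argument only at this point. Once the monomial $y_{[n] \setminus S}$ is identified, everything else is forced.
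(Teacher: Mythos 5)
Your proof is correct, and it takes a genuinely different, though closely parallel, route from the paper's. The paper argues by direct counting: it restricts to the subcube $C_0 \subseteq \bigcup G$ consisting of the points $p$ with $p_i = 0$ for all $i \in S$, observes that $|C_0| > 1$ (this is where $|G| > 1$ enters, playing exactly the role of your step $S \subsetneq T$), so that $C_0$ contains equally many points of even and odd parity, and then splits this balanced count over the members of $G$: a subcube whose star pattern is not $S$ has, by inclusion-minimality, a star outside $S$, hence meets $C_0$ in a parity-balanced set, while a subcube with star pattern $S$ meets $C_0$ in a single point whose parity is that of the subcube. You replace the restriction-and-count by a Fourier-coefficient extraction from the multilinear identity $\sum_{s \in G} [x \in s] = [x \in C]$ at the monomial $y_{[n] \setminus S}$; the two pillars are identical in content --- inclusion-minimality eliminates every pattern except $S$ exactly where the paper's star-outside-$S$ observation does, and your $S \subsetneq T$ (making the right-hand coefficient vanish) corresponds to the paper's $|C_0| > 1$. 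What the polynomial route buys is mechanical bookkeeping once the monomial is chosen: there is no need to check that the intersections $s \cap C_0$ are nonempty parity-balanced subcubes, since everything is forced by which variables appear in each indicator polynomial. What the paper's route buys is that it is entirely elementary and self-contained, requiring neither the $\pm 1$ encoding nor uniqueness of the multilinear extension.
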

\begin{proof}
Let $C \subseteq \bigcup G$ consist of all points $p$ such that $p_i = 0$ for all $i \in S$.
If $|C| = 1$ then $S$ is the star pattern of $\bigcup G$, which contradicts $|G| > 1$. Hence $|C| > 1$. Since $C$ is a subcube, it contains an equal number of points of even parity and of odd parity.

If $s \in G$ has star pattern other than $S$ then by inclusion-minimality, $s_i = *$ for some $i \notin S$. Therefore $s \cap C$ contains an equal number of points of even parity and of odd parity.
In contrast, if $s \in G$ has star pattern $S$ then $s \cap C$ contains a single point whose parity is the same as the parity of $s$. Since $C$ has an equal number of points of either parity, the \namecref{lem:parity} immediately follows.
\end{proof}

\begin{corollary} \label{cor:parity}
Let $F$ be a subcube partition in which there is a star pattern $S$ occurring twice, and every other star pattern occurs at most once. If $G$ is a subset of $F$ whose union is a subcube and $|G| > 1$ then $G$ contains both subcubes of $F$ whose star pattern is $S$.
\end{corollary}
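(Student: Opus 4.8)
The plan is to reduce the corollary to \Cref{lem:parity} by applying that lemma to a carefully chosen star pattern. Let $G \subseteq F$ satisfy $|G| > 1$ and have a subcube as its union. Since $G$ is nonempty, it has an inclusion-minimal star pattern $T$ (minimal among all star patterns realized by subcubes of $G$). I would apply \Cref{lem:parity} with $T$ playing the role of the pattern $S$ in that lemma.

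The key step is to extract a counting consequence. \Cref{lem:parity} tells us that among the subcubes of $G$ whose star pattern is exactly $T$, half have even parity and half have odd parity; in particular, the number of such subcubes is even. Because $T$ is realized in $G$ (it is one of the patterns actually occurring there), this count is at least one, and being even it is therefore at least two. Hence $G$ contains at least two distinct subcubes that share the star pattern $T$.

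Now I would bring in the multiplicity hypothesis on $F$. Every star pattern other than $S$ occurs at most once in $F$, and hence at most once in the subset $G \subseteq F$. Since $T$ occurs at least twice in $G$, it cannot be one of these singly-occurring patterns, so $T = S$. Consequently the two subcubes of $G$ with star pattern $T = S$ are among the subcubes of $F$ having star pattern $S$; as $F$ has exactly two such subcubes, $G$ must contain both of them, which is precisely the assertion.

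I do not expect a genuine obstacle, since the statement is essentially a bookkeeping consequence of \Cref{lem:parity}. The only point deserving care is the passage from ``half even, half odd'' to ``at least two'': one must observe that an inclusion-minimal pattern is, by definition, realized by at least one subcube of $G$, so the even count is strictly positive and therefore at least $2$. Everything else follows immediately from the assumption that $S$ is the unique star pattern of $F$ with multiplicity greater than one.
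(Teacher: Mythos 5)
Your proof is correct and follows essentially the same route as the paper: take an inclusion-minimal star pattern in $G$, invoke \Cref{lem:parity} to conclude that the number of subcubes of $G$ with that pattern is even (hence at least two), and use the multiplicity hypothesis on $F$ to force that pattern to equal $S$. Your write-up merely makes explicit the ``even and positive, hence at least two'' step that the paper leaves implicit.
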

\begin{proof}
Let $U$ be an inclusion-minimal star pattern in $G$. \Cref{lem:parity} implies that $G$ must contain an even number of subcubes whose star pattern is $U$. Necessarily $U = S$, and so $G$ contains both subcubes whose star pattern in $S$.
\end{proof}

\Cref{cor:parity} almost immediately implies the irreducibility of the subcube partition in \Cref{thm:LagariasShor}: any non-singleton subset of the subcube partition whose union is a subcube must contain both points $0^n,1^n$, and so its union must be $\{0,1\}^n$. The argument for \Cref{thm:cubic} is only slightly longer.

We prove \Cref{thm:cubic} in \Cref{sec:cubic}, and \Cref{thm:LagariasShor} in \Cref{sec:LagariasShor}.

\subsubsection{Cubic construction} \label{sec:cubic}

In this section we prove \Cref{thm:cubic}.

We need to prove three things about the set of subcubes $F$ given in the statement of the \namecref{thm:cubic}: that it is a subcube partition; that it is tight; and that it is irreducible.

\paragraph{Subcube partition}
The point $0^n$ covers itself. All other subcubes of $F$ contain at least one $1$.

Subcubes of the second type cover \emph{royal points}. These are points which contain a \emph{royal $1$}, which is a $1$ preceded cyclically by $m$ many $0$s. Since $n < 2(m+1)$, there can be at most one royal $1$, and so royal points are covered by precisely one subcube of the second type. We will soon see that they are not covered by any subcube of the third type.

We can guarantee that a subcube does not contain any royal point by adding ``blocking $1$s'': if each cyclic interval of length $m$ contains a $1$, then the subcube cannot contain any royal point. Each subcube of the third type is contained in the subcube $*^i 1 *^{j+k} 1 *^{m-1-k} 1 *^{m-1-i-j}$, in which the $1$s are separated by $j+k,m-1-k,m-1-j \le m-1$ many stars. Consequently, each royal point is covered by precisely one subcube of $F$.

It remains to handle points $x \neq 0^n$ which are not royal. Let $I+1$ be the index of the first $1$ in $x$. Since $x$ is not royal, $I \leq m-1$.

Let $I + 1 + m + J + 1$ be the first $1$ in $x$ beyond position $I + 1 + m$ (so $J \ge 0$). Such a $1$ exists since otherwise $x$ starts with $0^I 1$ and ends with $0^{m-I}$, and is consequently royal. For the same reason, $J \le m-1$. Since $I + 1 + m + J + 1 \leq n$, we see that $I + J \leq m - 1$.

Let $I + 1 + J + K + 1$ be the first $1$ in $x$ beyond position $I + 1 + J$ (so $K \ge 0$). Such a $1$ exists as seen before. Since $x$ is not royal, $J + K \le m - 1$. Collecting all the information, we see that $x$ belongs to the subcube of the third type
{
\begin{gather*}
 0^I 1 *^J \tikzmark{a} 0^K 1 *^{m - 1 - J - K} \tikzmark{b} 0^J 1 *^{m - 1 - I - J}. \\
\end{gather*}
\begin{tikzpicture}[overlay, remember picture]
 \node (aa) [below of = a, node distance = 2 em, anchor = south] {\footnotesize $I+1+J$};
 \node (bb) [below of = b, node distance = 2 em, anchor = south] {\footnotesize $I+1+m$};
 \draw[->] (aa.north) to (a.south);
 \draw[->] (bb.north) to (b.south);
\end{tikzpicture}
}
If $x \in 0^i 1 *^j 0^k 1 *^{m-1-j-k} 0^j 1 *^{m-1-i-j}$ and we follow the steps above then we find that $i = I$, $j = J$, and $k = K$. Therefore $x$ belongs to a unique subcube of $F$.

\paragraph{Tightness} This is clear, since the subcube $0^n$ mentions all coordinates.

\paragraph{Irreducibility} We will use \Cref{cor:parity} in order to prove irreducibility, so we first need to understand the star patterns of the various subcubes in $F$.

A subcube of the second type has precisely $m$ stars, and each subcube of the second type has a different star pattern. Furthermore, each star pattern either consists of a single interval, or of one interval starting at $1$ and another interval ending at $n$.

A subcube of the third type has no interval of $m$ stars, and cannot start with a star, hence its star patterns differ from those of subcubes of the second type.
Given the star pattern of a subcube of the third type, we can determine $i,j,k$. First, we determine $i + j$ by counting the number of trailing stars, which is $m-1-i-j$. This allows us to determine $j$ (and so $i$), by counting the number of stars in the first $i + 1 + j$ symbols. We can now determine $k$ by counting the number of stars in the first $i + 1 + m$ symbols, which is $m-1-k$.

Summarizing, if we consider only subcubes of the second and third types, then all star patterns are unique. Considering the entire subcube partition, there is one star pattern occurring twice, corresponding to the points $0^n$ and $0^{m-1}10^{m-1}11$, and all other star patterns occur once.

If $F$ were reducible then there would be a subset $G \subseteq F$ such that $1 < |G| < |F|$ and the union of $G$ is a subcube. According to \Cref{cor:parity}, $G$ must contain both points $0^n,0^{m-1}10^{m-1}11$, and so $\bigcup G$ must contain their join $0^{m-1}*0^{m-1}**$.
This implies that $G$ must cover the point $0^{m-1}10^{m-1}00$, and so must contain the subcube $0^{m-1}1*^m0$. Similarly, it must cover the point $0^{m-1}00^{m-1}10$, and so contain the subcube $*^{m-1}0^m1*$. Since the join of the latter two subcubes is $*^n$, we conclude that $G = F$, contrary to assumption. Therefore $F$ is irreducible.

\subsubsection{Lagarias--Shor construction} \label{sec:LagariasShor}

In this section we prove \Cref{thm:LagariasShor}.

We need to prove three things about the set of subcubes $F$ given in the statement of the \namecref{thm:LagariasShor}: that it is a subcube partition; that it is tight; and that it is irreducible. (We leave it to the reader to prove the formula for the size of $F$.)

\paragraph{Subcube partition} The point $1^n$ covers itself. Since $n$ is odd, every other subcube contains $0$, and so doesn't cover $1^n$.

Consider now a point $x \in \{0,1\}^n$ other than $1^n$. We will convert it, in stages, to a subcube $y \in F$ which contains it.

We initialize $y$ with $x$.
If there is an index $i$ such that $y_i = 1$ and $y_{i+1} \neq 1$ (treating indices cyclically) then any subcube in $F$ which covers $y$ must have $y_i = 1$ and so $y_{i-1} = *$. Accordingly, as long as there is an index $i$ such that $y_{i-1} \neq *$, $y_i = 1$, $y_{i+1} \neq 1$,
we set $y_{i-1} = *$.

When the process stops, every $1$ is either preceded by $*$ or followed by $1$. Since $x \neq 1^n$, every run of $1$ in $y$ must terminate (followed by $*$ or $0$). The final $1$ in each such run is not followed by $1$, and so must be preceded by $*$ (implying that the run has length $1$). It follows that $y$ is, up to rotation, a concatenation of copies of $*1$ and $0$, and so $y \in F$. Furthermore, the construction of $y$ ensures that this is the only subcube in $F$ covering $x$.

\paragraph{Tightness} This is clear, since the subcube $1^n$ mentions all coordinates.

\paragraph{Irreducibility} In view of using \Cref{cor:parity}, we first explore the star patterns of the various subcubes in $F$. The main observation is that we can recover a subcube of the second type from its star pattern. Indeed, every star must be followed by $1$, and every position not preceded by a star must be $0$.
This implies that apart from the two points $0^n,1^n$, all other star patterns are unique.

If $F$ were reducible then there would exist a subset $G \subset F$ with $1 < |G| < |F|$ whose union is a subcube. \Cref{cor:parity} shows that $G$ must contain both points $0^n,1^n$, and so their join $*^n$, contradicting the assumption $G \neq F$. Hence $F$ is irreducible.

\subsection{Minimal size}
\label{sec:minimal-subcubes}

What is the minimal size of a tight irreducible subcube partition of length $n$? (The question doesn't make sense without assuming tightness, since $*^n$ is always irreducible.)

When $n = 1$, there is a single tight irreducible subcube partition: $0, 1$. When $n = 2$, there are no tight irreducible subcube partitions. When $n = 3$, there is a unique tight irreducible subcube partition, up to flipping and rearranging coordinates:
\[
 000, *01, 1*0, 01*, 11.
\]
For $n = 4, 5, 6, 7$, Peitl and Szeider~\cite{PeitlSzeider22} used a computer search to show that the minimal number of subcubes is $7, 9, 11, 13$, respectively. This is consistent with the following conjecture.

\begin{conjecture} \label{conj:minimal-subcubes}
If $n \ge 3$ then the minimal size of a tight irreducible subcube partition of length $n$ is $2n - 1$.
\end{conjecture}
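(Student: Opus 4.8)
Since \Cref{conj:minimal-subcubes} asserts an exact minimum, a proof has two independent halves: a construction certifying the upper bound $\le 2n-1$, and a matching lower bound $\ge 2n-1$. The upper bound is the part I would expect to settle cleanly; the lower bound is the genuine obstacle, and is presumably why the statement is phrased as a conjecture rather than a theorem.

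For the upper bound, the plan is to build, for each $n \ge 3$, a tight subcube partition $F_n$ of size exactly $2n-1$ whose irreducibility is handed to us by \Cref{cor:parity}. The design principle is to keep the two antipodal points $0^n$ and $1^n$ in the partition -- their common star pattern is the empty set $\emptyset$ -- and to arrange the remaining $2n-3$ subcubes so that their star patterns are pairwise distinct and all nonempty. Then $\emptyset$ is the unique star pattern occurring twice, so by \Cref{cor:parity} every reducing subset must contain both $0^n$ and $1^n$; since $0^n \lor 1^n = *^n$, such a subset is forced to be all of $F_n$, and irreducibility follows with no case analysis. Tightness is immediate from the point $0^n$, and the fact that $F_n$ is a partition is verified mechanically using \Cref{lem:disjoint-subcubes} (pairwise disjointness) together with the weight identity $\sum_{s} 2^{-\operatorname{codim}(s)} = 1$ of \Cref{lem:testing-partition}.

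To produce $F_n$ concretely I would proceed by induction on $n$, taking the partition $S_3$ (which already exhibits this structure, with star patterns $\emptyset, \emptyset, \{1\}, \{2\}, \{3\}$) as the base case and, at each step, spending exactly two additional subcubes both to mention the new coordinate and to preserve the invariant ``$F_n$ contains $0^n$ and $1^n$, and all other star patterns are distinct and nonempty.'' Carrying $n-1 \to n$ in this way adds $2$ to the size, yielding $2n-1$. Should a particular choice of the two new subcubes fail to keep all star patterns distinct, irreducibility can instead be certified directly -- for instance by running \Cref{alg:reducibility}, exactly as in the proofs of \Cref{thm:cubic} and \Cref{thm:LagariasShor} -- so the argument does not actually hinge on \Cref{cor:parity}. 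The verification at each step (disjointness, the weight identity, and tightness) is routine.

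The hard part is the lower bound: showing that no tight irreducible subcube partition of length $n$ has fewer than $2n-1$ subcubes. The natural attack is induction by restriction: given such an $F$ of length $n$ and size $m$, fix a coordinate $i$ and pass to one of the restrictions $F|_{x_i = b}$ (delete coordinate $i$ and keep the subcubes compatible with $x_i = b$), hoping to obtain a tight irreducible partition of length $n-1$ of size at most $m - 2$; then $m - 2 \ge 2(n-1)-1$ would close the induction. The obstruction is that restriction is not well behaved: it can destroy tightness (a coordinate may have been mentioned only by discarded subcubes), it can destroy irreducibility (restricting can make two previously separated subcubes adjacent, or collapse a subcube's codimension), and the size can drop by as little as $1$ rather than $2$, which is exactly the slack the bound cannot afford. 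Making some choice of $i$ and $b$ simultaneously control all three behaviors -- or replacing the inductive scheme by a global counting/weighting argument that charges roughly two subcubes to each mentioned coordinate -- is where the real work lies, and I expect this to be the crux. At present the matching lower bound is known only from the exhaustive search of Peitl and Szeider for $n \le 7$, which is consistent with, but does not prove, the conjecture.
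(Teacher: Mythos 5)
You are right about the status of this statement: it is a conjecture, and the paper does not prove it either. What the paper supplies is exactly the two ingredients you identify: an upper-bound construction of size $2n-1$ (\Cref{thm:size-2n-1-merging}, with an alternative construction via nfs-flips in \Cref{sec:twisting}), and a lower bound that falls short of $2n-1$, namely $n+3$ for $n \ge 4$ (\Cref{thm:size-lower-bound}, proved via minimally unsatisfiable CNFs, not via restriction). Your closing remark that the matching lower bound rests only on the Peitl--Szeider search for $n \le 7$ slightly undersells this unconditional $n+3$ bound, but your assessment of where the real difficulty lies is accurate.

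However, your upper-bound plan has a genuine gap: the invariant you want to maintain --- $0^n, 1^n \in F_n$ with every other star pattern distinct and nonempty --- is unachievable for every even $n$. Apply \Cref{lem:parity} with $G = F_n$ (its union $*^n$ is a subcube and $|F_n| > 1$): since $F_n$ contains points, $\emptyset$ is an inclusion-minimal star pattern, so among the points of $F_n$ exactly half must have even parity and half odd parity. If the only points are $0^n$ and $1^n$ and $n$ is even, both have even parity, a contradiction; hence for even $n$ no subcube partition whatsoever (let alone a tight irreducible one of size $2n-1$) has point set $\{0^n, 1^n\}$. This is precisely why \Cref{thm:LagariasShor}, whose partition does contain both $0^n$ and $1^n$, is stated only for odd $n$. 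Consequently your induction breaks at every even step, and your fallback of running \Cref{alg:reducibility} certifies individual instances, not an infinite family; moreover you never actually exhibit the two new subcubes, so even for odd lengths the construction is not in hand. The paper's construction sidesteps the parity trap entirely: it builds $S_{n+1}$ from $S_n$ by \Cref{lem:merge} applied with $F_0 = \{1*^{n-1}, 00*^{n-2}, 01*^{n-2}\}$ and $F_1 = S_n$, maintaining the invariants $01*^{n-2} \in S_n$ and $1*^{n-1}, 00*^{n-2} \notin S_n$, and irreducibility comes from the merging lemma rather than from \Cref{cor:parity}. Indeed, for $n \ge 4$ the two points of $S_n$ have opposite parity but are not antipodal (e.g.\ $1000$ and $1111$ in $S_4$), so even a repaired parity argument for such partitions would need the additional covering steps used in the proof of \Cref{thm:cubic}, not the one-line argument you propose. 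If you want to salvage your scheme, you must either weaken the invariant to ``two points of opposite parity'' and supply those extra steps, or adopt the paper's merging (or twisting) machinery.
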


\Cref{sec:size-lb} explains the best lower bound on the size, due to Kullmann and Zhao~\cite{KullmannZhao13}.
\Cref{sec:merging,sec:twisting} present two constructions of an infinite family of tight irreducible subcube partitions of length $n$ and size $2n-1$. In \Cref{sec:minimal-weight} we present several more such constructions which will be useful in \Cref{sec:larger-alphabets}.

\subsubsection{Lower bound} \label{sec:size-lb}

Before presenting the constructions of tight irreducible subcube partitions of size $2n-1$, here is the best lower bound on the size, due to Kullmann and Zhao~\cite{KullmannZhao16}. We give an alternative proof using known results from the literature.

\begin{theorem} \label{thm:size-lower-bound}
If $n \ge 4$ then every tight irreducible subcube partition of length $n$ has size at least $n + 3$.
\end{theorem}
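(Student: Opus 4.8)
The plan is to establish a lower bound of $n+3$ on the size of any tight irreducible subcube partition of length $n \geq 4$. My approach would be to argue that tightness and irreducibility together force the partition to be fairly ``rich'': it cannot be too small because small subcube partitions that mention all coordinates tend to contain a reducible sub-structure. The natural strategy is to leverage the notion of \emph{regularity} (equivalently nonsingularity) that the paper mentions is equivalent to tightness here, and to connect the size to counting constraints coming from \Cref{lem:testing-partition}, namely $\sum_{s \in F} 2^{-\operatorname{codim}(s)} = 1$.

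\medskip

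First I would set up the counting framework. Each subcube $s$ contributes weight $2^{-\operatorname{codim}(s)}$, and these weights sum to $1$. I would examine how tightness interacts with this: since every one of the $n$ coordinates must be mentioned, and each subcube mentions exactly $\operatorname{codim}(s)$ coordinates, we get $\sum_{s \in F} \operatorname{codim}(s) \geq n$ (each coordinate is covered at least once). Combined with the weight identity, a convexity or extremal argument should lower-bound $|F|$. The crude bound from the weight identity alone only gives that the minimum codimension is small, so by itself this yields something weaker than $n+3$; the tightness constraint is what should push it up. I would then try to show that any coordinate is mentioned by at least two subcubes — this is the crux of the nonsingularity condition — since if a coordinate $i$ is mentioned by only one subcube $s$ with $s_i = b$, one can typically merge $s$ with a neighbor or find a reducible pair by flipping coordinate $i$, contradicting irreducibility. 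This would give $\sum_s \operatorname{codim}(s) \geq 2n$, which is a substantially stronger handle.

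\medskip

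The main obstacle, I expect, is turning the global counting inequality $\sum_s \operatorname{codim}(s) \geq 2n$ together with $\sum_s 2^{-\operatorname{codim}(s)} = 1$ into the clean bound $|F| \geq n+3$; a direct convexity argument tends to produce $|F| \geq$ something like $\Omega(\log n)$ or a linear bound with the wrong constant, rather than the exact additive $n+3$. The likely resolution is to combine the counting with a structural claim about irreducibility — for instance, that the partition cannot consist of too many high-dimensional subcubes without two of them joining into a larger subcube. Since the theorem statement explicitly says this is due to Kullmann and Zhao and promises an \emph{alternative} proof ``using known results from the literature,'' I would lean on a cited combinatorial result (plausibly a bound on the size of minimal covers, irredundant CNF formulas, or nonsingular hitting formulas) rather than reproving everything from scratch.

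\medskip

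Concretely, my step order would be: (1) recall that for $n \geq 3$ irreducibility plus tightness implies nonsingularity/regularity, so every coordinate is mentioned at least twice; (2) record the two key global identities, $\sum_s 2^{-\operatorname{codim}(s)} = 1$ and $\sum_s \operatorname{codim}(s) \geq 2n$; (3) invoke the relevant known bound on the size of such regular irreducible (equivalently, nonsingular unsatisfiable hitting) formulas to convert these into a lower bound on $|F|$; and (4) verify the additive constant yields exactly $n+3$ for $n \geq 4$, checking the small-$n$ boundary cases by hand against the enumeration data already quoted in the paper ($n=4$ gives $7 = 4+3$, consistent and tight). The delicate part throughout is ensuring the nonsingularity step is justified for all relevant subcubes, since a careless merging argument might fail when the ``flipped'' subcube is itself split across several parts of the partition.
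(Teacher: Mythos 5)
Your proposal correctly identifies two genuine ingredients of the paper's argument: the reduction to minimally unsatisfiable CNFs mentioning all $n$ variables (Tarsi's bound of $n+1$), and the fact that tightness plus irreducibility forces regularity/nonsingularity (\Cref{lem:irreducible-regular}). However, the core of the proof --- getting from $n+1$ up to $n+3$ --- is left as an unspecified appeal to ``the relevant known bound,'' and this is a genuine gap, because no off-the-shelf bound does this job. The known results give strictly less: the theorem of Davydov--Davydova--Kleine~B\"uning shows that a minimally unsatisfiable formula with exactly $n+1$ clauses has a variable occurring once positively and once negatively, which combined with regularity rules out size $n+1$ and yields only $|F| \ge n+2$. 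If instead you mean to cite a bound on nonsingular unsatisfiable \emph{hitting} clause-sets, that bound is precisely the statement being proved (it is due to Kullmann and Zhao), so the citation would be circular --- the whole point of the paper's ``alternative proof'' is to derive it from results about general minimally unsatisfiable formulas.

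The missing idea is the last step of the paper's proof: by Kleine~B\"uning's theorem, there is a \emph{unique} regular minimally unsatisfiable CNF with $n$ variables and $n+2$ clauses, corresponding to the subcubes $0^n$, $1^n$, and all cyclic rotations of $10*^{n-2}$; one then checks that for $n \ge 4$ these subcubes are not pairwise disjoint (e.g.\ $10*^{n-2}$ and $*^210*^{n-4}$ intersect), so they cannot form a partition, ruling out size $n+2$. This is the only place where the disjointness of the subcubes --- as opposed to mere minimal unsatisfiability of the cover --- enters, and without it the bound $n+3$ is unreachable. Your counting framework ($\sum_s 2^{-\operatorname{codim}(s)} = 1$ together with $\sum_s \operatorname{codim}(s) \ge 2n$) plays no role in the actual proof and, as you yourself note, cannot produce an exact additive constant; likewise, checking $n=4$ against enumeration data is a consistency check, not a proof for general $n$.
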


Before proving the theorem, we need a simple \namecref{lem:irreducible-regular}.

\begin{definition}[Regularity] \label{def:regular}
A subcube partition of length $n$ is \emph{regular} if for every $i \in [n]$ and every $b \in \{0,1\}$ there are at least two subcubes $s \in F$ such that $s_i = b$.
\end{definition}

This definition is due to Kullmann and Zhao~\cite{KullmannZhao13}, who used the term \emph{nonsingular}. The term \emph{regular} appears in Peitl and Szeider~\cite{PeitlSzeider22}.

\begin{lemma}[{\cite[Lemma 39]{KullmannZhao16}}] \label{lem:irreducible-regular}
If $F$ is a tight irreducible subcube partition of length $n \ge 2$ then $F$ is regular.
\end{lemma}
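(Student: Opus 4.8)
The plan is to prove the contrapositive: if a tight irreducible subcube partition $F$ of length $n \ge 2$ fails to be regular, then $F$ must be reducible (or not tight). By \Cref{def:regular}, failure of regularity means there is a coordinate $i \in [n]$ and a value $b \in \{0,1\}$ such that \emph{at most one} subcube $s \in F$ satisfies $s_i = b$. I would split into the two cases: either \emph{no} subcube has $s_i = b$, or \emph{exactly one} does.

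First I would dispose of the case where no subcube $s \in F$ has $s_i = b$. Every point $x \in \{0,1\}^n$ with $x_i = b$ lies in some subcube $s$ of the partition; since $x_i = b$ and $s$ contains $x$, we must have $s_i \in \{b, *\}$, and by assumption $s_i \neq b$, forcing $s_i = *$. Thus every subcube containing such a point is unconstrained at coordinate $i$. But by tightness some subcube $t$ mentions $i$, so $t_i \in \{0,1\}$; since no subcube has $t_i = b$ we get $t_i = \bar b$. The goal here is to extract a contradiction with $F$ being a partition — I expect the cleanest route is to observe that the subcubes with $s_i = *$ already partition the entire half-cube $\{x : x_i = b\}$, hence also partition $\{x : x_i = \bar b\}$ (a star coordinate covers both values symmetrically), so these subcubes alone cover all of $\{0,1\}^n$, leaving no room for $t$ with $t_i = \bar b$. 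That contradiction rules out this case.

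The main case, and the main obstacle, is when \emph{exactly one} subcube $s$ has $s_i = b$. Here I would argue that $G = F \setminus \{s\}$ witnesses reducibility, i.e. that $\bigcup G$ is a subcube. Concretely, $\bigcup G = \{0,1\}^n \setminus s$, and I claim this equals the subcube $s'$ obtained from $s$ by flipping coordinate $i$ (setting $s'_i = \bar b$ and leaving all other coordinates equal to those of $s$); more precisely $s'$ is $s$ with its $i$-th coordinate changed to $\bar b$. The intended picture is that $s$ occupies a ``half'' of a larger subcube and its complement within the partition fills out exactly the opposite half. To make this rigorous I would verify that every subcube $t \in G$ is contained in $\{x : x_i = \bar b\}$ together with possibly some region, and then show the union is precisely a single subcube. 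The delicate point is that $\bigcup G$ being the complement of one subcube $s$ need not in general be a subcube, so I must use the assumption that $s_i = b$ is the \emph{only} occurrence of value $b$ at coordinate $i$: this guarantees every other subcube lies in $\{x_i = \bar b\} \cup \{x_i = *\}$ in a controlled way, and I expect the proof to reduce to showing that $G$ partitions a subcube whose $i$-th coordinate can be set to $\bar b$.

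Finally, I would need to confirm that the witness $G$ satisfies the size bounds $1 < |G| < |F|$ required by \Cref{def:reducible}. The bound $|G| < |F|$ is immediate since $G = F \setminus \{s\}$. For $|G| > 1$, i.e. $|F| > 2$, I would note that the only subcube partition with $|F| \le 2$ is $\{*^n\}$ or $\{s, s'\}$ for a single split coordinate, and a short check shows these are either not tight for $n \ge 2$ or already handled. Assembling the two cases, any non-regular tight irreducible $F$ yields a contradiction, so $F$ must be regular; I anticipate the half-cube covering argument in the main case will be where most of the care is needed, since it is the step that genuinely uses both tightness and the exactness of the ``single occurrence'' hypothesis.
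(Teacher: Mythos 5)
Your Case 1 (no subcube has $s_i = b$) is correct and matches the spirit of the paper's argument: the subcubes with a star at coordinate $i$ tile the half-cube $\{x : x_i = b\}$, hence tile the opposite half-cube identically, so they exhaust $\{0,1\}^n$ and coordinate $i$ is never mentioned, contradicting tightness. But Case 2 contains a genuine error: the witness set $G = F \setminus \{s\}$ does not work, and no amount of care can make it work. Its union is $\{0,1\}^n \setminus s$, which has $2^n - 2^{n-\operatorname{codim}(s)}$ points; this is a power of two (a necessary condition for being a subcube) only when $\operatorname{codim}(s) = 1$, whereas $s'$ has $2^{n-\operatorname{codim}(s)}$ points, so the claimed identity $\bigcup G = s'$ fails on cardinality grounds alone whenever $\operatorname{codim}(s) \ge 2$. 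Nothing in the hypothesis forces $s$ to have codimension $1$: for instance, in the subcube partition $\{000, 001, 01*, 1**\}$, the point $000$ is the unique subcube whose third coordinate is $0$, and the union of the other three subcubes is a seven-point set, not a subcube.

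The correct witness --- and this is exactly what the paper's proof does, phrased via the sets $F_\sigma = \{x : \sigma x \in F\}$ for $\sigma \in \{0,1,*\}$ --- is the smaller set $G = \{ t \in F : t_i = \bar{b} \}$. Since the subcubes with a star at position $i$ meet both half-cubes in mirror-image tilings, the part of $\{x : x_i = \bar{b}\}$ not covered by them is precisely the reflection $s'$ of $s$, so $\bigcup G = s'$ is a subcube. This repairs your argument, but it also creates a sub-case your proposal does not address: with this $G$ one can have $|G| = 1$ even when $|F|$ is large (in the example above, $G = \{001\}$ while $|F| = 4$), so the condition $|G| > 1$ in \Cref{def:reducible} is not automatic. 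In that sub-case $s' \in F$, and one instead takes $\{s, s'\}$, whose union is $s$ with coordinate $i$ starred; irreducibility then forces $F = \{s, s'\}$ with $s = *^{i-1} b *^{n-i}$, which is not tight for $n \ge 2$. This two-level case analysis ($|G| > 1$ versus $|G| = 1$) is precisely the structure of the paper's proof, and it is the piece your proposal is missing.
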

\begin{proof}
We prove the definition of regularity for $i = 1$.

For $\sigma \in \{0,1,*\}$, let $F_\sigma = \{ x : \sigma x \in F \}$. Both $F_0 \cup F_*$ and $F_1 \cup F_*$ are subcube partitions of length $n - 1$, and so $\bigcup F_0 = \bigcup F_1$. Since $F$ is tight, $F_0,F_1$ are non-empty.

If $F_0 = \{x\}$ and $|F_1| > 1$ then the union of the subcubes corresponding to $F_1$ is the subcube $1x$, contradicting irreducibility.

If $F_0 = \{x\}$ and $|F_1| = 1$ then $F_0 = F_1 = \{x\}$ and so the union of the corresponding subcubes is $*x$. Since $F$ is irreducible, necessarily $x = *^{n-1}$, and so $F = \{ 0*^{n-1}, 1*^{n-1} \}$. Since $F$ is tight, necessarily $n = 1$, contradicting the assumption $n \ge 2$. 

It follows that $|F_0| \ge 2$. Similarly $|F_1| \ge 2$.
\end{proof}

We can now prove the size lower bound.

\begin{proof}[Proof of \Cref{thm:size-lower-bound}]
Let $F = \{ s_1,\dots,s_m \}$ be a tight subcube partition of length $n$. We can identify $F$ with a formula $\Phi$ in \emph{conjunctive normal form} (CNF) over variables $x_1,\dots,x_n$ whose clauses are ``$x \notin s_i$'' for all $i \in [m]$. For example, the subcube partition $0*,10,11$ corresponds to the CNF $x_1 \land (\bar{x}_2 \lor x_3) \land (\bar{x}_2 \lor \bar{x}_3)$.

Since every $x$ belongs to some $s_i$, the formula $\Phi$ is unsatisfiable. It is moreover minimally unsatisfiable, meaning that if we remove any clause, then it becomes satisfiable. Indeed, if we remove the clause ``$x \notin s_i$'', then any point in $s_i$ would satisfy the formula. Since $F$ is tight, $\Phi$ mentions all $n$ variables.

A well-known result attributed to Tarsi~\cite{AharoniLinial86} states that a minimally unsatisfiable CNF mentioning $n$ variables must contain at least $n + 1$ clauses, hence $m \ge n+1$.

Suppose that $m = n+1$. Davydov, Davydova, and Kleine~Büning~\cite[Theorem 12]{DDKB98} showed that if a minimally unsatisfiable CNF mentioning $n$ variables contains exactly $n + 1$ clauses, then some variable appears once positively and once negatively. In particular, $F$ is not regular, contradicting \Cref{lem:irreducible-regular}. Hence $m \ge n+2$.

Suppose that $m = n+2$. Kleine~Büning~\cite[Theorem 6]{KB00} showed that there is a unique regular minimally unsatisfiable CNF mentioning $n$ variables which contains exactly $n + 2$ clauses, up to renaming and reordering variables. The collection of subcubes corresponding to this CNF consists of $0^n,1^n$ together with all cyclic rotations of $10*^{n-2}$. When $n \ge 4$, these subcubes are not disjoint: for example, $10*^{n-2}$ and $*^210*^{n-4}$ both contain the subcube $1010*^{n-4}$. Hence $m \ge n+3$.
\end{proof}

In the following two subsections, we present two constructions of the same sequence of tight irreducible subcube partitions of length $n \ge 3$ and size $2n-1$.

\subsubsection{Merging} \label{sec:merging}

Our first construction is based on the following \namecref{lem:merge}, which is used to merge together two subcube partitions.

\begin{definition}[Reducibility for partial subcube partitions] \label{def:reducibility-partial}
A subset $F'$ of a subcube partition $F$ of length $n$ is \emph{reducible} if there exists a subset $G \subseteq F'$, with $|G| > 1$, such that the union of the subcubes in $G$ is a subcube different from $*^n$.
\end{definition}

\begin{lemma} \label{lem:merge}
Let $F_0,F_1$ be two subcube partitions of length $n$. Let
\[
 G = \{ 0x : x \in F_0 \setminus F_1 \} \cup \{ 1x : x \in F_1 \setminus F_0 \} \cup \{ *x : x \in F_0 \cap F_1 \}.
\]
Then
\begin{enumerate}[(a)]
\item $G$ is a subcube partition of length $n+1$.
\item If $F_0 \neq F_1$ and at least one of them is tight, then $G$ is tight.
\item If $F_0 \cap F_1 \neq \emptyset$ and both $F_0 \setminus F_1$ and $F_1$ are irreducible (or both $F_1 \setminus F_0$ and $F_0$ are irreducible) then $G$ is irreducible.
\end{enumerate}
\end{lemma}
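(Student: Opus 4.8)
The plan is to handle the three parts in order, with essentially all of the work falling in part (c).

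For parts (a) and (b) I would argue pointwise. Write a point of $\{0,1\}^{n+1}$ as $by$ with $b\in\{0,1\}$ and $y\in\{0,1\}^n$. A subcube of $G$ covering $0y$ has first symbol $0$ or $*$, so it is $0x$ with $x\in F_0\setminus F_1$ covering $y$, or $*x$ with $x\in F_0\cap F_1$ covering $y$; in both cases the tail $x$ ranges over exactly the members of $F_0$, so since $F_0$ is a partition there is a unique such subcube. The case $b=1$ is identical using $F_1$, giving (a). For (b), the new first coordinate is mentioned iff some $0x$ or $1x$ lies in $G$, i.e. iff $F_0\setminus F_1$ or $F_1\setminus F_0$ is nonempty, which holds exactly when $F_0\neq F_1$; and if (say) $F_0$ is tight then any $i\in[n]$ is mentioned by some $x\in F_0$, which survives into $G$ as $0x$ or $*x$, still mentioning the shifted coordinate.

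Part (c) is the crux. Assume for contradiction that $G$ is reducible, witnessed by $H\subseteq G$ with $1<|H|<|G|$ and $c:=\bigcup H$ a subcube of length $n+1$. I split on the first symbol $c_1$. Note first that if the relevant length-$n$ tail ever equals $*^n$, then $c=*^{n+1}$ covers the whole cube, forcing $H=G$ (the members of the partition $G$ are disjoint, so nothing can be dropped) and contradicting $|H|<|G|$; hence every tail that arises as a subcube below is automatically $\neq *^n$, as partial irreducibility requires. If $c_1=0$, then every subcube of $H$ has first symbol $0$ (a $*x\in H$ would force first-coordinate-$1$ points into $c$), so $H$ projects to $A\subseteq F_0\setminus F_1$ with $\bigcup A$ the tail of $c$, a subcube, $|A|=|H|>1$, and $\bigcup A\subseteq\bigcup(F_0\setminus F_1)\subsetneq\{0,1\}^n$ since $F_0\cap F_1\neq\emptyset$, so $\bigcup A\neq *^n$; this contradicts irreducibility of the partial subcube partition $F_0\setminus F_1$ (\Cref{def:reducibility-partial}). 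If $c_1=1$, symmetrically $H$ projects to $A\subseteq F_1\setminus F_0\subsetneq F_1$ with $\bigcup A$ a subcube and $1<|A|<|F_1|$, contradicting irreducibility of $F_1$.

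The hard case, and the main obstacle, is $c_1=*$, where both layers of $c$ are populated and I cannot reduce to a single parent partition. Write $c=*c'$ and split $H$ by first symbol into the projected families $A\subseteq F_0\setminus F_1$, $B\subseteq F_1\setminus F_0$, and $C\subseteq F_0\cap F_1$, all consisting of subcubes contained in $c'$. Reading the bottom layer of $c$ gives $A\cup C\subseteq F_0$ with $\bigcup(A\cup C)=c'$, and the top layer gives $B\cup C\subseteq F_1$ with $\bigcup(B\cup C)=c'$. The key move is to apply irreducibility of $F_1$ to $B\cup C$: since its union is a subcube, either $B\cup C=F_1$ (whence $c'=*^n$, already excluded) or $|B\cup C|=1$. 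In the latter case I distinguish whether the surviving subcube lies in $C$ or in $B$. If $C=\{x_*\}$ and $B=\emptyset$, then every $a\in A$ is simultaneously contained in $c'=x_*$ and disjoint from $x_*$ (both are members of $F_0$), forcing $A=\emptyset$ and $|H|=1$, a contradiction. If $B=\{x_1\}$ and $C=\emptyset$, then $A\subseteq F_0\setminus F_1$ satisfies $\bigcup A=c'=x_1\neq *^n$; since $x_1\in F_1\setminus F_0$ cannot be a member of $A\subseteq F_0\setminus F_1$, the union $A$ cannot be a singleton, so $|A|>1$, again contradicting irreducibility of $F_0\setminus F_1$. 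This exhausts all cases. The parenthetical alternative hypothesis follows by flipping the first coordinate and swapping $F_0\leftrightarrow F_1$, which is a symmetry of $G$.
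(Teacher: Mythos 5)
Your proof is correct and follows essentially the same route as the paper's: the same case split on the first symbol of $\bigcup H$, the same application of full irreducibility of $F_1$ to the top-layer family $B \cup C$, the same application of partial irreducibility (\Cref{def:reducibility-partial}) of $F_0 \setminus F_1$ in the remaining cases, and the same two sub-cases once $|B \cup C| = 1$. The only quibble is that your preliminary remark (``tail $= *^n$ forces $c = *^{n+1}$'') is literally true only when $c_1 = *$, but that is the only case in which you invoke it, so nothing breaks.
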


\begin{proof}
The first two items follow easily from the construction (the condition $F_0 \neq F_1$ in the second item guarantees that the first coordinate is mentioned).

Now suppose that $F_0 \cap F_1 \neq \emptyset$ and both $F_0 \setminus F_1$ and $F_1$ are irreducible. We need to show that $G$ is irreducible. If not, then there is a subset $H \subset G$, with $1 < |H| < |G|$, whose union is a subcube $x \neq *^n$.

If $x = 0y$ then $y$ is a union of $|H|$ subcubes in $F_0 \setminus F_1$. Since $F_0 \setminus F_1$ is irreducible and $|H| > 1$, necessarily $y = *^n$. However, this contradicts the assumption $F_0 \cap F_1 \neq \emptyset$.

We get a similar contradiction if $x = 1y$, using the irreducibility of $F_1$.

Finally, if $x = *y$ then $y$ is a union of $|H|$ subcubes of $F_0$ as well as a union of $|H|$ subcubes of $F_1$. Since $F_1$ is irreducible and $y \neq *^n$, necessarily $y \in F_1$. If $y \in F_0$ then $x \in G$, contradicting the assumption $|H| > 1$. If $y \notin F_0$ then $1y \in G$ and so $y$ is a union of subcubes in $F_0 \setminus F_1$. Since $F_0 \setminus F_1$ is irreducible and $y \neq *^n$, necessarily $y \in F_0 \setminus F_1$, contradicting both $y \notin F_0$ and $y \in F_1$.
\end{proof}

We now construct the promised sequence of tight irreducible subcube partitions.

\begin{theorem} \label{thm:size-2n-1-merging}
For each $n \ge 3$ there is a tight irreducible subcube partition $S_n$ of length $n$ and size $2n-1$.
\end{theorem}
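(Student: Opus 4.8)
The plan is to prove the theorem by induction on $n$, using \Cref{lem:merge} to pass from length $n$ to length $n+1$. To make the induction close, I would strengthen the statement with an invariant: maintain that $S_n$ contains a point $p$ (a $0$-dimensional subcube) that is at Hamming distance $1$ from an endpoint $q$ of some edge $c \in S_n$. The base case is $n = 3$, where I take $S_3 = \{000, 111, *01, 1*0, 01*\}$ from the abstract; one checks directly that this is a tight irreducible partition of size $5 = 2 \cdot 3 - 1$, and the point $000$ is adjacent to the endpoint $001$ of the edge $*01$, so the invariant holds.

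For the inductive step, suppose $S_n$ satisfies the invariant, with point $p$ adjacent to endpoint $q$ of the edge $c$, differing in coordinate $i$. Write $q''$ for the other endpoint of $c$, so that $p, q, q''$ are distinct and $\{p\} \cup c = \{p, q, q''\}$. I would feed two length-$n$ partitions into \Cref{lem:merge}: let $F_1 = S_n$, and let $F_0$ be obtained from $S_n$ by re-partitioning this three-point region the other way, namely $F_0 = (S_n \setminus \{\{p\}, c\}) \cup \{p \lor q, \{q''\}\}$, where $p \lor q = \{p,q\}$ is an edge (since $p,q$ are adjacent) and $\{q''\}$ is a point. Then $F_0$ is again a subcube partition, $F_0 \setminus F_1 = \{p \lor q, \{q''\}\}$ has size $2$, $F_1 \setminus F_0 = \{\{p\}, c\}$, and $F_0 \cap F_1 = S_n \setminus \{\{p\}, c\}$ has size $2n-3 > 0$. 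Setting $S_{n+1}$ to be the merge $G$ of $F_0$ and $F_1$, its size is $|F_0 \setminus F_1| + |F_1 \setminus F_0| + |F_0 \cap F_1| = 2 + 2 + (2n-3) = 2(n+1)-1$, as required.

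It then remains to verify the hypotheses of \Cref{lem:merge}. Tightness of $S_{n+1}$ follows from part (b), since $F_0 \neq F_1$ and $F_1 = S_n$ is tight. For irreducibility I would use the first option of part (c): $F_0 \cap F_1 \neq \emptyset$ as computed; $F_1 = S_n$ is irreducible by the inductive hypothesis; and $F_0 \setminus F_1 = \{p \lor q, \{q''\}\}$ is irreducible in the sense of \Cref{def:reducibility-partial}, because its only non-singleton subset is the whole two-element set, whose union is the three-point set $\{p, q, q''\}$, and three points can never form a subcube. Hence $S_{n+1}$ is a tight irreducible partition of size $2(n+1)-1$.

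The one step that needs care, and the only place where the strengthened invariant does real work, is checking that $S_{n+1}$ again contains a point adjacent to an edge-endpoint, so that the induction can continue; this is the part I expect to be the main obstacle when setting up the argument. Fortunately the construction is self-perpetuating: under the merge, the point $\{p\} \in F_1 \setminus F_0$ becomes the point $1p$, while the edge $p \lor q \in F_0 \setminus F_1$ becomes the edge $\{0p, 0q\}$. The point $1p$ and the endpoint $0p$ differ only in the newly prepended coordinate, so they are adjacent, and the invariant is restored automatically. The invariant has thus been chosen precisely so that it regenerates itself, which is what makes the clean size-$(2n-1)$ bound propagate through the induction.
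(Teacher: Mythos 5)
Your proof is correct, but it takes a different route from the paper's own proof of \Cref{thm:size-2n-1-merging}. The paper also inducts via \Cref{lem:merge} with $F_1 = S_n$, but it takes for $F_0$ the \emph{fixed} reducible three-subcube partition $\{1*^{n-1}, 00*^{n-2}, 01*^{n-2}\}$, and the invariant it maintains concerns specific subcubes: $01*^{n-2} \in S_n$ while $1*^{n-1}, 00*^{n-2} \notin S_n$; then $F_0 \cap F_1 = \{01*^{n-2}\}$ and $F_0 \setminus F_1 = \{1*^{n-1}, 00*^{n-2}\}$ is the two-element irreducible set. Your choice of $F_0$ --- re-partitioning the three points $p,q,q''$ the other way --- is instead an instance of what the paper calls an \emph{nfs-flip} (\Cref{def:nfs-flip}): a point together with an edge having an endpoint adjacent to that point is exactly an nfs-pair, and $\{p \lor q, \{q''\}\}$ is its flip. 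So your argument essentially rediscovers the paper's second construction, the ``twisting'' of \Cref{sec:twisting}: \Cref{lem:twist} is proved there precisely as in your inductive step (merge $F$ with its flipped copy $F'$), and the paper remarks after that lemma that twisting $S_n$ at the pair $s = 1^n$, $t = 1^{n-2}*0$ with $b = 1$ regenerates the same sequence $S_n$. All your verifications go through: the set identities $F_0 \setminus F_1 = \{p \lor q, \{q''\}\}$ and $F_1 \setminus F_0 = \{\{p\}, c\}$ follow from disjointness of partition members, the union $\{p,q,q''\}$ has three points and hence is never a subcube, and the point $1p$ next to the edge $0(p \lor q)$ restores the invariant. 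What your version buys: the invariant is coordinate-free (existence of a point Hamming-adjacent to an edge endpoint) and visibly self-perpetuating, and it shows more generally that \emph{any} tight irreducible partition containing such a point--edge configuration can be grown two subcubes at a time. What the paper's merging proof buys: an $F_0$ that is fixed and trivially analyzed, independent of the structure of $S_n$, at the cost of tracking membership invariants tied to explicit coordinates.
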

\begin{proof}
We construct the subcube partitions inductively. The starting point is
\[
 S_3 = \{ 000, *01, 1*0, 01*, 111 \},
\]
whose irreducibility was proved by Kullmann and Zhao~\cite[Lemma 41]{KullmannZhao16}, and can also be checked using \Cref{alg:reducibility}.
The construction will maintain the invariants that $01*^{n-2} \in S_n$ and $1*^{n-1},00*^{n-2} \notin S_n$, and moreover $|S_n| = 2n-1$

Given $S_n$, we construct $S_{n+1}$ by applying \Cref{lem:merge} to $F_0 = \{1*^{n-1}, 00*^{n-2}, 01*^{n-2}\}$ and $F_1 = S_n$.


Since $F_0$ is reducible and $F_1$ is irreducible, clearly $F_0 \neq F_1$, and so $S_{n+1}$ is tight by \Cref{lem:merge}.

The invariant implies that $F_0 \setminus F_1 = \{ 1*^{n-1}, 00*^{n-2} \}$ is irreducible. It follows that $S_{n+1}$ is irreducible by \Cref{lem:merge}.

Since $1*^{n-1} \notin F_1$, it follows that $01*^{n-1} \in S_{n+1}$. Since $0*^{n-1} \notin F_0$, it follows that $00*^{n-1} \notin S_{n+1}$. Since $|F_1| > 1$, in particular $*^n \notin F_1$, and so $1*^n \notin S_{n+1}$.

Finally, the invariants imply that $F_0 \cap F_1 = \{ 01*^{n-2} \}$, and so
\[
 |S_{n+1}| = |F_0 \setminus F_1| + |F_1 \setminus F_0| + |F_0 \cap F_1| = 2 + (|F_1| - 1) + 1 = 2n+1,
\]
using $|F_1| = 2n-1$.
\end{proof}

Here are the resulting subcube partitions for $n = 3,4,5$:
{
\begin{align*}
&000\tikzmark{a} & &000*\tikzmark{f} & &000** \\
&01*\tikzmark{b} & &\tikzmark{aa}1000\tikzmark{g} & & \tikzmark{ff}1000* \\
&*01\tikzmark{c} & &01**\tikzmark{h} & &\tikzmark{gg}11000 \\
&1*0\tikzmark{d} & &\tikzmark{bb}*01*\tikzmark{i} & &01*** \\
&111\tikzmark{e} & &\tikzmark{cc}1*01\tikzmark{j} & &\tikzmark{hh}*01** \\
&    & &\tikzmark{dd}11*0\tikzmark{k} & &\tikzmark{ii}1*01* \\
&    & &\tikzmark{ee}1111\tikzmark{l} & &\tikzmark{jj}11*01 \\
&    & &     & &\tikzmark{kk}111*0 \\
&    & &     & &\tikzmark{ll}11111
\end{align*}
\begin{tikzpicture}[overlay, remember picture]
\draw [->] ([yshift=3pt]a.east) to ([yshift=3pt]aa.west);
\draw [->] ([yshift=3pt]b.east) to ([yshift=3pt]bb.west);
\draw [->] ([yshift=3pt]c.east) to ([yshift=3pt]cc.west);
\draw [->] ([yshift=3pt]d.east) to ([yshift=3pt]dd.west);
\draw [->] ([yshift=3pt]e.east) to ([yshift=3pt]ee.west);
\draw [->] ([yshift=3pt]f.east) to ([yshift=3pt]ff.west);
\draw [->] ([yshift=3pt]g.east) to ([yshift=3pt]gg.west);
\draw [->] ([yshift=3pt]h.east) to ([yshift=3pt]hh.west);
\draw [->] ([yshift=3pt]i.east) to ([yshift=3pt]ii.west);
\draw [->] ([yshift=3pt]j.east) to ([yshift=3pt]jj.west);
\draw [->] ([yshift=3pt]k.east) to ([yshift=3pt]kk.west);
\draw [->] ([yshift=3pt]l.east) to ([yshift=3pt]ll.west);
\end{tikzpicture}
}

\subsubsection{Twisting} \label{sec:twisting}

Our second construction starts with the observation
\[
 x00 \cup x1* = x*0 \cup x11.
\]
Up to permutation and flipping of coordinates, this is the only way in which a set of points can be written as a union of two subcubes in two different ways, as we show below in \Cref{lem:nfs-pair}. Following Kullmann and Zhao~\cite[Definitions 45--46]{KullmannZhao16}, we call such a pair of subcubes an \emph{nfs-pair}. The \emph{nfs-flip} of the pair on the left is the pair on the right.

\begin{definition}[Nfs-pair, nfs-flip] \label{def:nfs-flip}
Two subcubes $s,t$ constitute an \emph{nfs-pair} if they differ on exactly two positions $i,j$, where $(s_i,t_i) \in \{(0,1),(1,0)\}$ and $t_j = *$.

The \emph{nfs-flip} of $s,t$ is the pair of subcubes $s',t'$ obtained by copying the coordinates except for $i,j$, and setting $s'_i = *$, $s'_j = s_j$, $t'_i = t_i$, $t'_j = \bar{s}'_j$.
\end{definition}

\begin{lemma} \label{lem:nfs-flip}
If $s,t$ is an nfs-pair with nfs-flip $s',t'$ then $s \cup t = s' \cup t'$.
\end{lemma}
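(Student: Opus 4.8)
The plan is to verify the set equality $s \cup t = s' \cup t'$ pointwise, after factoring out the coordinates on which all four subcubes are forced to agree. By the definition of an nfs-pair, $s$ and $t$ agree on every coordinate outside $\{i,j\}$, and by the definition of the nfs-flip, both $s'$ and $t'$ copy exactly these coordinates from $s$. Hence a point $x \in \{0,1\}^n$ can belong to any of the four subcubes only if its restriction to $[n] \setminus \{i,j\}$ matches this common pattern; for every other point, $x$ lies in none of $s,t,s',t'$, so the claimed equality holds trivially there. It therefore suffices to analyze membership as a function of the pair $(x_i, x_j) \in \{0,1\}^2$.

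Next I would record what each subcube demands at positions $i$ and $j$. Since $s$ and $t$ differ at $j$ while $t_j = *$, we must have $s_j \in \{0,1\}$; write $b := s_j$. By the symmetry between the two allowed values of $(s_i,t_i)$, assume without loss of generality that $(s_i,t_i) = (0,1)$ (the case $(1,0)$ is identical after swapping the two symbols at coordinate $i$). Then, for a point whose restriction matches the common pattern, membership reduces to the following conditions on $(x_i,x_j)$:
\begin{itemize}
\item $x \in s$ exactly when $(x_i,x_j) = (0,b)$;
\item $x \in t$ exactly when $x_i = 1$, that is $(x_i,x_j) \in \{(1,0),(1,1)\}$;
\item $x \in s'$ exactly when $x_j = b$ (since $s'_i = *$ and $s'_j = b$), that is $(x_i,x_j) \in \{(0,b),(1,b)\}$;
\item $x \in t'$ exactly when $(x_i,x_j) = (1,\bar b)$ (since $t'_i = 1$ and $t'_j = \bar b$).
\end{itemize}

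Finally I would compare the two unions on these two coordinates. The pairs covered by $s \cup t$ are $\{(0,b),(1,0),(1,1)\}$, while the pairs covered by $s' \cup t'$ are $\{(0,b),(1,b)\} \cup \{(1,\bar b)\} = \{(0,b),(1,b),(1,\bar b)\}$. Because $\{(1,b),(1,\bar b)\} = \{(1,0),(1,1)\}$, both unions equal $\{(0,b),(1,0),(1,1)\}$, and combined with the factoring step this yields $s \cup t = s' \cup t'$.

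I do not expect a genuine obstacle here: once the agreeing coordinates are factored out, the statement collapses to a finite check on the two coordinates $i,j$. The only steps requiring care are bookkeeping ones — confirming from the definitions that $s_j \neq *$ so that $b$ is well defined, verifying that the nfs-flip leaves every coordinate outside $\{i,j\}$ untouched so the factoring is legitimate, and checking that the ``without loss of generality'' reduction is valid (it is, since exchanging the two symbols at coordinate $i$ is a symmetry of the entire configuration).
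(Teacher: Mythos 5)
Your proof is correct and matches the argument the paper leaves implicit: the lemma is stated without proof there, being exactly the motivating observation $x00 \cup x1* = x*0 \cup x11$ generalized, and your factoring-out of the common coordinates followed by the finite check on $(x_i,x_j)$ is precisely that verification made explicit. The bookkeeping points you flag (well-definedness of $b = s_j$, agreement outside $\{i,j\}$, the symmetry justifying the WLOG) are all handled correctly.
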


Nfs-pairs are the only pairs of subcubes satisfying \Cref{lem:nfs-flip} non-trivially.

\begin{lemma} \label{lem:nfs-pair}
Let $s,t$ and $s',t'$ be two pairs of disjoint subcubes such that $s \cup t = s' \cup t'$, the common value is not a subcube, and $\{s,t\} \neq \{s',t'\}$. Then either $s,t$ or $t,s$ is an nfs-pair, and $s',t'$ or $t',s'$ is its nfs-flip.
\end{lemma}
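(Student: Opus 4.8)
The plan is to pass to the common refinement of the two partitions. Writing $U := s \cup t = s' \cup t'$, I set $a = s \cap s'$, $b = s \cap t'$, $c = t \cap s'$, $d = t \cap t'$; each of these is either empty or a subcube (an intersection of subcubes), and together they partition $U$. The four given subcubes are recovered as $s = a \cup b$, $t = c \cup d$, $s' = a \cup c$, $t' = b \cup d$, and in each case the two cells are disjoint because they lie on opposite sides of one of the two partitions. The one elementary fact I will use throughout is that \emph{two disjoint nonempty subcubes whose union is a subcube differ in exactly one coordinate}, where they carry distinct bits, and agree (as symbols in $\{0,1,*\}$) everywhere else. This follows by restricting to the free coordinates of the union and applying the weight count of \Cref{lem:testing-partition}: two subcubes partitioning a $k$-cube must each have codimension $1$, so they are the two halves $x_j = 0$ and $x_j = 1$ of a single free coordinate $j$.

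The first real step is to show that at least one cell is empty. Suppose all four are nonempty. Then each of $s, t, s', t'$ is a subcube expressed as a disjoint union of two nonempty subcubes, so by the fact above the pairs $(a,b)$, $(c,d)$, $(a,c)$, $(b,d)$ each differ in exactly one coordinate and agree as symbols elsewhere. In particular each such pair has the \emph{same} star pattern (the differing coordinate carries bits in both), so $a,b,c,d$ all share one star pattern $P$. Quotienting out the $P$-coordinates, the four cells become four \emph{distinct} points $\hat a, \hat b, \hat c, \hat d$ of $\{0,1\}^{[n]\setminus P}$ with $\hat a\hat b$, $\hat b\hat d$, $\hat d\hat c$, $\hat c\hat a$ at Hamming distance $1$ — a $4$-cycle in the hypercube. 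A $4$-cycle in the hypercube is exactly the four corners of a $2$-face (the four edge-directions sum to zero, forcing them to pair up into two coordinates), so $U$ equals the subcube obtained by freeing $P$ together with those two coordinates. This contradicts $U$ not being a subcube. Hence some cell is empty; moreover two empty cells would force one of $s,t,s',t'$ to be empty or would force $\{s,t\}=\{s',t'\}$, both excluded, so \emph{exactly} one cell is empty.

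Using the freedom to swap $s \leftrightarrow t$ and $s' \leftrightarrow t'$ (which is exactly the ``$s,t$ or $t,s$'' / ``$s',t'$ or $t',s'$'' latitude in the statement), I may assume $b = s \cap t' = \emptyset$. Then $s = a$, $t' = d$, and $c$ is a nonempty subcube with $s' = s \cup c$ and $t = c \cup d$ both subcubes. The fact above gives that $s,c$ differ in a single coordinate $r$ and $c,d$ differ in a single coordinate $q$. I then check $r \ne q$: otherwise $s,c,d$ agree off $r$ and carry three bits at $r$ with $s_r \ne c_r \ne d_r$, forcing either $s = d$ (impossible, disjoint) or three pairwise distinct bits in $\{0,1\}$. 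With $r \ne q$, reading off symbols shows $s,c,d$ agree outside $\{r,q\}$ and realize $(\bar\rho,\sigma),(\rho,\sigma),(\rho,\bar\sigma)$ there for bits $\rho,\sigma$, so on $\{r,q\}$ the subcube $t = c\cup d$ frees $q$ while $s'=s\cup c$ frees $r$. Consequently $s$ and $t$ differ in exactly the two positions $r$ (distinct bits) and $q$ (with $t_q = *$), which is precisely the definition of an nfs-pair with $i=r$, $j=q$; and computing the nfs-flip from \Cref{def:nfs-flip} yields exactly $s'$ and $t'$.

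The \textbf{main obstacle} is the second step — proving that not all four refinement cells can be nonempty. The clean route is the observation that the four ``sides'' being subcubes forces all four cells to share a single star pattern, after which the problem collapses to recognizing a hypercube $4$-cycle as a $2$-face; getting this reduction right (rather than grinding a coordinate-by-coordinate parity analysis of the four difference-coordinates) is where the care is needed. Once one cell is known to be empty, the remaining analysis is a direct symbolic computation matching \Cref{def:nfs-flip}.
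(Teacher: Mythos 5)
Your proof is correct, and it takes a genuinely different route from the paper's. Both arguments ultimately rest on the same elementary fact---two disjoint nonempty subcubes whose union is a subcube are the two halves $x_j=0$, $x_j=1$ of that union along a single coordinate, agreeing as symbols elsewhere---which you prove explicitly via \Cref{lem:testing-partition}, and which the paper invokes implicitly in its final step (``$p,q'$ differ in a single non-star position''). The difference is in the case analysis. The paper anchors on a conflict coordinate of $s,t$, writing $s=0p$, $t=1q$, and splits according to the first symbol of $s'$ and $t'$: both bits forces $\{s,t\}=\{s',t'\}$; both stars forces $p=q$, so $s\cup t=*p$ is a subcube, in one line; and the mixed case $s'=*p'$, $t'=1q'$ yields $p'=p$ and $q=p\cup q'$, to which the key fact applies. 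You instead form the common refinement $a,b,c,d$ and case on how many cells are empty; the configuration corresponding to the paper's ``both stars'' case is your ``all four cells nonempty'' case, and there, lacking an anchored coordinate, you must do real work: the four sides force a common star pattern, and a $4$-cycle in the hypercube spans a $2$-face (your parity-of-edge-directions argument for this is sound, since adjacent edges of the cycle cannot repeat a direction without collapsing two of the distinct cells). Your remaining cases---two empty cells being degenerate, and exactly one empty cell giving the nfs-pair and its flip by direct symbolic computation---parallel the paper's mixed case in substance, and your use of the $s\leftrightarrow t$, $s'\leftrightarrow t'$ swaps to normalize the empty cell to $b$ is exactly the latitude the statement grants. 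What your route buys is symmetry (no preferred coordinate, no initial normalization) together with an explicit, reusable statement of the key fact; what the paper's buys is brevity, since the anchored coordinate disposes of the hard case by a one-line projection rather than a $4$-cycle lemma.
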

\begin{proof}
Since $s,t$ are disjoint, they must conflict. Without loss of generality, $s = 0p$ and $t = 1q$. If both $s'$ and $t'$ start with non-stars then clearly $\{s,t\} = \{s',t'\}$, and if both start with a star then $p = q$ and so $s \cup t = *p$ is a subcube. Therefore without loss of generality, $s' = *p'$ and $t' = 1q'$.

Since $0p \cup 1q = *p' \cup 1q'$, considering the points starting with $0$, we see that $p' = p$. Considering the points starting with $1$, we see that $q = p \cup q'$. Since $p,q',q$ are all subcubes, it must be that $p,q'$ are subcubes differing in a single non-star position, and $q$ is obtained from them by changing this position to a star. Thus $s,t$ is an nfs-pair, and $s',t'$ is its nfs-flip.
\end{proof}

The construction is based on the following simple corollary of \Cref{lem:merge}.

\begin{lemma} \label{lem:twist}
Let $F$ be a tight irreducible subcube partition containing an nfs-pair $s,t$, and let $s',t'$ be its nfs-flip. The following subcube partition is tight and irreducible, for any $b \in \{0,1\}$:
\[
 G = \{ *x : x \in F, x \neq s,t \} \cup \{ bs, bt, \bar{b}s', \bar{b}t' \}.
\]

Furthermore, $|G| = |F| + 2$, and $G$ contains the nfs-pairs $bs,bt$ and $\bar{b}s',\bar{b}t'$.
\end{lemma}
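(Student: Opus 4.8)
The plan is to realize $G$ as the output of the merging construction of \Cref{lem:merge}, applied to $F$ and a ``twisted'' copy of it. First I would set
\[
 F' = (F \setminus \{s,t\}) \cup \{s',t'\}.
\]
Since $s,t$ is an nfs-pair with nfs-flip $s',t'$, \Cref{lem:nfs-flip} gives $s' \cup t' = s \cup t$; moreover $s',t'$ conflict at the position where one is star-free and the other carries the complementary bit, hence are disjoint. Thus $F'$ covers exactly the same points as $F$ with pairwise disjoint subcubes, so it is again a subcube partition of length $n$. I would also record that $s',t' \notin F$ (the only subcubes of the partition $F$ meeting $s \cup t$ are $s,t$ themselves, and $s',t'$ differ from both in their star pattern on $\{i,j\}$) and symmetrically $s,t \notin F'$; consequently $F \setminus F' = \{s,t\}$, $F' \setminus F = \{s',t'\}$, and $F \cap F' = F \setminus \{s,t\}$.

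Next I would match $G$ against \Cref{lem:merge} by taking $(F_0,F_1) = (F,F')$ when $b = 0$ and $(F_0,F_1) = (F',F)$ when $b = 1$. A direct comparison of the three pieces shows the merge of $F_0,F_1$ is precisely $G$: the intersection part contributes $\{*x : x \in F,\, x \neq s,t\}$, while the two difference parts contribute $\{bs,bt\}$ and $\{\bar b s',\bar b t'\}$. For tightness (part (b)) I need $F_0 \neq F_1$, which holds since $s \in F \setminus F'$, together with $F$ being tight. For irreducibility (part (c)) I need $F_0 \cap F_1 \neq \emptyset$ and one of the two irreducibility conditions. The intersection $F \setminus \{s,t\}$ is nonempty because the union of an nfs-pair is never a subcube — in particular it is not $*^n$ — so $F$ cannot equal $\{s,t\}$. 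The condition I would invoke puts the given irreducible full partition $F$ in the role of the ``$F_i$'' factor and the two-element set $\{s',t'\}$ (resp.\ $\{s,t\}$) in the role of the difference factor; this set is irreducible in the sense of \Cref{def:reducibility-partial} precisely because its only nontrivial subset has union $s' \cup t' = s \cup t$, which is not a subcube. Parts (a)--(c) of \Cref{lem:merge} then give that $G$ is a tight irreducible subcube partition.

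The ``furthermore'' clause is bookkeeping: $|G| = |F \setminus \{s,t\}| + 4 = |F| + 2$. For the nfs-pair claims I would observe that prepending a common symbol preserves the defining conditions of an nfs-pair (the two distinguished positions merely shift by one), so $bs,bt$ is an nfs-pair because $s,t$ is; and $s',t'$ is itself an nfs-pair — reading the star-introducing position of the flip as the ``$*$'' position of \Cref{def:nfs-flip}, with the roles of the two subcubes possibly swapped — so $\bar b s',\bar b t'$ is an nfs-pair as well.

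The main obstacle I anticipate is not any single hard step but pinning down the identifications exactly: confirming that $F'$ is a legitimate subcube partition and that $s,t \notin F'$, $s',t' \notin F$, so that the set differences feeding into \Cref{lem:merge} are literally $\{s,t\}$ and $\{s',t'\}$. The only genuinely substantive point is the nonemptiness of $F \cap F'$ and the irreducibility of the two-element difference factor, and both reduce to the single observation that the union of an nfs-pair is not a subcube.
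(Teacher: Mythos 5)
Your proposal is correct and follows essentially the same route as the paper: define $F'$ by replacing $s,t$ with their nfs-flip $s',t'$, apply \Cref{lem:merge} with $F$ and $F'$ in the roles $F_b,F_{\bar b}$, and verify tightness via $F \neq F'$ and irreducibility via $F \cap F' \neq \emptyset$ together with the irreducibility of $F$ and of the two-element difference set (whose union $s \cup t = s' \cup t'$ is not a subcube). The only nitpick is that the difference factor paired with $F$ in condition (c) of \Cref{lem:merge} is $\{s',t'\}$ for both values of $b$ (not ``resp.\ $\{s,t\}$''), but this does not affect the argument.
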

\begin{proof}
Let $F'$ be the formula obtained from $F$ by replacing $s,t$ with $s',t'$. We apply \Cref{lem:merge} on $F_b = F$ and $F_{\bar{b}} = F'$, obtaining the stated subcube partition $G$.

Since $F \neq F'$ and $F$ is tight, $G$ is tight.

Clearly $F$ cannot consist only of $s,t$, and so $F \cap F' \neq \emptyset$. Since $F$ and $F' \setminus F \subset F'$ are both irreducible, it follows that $G$ is irreducible.
\end{proof}

In order to obtain the sequence $S_n$ constructed in \Cref{thm:size-2n-1-merging} using \Cref{lem:twist}, start with $S_3$. Given $S_n$, apply the \namecref{lem:twist} with $s=1^n$, $t=1^{n-2}*0$, and $b = 1$, and rotate the resulting subcube partition once to the left. The result is $S_{n+1}$. Here is an example:
{
\begin{align*}
&000 \tikzmark{a} & &\tikzmark{aa} *000 \tikzmark{f} & & \tikzmark{ff} 000* \\
&01* \tikzmark{b} & &\tikzmark{bb} *01* \tikzmark{g} & &\tikzmark{gg} 01** \\
&*01 \tikzmark{c} & &\tikzmark{cc} **01 \tikzmark{h} & &\tikzmark{hh} *01* \\
&\lmarked{1*0} \tikzmark{d} & &\tikzmark{dd} 11*0 \tikzmark{i} & &\tikzmark{ii} 1*01 \\
&\lmarked{111} \tikzmark{e} & &\tikzmark{ddd} 0100 \tikzmark{j} & &\tikzmark{jj} 1000 \\
& & &\tikzmark{ee} 1111\tikzmark{k} & &\tikzmark{kk} \lmarked{1111} \\
& & &\tikzmark{eee} 011*\tikzmark{l} & &\tikzmark{ll} \lmarked{11*0}
\end{align*}
\begin{tikzpicture}[overlay, remember picture]
\draw [->] ([yshift=3pt]a.east) to ([yshift=3pt]aa.west);
\draw [->] ([yshift=3pt]b.east) to ([yshift=3pt]bb.west);
\draw [->] ([yshift=3pt]c.east) to ([yshift=3pt]cc.west);
\draw [->] ([yshift=3pt]d.east) to ([yshift=3pt]dd.west);
\draw [->] ([yshift=3pt]d.east) to ([yshift=3pt]ddd.west);
\draw [->] ([yshift=3pt]e.east) to ([yshift=3pt]ee.west);
\draw [->] ([yshift=3pt]e.east) to ([yshift=3pt]eee.west);
\draw [->] ([yshift=3pt]f.east) to ([yshift=3pt]ff.west);
\draw [->] ([yshift=3pt]g.east) to ([yshift=3pt]gg.west);
\draw [->] ([yshift=3pt]h.east) to ([yshift=3pt]hh.west);
\draw [->] ([yshift=3pt]i.east) to ([yshift=3pt]ii.west);
\draw [->] ([yshift=3pt]j.east) to ([yshift=3pt]jj.west);
\draw [->] ([yshift=3pt]k.east) to ([yshift=3pt]kk.west);
\draw [->] ([yshift=3pt]l.east) to ([yshift=3pt]ll.west);
\end{tikzpicture}
}

\subsection{Minimal weight}
\label{sec:minimal-weight}

In \Cref{sec:larger-alphabets}, we will consider irreducible subcube partitions over larger alphabets. As we show in \Cref{sec:expansion}, one of the ways to construct an irreducible subcube partition over an alphabet $\{0,\dots,q-1\}$ is to start with an irreducible subcube partition over $\{0,1\}$, and replace each $1$ in each subcube with each of $\{1,\ldots,q-1\}$. The resulting number of subcubes is
\[
 \sum_{s \in F} (q-1)^{\nones{s}},
\]
where $F$ is the subcube partition we start with, and $\nones{s}$ is the number of $1$s in $s$. This suggests looking for a tight irreducible subcube partition which minimizes the above objective function.

The concept of \emph{majorization} allows us to optimize this objective function for all $q$'s at once.

\begin{definition}[Weight vector] \label{def:weight-vector}
Let $F$ be a subcube partition of length $n$. Its \emph{weight vector} is the vector $w(F) = w_0,\dots,w_n$, where $w_h$ is the number of subcubes of $F$ of \emph{weight} $h$, that is, with $h$ many $1$s.

The notation $w_{\ge h}$ stands for $w_h + \cdots + w_n$, which is the number of subcubes with at least $h$ many $1$s.
\end{definition}

\begin{definition}[Majorization] \label{def:majorization}
Let $a,b$ be two weight vectors of length $n+1$. We say that $a$ \emph{majorizes} $b$ if for every $h \leq n$, we have $a_{\geq h} \geq b_{\geq h}$.
\end{definition}

\begin{lemma} \label{lem:majorization}
Let $F,G$ be subcube partitions of length $n$. If $w(F)$ majorizes $w(G)$ then for all monotone non-decreasing functions $\phi\colon \{0,\dots,n\} \to \mathbb{R}$,
\[
 \sum_{s \in F} \phi(\nones{s}) \geq \sum_{s \in G} \phi(\nones{s}).
\]
In particular, this holds for $\phi(h) = (q-1)^h$ as long as $q \ge 2$.
\end{lemma}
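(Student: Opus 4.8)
The plan is to deduce the inequality from the defining inequalities of majorization by an Abel summation (summation-by-parts) argument. Write $w_h$ and $v_h$ for the number of subcubes of weight $h$ in $F$ and $G$ respectively, so that $\sum_{s \in F}\phi(\nones{s}) = \sum_{h=0}^n w_h\,\phi(h)$ and likewise $\sum_{s \in G}\phi(\nones{s}) = \sum_{h=0}^n v_h\,\phi(h)$. The goal is therefore to show that $\sum_{h=0}^n (w_h - v_h)\phi(h) \ge 0$, and since the hypothesis is phrased in terms of the tail sums $w_{\ge h}$ and $v_{\ge h}$, the crux is to rewrite the objective in terms of these tails rather than the individual level counts.

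First I would record the telescoping identity $\phi(h) = \phi(0) + \sum_{\ell=1}^h \Delta_\ell$, where $\Delta_\ell := \phi(\ell) - \phi(\ell-1)$; monotonicity of $\phi$ guarantees $\Delta_\ell \ge 0$, which is exactly the property we will exploit. Substituting this into $\sum_h w_h \phi(h)$ and exchanging the order of the two summations (grouping together the contribution of each $\Delta_\ell$, which is weighted by the number of subcubes of weight at least $\ell$) turns the level counts into tail counts:
\[
 \sum_{s \in F}\phi(\nones{s}) = \phi(0)\,w_{\ge 0} + \sum_{\ell=1}^n \Delta_\ell\, w_{\ge \ell},
\]
and identically with $v$ in place of $w$ for $G$.

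Subtracting the two expressions gives
\[
 \sum_{s \in F}\phi(\nones{s}) - \sum_{s \in G}\phi(\nones{s}) = \phi(0)\,(w_{\ge 0} - v_{\ge 0}) + \sum_{\ell=1}^n \Delta_\ell\,(w_{\ge \ell} - v_{\ge \ell}).
\]
Every factor $\Delta_\ell$ is non-negative by monotonicity, and every difference $w_{\ge \ell} - v_{\ge \ell}$ is non-negative because $w(F)$ majorizes $w(G)$; hence each summand with $\ell \ge 1$ is non-negative. For the function relevant to the application, $\phi(h) = (q-1)^h$ with $q \ge 2$, we have $\phi(0) = 1$ and $\Delta_\ell = (q-1)^{\ell} - (q-1)^{\ell-1} \ge 0$, and the boundary term is handled in the next step.

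I expect the only real subtlety to be the boundary term $\phi(0)(w_{\ge 0} - v_{\ge 0})$: the summation-by-parts cleanly reduces the $\ell \ge 1$ part of the sum to the majorization inequalities, but the constant level $\phi(0)$ is not controlled by any increment $\Delta_\ell$. One resolves it by noting $w_{\ge 0} - v_{\ge 0} \ge 0$ (the $h = 0$ instance of majorization), so this term is non-negative as soon as $\phi(0) \ge 0$, which holds for $\phi(h) = (q-1)^h$; equivalently, restricting attention to non-negative $\phi$ covers the stated application. With every term thus shown to be non-negative, the desired inequality follows.
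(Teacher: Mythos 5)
Your proof is correct and is essentially the same as the paper's: both rewrite $\sum_h w_h\,\phi(h)$ by Abel summation as $w_{\ge 0}\,\phi(0) + \sum_{h=1}^n w_{\ge h}\,\bigl(\phi(h)-\phi(h-1)\bigr)$ and then apply the majorization inequalities term by term. Your explicit handling of the boundary term --- observing that it requires $\phi(0) \ge 0$, which holds for $\phi(h) = (q-1)^h$ --- is in fact slightly more careful than the paper, whose one-line inequality silently relies on the same assumption.
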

\begin{proof}
We will show that
\[
 \sum_{h=0}^n w_h(F) \phi(h) \ge \sum_{h=0}^n w_h(G) \phi(h).
\]
Indeed,
\begin{align*}
 \sum_{h=0}^n w_h(F) \phi(h) &=
 w_{\ge 0}(F) \phi(0) + \sum_{h=1}^n w_{\ge h}(F) (\phi(h) - \phi(h-1)) \\ &\ge
 w_{\ge 0}(G) \phi(0) + \sum_{h=1}^n w_{\ge h}(G) (\phi(h) - \phi(h-1)) =
 \sum_{h=0}^n w_h(G) \phi(h). \qedhere
\end{align*}
\end{proof}


\Cref{lem:majorization} allows us to reformulate our goal: find the minimal weight vectors (in the sense of majorization) of the tight irreducible subcube partitions of length $n$. (There could be more than one minimal weight vector, since majorization is not a linear order.)

\begin{conjecture} \label{conj:minimal-weight}
For every $n \geq 3$, the minimal weight vectors of tight irreducible subcube partitions of length $n$ are $1,n-1,n-1,0,\dots,0$ and $1,n,n-3,1,0,\dots,0$.
\end{conjecture}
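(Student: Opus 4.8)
I would separate the statement into an \emph{achievability} part—that each of the vectors $a_n := (1,n-1,n-1,0,\dots,0)$ and $b_n := (1,n,n-3,1,0,\dots,0)$ is realized by some tight irreducible subcube partition of length $n$—and a \emph{minimality} part—that the weight vector of \emph{every} tight irreducible subcube partition of length $n$ majorizes $a_n$ or $b_n$. Granting both, $a_n$ and $b_n$ are exactly the minimal weight vectors: they are incomparable, since $a_{n,\ge 2}=n-1>n-2=b_{n,\ge 2}$ while $a_{n,\ge 3}=0<1=b_{n,\ge 3}$, so neither majorizes the other, and the minimality part then leaves no room for a third minimal element. A short lemma does most of the translation work: in every subcube partition one has $w_0=1$, because the unique subcube containing $0^n$ is constrained only by $0$'s and $*$'s and hence has weight $0$, while any two weight-$0$ subcubes both contain $0^n$ and so cannot be disjoint. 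Thus $w_{\ge 0}=|F|$ and $w_{\ge 1}=|F|-1$, and writing $p=w_{\ge 2}$ and $q=w_{\ge 3}$, the condition ``$w(F)\succeq a_n$ or $w(F)\succeq b_n$'' is equivalent to: $|F|\ge 2n-1$, and either $p\ge n-1$, or both $p=n-2$ and $q\ge 1$.

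For achievability I would exhibit two recursive families, with base cases $S_3=\{000,01*,*01,1*0,111\}$, whose weight vector is $(1,3,0,1)=b_3$, and $\{*00,0*1,11*,010,101\}$, whose weight vector is $(1,2,2,0)=a_3$ (its tightness and irreducibility are checked directly, e.g.\ by \Cref{alg:reducibility}). To pass from length $n$ to $n+1$ I would apply \Cref{lem:twist}: it adjoins exactly two subcubes and preserves tightness and irreducibility, and prepending the leading $*$ to the untouched subcubes leaves their weights unchanged. The only freedom is the nfs-pair $s,t$ and the bit $b$, which jointly determine the weights of the four replacement subcubes $bs,bt,\bar b s',\bar b t'$. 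I would choose them so that the net effect at each step is to add exactly one weight-$1$ and one weight-$2$ subcube; the key point is to pick an nfs-pair whose flip does not inflate weight (concretely, with the free coordinate of $s$ set to $1$, so that $t'$ is no heavier than $t$) together with $b=0$ prepended to the heavier subcube. For instance, twisting $\{*00,0*1,11*,010,101\}$ at the nfs-pair $s=101,t=*00$ with $b=0$ produces a length-$4$ partition with weight vector $(1,3,3,0,0)=a_4$. The only bookkeeping is to maintain an invariant guaranteeing that such a low-weight nfs-pair is available at every length (so no subcube of weight $\ge 3$ is ever created in the $a_n$ family, and the single weight-$3$ subcube persists in the $b_n$ family), analogous to the shape invariants in \Cref{thm:size-2n-1-merging}.

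The minimality direction is the crux, and the honest statement is that it subsumes \Cref{conj:minimal-subcubes}: by the reduction above, $w(F)\succeq a_n$ or $b_n$ forces $|F|\ge 2n-1$, so a complete proof of minimality would in particular prove the minimal-size conjecture, which is open. Treating the size bound as conjectural input, the genuinely new content is the pair of inequalities $w_{\ge 2}\ge n-2$, together with ``$w_{\ge 3}\ge 1$ whenever $w_{\ge 2}=n-2$''. I would attack these through the minimally unsatisfiable CNF correspondence of \Cref{thm:size-lower-bound}, under which the weight $\nones{s}$ of a subcube equals the number of negative literals in its clause, and tight irreducible partitions correspond to regular (nonsingular) unsatisfiable hitting formulas. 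The plan is to refine the structural inputs used there—Tarsi's deficiency bound and the classifications of Davydov--Davydova--Kleine~Büning and Kleine~Büning—into statements tracking the negative-literal distribution, most plausibly by an induction on the deficiency $|F|-n$ that peels off a coordinate while controlling the change in the weight vector, in the spirit of \Cref{lem:irreducible-regular}.

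The main obstacle is precisely the bound $w_{\ge 2}\ge n-2$. Regularity yields only $\sum_i \#\{s:s_i=1\}=\sum_h h\,w_h\ge 2n$, which (once all weights are $\le 2$) gives merely $w_{\ge 2}\ge 2$; the linear lower bound $n-2$ cannot come from local counting and must exploit irreducibility of the whole hitting structure. I expect this to be the same difficulty that keeps \Cref{conj:minimal-subcubes} open, so the realistic target is a proof of minimality \emph{conditional} on the minimal-size conjecture, with the two negative-literal inequalities as the new theorem to be extracted from the structure of regular unsatisfiable hitting formulas.
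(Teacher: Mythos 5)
Your overall framing matches the paper's treatment of this conjecture: since the minimality direction subsumes \Cref{conj:minimal-subcubes}, the paper likewise proves only the constructions unconditionally (\Cref{thm:weight-1-n-1-n-1,thm:weight-1-n-n-3-1}) and the majorization lower bound conditionally on the minimal-size conjecture (\Cref{thm:minimal-weight-lb}). Your achievability route via \Cref{lem:twist} is precisely the alternative the paper sanctions but leaves to the reader (the paper itself uses \Cref{lem:merge} with a rotation); your base cases are correct (your second one is $A_3$ with two coordinates swapped), and your sample twist from length $3$ to $4$ checks out. The one soft spot there is that the invariant guaranteeing a suitable nfs-pair with the right weights at every step --- in particular for the $1,n,n-3,1,0,\dots,0$ family, where the weight-$3$ subcube must persist untouched --- is asserted as ``bookkeeping'' rather than proved, whereas the paper's merging construction makes the corresponding invariants explicit.

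The genuine gap is in the conditional minimality direction, and it is a double one. First, your diagnosis of the difficulty is wrong: the bound $w_{\ge 2}\ge n-2$ \emph{does} come from local counting once the size bound is granted. Each subcube of weight $1$ contains a distinct point of weight $1$ (set its stars to $0$; disjointness makes these points distinct), so $w_1\le n$; combined with your own observation $w_0=1$ and the conjectural bound $w_{\ge 0}\ge 2n-1$, this gives $w_{\ge 2}=w_{\ge 0}-w_0-w_1\ge (2n-1)-1-n=n-2$ immediately. The deficiency-induction program through minimally unsatisfiable CNFs that you propose for this inequality is therefore unnecessary. Second, the step that genuinely needs a new idea --- ruling out the weight vector $1,n,n-2,0,\dots,0$, i.e., proving $w_{\ge 3}\ge 1$ whenever $w_{\ge 2}=n-2$ --- is left with no argument at all in your proposal. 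The paper's proof of \Cref{thm:minimal-weight-lb} closes it as follows: in that case $w_0=1$ and $w_1=n$, and the weight-$0$ subcube must conflict with each of the $n$ weight-$1$ subcubes; since its only non-star entries are $0$s, the only possible conflict with the subcube having a $1$ in position $i$ forces a $0$ in position $i$, so the weight-$0$ subcube is the point $0^n$. Irreducibility then forbids any weight-$1$ subcube from being a point (its union with $0^n$ would be an edge), so every point of $F$ has even weight, contradicting \Cref{lem:parity} applied to the star pattern $\emptyset$. Without this parity argument (or a substitute for it), your conditional proof of minimality does not close, and the effort you budget is aimed at the wrong inequality.
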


In \Cref{sec:weight-lb}, we show that \Cref{conj:minimal-subcubes} implies the lower bound part of \Cref{conj:minimal-weight}. In \Cref{sec:weight-construction} we give matching constructions.

\smallskip

Unconditionally, we can show that every tight irreducible subcube partition of length $n \ge 3$ must contain a subcube of weight~$2$.

\begin{lemma} \label{lem:minimal-maximum-weight}
If $F$ is a tight irreducible subcube partition of length $n \ge 3$ then $F$ contains a subcube of weight at least $2$.
\end{lemma}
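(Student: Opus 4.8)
The plan is to argue by contradiction: assume every subcube in $F$ has weight at most $1$ and derive a failure of irreducibility. The key move is not to look at the point $0^n$ (which is covered by a weight-$0$ subcube and tells us little) but rather at the subcube that covers the all-ones point $1^n$.

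First I would analyze the subcube $s \in F$ with $1^n \in s$. Since $1^n$ lies in $s$, no coordinate of $s$ can be fixed to $0$, so every non-star coordinate of $s$ equals $1$; consequently the weight of $s$ equals the number of its non-star coordinates. Under the contradiction hypothesis this number is at most $1$. The case of zero non-star coordinates would force $s = *^n$, hence $F = \{*^n\}$, which is not tight — excluded since $n \ge 3$. Therefore $s$ fixes exactly one coordinate, say $s = *^{j-1} 1 *^{n-j} = \{x : x_j = 1\}$, a half-cube. The complementary set $\{x : x_j = 0\} = *^{j-1} 0 *^{n-j}$ is then itself a subcube, and since $F$ partitions $\{0,1\}^n$, the family $F \setminus \{s\}$ is precisely a partition of this complementary subcube. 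This exhibits $F \setminus \{s\}$ as a sub-partition whose union is a single subcube.

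To turn this into reducibility in the sense of \Cref{def:reducible} I only need $1 < |F \setminus \{s\}| < |F|$, i.e.\ $|F| > 2$. This is immediate from regularity (\Cref{lem:irreducible-regular}): fixing any coordinate, there are at least two subcubes setting it to $0$ and at least two setting it to $1$, and these are disjoint families, so $|F| \ge 4$. Thus $G = F \setminus \{s\}$ is a proper, non-singleton subset whose union is a subcube, contradicting the irreducibility of $F$. Hence some subcube of $F$ must have weight at least $2$.

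I do not expect a serious obstacle here; the argument is short once one notices that it is the cover of $1^n$, rather than $0^n$, that does the work. The only two points requiring care are ruling out the degenerate cover $s = *^n$ (handled by tightness) and certifying $|F| > 2$ (handled by regularity via \Cref{lem:irreducible-regular}), both of which are routine.
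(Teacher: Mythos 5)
Your proof is correct and follows essentially the same route as the paper: both analyze the subcube covering $1^n$, rule out the weight-$0$ case by tightness, and in the weight-$1$ case observe that the remaining subcubes union to the complementary half-cube, contradicting irreducibility. The only cosmetic difference is in dispatching the degenerate possibility $|F|=2$: the paper notes that irreducibility would then force $F=\{0*^{n-1},1*^{n-1}\}$, which violates tightness, whereas you invoke regularity via \Cref{lem:irreducible-regular} to get $|F|\ge 4$ up front --- both are valid.
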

\begin{proof}
Suppose that every subcube in $F$ has weight at most $1$. Let $s \in F$ be the subcube containing $1^n$. If $s$ has weight $0$ then $s = *^n$, contradicting the tightness of $F$. If $s$ has weight $1$ then, without loss of generality, $s = 1*^{n-1}$. The union of all other subcubes of $F$ must be $0*^{n-1}$, and so by irreducibility, $F = \{ 0*^{n-1}, 1*^{n-1} \}$, contradicting tightness.
\end{proof}

\subsubsection{Lower bound} \label{sec:weight-lb}

In this section we prove the lower bound part of \Cref{conj:minimal-weight}, assuming \Cref{conj:minimal-subcubes}. As we explain in the proof, this amounts to ruling out the weight vector $1,n,n-2,0,\dots,0$.

\begin{theorem} \label{thm:minimal-weight-lb}
Assume \Cref{conj:minimal-subcubes}. For every $n \geq 3$, the weight vector of any tight irreducible subcube partition of length $n$ majorizes either $1,n-1,n-1,0,\dots,0$ or $1,n,n-3,1,0,\dots,0$.
\end{theorem}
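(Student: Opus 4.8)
The plan is to use \Cref{conj:minimal-subcubes} to reduce the statement to excluding a single weight vector, and then to rule that vector out by a structural argument; irreducibility, not mere counting, will be the crux of the final step. Write $N=|F|$, so that $w_{\ge 0}(F)=N$.

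First I would pin down the two easy entries of the weight vector. There is exactly one subcube of weight $0$: any two weight-$0$ subcubes have all their non-$*$ entries equal to $0$, hence never conflict, hence coincide by \Cref{lem:disjoint-subcubes}; so $w_0=1$. Similarly there is at most one weight-$1$ subcube with its $1$ in a given coordinate, since two such would again fail to conflict, so $w_1\le n$. Combined with $N\ge 2n-1$ from \Cref{conj:minimal-subcubes}, this gives $w_{\ge 1}=N-1\ge 2n-2$ and $w_{\ge 2}=N-1-w_1\ge n-2$. The majorization bookkeeping of \Cref{def:majorization} is then immediate: writing $A=1,n-1,n-1,0,\dots,0$ and $B=1,n,n-3,1,0,\dots,0$, the constraints for $h=0,1$ always hold, so only the tail matters. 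If $w_{\ge 2}\ge n-1$ then $w(F)$ majorizes $A$; if $w_{\ge 2}=n-2$ and $w_{\ge 3}\ge 1$ it majorizes $B$. The only surviving case is $w_{\ge 2}=n-2$ with $w_{\ge 3}=0$, and then $w_1=N-n+1\le n$ forces $N=2n-1$, $w_1=n$, $w_2=n-2$; that is, the weight vector equals $1,n,n-2,0,\dots,0$. This is exactly the vector I must show is unrealizable.

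So suppose $F$ is tight and irreducible with weight vector $1,n,n-2,0,\dots,0$. Several structural facts fall out. Since $N\ge 3$, no subcube has codimension $1$ (its complement would be a subcube covered by the remaining $\ge 2$ subcubes, contradicting irreducibility), so every subcube has at least two non-$*$ entries. For each $i$ there is a unique weight-$1$ subcube $t_i$ with its $1$ at $i$, and $t_i$ is the only subcube containing the weight-$1$ point $e_i$; hence the weight-$0$ subcube cannot have a $*$ in any coordinate (else it too would contain some $e_i$), so it equals $0^n$. The subcube through $1^n$ has all non-$*$ entries equal to $1$, so its weight equals its codimension; being at most $2$ (maximal weight) and at least $2$ (no codimension $1$), it is $11*^{n-2}$, with $1$-coordinates I call $p,q$. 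Finally, regularity (\Cref{lem:irreducible-regular}) requires at least $2n$ occurrences of $1$ across the coordinates, whereas the vector supplies only $\sum_h hw_h=3n-4$; for $n=3$ this already fails, so I may assume $n\ge 4$.

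To squeeze out more structure I would use the weight-enumerator refinement of \Cref{lem:testing-partition},
\[
 \sum_{s\in F} z^{\nones{s}}(1+z)^{\dim s} \;=\; (1+z)^n .
\]
Its degree-$n$ coefficient counts subcubes with no $0$ entry, giving exactly one such subcube (necessarily the $11*^{n-2}$ through $1^n$); its degree-$(n-1)$ coefficient then gives exactly two subcubes with exactly one $0$, and inspecting which subcube can contain the weight-$(n-1)$ points $1^n-e_p$ and $1^n-e_q$ shows these two single-$0$ subcubes carry their zeros at $p$ and $q$. Here lies the main obstacle: these coefficient identities hold for every subcube partition, reducible or not, so they cannot finish the proof by themselves — irreducibility must enter essentially. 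The plan is to use the pinned-down structure to exhibit a proper subcollection whose union is a subcube, equivalently to run the join-closure of \Cref{alg:reducibility} from a well-chosen pair and show it halts before exhausting $F$. Small cases indicate the reducible witness is a pair such as $10*^{n-2}\cup 11*^{n-2}=1*^{n-1}$ or two leftover points spanning an edge, but the precise witness depends on the $0$-patterns of the weight-$1$ and weight-$2$ subcubes; the hard part is a case analysis over those patterns showing that a reducible pair is unavoidable, which contradicts irreducibility and rules out the vector $1,n,n-2,0,\dots,0$.
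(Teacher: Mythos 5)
Your reduction is correct and matches the paper's: the bounds $w_0\le 1$, $w_1\le n$ together with \Cref{conj:minimal-subcubes} leave exactly one weight vector to exclude, namely $1,n,n-2,0,\dots,0$, and your structural facts in that case (the weight-$0$ subcube is the point $0^n$, each coordinate carries a unique weight-$1$ subcube) are sound. But there is a genuine gap exactly where you flag it yourself: you never derive a contradiction from this weight vector. The concluding ``case analysis over the $0$-patterns showing that a reducible pair is unavoidable'' is not carried out, and it is the entire content of the theorem at that stage, not a routine verification; nothing in the structure you pinned down obviously forces a pair such as $10*^{n-2},11*^{n-2}$ to both lie in $F$, so it is not clear your proposed route can even be pushed through. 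As it stands, the proposal is an incomplete proof.

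The missing idea is \Cref{lem:parity}, which is how the paper finishes, and it replaces the case analysis entirely. Having established $0^n\in F$, observe that no weight-$1$ subcube can be a point: if $0^{i-1}10^{n-i}\in F$, then its union with $0^n$ is the edge $0^{i-1}{*}0^{n-i}$, a proper two-element subfamily whose union is a subcube, contradicting irreducibility. Since $w_{\ge 3}=0$, every point of $F$ therefore has weight $0$ or $2$, i.e.\ even parity. Now apply \Cref{lem:parity} with $G=F$ and the star pattern $S=\emptyset$, which is inclusion-minimal and occurs in $F$ (it is the star pattern of $0^n$): among the points of $F$, exactly half must have odd parity. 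This contradicts the fact that all of them are even and kills the vector $1,n,n-2,0,\dots,0$ for every $n\ge 3$ at once; in particular your separate regularity count for $n=3$, while correct, becomes unnecessary. The lesson is that irreducibility enters not through an explicit reducible witness but through the global parity constraint that irreducible partitions must satisfy.
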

\begin{proof}
Let $F$ be a tight irreducible subcube partition of length $n$, and let $w$ be its weight vector. The theorem states that (i) $w_{\ge 0} \geq 2n-1$; (ii) $w_{\ge 1} \geq 2n-2$; and either (iii) $w_{\ge 2} \geq n-1$ or (iv) $w_{\ge 2} \geq n-2$ and $w_{\ge 3} \ge 1$.

We start with the following observation: $w_h \leq \binom{n}{h}$. Indeed, every subcube $s$ of weight $h$ contains the point $x_s$ obtained by switching all $*$s to $0$s, which has weight $h$. Since the subcubes in $F$ are disjoint, every $s \in F$ of weight $h$ has a different $x_s$. Since there are $\binom{n}{h}$ many possible $x_s$, it follows that $w_h \leq \binom{n}{h}$.

The inequality $w_{\ge 0} \geq 2n-1$ is \Cref{conj:minimal-subcubes}. Since $w_0 \leq 1$, the inequality $w_{\ge 1} \geq 2n-2$ follows. Since $w_1 \leq n$, we deduce the inequality $w_{\ge 2} \geq n-2$. To complete the proof, we need to show that either (iii) $w_{\ge 2} \geq n-1$ or (iv) $w_{\ge 3} \geq 1$. We will show that the assumptions $w_{\ge 2} = n-2$ and $w_{\ge 3} = 0$ lead to a contradiction.

\smallskip

Suppose, therefore, that $w_2 = n-2$ and $w_{\ge 3} = 0$. Since $w_{\ge 0} \geq 2n-1$ and $w_0 \leq 1$, $w_1 \leq n$, this implies that $w_0 = 1$ and $w_1 = n$.

Since $w_1 = n$, for every $i \in [n]$ there is a subcube $s^{(i)} \in F$ which contains $1$ in the $i$'th position: $s^{(i)}_i = 1$. The point $0^n$ is covered by the unique subcube $s^{(0)} \in F$ of weight $0$. Since $s^{(0)}$ and $s^{(i)}$ must conflict, necessarily $s^{(0)}_i = 0$ (this is the only possible conflict), and so $s^{(0)} = 0^n$ is a point.

Since $0^n \in F$, the subcubes $s^{(i)}$ cannot be points. Indeed, if $s^{(i)}$ is a point then $s^{(i)} = 0^{i-1}10^{n-i}$, and so $s^{(0)} \cup s^{(i)} = 0^{i-1}*0^{n-i}$, contradicting irreducibility. Consequently, all points in $F$ have even weight, contradicting \Cref{lem:parity} (applied on the star pattern $\emptyset$).
%
\end{proof}

\subsubsection{Construction} \label{sec:weight-construction}

In this section, we prove (unconditionally) the upper bound part of \Cref{conj:minimal-weight}, by constructing tight irreducible subcube partitions of length $n \ge 3$ and weight vectors $1,n-1,n-1,0,\dots,0$ and $1,n,n-3,1,0,\dots,0$. The constructions will use the method of \Cref{thm:size-2n-1-merging}. The same subcube partitions can also be constructed using the method of \Cref{lem:twist}; we leave the details to the reader.

\begin{theorem} \label{thm:weight-1-n-1-n-1}
For each $n \ge 3$ there is a tight irreducible subcube partition $A_n$ whose weight vector is $1,n-1,n-1,0,\dots,0$.
\end{theorem}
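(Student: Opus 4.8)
The plan is to build $A_n$ by induction on $n$ using the merging operation of \Cref{lem:merge}, exactly in the spirit of \Cref{thm:size-2n-1-merging}, but with two deliberate changes that keep every weight at most $2$. First, I would take a different base case. Second, and crucially, at each step I would prepend the new coordinate to the \emph{large} partition $A_n$ with a $0$ (i.e.\ take $F_0 = A_n$), rather than with a $1$: since prepending $0$ or $*$ leaves the number of ones unchanged, the weights of all $2n-1$ subcubes of $A_n$ are preserved, and only the two small new subcubes can raise the weight.

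For the base case, recall that the unique tight irreducible subcube partition $S_3 = \{000, *01, 1*0, 01*, 111\}$ has weight vector $1,3,0,1$. Flipping the third coordinate is an automorphism of the cube, so it sends $S_3$ to another tight irreducible subcube partition; I would set $A_3 = \{001, *00, 1*1, 01*, 110\}$, whose weight vector is $1,2,2,0$. For the inductive step I would maintain a self-similar invariant. Writing $R_n = *^{n-3}1*1$, $P_n = *^{n-3}0**$, and $Q_n = *^{n-3}1*0$ (three subcubes differing only in their last three coordinates, namely coordinates $n-2$ and $n$), the invariant is that $R_n \in A_n$ while $P_n, Q_n \notin A_n$. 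One checks immediately that $F_1 := \{P_n, Q_n, R_n\}$ is a subcube partition of length $n$ (it partitions according to the values of coordinates $n-2$ and $n$), with $P_n$ of weight $0$, $Q_n$ of weight $1$, and $R_n$ of weight $2$. The base case $A_3$ satisfies the invariant since $R_3 = 1*1 \in A_3$ and $P_3 = 0**,\ Q_3 = 1*0 \notin A_3$.

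I would then apply \Cref{lem:merge} with $F_0 = A_n$ and $F_1 = \{P_n, Q_n, R_n\}$, defining $A_{n+1} := G$. Here $F_1 \setminus F_0 = \{P_n, Q_n\}$ is a two-element set whose union is the complement of the codimension-$2$ subcube $R_n$, hence not a subcube, so $\{P_n,Q_n\}$ is irreducible; combined with the irreducibility of $F_0 = A_n$ and the fact that $F_0 \cap F_1 = \{R_n\} \neq \emptyset$, the (second) irreducibility clause of \Cref{lem:merge} shows that $A_{n+1}$ is irreducible, and tightness follows from $F_0 \neq F_1$ together with the tightness of $A_n$. The weight count is then immediate: every subcube of $A_n$ reappears in $A_{n+1}$ prepended with $0$ or with $*$ (the latter only for $R_n$), so its weight is unchanged and these contribute $1, n-1, n-1$; the only new subcubes are $1P_n$ and $1Q_n$, of weights $1$ and $2$. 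Hence the weight vector of $A_{n+1}$ is $1, n, n, 0, \dots, 0$, as required.

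The step that needs care, and which I expect to be the main obstacle, is verifying that the invariant propagates, and this is exactly where the self-similar choice of $R_n, P_n, Q_n$ pays off. In $A_{n+1}$ the subcube obtained from $R_n$ is $*R_n = *^{n-2}1*1 = R_{n+1}$, so $R_{n+1} \in A_{n+1}$. For the non-membership, note that the only star-prefixed subcube of $A_{n+1}$ is $R_{n+1}$ (all others arise as $0x$ or $1P_n, 1Q_n$), whereas $P_{n+1}$ and $Q_{n+1}$ both begin with a star, their prefix $*^{n-2}$ being nonempty for $n \ge 3$. Since $P_{n+1}$ differs from $R_{n+1}$ in coordinate $n-1$ and $Q_{n+1}$ differs from $R_{n+1}$ in coordinate $n+1$, neither equals $R_{n+1}$, so $P_{n+1}, Q_{n+1} \notin A_{n+1}$, closing the induction and completing the construction.
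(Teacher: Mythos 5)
Your proof is correct and follows essentially the same route as the paper: the identical base case $A_3$ (flipping the third coordinate of $S_3$), and an induction that applies \Cref{lem:merge} with $F_0 = A_n$ and a three-subcube partition $F_1$ meeting $A_n$ in exactly one subcube, with the irreducibility of the two-element set $F_1 \setminus F_0$ and of $A_n$ supplying the hypotheses of the lemma. The only difference is cosmetic: you anchor the invariant subcubes $R_n, P_n, Q_n$ in the \emph{last} three coordinates, so that prepending the new coordinate leaves them untouched and no rotation is needed, whereas the paper anchors its invariant ($01*^{n-2} \in A_n$, $1*^{n-1}, 00*^{n-2} \notin A_n$) in the first coordinates and rotates the merged partition once to the left after each step.
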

\begin{proof}
We construct the subcube partitions inductively, starting with
\[
 A_3 = \{ *00, 001, 01*, 110, 1*1 \},
\]
which is obtained from $S_3$ of \Cref{thm:size-2n-1-merging} by flipping the third coordinate. The construction will maintain the invariants that $01*^{n-2} \in A_n$ and $1*^{n-1}, 00*^{n-2} \notin A_n$.

Given $A_n$, we construct $A_{n+1}$ by applying \Cref{lem:merge} to $F_0 = A_n$ and $F_1 = \{1*^{n-1}, 00*^{n-2}, 01*^{n-2}\}$, and rotating the result $G$ once to the left, that is, $A_{n+1} = \{ xb : bx \in G \}$, where $b \in \{0,1,*\}$ and $x \in \{0,1,*\}^n$.

Since $F_0$ is irreducible and $F_1$ is reducible, clearly $F_0 \neq F_1$, and so $A_{n+1}$ is tight by \Cref{lem:merge}.

The invariant implies that $F_1 \setminus F_0 = \{1*^{n-1}, 00*^{n-2}\}$ is irreducible, and so $A_{n+1}$ is irreducible by \Cref{lem:merge}.

The invariant states that $01*^{n-2} \in F_0$. Since $01*^{n-2} \in F_1$ by definition, it follows that $*01*^{n-2} \in G$, and so $01*^{n-1} \in A_{n+1}$. Since $A_{n+1}$ is irreducible, necessarily $00*^{n-1} \notin A_{n+1}$ (otherwise $00*^{n-1} \cup 01*^{n-1} = 0*^n$ would be a subcube) and $1*^n \notin A_{n+1}$ (otherwise the union of all other subcubes would be $0*^n$).

Finally, the invariants imply that $F_1 \setminus F_0 = \{1*^{n-1}, 00*^{n-2}\}$, and so compared to $F_0$, the subcube partition $G$ gains one subcube of weight $2$ (namely, $11*^{n-1}$) and one subcube of weight $1$ (namely, $100*^{n-2}$); all other subcubes originate from $F_0$ and maintain their weight.
\end{proof}

\begin{theorem} \label{thm:weight-1-n-n-3-1}
For each $n \ge 3$ there is a tight irreducible subcube partition $D_n$ whose weight vector is $1,n,n-3,1,0,\dots,0$.
\end{theorem}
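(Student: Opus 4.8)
The plan is to reuse the inductive machinery behind $A_n$ from \Cref{thm:weight-1-n-1-n-1} essentially unchanged, modifying only the base case. The crucial observation is that $S_3 = \{000, *01, 1*0, 01*, 111\}$ from \Cref{thm:size-2n-1-merging} already realizes the target weight vector at $n = 3$: the weights of its five subcubes are $0,1,1,1,3$, so $w(S_3) = 1,3,0,1$, which is exactly $1,n,n-3,1$ with $n = 3$. (In \Cref{thm:weight-1-n-1-n-1} one flips the third coordinate of $S_3$ to turn $111$ into the weight-$2$ subcube $110$; for $D_n$ we simply do not flip, keeping the weight-$3$ point.) Accordingly I would set $D_3 = S_3$ and run the same induction, maintaining the invariants $01*^{n-2} \in D_n$ and $1*^{n-1}, 00*^{n-2} \notin D_n$, which hold for $D_3$ since $01* \in S_3$ while $1**, 00* \notin S_3$.

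For the inductive step I would, given $D_n$, construct $D_{n+1}$ by applying \Cref{lem:merge} to $F_0 = D_n$ and $F_1 = \{1*^{n-1}, 00*^{n-2}, 01*^{n-2}\}$ and rotating the result once to the left. Tightness, irreducibility, and invariant maintenance are verified exactly as for $A_n$: $F_0$ is irreducible while $F_1$ is reducible, so $F_0 \neq F_1$ and the merge is tight; the invariants give $F_0 \cap F_1 = \{01*^{n-2}\} \neq \emptyset$ and $F_1 \setminus F_0 = \{1*^{n-1}, 00*^{n-2}\}$, which is irreducible as a two-element set, so \Cref{lem:merge}(c) yields irreducibility. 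Since none of these arguments refers to weights, they transfer verbatim.

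The only new content is the weight bookkeeping, which I would carry out by tracking the three pieces of the merge. Prepending $0$ to $F_0 \setminus F_1 = D_n \setminus \{01*^{n-2}\}$ preserves all weights and deletes one weight-$1$ subcube; prepending $1$ to $F_1 \setminus F_0$ adds $11*^{n-1}$ of weight $2$ and $100*^{n-2}$ of weight $1$; and prepending $*$ to $F_0 \cap F_1 = \{01*^{n-2}\}$ adds $*01*^{n-2}$ of weight $1$. The net effect is to increment $w_1$ and $w_2$ by one each and to leave $w_0$ and $w_3$ fixed; rotation does not change weights. Hence $w(D_n)$ evolves from $1,3,0,1$ to $1,n,n-3,1,0,\dots,0$, as required. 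I expect no serious obstacle: the entire difficulty collapses to the initial observation that $S_3$ itself --- rather than its coordinate flip --- seeds the desired weight profile, after which the proof is a transcription of \Cref{thm:weight-1-n-1-n-1}.
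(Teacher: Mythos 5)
Your proposal is correct and coincides with the paper's own proof: the paper likewise sets $D_3 = S_3$ and declares the rest identical to the proof of \Cref{thm:weight-1-n-1-n-1}, i.e., the same merge-and-rotate induction with the same invariants. Your explicit weight bookkeeping (each step adds one subcube of weight $1$ and one of weight $2$, preserving $w_0$ and $w_3$) is exactly the calculation the paper leaves implicit.
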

\begin{proof}
The proof is very similar to that of \Cref{thm:weight-1-n-1-n-1}. We take
\[
 D_3 = S_3 = \{ *01, 000, 01*, 111, 1*0 \}.
\]
The rest of the proof is identical.
\end{proof}

The subcube partition $D_3$ is obtained from $A_3$ by flipping the third coordinate, and this holds for every $n$, by construction. By flipping coordinates appropriately, we can also obtain other tight irreducible subcube partitions $B_n,C_n$ whose weight vectors are $1,n-1,n-1,0,\dots,0$. The subcube partition $B_n$ is obtained by flipping the first coordinate, and $C_n$ is obtained by flipping both the first and the third coordinates. The subcube partitions $B_n,C_n$ can also be obtained using an iterative construction as above.

Here are the subcube partitions $A_5,B_5,C_5,D_5$ after rotation once to the left:
\[
\begin{array}{clclclc}
0000* & &0000* & &0100* & &0100* \\
01000 & &01001 & &00001 & &00000 \\
0*100 & &0*101 & &0*101 & &0*100 \\
0**10 & &0**11 & &0**11 & &0**10 \\
1***0 & &1***1 & &1***1 & &1***0 \\
10001 & &10000 & &11000 & &11001 \\
*1001 & &*1000 & &*0000 & &*0001 \\
**101 & &**100 & &**100 & &**101 \\
***11 & &***10 & &***10 & &***11 \\[0.5em]
A_5 && B_5 && C_5 && D_5
\end{array}
\]
Among all subcube partitions obtained from $A_n$ by flipping coordinates, these are the only ones whose weight vector is either $1,n-1,n-1,0,\dots,0$ or $1,n,n-3,1,0,\dots,0$.

\subsection{Maximal number of points}
\label{sec:maximal-points}

In the following section, we tackle the problem of maximizing the number of subcubes in an irreducible subcube partition. As a warm-up, we start with the problem of maximizing the number of points (zero-dimensional subcubes) in an irreducible subcube partition.

For $n = 3,4,5,6$, a computer search reveals that the maximum number of points in an irreducible subcube partition of length $n$ is $2,4,8,16$. This is consistent with the following conjecture.

\begin{conjecture} \label{conj:max-points}
If $n \ge 3$ then the maximum number of points in an irreducible subcube partition of length $n$ is $2^{n-2}$.
\end{conjecture}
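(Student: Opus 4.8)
The statement packages two claims: a construction giving an irreducible subcube partition of length $n$ with exactly $2^{n-2}$ points (the easy, \emph{matching} direction), and the upper bound that no irreducible subcube partition can have more (the genuinely conjectural part). I would attack these separately, and I expect essentially all the difficulty to live in the upper bound.

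For the construction, the plan is to place all $2^{n-2}$ points in a single parity class and read off irreducibility from \Cref{lem:parity}. The point is that every point has star pattern $\emptyset$, which is inclusion-minimal among all star patterns. Hence if a subset $G$ with $|G|>1$ and $\bigcup G$ a subcube contained any point, then $\emptyset$ would be the inclusion-minimal pattern of $G$, and \Cref{lem:parity} would force the points of $G$ to split evenly between the two parities; in particular $G$ would have to contain a point of each parity. Therefore, if \emph{all} points of the partition share one parity, no reducing subset can contain a point, and irreducibility reduces to showing that the higher-dimensional subcubes form an irreducible partial partition of the complement (in the sense of \Cref{def:reducibility-partial}, where the value $*^n$ is automatically excluded since such a $G$ avoids all points).

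It then remains to exhibit $2^{n-2}$ monochromatic points together with such a covering. Choosing the points from one parity class automatically makes them pairwise non-adjacent (same parity forces Hamming distance $\ge 2$), so no two of them form an edge. I would produce the covering recursively, using \Cref{lem:merge} to lift a length-$n$ example to length $n+1$ while doubling the number of points and inheriting irreducibility from the lemma's hypotheses; the routine bookkeeping is to maintain a suitable invariant (a base case at $n=3$ and a compatible pair $F_0,F_1$ meeting only in higher-dimensional subcubes, so that every point of each summand survives into the merge, with a coordinate flip to keep the surviving points in one parity class). The count $2^{n-2}$ together with \Cref{lem:testing-partition} then certifies a partition of the claimed size.

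The main obstacle is the upper bound. The one easy observation is that the points must form an independent set in the hypercube graph: two points at Hamming distance $1$ would union to an edge, a reducing subset whenever the partition has more than two subcubes (\Cref{lem:disjoint-subcubes}). This only gives $\le 2^{n-1}$, off by the crucial factor of two. Closing the gap requires controlling configurations at larger distance — for instance, a pair of points at distance $2$ spans a $2$-face whose other two cells lie in further subcubes, and feeding this pair to \Cref{alg:reducibility} drives those subcubes (and then their neighbours) into a growing subcube that, for irreducibility, must eventually engulf all of $F$. Turning such local propagation arguments into the clean global bound $2^{n-2}$ is exactly what I would not expect to be able to do in general, which is consistent with the statement being a conjecture supported only by the computer search for $n \le 6$.
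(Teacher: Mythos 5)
Your reading of the statement's status is correct: it is a conjecture, the paper proves only the matching construction (\Cref{thm:many-points}) together with the trivial upper bound of $2^{n-1}$ via independence, and your assessment that the tight upper bound $2^{n-2}$ is genuinely open agrees with the paper. However, your construction plan rests on a premise that can never be realized: it is impossible for all points of a subcube partition to lie in a single parity class. Apply \Cref{lem:parity} with $G = F$ itself (this is allowed: $\bigcup F = *^n$ is a subcube and $|F| > 1$). If $F$ contains any point, then $\emptyset$ is a star pattern occurring in $F$, and it is automatically inclusion-minimal since it has no proper subsets; the lemma then forces exactly half of the points of $F$ to have even parity and half odd parity. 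The paper itself relies on this fact, e.g.\ in the proof of \Cref{thm:minimal-weight-lb} (``all points in $F$ have even weight, contradicting \Cref{lem:parity}'') and in \Cref{lem:length-4}, where the admissible point sets $\{0000,1110\}$ and $\{0000,1110,1101,0011\}$ are parity-balanced. So the monochromatic-parity shortcut by which you intended to show that no reducing subset contains a point is unavailable, and the recursion you describe, which is supposed to ``keep the surviving points in one parity class,'' cannot be carried out. Notably, the very lemma you invoke is the one that kills the plan.

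The recursive mechanism itself is the right one and is essentially what the paper does: \Cref{lem:xor-subst-2} applies \Cref{lem:merge} to $F$ and its first-coordinate flip $F'$, so that each point $bt \in F$ splits into two points ($00t,11t$ or $01t,10t$) while non-point subcubes stay non-points; iterating from $S_3$ (\Cref{cor:xor-subst}) doubles the number of points with each added coordinate and yields $2^{n-2}$ points at length $n$ (\Cref{thm:many-points}). The crucial difference is where irreducibility comes from: not from any parity property of the point set (the resulting point sets are necessarily half even, half odd), but from condition (c) of \Cref{lem:merge}, which here reduces to checking $F \cap F' \neq \emptyset$ --- and the paper shows this holds for every irreducible $F$ other than $\{0*^{n-1},1*^{n-1}\}$, since otherwise the subcubes of $F$ starting with $0$ would union to $0*^{n-1}$. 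If you replace your parity argument by this application of \Cref{lem:merge}, your outline becomes the paper's proof of the constructive half; the upper-bound half remains open, exactly as you suspected.
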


It is easy to see that an irreducible subcube partition of length $n$ contains at most $2^{n-1}$ points. Indeed, if the subcube partition contained more than $2^{n-1}$ points then there would be two points differing in a single coordinate. The union of these two points is an edge (a one-dimensional subcube), contradicting irreducibility.

\smallskip

In the rest of this section, we construct an irreducible subcube partition of length $n$ containing $2^{n-2}$ many points. The construction uses the following \namecref{lem:xor-subst-2}.

\begin{lemma} \label{lem:xor-subst-2}
If $F \neq \{0*^{n-1}, 1*^{n-1}\}$ is an irreducible subcube partition of length $n$ then the following is an irreducible subcube partition of length $n + 1$:
\[
 G = \{ 00t, 11t : 0t \in F \} \cup \{ 01t, 10t : 1t \in F \} \cup \{ **t : *t \in F \}.
\]
\end{lemma}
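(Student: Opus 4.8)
The plan is to exploit the XOR structure hidden in the definition of $G$. I would introduce the map $\pi\colon\{0,1\}^{n+1}\to\{0,1\}^n$ that replaces the first two coordinates $(a,b)$ by their sum $a\oplus b$ and leaves the remaining coordinates untouched. The construction of $G$ is engineered precisely so that $\pi$ carries each subcube of $G$ back onto the subcube of $F$ it came from: $\pi(00t)=\pi(11t)=0t$, $\pi(01t)=\pi(10t)=1t$, and $\pi(**t)=*t$. The one general fact I would record at the outset is that $\pi$ sends \emph{every} subcube of $\{0,1\}^{n+1}$ to a subcube of $\{0,1\}^n$: the first two coordinates of a subcube form a subcube of $\{0,1\}^2$, and applying XOR to a $2$-coordinate subcube always yields a $1$-coordinate subcube (both $\{00,11\}^c$-type faces and all larger faces map to $*$, $\{00\}$/$\{11\}$ map to $0$, $\{01\}$/$\{10\}$ map to $1$).

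I would first check that $G$ is a subcube partition using \Cref{lem:testing-partition}. Disjointness is routine: two subcubes of $G$ coming from the same $s\in F$ conflict in the first two coordinates, while two coming from distinct $s,s'\in F$ (which conflict, by \Cref{lem:disjoint-subcubes}) either inherit a conflict in the copied tail, or, when $s,s'$ conflict in the first coordinate of $F$, conflict within the first two coordinates of $G$ (any value of XOR $0$ conflicts with any value of XOR $1$). The weight count is likewise routine: each $s\in F$ of codimension $c$ contributes total weight $2^{-c}$ to $G$ — two subcubes of codimension $c+1$ when $s_1\in\{0,1\}$, and one subcube of codimension $c$ when $s_1=*$ — so the sum over $G$ equals the sum over $F$, namely $1$.

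For irreducibility I would argue by contradiction: suppose some $H\subset G$ with $1<|H|<|G|$ has union a subcube $c$. Applying $\pi$, the set $\pi(H)\subseteq F$ has union $\pi(c)$, which is a subcube by the general fact above. Since $F$ is irreducible, the only options are $|\pi(H)|=1$ or $\pi(H)=F$. In the first case every member of $H$ lies in a single fiber of $\pi$; as $|H|>1$ this forces $H=\{00t,11t\}$ or $H=\{01t,10t\}$, whose union projects in the first two coordinates to $\{00,11\}$ or $\{01,10\}$ — not a subcube of $\{0,1\}^2$ — contradicting that $c$ is a subcube. This disposes of the first case cleanly.

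The main obstacle, and the only place the hypothesis $F\neq\{0*^{n-1},1*^{n-1}\}$ enters, is the remaining case $\pi(H)=F$, equivalently $\pi(c)=\bigcup F=*^n$. Here I would first establish a short preliminary claim: writing $F_\sigma=\{x:\sigma x\in F\}$, the only irreducible $F$ with $F_*=\emptyset$ is $\{0*^{n-1},1*^{n-1}\}$ — for otherwise $|F_0|>1$ or $|F_1|>1$, and the subcubes of $F$ sharing that first-coordinate value would union to the proper subcube $0*^{n-1}$ or $1*^{n-1}$, a reduction. By hypothesis, therefore, $F_*\neq\emptyset$, so $\pi(H)=F$ forces $**t\in H$ for some $*t\in F$ (the unique $\pi$-preimage of $*t$ in $G$). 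But $**t\subseteq c$ requires $c$ to be unconstrained in both of its first two coordinates, i.e. $c=*^{n+1}$; then $\bigcup H$ is the entire cube and $H=G$, contradicting $|H|<|G|$. Having ruled out both cases, I conclude that $G$ is irreducible.
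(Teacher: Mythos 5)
Your proof is correct, but it takes a genuinely different route from the paper's. The paper disposes of this lemma in three lines: it observes that $G$ is exactly the result of applying \Cref{lem:merge} to $F_0 = F$ and $F_1 = F'$, where $F'$ is $F$ with the first coordinate flipped, so the partition property and irreducibility follow from that lemma once one checks $F \cap F' \neq \emptyset$ --- and this is precisely where the hypothesis $F \neq \{0*^{n-1},1*^{n-1}\}$ enters (if $F \cap F' = \emptyset$, every subcube of $F$ starts with $0$ or $1$, and irreducibility forces $F = \{0*^{n-1},1*^{n-1}\}$, just as in your preliminary claim). Your argument is instead self-contained: you never invoke \Cref{lem:merge}, but push a hypothetical reducing set $H \subset G$ forward through the XOR map $\pi$ and apply the irreducibility of $F$ to $\pi(H)$, splitting into $|\pi(H)|=1$ (killed by the fiber structure, since no fiber's union is a subcube) and $\pi(H)=F$ (killed by the hypothesis via $**t \in H$). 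This is in fact the same technique the paper uses to prove \Cref{lem:expansion} --- preimages under coordinate surjections preserve irreducibility, because images of subcubes are subcubes and images commute with unions --- here applied in ``block'' form to the surjection $\{0,1\}^2 \to \{0,1\}$, $(a,b)\mapsto a \oplus b$. What the paper's route buys is brevity and reuse of its merging machinery; what yours buys is transparency of the XOR structure and immediate generality: the identical argument applied to the parity map on $k+1$ coordinates proves \Cref{cor:xor-subst} in one shot, with no iteration. One step of yours is slightly compressed: from $**t \subseteq c$ you conclude ``$c$ is unconstrained in both of its first two coordinates, i.e.\ $c = *^{n+1}$'', but the ``i.e.'' also needs the last $n-1$ coordinates of $c$ to be free; this does follow from $\pi(c) = *^n$, which you established at the start of that case (since $\pi$ leaves the tail untouched), so it is a one-line fill rather than a gap.
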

\begin{proof}
Let $F'$ be the irreducible subcube partition obtained from $F$ by flipping the first coordinate. The subcube partition $G$ results from applying \Cref{lem:merge} to $F$ and $F'$. According to the \namecref{lem:merge}, in order to show that $G$ is irreducible, it suffices to show that $F \cap F' \neq \emptyset$.

If $F \cap F' = \emptyset$ then all subcubes in $F$ start with $0$ or $1$. Hence the union of all subcubes in $F$ starting with $0$ is $0*^{n-1}$, and the union of all subcubes in $F$ starting with $1$ is $1*^{n-1}$. Since $F$ is irreducible, it follows that $F = \{0*^{n-1}, 1*^{n-1}\}$, contrary to the assumption. Therefore $F \cap F' \neq \emptyset$, completing the proof.
\end{proof}

\begin{corollary} \label{cor:xor-subst}
If $F \neq \{0*^{n-1}, 1*^{n-1}\}$ is an irreducible subcube partition of length $n$ then for every $k \ge 1$, the following is an irreducible subcube partition of length $n + k$:
\[
 G = \{a_1\ldots a_k t : bt \in F, a_1,\dots,a_k,b \in \{0,1\}, a_1 \oplus \dots \oplus a_k = b\} \cup \{ *^k t : *t \in F \}.
\]
\end{corollary}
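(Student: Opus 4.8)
The plan is to prove \Cref{cor:xor-subst} by induction on $k$, using \Cref{lem:xor-subst-2} as the inductive step. The base case $k = 1$ is essentially \Cref{lem:xor-subst-2} itself: when $k=1$ the condition $a_1 = b$ forces each subcube $bt \in F$ (with $b \in \{0,1\}$) to contribute exactly one subcube $bt$, and each $*t \in F$ to contribute $*t$, so $G = F$, which is irreducible by hypothesis. (Alternatively one can take $k=0$ as a trivial base giving $G = F$ and start the induction at $k=1$ via \Cref{lem:xor-subst-2}.)

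For the inductive step, suppose the claim holds for some $k \ge 1$, producing an irreducible subcube partition $G_k$ of length $n + k$. The key observation is that applying \Cref{lem:xor-subst-2} to $G_k$ yields exactly $G_{k+1}$. Indeed, \Cref{lem:xor-subst-2} prepends a new first coordinate: a subcube of $G_k$ of the form $0w$ becomes the pair $00w, 11w$; a subcube $1w$ becomes $01w, 10w$; and a subcube $*w$ becomes $**w$. Tracking how this interacts with the parametrization $a_1 \ldots a_k t$ of $G_k$, one checks that prepending a bit $a_0$ and recording the new XOR constraint $a_0 \oplus a_1 \oplus \cdots \oplus a_k = b$ reproduces precisely the description of $G_{k+1}$ with $k+1$ free bits. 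The bookkeeping here is the only nontrivial part: I would verify that the star pattern $*^k t \mapsto *^{k+1}t$ matches the $\{**w : *w \in G_k\}$ clause, and that the two subcubes produced from each $bt \in F$ at level $k$ correctly split into four at level $k+1$ according to the parity of $a_0$.

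To invoke \Cref{lem:xor-subst-2} on $G_k$ I must check its hypothesis, namely that $G_k \neq \{0*^{n+k-1}, 1*^{n+k-1}\}$. This is immediate: since $F \neq \{0*^{n-1}, 1*^{n-1}\}$ and $F$ is a partition of length $n$, either $F$ contains a subcube $*t$ mentioning the first coordinate as a star (yielding a subcube $*^k t$ in $G_k$ with $t \neq *^{n-1}$, hence $G_k$ has a subcube whose first coordinate is $*$ but which is not all-stars), or $F$ has at least three subcubes, in which case $G_k$ has strictly more than two subcubes. In either case $G_k$ cannot be the trivial two-subcube partition, so the hypothesis of \Cref{lem:xor-subst-2} is satisfied.

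The main obstacle I anticipate is purely the notational matching in the inductive step: confirming that the recursive prepending of a single bit with a running XOR constraint reassembles into the single global constraint $a_1 \oplus \cdots \oplus a_k = b$, rather than any genuine mathematical difficulty. Once the parametrizations are aligned, irreducibility transfers for free from \Cref{lem:xor-subst-2}, and no separate verification that $G$ is a partition is needed, since \Cref{lem:xor-subst-2} (via \Cref{lem:merge}) already guarantees it. I would therefore present the proof as a clean induction, spending most of the prose on exhibiting the identification $G_{k+1} = \mathrm{(\text{output of \Cref{lem:xor-subst-2} on }} G_k)$ and dispatching the hypothesis check as above.
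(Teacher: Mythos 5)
Your proof is correct and is essentially the paper's own argument: the paper likewise obtains the corollary by applying \Cref{lem:xor-subst-2} iteratively $k-1$ times, observing at each step that the hypothesis of the lemma is satisfied. The only (cosmetic) difference is in how that hypothesis is checked --- the paper notes that the output of \Cref{lem:xor-subst-2} is never of the form $\{0*^m, 1*^m\}$, whereas you deduce it from $F$ itself (either $F$ has a star-first subcube, so $G_k$ does too, or $|F| \ge 3$, so $|G_k| \ge 3$); both verifications work.
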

\begin{proof}
Apply the \namecref{lem:xor-subst-2} iteratively $k-1$ times to $F$, noticing that the subcube partition constructed in the \namecref{lem:xor-subst-2} is never of the form $\{ 0*^m, 1*^m \}$.
\end{proof}

In order to construct an irreducible subcube partition of length $n$ with $2^{n-2}$ many points, we apply the \namecref{cor:xor-subst} to the irreducible subcube partition $S_3$ from \Cref{thm:size-2n-1-merging}.

\begin{theorem} \label{thm:many-points}
For every $n \ge 3$ there is an irreducible subcube partition of length $n$ containing $2^{n-2}$ many points.
\end{theorem}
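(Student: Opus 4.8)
The plan is to feed the base partition $S_3$ from \Cref{thm:size-2n-1-merging} into \Cref{cor:xor-subst} and then simply count how many of the resulting subcubes are points. Since $S_3 = \{000, *01, 1*0, 01*, 111\}$ has five subcubes, it is certainly not $\{0*^2, 1*^2\}$, so the hypothesis of the \namecref{cor:xor-subst} is satisfied. Applying it, the leading coordinate $b$ of each subcube $bt \in S_3$ is replaced by a block $a_1 \dots a_{n-2}$ of $n-2$ coordinates subject to $a_1 \oplus \dots \oplus a_{n-2} = b$ (and each leading $*$ is replaced by $*^{n-2}$), while the trailing block $t$ of length $2$ is copied verbatim. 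The total length is $(n-2) + 2 = n$, and \Cref{cor:xor-subst} guarantees that the resulting $G$ is an irreducible subcube partition. So the only remaining task is the point count.

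I would organize the count according to the subcube of $S_3$ from which each subcube of $G$ descends. A subcube of $G$ is a point exactly when every coordinate is fixed; since the leading block $a_1 \dots a_{n-2}$ is always fully fixed, a subcube of $G$ is a point if and only if it descends from a subcube $bt \in S_3$ with $b \in \{0,1\}$ and $t$ star-free, i.e.\ from a \emph{point} of $S_3$. The two points of $S_3$ are $000$ and $111$. The point $000$ (with $b=0$, $t=00$) gives rise to the points $a_1 \dots a_{n-2}\,00$ with $\bigoplus_i a_i = 0$, and there are exactly $2^{n-3}$ binary blocks of length $n-2$ with a prescribed parity; likewise $111$ (with $b=1$, $t=11$) contributes the $2^{n-3}$ points $a_1 \dots a_{n-2}\,11$ with $\bigoplus_i a_i = 1$. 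The remaining subcubes $*01$, $1*0$, $01*$ each carry a star—either in the leading coordinate or inside $t$—so they descend to subcubes of dimension at least one and contribute no points. Summing, $G$ has exactly $2^{n-3} + 2^{n-3} = 2^{n-2}$ points, which is the claim. (For $n=3$ the formula already returns $S_3$ itself, with $2 = 2^{n-2}$ points, so no separate argument is needed.)

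I do not expect a genuine obstacle, since irreducibility is handed to us by \Cref{cor:xor-subst} and the rest is bookkeeping. The one point to get right is matching the size of the leading block to the target length—so that it has $n-2$ symbols and the overall length is $n$—and then verifying that the parity constraint $\bigoplus_i a_i = b$ selects exactly half of the $2^{n-2}$ possible blocks, namely $2^{n-3}$, for each of the two underlying points. Everything else follows immediately from the explicit form of the construction.
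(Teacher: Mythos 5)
Your proof is correct and takes essentially the same route as the paper: feed $S_3$ into \Cref{cor:xor-subst} and count the points descending from $000$ and $111$, each contributing $2^{n-3}$ parity-constrained blocks. One incidental merit of your careful bookkeeping: you size the leading block as $n-2$ coordinates to match the displayed formula in \Cref{cor:xor-subst}, whereas the paper's one-line proof takes $k=n-3$ relying on the corollary's stated length $n+k$, which is off by one against its own displayed construction (which yields length $n+k-1$) --- your accounting quietly resolves that discrepancy.
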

\begin{proof}
The subcube partition $S_3 = \{ 000, *01, 1*0, 01*, 111 \}$ is irreducible (according to \Cref{thm:size-2n-1-merging}) and contains two points. Applying \Cref{cor:xor-subst} with $k = n-3$, we get an irreducible subcube partition of length $n$ in which each of the two original points gives rise to $2^{n-3}$ points, for a total of $2^{n-2}$ points.
\end{proof}

Using similar ideas, we can construct an irreducible subcube partition of length $n$ containing any even number of points between $2$ and $2^{n-2}$. We leave the details to the reader.

\subsection{Maximal size}
\label{sec:maximal-subcubes}

What is the maximal size of an irreducible subcube partition of length $n$? Here are some values, based on experiments and an upper bound which we present in \Cref{sec:maximal-subcubes-ub}:

\[
\begin{array}{r|ccccccc}
n & 3 & 4 & 5 & 6 & 7 & 8 & 9 \\\hline
\text{Lower bound} & 5 & 9 & 20 & 40 & 80 & 160 & 320 \\
\text{Upper bound} & 5 & 9 & 20 & 40 & 83 & 166 & 334
\end{array}
\]

Based on these results, we make the following conjecture.

\begin{conjecture} \label{conj:maximal-size}
For every $n \ge 5$, the maximal size of an irreducible subcube partition of length $n$ is $5 \cdot 2^{n-3}$.
\end{conjecture}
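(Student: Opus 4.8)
The plan is to establish the two inequalities separately: the lower bound $\ge 5\cdot 2^{n-3}$ by an explicit construction, and the upper bound $\le 5\cdot 2^{n-3}$, which is the substantive part. For the lower bound I would exhibit, for each $n\ge 5$, an irreducible subcube partition consisting of exactly $2^{n-2}$ points and $3\cdot 2^{n-3}$ edges and no higher-dimensional subcubes; the arithmetic $2^{n-2}+3\cdot 2^{n-3}=5\cdot 2^{n-3}$ together with the measure check $2^{n-2}\cdot 1+3\cdot 2^{n-3}\cdot 2=2^n$ (\Cref{lem:testing-partition}) then confirms both the size and that this is a partition. Building such a partition is most naturally attempted recursively, in the spirit of \Cref{lem:merge} and \Cref{cor:xor-subst}, starting from a small base case (e.g.\ $n=5$, size $20$) and doubling. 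The subtle point, which I expect to require genuine care, is to keep the doubling step from ever producing a subcube of dimension $\ge 2$: a naive application of \Cref{lem:xor-subst-2} fails here, because in a points-and-edges partition some edge must carry a $*$ in any given coordinate (otherwise that coordinate is never freed and the partition is reducible), and duplicating that coordinate promotes the edge to a square.

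For the upper bound, the key observation is that everything reduces to counting points. Writing $n_d$ for the number of $d$-dimensional subcubes and $N=\sum_d n_d$ for the size, the measure identity of \Cref{lem:testing-partition}, together with the fact that every non-point has dimension at least $1$, gives
\[
 2^n = n_0 + \sum_{d\ge 1} n_d 2^d \ \ge\ n_0 + 2(N-n_0) = 2N - n_0 ,
\]
and hence $N\le 2^{n-1}+n_0/2$. Consequently, once one knows $n_0\le 2^{n-2}$ — which is exactly \Cref{conj:max-points} — the upper bound $N\le 2^{n-1}+2^{n-3}=5\cdot 2^{n-3}$ follows at once, with equality forcing $n_0=2^{n-2}$ and all remaining subcubes to be edges, matching the construction. (The threshold $n\ge 5$ enters only in the small cases: the formula is correct for $n=3$ but overshoots for $n=4$, so these must be checked by hand.)

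Thus the whole difficulty is concentrated in the maximum-points bound $n_0\le 2^{n-2}$, and this is the step I expect to be the main obstacle — indeed it is itself stated only as \Cref{conj:max-points}. The easy part is that the points of an irreducible partition form a code of minimum Hamming distance at least $2$ (two points at distance $1$ would union to an edge), which yields only $n_0\le 2^{n-1}$, and the even-weight code shows this is tight \emph{as a code}. To push down to $2^{n-2}$ one must exploit the global requirement that the complement of the point set be tileable by subcubes of dimension $\ge 1$: every such subcube meets the even-weight and the odd-weight points equally, which already rules the even-weight code out as a point set entirely, and more refined local and parity constraints (via \Cref{lem:parity}) should forbid dense point sets. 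Turning these obstructions into the exact bound $2^{n-2}$ is the crux. Failing a full proof of \Cref{conj:max-points}, the displayed inequality at least establishes the conditional statement that \Cref{conj:max-points} implies the upper bound of \Cref{conj:maximal-size}; the weaker unconditional upper bound reflects precisely the current gap between $2^{n-1}$ and $2^{n-2}$ for the number of points.
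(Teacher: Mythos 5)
First, note that the statement you were asked to prove is stated in the paper only as a \emph{conjecture}: the paper itself does not prove it, and neither do you. What the paper does prove is (a) a matching construction (\Cref{thm:maximal-construction}) of irreducible subcube partitions of size $5\cdot 2^{n-3}$ for all $n\ge 3$ except $n=4$, (b) the conditional upper bound (\Cref{lem:maximal-size-via-points}) that \Cref{conj:max-points} implies size at most $5\cdot 2^{n-3}$, and (c) an unconditional upper bound $\frac{2n-1}{3n-1}2^n$ via Forcade's theorem on maximal matchings (\Cref{thm:maximal-subcubes-ub}). Your upper-bound argument is exactly the paper's \Cref{lem:maximal-size-via-points}, down to the same inequality $N \le 2^{n-1}+n_0/2$, and your honest assessment that it only yields a conditional statement is accurate; note, though, that your fallback bound $6\cdot 2^{n-3}$ obtained from $n_0\le 2^{n-1}$ is weaker than the paper's Forcade-based bound, which is asymptotically $\frac{16}{3}\cdot 2^{n-3}$.

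The genuine gap is on the lower-bound side. You correctly predict the shape of the extremal examples ($2^{n-2}$ points and $3\cdot 2^{n-3}$ edges) and you correctly diagnose why the doubling step in the spirit of \Cref{lem:xor-subst-2} must fail: every coordinate of an irreducible points-and-edges partition carries a star on some edge, and duplicating that coordinate turns the edge into a square. But you stop there, and this obstruction is precisely what the paper's construction is designed to circumvent. The missing idea is Perezhogin's inductive step (\Cref{lem:perezhogin}): instead of duplicating one coordinate, one appends \emph{two} coordinates and lets $\mathbb{Z}_4$, via the Gray map, act by rotation on the last two coordinates of the existing subcubes, patching the construction near $0^n$ by swapping four ``removed'' edges for four ``added'' edges. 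This step multiplies the number of subcubes of \emph{each fixed dimension} by four, so points stay points and edges stay edges, and the size goes from $5\cdot 2^{n-3}$ to $5\cdot 2^{(n+2)-3}$ as required; irreducibility is preserved by a delicate case analysis on which added edges a would-be reducing set contains. Since the step increases the length by $2$, two base cases are needed, at $n=3$ and $n=6$ (the latter a nontrivial explicit partition), which together cover all $n\ge 5$; the paper separately shows (\Cref{lem:maximal-4}) that $n=4$ is genuinely exceptional with maximum $9$, matching your remark. So your proposal contains the correct reduction for the upper bound but is missing the entire constructive half, which is where the bulk of the paper's work lies.
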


We give a matching construction in \Cref{sec:weight-construction}. The size $5 \cdot 2^{n-3}$ is best possible, assuming \Cref{conj:max-points}.

\begin{lemma} \label{lem:maximal-size-via-points}
Assume \Cref{conj:max-points}. For every $n \ge 3$, every irreducible subcube partition of length $n$ has size at most $5 \cdot 2^{n-3}$.
\end{lemma}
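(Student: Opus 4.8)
The plan is to combine the volume identity for subcube partitions (\Cref{lem:testing-partition}) with the point bound supplied by \Cref{conj:max-points}, and then optimize a trivial arithmetic expression. The key observation is that maximizing the number of subcubes forces the subcubes to be as small as possible, but points are capped by \Cref{conj:max-points}, so the remaining volume must be absorbed by subcubes of dimension at least $1$, each of which is at least twice as large as a point.

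Concretely, I would begin by rewriting \Cref{lem:testing-partition} in terms of dimension. Multiplying the identity $\sum_{s \in F} 2^{-\operatorname{codim}(s)} = 1$ by $2^n$ and using $\dim(s) = n - \operatorname{codim}(s)$ gives
\[
 \sum_{s \in F} 2^{\dim(s)} = 2^n.
\]
Let $p$ denote the number of points of $F$ and let $m = |F|$ be its size. Every subcube that is not a point has dimension at least $1$, hence volume $2^{\dim(s)} \geq 2$. Separating the points (each of volume $1$) from the remaining $m - p$ subcubes (each of volume at least $2$) yields the inequality
\[
 2^n = \sum_{s \in F} 2^{\dim(s)} \geq p \cdot 1 + (m - p) \cdot 2 = 2m - p,
\]
and therefore $m \leq \tfrac{1}{2}\left(2^n + p\right)$.

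The final step is to invoke \Cref{conj:max-points}, which for $n \geq 3$ asserts $p \leq 2^{n-2}$. Substituting gives
\[
 m \leq \frac{2^n + 2^{n-2}}{2} = 2^{n-1} + 2^{n-3} = 5 \cdot 2^{n-3},
\]
as claimed. There is no real obstacle here: the argument is a one-line counting bound once the volume identity is expressed in terms of dimension, and the only nontrivial ingredient is the assumed point bound, whose role is precisely to prevent the partition from consisting entirely of points and edges. I would note that the bound is tight exactly when $F$ attains the maximum $2^{n-2}$ points and all remaining subcubes are edges, matching the structure of the construction promised in \Cref{sec:maximal-subcubes}.
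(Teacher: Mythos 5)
Your proof is correct and is essentially the same argument as the paper's: both split $F$ into points and non-points, note that each non-point covers at least two of the $2^n$ points (which is exactly what your rescaled volume identity from \Cref{lem:testing-partition} encodes), and then apply the $p \leq 2^{n-2}$ bound from \Cref{conj:max-points}. The paper writes the bound as $|F| \leq p + \tfrac{2^n - p}{2}$ rather than $|F| \leq \tfrac{2^n + p}{2}$, but these are identical.
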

\begin{proof}
Let $F$ be an irreducible subcube partition of length $n$. According to \Cref{conj:max-points}, $F$ contains $m \leq 2^{n-2}$ points. All other subcubes of $F$ cover at least two points, and so the size of $F$ is at most
\[
 m + \frac{2^n - m}{2} = 2^{n-1} + \frac{m}{2} \leq 2^{n-1} + 2^{n-3}. \qedhere
\]
\end{proof}

\subsubsection{Upper bound} \label{sec:maximal-subcubes-ub}

In this section, we use a result of Forcade~\cite{Forcade73} to give an upper bound on the size of irreducible subcube partitions.

\begin{theorem} \label{thm:maximal-subcubes-ub}
For every $n \ge 3$, the size of any irreducible subcube partition of length $n$ is at most
\[
 \frac{2n-1}{3n-1} 2^n = \left(\frac{16}{3} - \Theta\left(\frac{1}{n}\right) \right) 2^{n-3}.
\]
\end{theorem}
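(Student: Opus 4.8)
The plan is to bound the number of points of $F$ and then feed that bound into the counting already carried out in \Cref{lem:maximal-size-via-points}. The key observation is that the arithmetic there — an irreducible subcube partition with $m$ points has size at most $m + (2^n - m)/2 = 2^{n-1} + m/2$ — does not use \Cref{conj:max-points} at all; only the final step $m \le 2^{n-2}$ does. So it suffices to establish the unconditional bound $m \le \frac{n-1}{3n-1}2^n$, since substituting this into $2^{n-1} + m/2$ gives $2^n\bigl(\tfrac12 + \tfrac{n-1}{2(3n-1)}\bigr) = \frac{2n-1}{3n-1}2^n$, exactly the claimed bound.

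To bound $m$, I would relate the point set $P$ of $F$ to matchings in the hypercube graph $Q_n$ (vertices $\{0,1\}^n$, with an edge between vertices at Hamming distance $1$). First, irreducibility forces $P$ to be an independent set in $Q_n$: two points at Hamming distance $1$ would have an edge as their union, contradicting irreducibility. Second — and this is the crux — every subcube of $F$ of positive dimension can be perfectly matched internally, by fixing one of its starred coordinates $i$ and pairing each $v$ with $v \oplus e_i$ (both lie in the subcube). Since the subcubes of $F$ are pairwise disjoint, these internal perfect matchings combine into a single matching $M$ of $Q_n$ whose set of unmatched vertices is exactly $P$. Because $P$ is independent, no edge of $Q_n$ joins two unmatched vertices, so $M$ is a \emph{maximal} matching, with $|M| = (2^n - m)/2$.

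Now I would invoke Forcade's theorem on smallest maximal matchings~\cite{Forcade73}: every maximal matching of $Q_n$ has at least $\frac{n}{3n-1}2^n$ edges. Applying this to $M$ gives $(2^n - m)/2 \ge \frac{n}{3n-1}2^n$, which rearranges to $m \le \frac{n-1}{3n-1}2^n$, completing the argument when combined with the first paragraph. (As a consistency check, for $n=3$ this reads $m \le 2$, matching the known value $2^{n-2}=2$.)

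The conceptual heart — and the step I would be most careful about — is the observation that an irreducible subcube partition yields a maximal matching essentially for free: the positive-dimensional subcubes supply the matching edges, while the points supply an independent (hence admissible) unmatched set. The only remaining point of care is to cite Forcade's bound in precisely the form of a lower bound on the size of any maximal matching (equivalently, on the smallest maximal matching) of $Q_n$; the stated upper bound $\frac{2n-1}{3n-1}2^n$ is then pure arithmetic, and its asymptotic form $\bigl(\tfrac{16}{3} - \Theta(\tfrac1n)\bigr)2^{n-3}$ follows by expanding $\frac{2n-1}{3n-1} = \tfrac{2}{3}\bigl(1 - \tfrac{1}{2(3n-1)}\bigr)$.
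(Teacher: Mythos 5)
Your proposal is correct and is essentially the paper's own argument: subdividing each positive-dimensional subcube into edges (your internal perfect matchings along a starred coordinate) yields a matching of the hypercube whose unmatched vertices are the points of $F$, which is maximal by irreducibility, and Forcade's lower bound is then applied exactly as you do. The only difference is bookkeeping---you first bound the number of points and then reuse the counting of \Cref{lem:maximal-size-via-points}, whereas the paper directly writes $|F| \leq |G| = 2^n - m$ with $m$ the number of matching edges---and since the number of points equals $2^n - 2m$, the two computations are algebraically identical.
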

\begin{proof}
Let $F$ be an irreducible subcube partition of length $n$. 
Let $G$ be a subcube partition obtained from $F$ by subdividing each subcube of dimension larger than $1$ into edges (subcubes of dimension $1$) in an arbitrary way.

Since $F$ is irreducible, no two points in $G$ span an edge. Therefore the set of edges in $G$ constitutes a maximal matching in the $n$-dimensional hypercube. Forcade~\cite{Forcade73} proved that any maximal matching in the $n$-dimensional hypercube contains $m \geq \frac{n}{3n-1} 2^n$ edges. Therefore
\[
 |F| \leq |G| = m + (2^n - 2m) = 2^n - m \leq 2^n - \frac{n}{3n-1} 2^n. \qedhere
\]
\end{proof}

Forcade showed that the bound $\frac{n}{3n-1} 2^n$ is asymptotically tight by giving a matching construction. Therefore this method cannot prove the conjectured upper bound $5 \cdot 2^{n-3}$.

\subsubsection{Construction} \label{sec:maximal-subcubes-construction}

In this section we construct irreducible subcube partitions of size $5 \cdot 2^{n-3}$ for all $n \ge 3$ except for $n = 4$. When $n = 4$, a computer search reveals that the maximum number of subcubes is $9$, which is achieved by
\[
 0000, 0011, 1101, 1110, *100, *111, 0*01, 0*10, 10**.
\]
We prove this below.

Our construction is based on the work of Perezhogin~\cite{Perezhogin05}, brought to our attention by Tarannikov~\cite{Tarannikov23}.%
\footnote{The construction on page 55 in \cite{Perezhogin05} contains several minor errors. Tarannikov pointed out to us that the definition of the $M_{\sigma_i \sigma_j}^{n+2}$ there should read
\begin{align*}
  M_{00}^{n+2} = (M_{00}^n(00) \cup M_{01}^n(01) \cup M_{11}^n(11) \cup M_{10}^n(10) \cup \mathbf{v}_1^{00} \cup \mathbf{v}_2^{00} \cup \mathbf{v}_3^{00} \cup \mathbf{v}_4^{00}) \setminus (\mathbf{e}_1(00) \cup \mathbf{e}_2(01) \cup \mathbf{e}_3(11) \cup \mathbf{e}_4(10)),\\
  M_{01}^{n+2} = (M_{00}^n(01) \cup M_{01}^n(11) \cup M_{11}^n(10) \cup M_{10}^n(00) \cup \mathbf{v}_1^{01} \cup \mathbf{v}_2^{10} \cup \mathbf{v}_3^{01} \cup \mathbf{v}_4^{10}) \setminus (\mathbf{e}_1(01) \cup \mathbf{e}_2(11) \cup \mathbf{e}_3(10) \cup \mathbf{e}_4(00)),\\
  M_{11}^{n+2} = (M_{00}^n(11) \cup M_{01}^n(10) \cup M_{11}^n(00) \cup M_{10}^n(01) \cup \mathbf{v}_1^{11} \cup \mathbf{v}_2^{11} \cup \mathbf{v}_3^{11} \cup \mathbf{v}_4^{11}) \setminus (\mathbf{e}_1(11) \cup \mathbf{e}_2(10) \cup \mathbf{e}_3(00) \cup \mathbf{e}_4(01)),\\
  M_{10}^{n+2} = (M_{00}^n(10) \cup M_{01}^n(00) \cup M_{11}^n(01) \cup M_{10}^n(11) \cup \mathbf{v}_1^{10} \cup \mathbf{v}_2^{01} \cup \mathbf{v}_3^{10} \cup \mathbf{v}_4^{01}) \setminus (\mathbf{e}_1(10) \cup \mathbf{e}_2(00) \cup \mathbf{e}_3(01) \cup \mathbf{e}_4(11)).
\end{align*}
}
The construction is inductive, increasing the length by~$2$ at each step. Consequently, we will need two base cases, for $n = 3$ and for $n = 6$. The same inductive construction will also be used in \Cref{sec:homogeneous-partitions} to construct subcube partitions consisting only of edges for all $n \ge 4$ except for $n = 5$.

\smallskip

We start with the inductive step. The construction uses a mapping from $\ZZf$ to $\{0,1\}^2$:
\[
\begin{array}{cccc}
0 & 1 & 2 & 3 \\\hline
00 & 01 & 11 & 10
\end{array}
\]
From a geometric perspective, we enumerate vertices of a two-dimensional face clockwise:
\begin{center}
\tikzset{
    vert/.style = {align=center, inner sep=0pt, text centered, circle, draw, text width=1em, thick,
    every edge quotes/.style = {auto}}
}
\begin{tikzpicture}[vert=[circle, draw]],
\foreach \i/\x/\y/\ps in {0/0/0/{south west}, 1/0/1/{north west}, 2/1/1/{north east}, 3/1/0/{south east}}
{
    \node[vert,label={\ps:\x\y}] (a\x\y) at (1.5*\x,1.5*\y) {\i};
};
\foreach \a/\b/\q in {00/01/0*, 01/11/*1, 11/10/1*, 10/00/*0}
\draw[->] (a\a) edge["$\q$"] (a\b);
\end{tikzpicture}
\end{center}
We denote this mapping, known as the \emph{Gray map}, by $\alpha \mapsto \enc{\alpha}$. We use the same notation to associate two adjacent elements of $\ZZf$ with the corresponding edge, e.g.\ $\enc{1,2} = *1$.

Let $\WWf$ consist of all subsets of $\ZZf$ which are either singletons or pairs of adjacent elements. 
The function $\phi \mapsto \enc{\phi}$ maps $\WWf$ into $\{0,1,*\}^2 \setminus \{**\}$.
Given $\phi \in \WWf$ and $\alpha \in \ZZf$, we define $\phi + \alpha$ in the obvious way. For example, $\{1\} + 2 = \{3\}$ and $\{1,2\} + 2 = \{3,0\} = \{0,3\}$.

The construction will apply to subcube partitions which satisfy the following \emph{complementation property}.

\begin{definition}[Complementation property]
A subcube partition $F$ of length $n \ge 2$ satisfies the \emph{complementation property} if the following properties hold:
\begin{enumerate}[(a)]
\item No subcube in $F$ ends with $**$.
\item If $s\enc{\phi} \in F$, where $\phi \in \WWf$, then $s\enc{\phi+2} \notin F$.
\end{enumerate}
(Note that $\enc{\alpha+2}$ is obtained from $\enc{\alpha}$ by complementing both symbols, leaving stars untouched.)
\end{definition}

The construction is given by the following lemma.

\begin{lemma} \label{lem:perezhogin}
Let $F$ be an irreducible subcube partition of length $n \ge 2$ containing $0^{n-2}\enc{0,1}$ and satisfying the complementation property. Define
\[
 F' = \{ s \enc{\phi-\alpha} \enc{\alpha} : s\enc{\phi} \in F, \alpha \in \ZZf, s\enc{\alpha} \neq 0^{n-2}\enc{0,1} \} \cup \{ 0^{n-2} \enc{-\alpha} \enc{\alpha,\alpha+1} : \alpha \in \ZZf \}.
\]
Then $F'$ is a tight irreducible subcube partition of length $n + 2$ containing $0^n\enc{0,1}$ and satisfying the complementation property. Moreover, for every $d$, if $F$ contains $m$ subcubes of dimension $d$ then $F'$ contains $4m$ subcubes of dimension $d$.
\end{lemma}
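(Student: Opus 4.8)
The plan is to verify the six assertions—partition, tightness, containment of $0^n\enc{0,1}$, the complementation property, the dimension count, and irreducibility—leaving irreducibility for last as the only real difficulty. I would write each subcube of $F'$ as $s\,\enc{\mu}\,\enc{\nu}$, with $s$ on coordinates $1,\dots,n-2$, the \emph{$\mu$-block} $\enc\mu$ on coordinates $n-1,n$, and the \emph{$\nu$-block} $\enc\nu$ on coordinates $n+1,n+2$; a \emph{Type I} subcube $s\enc{\phi-\alpha}\enc\alpha$ comes from a non-special $s\enc\phi\in F$, and a \emph{Type II} subcube $0^{n-2}\enc{-\alpha}\enc{\alpha,\alpha+1}$ from the special subcube. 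Passing from $s\enc\phi$ to either lift leaves the $\mu$-block codimension equal to that of $\enc\phi$ and adds a block of total codimension $2$, so each of the four lifts of a source has the same dimension as the source (giving the dimension count) and codimension larger by $2$. Hence $\sum_{u\in F'}2^{-\operatorname{codim}(u)}=\sum_{s\enc\phi\in F}4\cdot 2^{-(\operatorname{codim}(s\enc\phi)+2)}=1$, so with disjointness (below) \Cref{lem:testing-partition} shows $F'$ is a partition. Tightness is free from the Type II subcubes alone: $0^{n-2}$ mentions coordinates $1,\dots,n-2$, each $\enc{-\alpha}$ mentions $n-1,n$, and the four edges $\enc{\alpha,\alpha+1}$ jointly mention $n+1,n+2$; note this needs no tightness of $F$. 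The containment holds because the $\alpha=0$ Type II subcube is $0^{n-2}\enc0\enc{0,1}=0^n\enc{0,1}$, and complementation~(a) is immediate; for~(b), a Type I lift's only possible $\enc{\cdot+2}$-partner is a Type I lift forcing $s\enc{\phi+2}\in F$, forbidden by complementation of $F$, and the Type II case is ruled out since $\enc{-\alpha}\neq\enc{-\alpha-2}$.

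For disjointness I would use that conflict equals disjointness (\Cref{lem:disjoint-subcubes}) and that subtracting a fixed $\alpha\in\ZZf$ is a bijection of $\ZZf$, so the $\mu$-block comparison of two lifts obeys exactly the disjointness relation of their sources. Two lifts with distinct $\nu$-values conflict in the $\nu$-block; two Type I lifts with equal $\alpha$ inherit a conflict from their distinct sources; two Type II lifts conflict in the $\mu$-block. The only delicate case is a Type I lift against a Type II lift agreeing on both the $\nu$- and $\mu$-blocks: tracing back, the source $s\enc\phi$ would then contain $0^n$ or $0^{n-1}1$, forcing $s\enc\phi$ to coincide with the special subcube (both lie in the partition $F$ and share that point)—contradicting that the source is non-special.

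For irreducibility, suppose $G'\subseteq F'$ with $1<|G'|<|F'|$ and $c:=\bigcup G'$ a subcube; I aim to force $c=*^{n+2}$. The tools are the isometry $S_\beta$ acting on the $\mu$-block by $\enc\psi\mapsto\enc{\psi+\beta}$ (which preserves subcubes and disjointness), the \emph{edge-split partition} $\hat F:=(F\setminus\{0^{n-2}\enc{0,1}\})\cup\{0^n,0^{n-1}1\}$, and for $\nu_0\in\ZZf$ the \emph{slice} $\Phi_{\nu_0}$, obtained from those $u\in G'$ whose $\nu$-set contains $\nu_0$ by fixing the $\nu$-block to $\enc{\nu_0}$, deleting it, and applying $S_{+\nu_0}$. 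Each slice lands in $\hat F$ (a Type I lift returns to its non-special source; a Type II lift returns to $0^n$ or $0^{n-1}1$ according as $\nu_0=\alpha$ or $\nu_0=\alpha+1$), and $\bigcup\Phi_{\nu_0}=S_{+\nu_0}(c|_{\nu=\nu_0})$ is a subcube. If the $\nu$-block of $c$ is a point, only one slice is nonempty and consists of non-special subcubes of $F$ with subcube union, so irreducibility of $F$ makes it a singleton and $|G'|=1$, a contradiction. Two-element reducing sets are also excluded: the only lifts that combine into a subcube are two Type I lifts sharing $s$ whose sources are $s\enc\phi,s\enc{\phi+1}$ (ruled out because their union would reduce $F$, or they overlap) or $s\enc\phi,s\enc{\phi+2}$ (ruled out by complementation of $F$); no Type II lift combines with another lift, as its $\nu$-block has an incompatible star pattern.

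The remaining cases are the edge and square $\nu$-blocks, and here lies the \textbf{main obstacle}. When the $\nu$-block is the edge $\enc{\nu_0,\nu_0+1}$, only the Type II subcube $0^{n-2}\enc{-\nu_0}\enc{\nu_0,\nu_0+1}$ can occur in $G'$; if it is absent, the two point-slices are singletons and we fall into the excluded two-element case, while if it is present I would write $\bigcup\Phi_{\nu_0}=\bigcup H_p\cup\{0^n\}=:X$ and $\bigcup\Phi_{\nu_0+1}=\bigcup H_q\cup\{0^{n-1}1\}=:Y=S_{+1}(X)$ with $H_p,H_q\subseteq F\setminus\{0^{n-2}\enc{0,1}\}$. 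Since $0^{n-1}1\notin X$ and $0^n\notin Y$ (these points live only in the special subcube of $F$) and $Y=S_{+1}(X)$, the $\mu$-blocks are pinned to $\enc0$ and $\enc1$, giving $X=D\times\enc0$, $Y=D\times\enc1$ for a common subcube $D\ni 0^{n-2}$; the key point is then that $H_p\cup H_q$ together with the special subcube \emph{reconstitute} a genuine subcube of $F$, namely $\bigcup(H_p\cup H_q\cup\{0^{n-2}\enc{0,1}\})=D\times\enc{0,1}$, so irreducibility of $F$ forces $H_p=H_q=\emptyset$ and $|G'|=1$. The square case ($\nu$-block $=**$) I would treat by the same reconstitution across slices: with no Type II subcube the four point-slices are singletons whose sources are $s\enc{\psi+\nu_0}$ over $\nu_0\in\ZZf$, which includes the complementation-forbidden pair $s\enc\psi,s\enc{\psi+2}$; otherwise the split points supplied by the Type II lifts again glue the sliced sources and the special subcube into a subcube of $F$, and $F$'s irreducibility drives $c$ up to $*^{n+2}$. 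I expect the hardest part to be exactly this reconstitution: pinning the $\mu$-blocks via $Y=S_{+1}(X)$ and the exclusion of $0^n,0^{n-1}1$, and then bookkeeping the straddling Type II contributions in the square case so that the sliced sources plus the special subcube assemble into a single subcube of $F$ on which irreducibility can be invoked.
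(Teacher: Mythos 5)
Your overall framework is sound and is organized genuinely differently from the paper's proof: you case-split on the $\nu$-block of $c=\bigcup G'$ (point, edge, or square), whereas the paper case-splits on which added edges lie in $G'$; and your ``reconstitution'' of the special subcube $0^{n-2}\enc{0,1}$ from the two split points $0^n,0^{n-1}1$ is a clean device that the paper only uses implicitly in its final case (via the set $G''$ obtained by swapping added edges for removed edges). Your point case, your edge case (both with and without the Type II subcube, including the pinning of the $\mu$-blocks via $Y=S_{+1}(X)$ and the exclusion of $0^n,0^{n-1}1$), your two-element exclusion, and your square-without-Type-II case are all correct and complete; the last of these is even slightly slicker than the paper's corresponding argument, since you invoke the complementation property on the forbidden pair $s\enc{\psi},s\enc{\psi+2}$ where the paper derives $s\enc{\phi},s\enc{\phi+1}\in F$ and argues via irreducibility or disjointness.

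The gap is in the square-with-Type-II case, which is exactly where the paper's real work (its ``single added edge'', ``two adjacent added edges'' and ``two non-adjacent added edges'' cases) is concentrated. Write $c=b\,\beta\,**$ with $b$ on the first $n-2$ coordinates and $\beta$ the $\mu$-block. Gluing all Type I sources together with the special subcube into $H$ gives $\bigcup H=b\times **$, and irreducibility of $F$ indeed forces $H=F$ and hence $b=*^{n-2}$ (the singleton alternative dies because the special subcube's $\mu$-block is $0*$, not $**$). But this does \emph{not} ``drive $c$ up to $*^{n+2}$'': it leaves $\beta$ completely unpinned, so the possibilities $\bigcup G'=*^{n-2}\enc{\gamma}\,**$ and $\bigcup G'=*^{n-2}\enc{\delta,\delta+1}\,**$ survive your argument and must be excluded separately. (Gluing per straddling slice pair instead of globally does not repair this: when $\beta$ is an edge, two adjacent rotations of it cover three points of the square, so the glued union is not a subcube and irreducibility cannot be invoked.) The exclusions can be done inside your framework, but they are additional arguments of the same flavor as the paper's hardest sub-cases: if $\beta=\enc{\gamma}$, then $H=F$ forces every non-special member of $F$ to end in two non-stars, so the members ending with $\enc{2}$ (resp.\ $\enc{3}$) have union $*^{n-2}\enc{2}$ (resp.\ $*^{n-2}\enc{3}$), whence $*^{n-2}\enc{2},*^{n-2}\enc{3}\in F$ by irreducibility, and their union $*^{n-2}\enc{2,3}$ contradicts irreducibility; if $\beta=\enc{\delta,\delta+1}$, the slices untouched by Type II lifts yield either two overlapping edges $*^{n-2}\enc{\cdot,\cdot}$ in $F$ (contradicting disjointness) or, via one more reconstitution, force $0^{n-2}\enc{0,1}=*^{n-2}\enc{0,1}$, impossible for $n>2$. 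None of this is in your sketch, and without it the final case is not proved.
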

\begin{proof}

We can write $F'$ as follows:
\[
 F' = \underbrace{\{ s \enc{\phi-\alpha} \enc{\alpha} : s\enc{\phi} \in F\}}_{F''} \setminus
 \underbrace{\{ 0^{n-2} \enc{\alpha,\alpha+1} \enc{-\alpha} \}}_{\text{removed edges}} \cup
 \underbrace{\{ 0^{n-2} \enc{-\alpha} \enc{\alpha,\alpha+1} \}}_{\text{added edges}},
\]
where $\alpha$ ranges over $\ZZf$. We use the terms ``removed edges'' and ``added edges'' below to refer to the sets in the expression above.

Most of the properties (apart from irreducibility) are easy to verify:
\begin{description}
\item[Subcube partition] The removed edges and the added edges cover the same set of points, namely
\[
 \{ 0^{n-2} \enc{\alpha} \enc{\beta} : \alpha + \beta \in \{0,1\} \},
\]
and so $F'$ is a subcube partition.

\item[Tightness] The added edges $0^{n-2}\enc{0}\enc{0,1}$ and $0^{n-2}\enc{3}\enc{1,2}$ together mention all coordinates.
\item[Contains $0^n\enc{0,1}$] This is one of the added edges.
\item[Complementation property] By construction, no subcube in $F'$ ends with $**$. It remains to show that if $t\enc{\psi} \in F'$ then $t\enc{\psi+2} \notin F'$. If $\psi$ is a pair then $t\enc{\psi}$ is one of the added edges, and each of these has a different $t$. \\ If $\psi = \{\alpha\}$ is a singleton then $t\enc{\psi} = s \enc{\phi-\alpha} \enc{\alpha}$ for some $s \enc{\phi} \in F$. If $F'$ also contains $t \enc{\psi + 2} = s \enc{\phi-\alpha} \enc{\alpha + 2}$ then $s \enc{\phi+2} \in F$, contradicting the assumption that $F$ satisfies the complementation property.
\item[Subcube counts] Since $\enc{\phi + \alpha}$ has the same number of stars as $\enc{\phi}$ and $\enc{\alpha}$ has no stars, each subcube in $F$ gives rise to four subcubes of the same dimension in $F''$. Getting from $F''$ to $F'$ involved adding and removing four edges, hence the claim about subcube counts.
\end{description}

\paragraph{Irreducibility}

The main part of the proof is proving the irreducibility of $F'$. Let $G'$ be a non-empty set whose union is a subcube. We need to show that either $|G'| = 1$ or $G' = F'$.

For every $\alpha \in \ZZf$, define
\begin{align*}
 G_\alpha &= \{ s\enc{\phi} : s\enc{\phi - \alpha} \enc{\alpha} \in G' \}, \\
 H_\alpha &= \{ s\enc{\phi - \alpha} : s\enc{\phi - \alpha} \enc{\alpha} \in G' \}, \\
 P_\alpha &= \{ t \in \{0,1\}^n : t \enc{\alpha} \in \bigcup G' \}.
\end{align*}
By construction, $G_\alpha \subseteq F \setminus \{0^{n-2}\enc{0,1}\}$.

We consider several cases, according to which added edges (if any) belong to $G'$.

\paragraph{No added edges} Suppose first that $G'$ contains no added edge. In this case,
\[
 G' = \bigcup_\alpha \{ t \enc{\alpha} : t \in H_\alpha \}.
\]
This implies that $\bigcup H_\alpha$, and so $\bigcup G_\alpha$, are subcubes of $F$ for each $\alpha$. By irreducibility of $F$, each $G_\alpha$ is either empty or a singleton.

If there is a single non-empty $H_\alpha$ then $G'$ is a singleton, and we are done. Suppose therefore that at least two $H_\beta$ are non-empty.
If $H_\alpha,H_{\alpha+2}$ are non-empty then since $\enc{\alpha} \lor \enc{\alpha+2} = **$, in fact $H_\beta$ is non-empty for all $\beta$. Therefore we can assume that $H_\alpha,H_{\alpha+1}$ are non-empty for some $\alpha$.

Suppose $H_\alpha = \{ s \enc{\phi-\alpha} \}$, where $s\enc{\phi} \in F$. Since $\bigcup G'$ is a subcube, we have $\bigcup H_\alpha = \bigcup H_{\alpha+1}$. Recalling that $H_{\alpha+1}$ is a singleton, this shows that $s \enc{\phi-\alpha} \in H_{\alpha+1}$, and so $s\enc{\phi+1} \in F$.
Thus $s\enc{\phi},s\enc{\phi+1} \in F$.

If $\phi=\{\alpha\}$ then $s\enc{\alpha} \cup s\enc{\alpha+1} = s\enc{\alpha,\alpha+1}$, contradicting the irreducibility of $F$. If $\phi = \{\alpha,\alpha+1\}$, then $s\enc{\phi} \cap s\enc{\phi+1} = s\enc{\alpha+1}$, contradicting the assumption that $F$ is a partition. Therefore this case cannot happen.

\paragraph{Single added edge} Suppose next that $G'$ contains a single added edge, $0^{n-2} \enc{-\alpha} \enc{\alpha,\alpha+1}$. Then
\begin{align*}
P_\alpha &= \bigcup H_\alpha \cup \{0^{n-2} \enc{-\alpha} \}, \\
\bigcup G_\alpha &= \{ s \enc{\beta+\alpha} : s \enc{\beta} \in P_\alpha, s\enc{\beta} \neq 0^{n-2} \enc{-\alpha} \} \\ &=
\{ s \enc{\beta+\alpha} : s \enc{\beta} \in P_\alpha \} \setminus \{0^{n-2} \enc{0} \}, \\
P_{\alpha+1} &= \bigcup H_{\alpha+1} \cup \{0^{n-2} \enc{-\alpha}\}, \\
\bigcup G_{\alpha+1} &= \{ s \enc{\beta+\alpha+1} : s \enc{\beta} \in P_{\alpha+1}, s\enc{\beta} \neq 0^{n-2} \enc{-\alpha} \} \\ &=
\{ s \enc{\beta+\alpha+1} : s \enc{\beta} \in P_{\alpha+1} \} \setminus \{0^{n-2} \enc{1}\}.
\end{align*}

Since $\bigcup G'$ is a subcube, we have $P_\alpha = P_{\alpha+1}$, and the common value is a subcube containing $0^{n-2} \enc{-\alpha}$. We claim that this subcube must be of the form $r \enc{-\alpha}$. This is because of the following:
\begin{itemize}
\item Since $0^{n-2}\enc{0,1} \in F$, in particular $0^{n-2}\enc{1} \notin F$, and so $0^{n-2}\enc{1-\alpha} \notin P_\alpha$.
\item By assumption $G'$ doesn't contain the added edge $0^{n-2} \enc{-\alpha-1} \enc{\alpha+1,\alpha+2}$, and so $0^{n-2}\enc{-\alpha-1} \notin P_{\alpha+1} = P_\alpha$.
\item Since $\enc{-\alpha} \lor \enc{2-\alpha} = **$, also $0^{n-2} \enc{2-\alpha} \notin P_\alpha$ (otherwise $P_\alpha$ would contain $0^{n-2}**$ since $P_\alpha$ is a subcube).
\end{itemize}

Notice now that
\[
 \bigcup (G_\alpha \cup G_{\alpha+1} \cup 0^{n-2} \enc{0,1}) =
 \{ s \enc{\beta+\alpha}, s \enc{\beta+\alpha+1} : s \enc{\beta} \in P_\alpha \} = r\enc{0,1}.
\]
The irreducibility of $F$ implies that $r\enc{0,1} = 0^{n-2} \enc{0,1}$. Therefore either $\bigcup G' = 0^{n-2} \enc{-\alpha} \enc{\alpha,\alpha+1}$ or $\bigcup G' = 0^{n-2} \enc{-\alpha} **$. In the former case, $G'$ is a singleton consisting just of a single added edge, and so it remains to rule out the latter case.

If $\bigcup G' = 0^{n-2} \enc{-\alpha} **$ then in particular $0^{n-2} \enc{-\alpha} \enc{\alpha+2,\alpha+3} \subseteq \bigcup G'$, and so $\bigcup H_{\alpha+2} = 0^{n-2} \enc{-\alpha}$ and $\bigcup H_{\alpha+3} = 0^{n-2} \enc{-\alpha}$. This implies that $\bigcup G_{\alpha+2} = 0^{n-2} \enc{2}$ and $\bigcup G_{\alpha+3} = 0^{n-2} \enc{3}$, and so both $0^{n-2} \enc{2},0^{n-2} \enc{3} \in F$. However, this contradicts the irreducibility of $F$, since $0^{n-2}\enc{2} \cup 0^{n-2} \enc{3} = 0^{n-2} \enc{2,3}$ is a subcube.

\paragraph{Exactly two adjacent added edges} Suppose now that $G'$ contains precisely two added edges, whose indices differ by $1$, say $0^{n-2}\enc{-\alpha}\enc{\alpha,\alpha+1}$ and $0^{n-2}\enc{-\alpha-1}\enc{\alpha+1,\alpha+2}$. Therefore
\[
 P_{\alpha+1} = \bigcup H_{\alpha+1} \cup \{0^{n-2} \enc{-\alpha-1,-\alpha}\},
\]
which implies that
\[
 \bigcup G_{\alpha+1} = \{ s \enc{\beta+\alpha+1} : s\enc{\beta} \in P_{\alpha+1} \} \setminus \{0^{n-2} \enc{0,1}\}.
\]

Since $\bigcup G'$ is a subcube, also $P_{\alpha+1}$ is a subcube, and so $\bigcup (G_{\alpha+1} \cup 0^{n-2} \enc{0,1})$ is a subcube. The irreducibility of $F$ implies that $G_{\alpha+1} = \emptyset$, and so $P_{\alpha+1} = 0^{n-2} \enc{-\alpha-1,-\alpha}$.

The join of the two added edges is $0^{n-2}\enc{-\alpha-1,-\alpha}**$, and this shows that all $P_\beta$ are equal.
In particular, since $P_\alpha = \bigcup H_\alpha \cup \{0^{n-2} \enc{-\alpha}\}$, we have $H_\alpha = \{0^{n-2} \enc{-\alpha-1}\}$ and so $G_\alpha = \{0^{n-2} \enc{3}\}$. Similarly, $P_{\alpha+2} = \bigcup H_{\alpha+2} \cup \{0^{n-2} \enc{-\alpha-1}\}$ and so $H_{\alpha+2} = \{0^{n-2} \enc{-\alpha}\}$, implying that $G_{\alpha+2} = \{0^{n-2} \enc{2}\}$. However, then $0^{n-2}\enc{2},0^{n-2}\enc{3} \in F$, which contradicts the irreducibility of $F$ since $0^{n-2}\enc{2} \cup 0^{n-2}\enc{3} = 0^{n-2}\enc{2,3}$ is a subcube.

\paragraph{Two non-adjacent added edges} In the remaining case, the set $G'$ contains two non-adjacent edges, say $0^{n-2} \enc{-\alpha} \enc{\alpha,\alpha+1}$ and $0^{n-2} \enc{-\alpha-2} \enc{\alpha+2,\alpha+3}$. The join of these two edges is $0^{n-2} ****$, and so $\bigcup G'$ is a subcube ending with $****$. In particular, $G'$ must contain all added edges.

Let $G''$ be obtained from $G'$ by replacing the added edges with the removed edges. Since the added edges and the removed edges span the same points, $\bigcup G''$ is still a subcube. Consider
\[
 G''_0 = \{ s \enc{\phi} : s \enc{\phi} \enc{0} \in G'' \} \subseteq F.
\]
Since $\bigcup G''_0$ is a subcube, the irreducibility of $F$ implies that either $G''_0$ is a singleton or $\bigcup G''_0 = \{0,1\}^n$. In the latter case, clearly $\bigcup G' = \{0,1\}^{n+2}$, and so $G' = F'$. In the former case, since the removed edge $0^{n-2}\enc{0,1}$ belongs to $G''_0$ by construction, we see that $G''_0 = \{0^{n-2} \enc{0,1}\}$.
However, since $\bigcup G''$ is a subcube ending with $****$, we see that $\bigcup G''_0$ should be a subcube ending with $**$ rather than with $\enc{0,1}$. So this case cannot happen.
\end{proof}

We can now construct the desired subcube partitions.

\begin{theorem} \label{thm:maximal-construction}
For every $n \ge 3$ other than $n = 4$ there is a tight irreducible subcube partition of length $n$ and size $5 \cdot 2^{n-3}$.
\end{theorem}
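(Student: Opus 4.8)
The plan is to build these partitions by induction using \Cref{lem:perezhogin}, which is the workhorse here: it sends an irreducible length-$n$ partition that contains $0^{n-2}\enc{0,1}$ and satisfies the complementation property to a \emph{tight} irreducible length-$(n+2)$ partition with the same two structural properties (now containing $0^n\enc{0,1}$), while turning each dimension-$d$ count $m$ into $4m$. Summing over $d$, the total size is multiplied by exactly $4$ at each step, so the target size is invariant under the step: if $|F| = 5\cdot 2^{n-3}$ then $|F'| = 4\cdot 5\cdot 2^{n-3} = 5\cdot 2^{(n+2)-3}$. The three properties ``contains $0^{\bullet}\enc{0,1}$'', ``satisfies the complementation property'', and ``irreducible'' thus serve as the induction invariant, with tightness supplied automatically by the \namecref{lem:perezhogin} for every non-base partition. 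Since the step advances the length by $2$, I need one base case of each parity; the odd chain will start at $n=3$ and the even chain at $n=6$. Crucially, the even chain cannot be started at $n=4$, since the maximum size of a length-$4$ irreducible partition is $9 < 10 = 5\cdot 2^{4-3}$, which is exactly why $n=4$ is the lone exception in the statement.

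For the odd base case $n=3$ I would exhibit an explicit partition and check the invariant by hand. Flipping the second coordinate of $S_3$ from \Cref{thm:size-2n-1-merging} gives
\[
 \{\, 010,\ *11,\ 1*0,\ 00*,\ 101 \,\},
\]
which is irreducible and tight (being a coordinate-flip of the irreducible $S_3$) and has size $5 = 5\cdot 2^{3-3}$. It contains $00* = 0^{1}\enc{0,1}$ as required, and one checks directly that no subcube ends in $**$ and that for each subcube $s\enc{\phi}$ the complement $s\enc{\phi+2}$ is absent (for instance $00* = 0\enc{0,1}$ forbids $01* = 0\enc{2,3}$, which is indeed not present), so the complementation property holds. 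Iterating \Cref{lem:perezhogin} from here yields partitions of size $5\cdot 4^{k} = 5\cdot 2^{(3+2k)-3}$ for every odd length $3+2k$.

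For the even base case $n=6$ I would exhibit an explicit length-$6$ partition of size $40 = 5\cdot 2^{6-3}$ that contains $0^{4}\enc{0,1}$ and satisfies the complementation property, and then iterate the \namecref{lem:perezhogin} to obtain sizes $40\cdot 4^{k} = 5\cdot 2^{(6+2k)-3}$ for all even lengths $\ge 6$. Together with the odd chain this covers every $n\ge 3$ with $n\ne 4$, completing the proof. I expect the main obstacle to be the $n=6$ base case itself: verifying the complementation property is a routine finite check, but confirming irreducibility of a $40$-subcube partition by hand is infeasible, so I would lean on \Cref{alg:reducibility}, whose polynomial-time guarantee (\Cref{thm:reducibility-algorithm}) makes a certified computer check of irreducibility straightforward. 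The only genuinely creative part is producing a length-$6$ partition meeting all the invariants simultaneously; a natural way to search for it is to start from the length-$4$ size-$9$ example displayed in \Cref{sec:maximal-subcubes-construction} and a second copy of it (or a suitably flipped variant), merge them via \Cref{lem:merge} to reach length$~6$, and then adjust a few subcubes into the Gray-map edge form so that the complementation property and the required subcube $0^4\enc{0,1}$ both hold.
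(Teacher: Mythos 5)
Your overall strategy is exactly the paper's: iterate \Cref{lem:perezhogin} (which multiplies the size by $4$ and advances the length by $2$, preserving irreducibility, the complementation property, and the subcube $0^{n-2}\enc{0,1}$, while supplying tightness), starting from base cases at $n=3$ and $n=6$. Your odd base case is complete and correct: the coordinate-flip $\{010, *11, 1*0, 00*, 101\}$ of $S_3$ is indeed an irreducible tight partition containing $00* = 0\enc{0,1}$, and your check of the complementation property is right (the paper uses a different but equally valid length-$3$ example, $\{100, *10, 1*1, 00*, 011\}$).

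The genuine gap is the even base case: you never exhibit a length-$6$, size-$40$ partition satisfying the invariants, and this is the crux of the theorem — the paper's proof hinges on an explicit algebraic construction (the $16$ points $\mathit{abcdef}$ with $a \oplus b = c \oplus f = \overline{d \oplus e}$ together with $24$ edges in six symmetric families such as $*aba\bar{b}\bar{a}$), which is precisely the ``genuinely creative part'' you defer. Worse, the search heuristic you propose cannot succeed as described: any application of \Cref{lem:merge} whose output is irreducible must have $F_0 \cap F_1 \neq \emptyset$ (if the intersection is empty, the subcubes starting with $0$ union to $0*^n$, so the result is reducible), hence the output size is $|F_0| + |F_1| - |F_0 \cap F_1| \leq |F_0| + |F_1| - 1$. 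Since the maximal size at length $5$ is $20$, merging two length-$5$ partitions yields at most $39 < 40$ subcubes, and starting from the length-$4$ size-$9$ example and merging twice does even worse ($\leq 17$ at length $5$, then $\leq 33$ at length $6$). So ``adjusting a few subcubes'' afterwards is not a repair of a near-miss but would have to create additional subcubes, which is not a defined operation. Without an actual size-$40$ length-$6$ base case, the even chain — i.e., the theorem for all even $n \geq 6$ — remains unproven.
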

\begin{proof}
Using \Cref{lem:perezhogin}, it suffices to construct tight irreducible subcube partitions of lengths $n \in \{3,6\}$ containing $0^{n-1}*$ and satisfying the complementation property in the lemma.

For $n = 3$, we can take
\[
 100, *10, 1*1, 00*, 011.
\]

For $n = 6$, we can take
\begin{align*}
&\mathit{abcdef}\colon a \oplus b = c \oplus f = \overline{d \oplus e} \\
&*aba\bar{b}\bar{a} \\
&a*\bar{a}b\bar{a}b \\
&ab*\bar{b}\bar{a}b \\
&ab\bar{a}*ab \\
&aba\bar{b}*\bar{b} \\
&ababa*
\end{align*}
Here $a,b,c,d,e,f$ range over $\{0,1\}$.
\end{proof}

We close this section by showing that $n = 4$ is indeed exceptional. We first need the following lemma, which will also be useful in \Cref{sec:homogeneous-partitions}.

\begin{lemma} \label{lem:length-4}
Let $F$ be an irreducible subcube partition of length $4$. The set of points in $F$ is either empty or one of the following, up to permutation and flipping of coordinates:
\begin{align*}
&0000 & &0000 \\
&1110 & &1110 \\
&     & &1101 \\
&     & &0011
\end{align*}
\end{lemma}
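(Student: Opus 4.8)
The plan is to determine the possible point sets purely from two consequences of the hypotheses: that no two points are adjacent, and that the points are parity-balanced. Write $P$ for the set of points of $F$, and split it as $P = E \cup O$ according to whether a point has even or odd weight. First, irreducibility forbids two points at Hamming distance $1$: their union would be an edge, a proper non-singleton sub-partition whose union is a subcube. Second, applying \Cref{lem:parity} to $G = F$ (whose union $\{0,1\}^4 = *^4$ is a subcube, and $|F|>1$ whenever $P \neq \emptyset$) with the inclusion-minimal star pattern $S = \emptyset$, the points split evenly by parity, so $|E| = |O| =: k$. Two distinct points of the same parity are automatically at even distance $\ge 2$, so the only distance constraint that can fail is between $E$ and $O$; since a point of $E$ and a point of $O$ are always at odd distance, forbidding distance $1$ forces every cross distance to equal $3$.

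Next I would translate the cross-distance-$3$ condition into a counting statement on the hypercube $Q_4$. For $o \in O$ let $S_3(o) = \{x : d(x,o)=3\}$ be the distance-$3$ sphere; then $E \subseteq \bigcap_{o \in O} S_3(o)$. Complementation $x \mapsto \overline{x}$ sends $S_3(o)$ bijectively onto the neighbourhood $N(o)$, so for $o_1 \neq o_2$ we get $|S_3(o_1) \cap S_3(o_2)| = |N(o_1) \cap N(o_2)|$, which equals $2$ if $d(o_1,o_2)=2$ and $0$ if $d(o_1,o_2)=4$ (two odd vertices are at even distance). Since any two odd vertices share at most $2$ common neighbours, for $k \ge 3$ we would need $k = |E| \le |\bigcap_{o} S_3(o)| \le 2$, a contradiction. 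Hence $k \le 2$.

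It remains to identify each case up to symmetry, using that the symmetry group (coordinate permutations together with flips) acts transitively on pairs of points at any fixed Hamming distance — one flips to move a chosen point to $0^4$ and then permutes coordinates. If $k=0$ the point set is empty. If $k=1$ then $P$ consists of one even and one odd point at distance $3$; by transitivity this pair is unique up to symmetry, with representative $\{0000, 1110\}$. If $k=2$ then, since the intersection $S_3(o_1)\cap S_3(o_2)$ must have size $k=2$, the two odd points satisfy $d(o_1,o_2)=2$ and $E$ is forced to be exactly the two complements of their common neighbours; thus $P$ is determined by the distance-$2$ pair $\{o_1,o_2\}$, which is again unique up to symmetry. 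Taking $o_1 = 1110$, $o_2 = 1101$ (common neighbours $1111, 1100$) yields $E = \{0000, 0011\}$ and hence the representative $\{0000, 1110, 1101, 0011\}$.

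The main thing to get right is the realization that irreducibility is used only through non-adjacency of points, while all the arithmetic is carried by the parity balance from \Cref{lem:parity}; the technical heart is the common-neighbour count, which both caps $k$ at $2$ and, in the case $k=2$, rigidifies $E$ completely so that the whole configuration collapses to a single distance-$2$ pair. I expect no genuine obstacle beyond bookkeeping: one should double check that the symmetry used to normalize the $o$-pair simultaneously carries the derived set $E$ to its standard form, which holds because $E$ is defined equivariantly from $O$.
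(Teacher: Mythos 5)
Your proof is correct, and it reaches the full classification by a route that is organized differently from the paper's, even though it rests on the same two pillars: parity balance among the points (\Cref{lem:parity} applied with $G = F$ and the inclusion-minimal star pattern $\emptyset$) and the fact that irreducibility forbids two points at Hamming distance $1$. The paper proceeds by incremental normalization: fix $0000$, use parity plus non-adjacency to force an odd point of weight $3$ (WLOG $1110$), then, if there are more than two points, force $1101$, check by hand that the only admissible further even point is $0011$, and finally observe that $1011,0111$ are excluded so the configuration closes up. You instead argue globally: all even--odd distances must equal $3$; the complementation map identifies $S_3(o_1) \cap S_3(o_2)$ with $N(o_1) \cap N(o_2)$, whose size is at most $2$, which caps $|E| = |O| \le 2$ in one stroke; and transitivity of the symmetry group on distance-$3$ pairs (for $k=1$) and on distance-$2$ pairs, with $E$ determined equivariantly from $O$ (for $k=2$), produces the two representatives. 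The content is ultimately the same computation --- the paper's excluded lists $1100,1010,0110,1001,0101,1111$ and $1011,0111$ are exactly the neighbors of the chosen odd points, i.e.\ your common-neighbor count done by hand --- but your version packages the exclusions as a single counting statement and makes the ``up to symmetry'' bookkeeping explicit rather than implicit in WLOG choices. Note that both arguments are genuinely special to length $4$: your complementation step uses $d(\bar x, o) = 4 - d(x,o)$, so distance-$3$ spheres become neighborhoods only because $n - 3 = 1$.
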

\begin{proof}
Suppose that $F$ contains a point, say $0000 \in F$.
According to \Cref{lem:parity}, $F$ must contain a point of odd parity. Due to irreducibility, this point needs to have weight $3$, say $1110 \in F$.

If $|F| > 2$ then according to \Cref{lem:parity}, $F$ contains another point of odd parity, which due to irreducibility needs to have weight $3$, say $1101 \in F$. According to \Cref{lem:parity}, $F$ needs to contain another point of even parity, which due to irreducibility cannot be one of $1100,1010,0110,1001,0101,1111$. Therefore $0011 \in F$.

Due to irreducibility, $1011,0111 \notin F$, and so \Cref{lem:parity} shows that in this case $F$ contains precisely the following points: $0000,1110,1101,0011$.
\end{proof}

We can now show that \Cref{thm:maximal-construction} cannot hold for $n = 4$.

\begin{lemma} \label{lem:maximal-4}
There is no irreducible subcube partition of length $4$ and size $10$.
\end{lemma}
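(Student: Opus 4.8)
The plan is to first pin down the coarse structure of a hypothetical irreducible subcube partition $F$ of length $4$ and size $10$ by a counting argument, and only then exhibit an explicit reduction. Write $m$ for the number of points (zero-dimensional subcubes) of $F$. Every non-point subcube has dimension at least $1$ and hence covers at least two of the $16$ points of $\{0,1\}^4$, so $m + 2(10-m) \le 16$, which gives $m \ge 4$. On the other hand, \Cref{lem:length-4} tells us that $m \in \{0,2,4\}$. Hence $m = 4$, and moreover the inequality above is tight, so all $6$ non-point subcubes are edges. Applying \Cref{lem:length-4} once more, after permuting and flipping coordinates we may assume that the four points are exactly $0000, 1110, 1101, 0011$, and the remaining $12$ vertices are matched into $6$ edges.

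Next I would examine the edges forced by the two extreme-weight vertices $1111$ and $1011$, reducing the number of cases using symmetries of the point set. The vertex $1111$ is adjacent to $1110, 1101, 0111, 1011$; the first two are points, so $1111$ lies in either $*111$ or $1*11$. These two options are exchanged by the transposition of coordinates $1$ and $2$, which fixes each of the four points, so we may assume $*111 \in F$. Now $1011$ is adjacent to $0011$ (a point), to $1111$ (already covered), and to $1001, 1010$; thus $1011$ lies in either $10*1$ or $101*$. These are exchanged by the transposition of coordinates $3$ and $4$, which preserves the point set (swapping $1110$ and $1101$ and fixing $0000,0011$) and fixes the edge $*111$, so we may assume $101* \in F$. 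Finally, $0010$ is adjacent only to $0000, 0011$ (both points), to $1010$ (now covered by $101*$), and to $0110$, so the edge $0*10 \in F$ is forced.

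At this point $F$ contains the two points $1110, 0011$ and the three edges $*111, 101*, 0*10$, and one checks that these five subcubes cover precisely the eight vertices whose third coordinate is $1$; that is, their union is the subcube $**1*$. Since $1 < 5 < 10$, this is a proper nontrivial reduction, contradicting the irreducibility of $F$. The argument needs essentially no case analysis beyond the two symmetry reductions, so the only point requiring care — and the closest thing to an obstacle — is verifying that each transposition used genuinely fixes the four-point set (as a set) and the edge fixed at the previous step, so that the ``without loss of generality'' is legitimate; this is the routine bookkeeping one should carry out explicitly.
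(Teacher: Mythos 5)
Your proof is correct, but after the common setup it takes a genuinely different route from the paper's. Both arguments begin identically: counting vertices forces at least $4$ points, \Cref{lem:length-4} caps the number at $4$, so the partition consists of exactly $4$ points and $6$ edges, and up to symmetry the points are $0000, 1110, 1101, 0011$. From there the paper argues globally: letting $F_i$ be the set of edges with a star in coordinate $i$, irreducibility forces each $F_i$ to be non-empty, and a parity count inside the halfcube $x_i = 0$ (which contains exactly two of the four points, while each edge outside $F_i$ contributes an even number of its vertices and each edge in $F_i$ contributes exactly one) forces $|F_i|$ to be even, hence $|F_i| \ge 2$; summing over $i$ gives $\sum_i |F_i| \ge 8$, contradicting the fact that there are only $6$ edges. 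You instead argue locally and constructively: two legitimate symmetry reductions (the transpositions $(1\,2)$ and $(3\,4)$, which indeed preserve the point set and, in the second case, the already-fixed edge $*111$) pin down $*111, 101* \in F$, the edge $0*10$ is then forced, and the five subcubes $1110, 0011, *111, 101*, 0*10$ visibly cover exactly $**1*$, exhibiting an explicit reducing subset. I checked your adjacency claims, the symmetry bookkeeping, and the final union ($1+1+2+2+2 = 8$ pairwise distinct vertices, all with third coordinate $1$); everything is in order. The trade-off: the paper's counting argument is shorter, needs no case analysis or WLOG hygiene, and stays in the same parity-counting spirit as \Cref{lem:parity}, whereas your argument is more elementary and produces a concrete witness of reducibility rather than a numerical contradiction, at the cost of the careful symmetry verification you rightly flag as the delicate point.
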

\begin{proof}
Let $F$ be an irreducible subcube partition of length $4$ and size $10$. According to \Cref{lem:length-4}, $F$ contains at most $4$ points, and so has to consist of $4$ points and $6$ edges. Moreover, without loss of generality the points in $F$ are $0000, 1110, 1101, 0011$.

Let $F_i$ consist of all subcubes $s \in F$ with $s_i = *$. Note that $F_i$ is non-empty, since otherwise the union of subcubes $s \in F$ with $s_i = 0$ is $*^{i-1}0*^{4-i}$, contradicting irreducibility.

There are exactly two points in $F$ belonging to $*^{i-1}0*^{4-i}$. Each edge outside of $F_i$ contains an even number of such points, and each edge in $F_i$ contains one such point. Therefore $|F_i|$ is even, and so $|F_i| \ge 2$. This implies that $|F_1| + |F_2| + |F_3| + |F_4| \geq 8$, contradicting the fact that $F$ contains only $6$ edges.
\end{proof}

This lemma also follows from the computation of Peitl and Szeider~\cite[Table~1]{PeitlSzeider22}.

\subsection{Maximal minimum dimension}
\label{sec:maximal-min-dim}

All irreducible subcube partitions we have exhibited so far contain points. Is this necessary? More generally, given $n$, what is the maximal $d$ such that there exists a tight irreducible subcube partition in which every subcube has dimension at least $d$? (The question doesn't make sense without assuming tightness, since $*^n$ is always irreducible.)

The constructions we give below suggest the following conjecture.

\begin{definition}[Minimum dimension] \label{def:min-dim}
For a subcube partition $F$, let $\delta(F)$ denote the minimum dimension of a subcube of $F$.

Also, let $\delta_*(F)$ denote the minimum dimension of a subcube of $F$ ending with a star (if such a subcube exists), and let $\delta_b(F)$ denote the minimum dimension of a subcube of $F$ not ending with a star (if such a subcube exists).
\end{definition}

\begin{conjecture} \label{conj:min-dim}
Every tight irreducible subcube partition $F$ of length $n$ satisfies $\delta(F) \leq n/2 - o(n)$.
\end{conjecture}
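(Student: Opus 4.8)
The plan is to prove the contrapositive: I assume $F$ is a tight irreducible subcube partition of length $n$ in which \emph{every} subcube has dimension greater than $n/2 - o(n)$, i.e.\ codimension $c < n/2 + o(n)$, and aim for a contradiction. The first, purely quantitative, observation is that such an $F$ is necessarily small: since $1 = \sum_{s \in F} 2^{-\operatorname{codim}(s)} \ge |F|\, 2^{-c}$ by \Cref{lem:testing-partition}, we get $|F| \le 2^{c} < 2^{n/2 + o(n)}$. Thus the whole problem reduces to the following assertion, which is where all the content lies: a tight irreducible subcube partition with at most $2^{n/2+o(n)}$ subcubes cannot have all of its subcubes of dimension above $n/2$.

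My main line of attack would be a ``growing'' argument built on \Cref{thm:reducibility-algorithm}. Irreducibility is equivalent to the statement that, for every pair $s_i, s_j \in F$, the process that repeatedly enlarges $G$ by any subcube meeting $\bigvee G$ terminates only at $G = F$; equivalently, the join $\bigvee G$ must climb all the way from codimension $\le c$ down to $*^n$. I would try to show that when all codimensions are below $n/2$ one can choose the starting pair so that this climb gets stuck, with the join stabilizing at a proper subcube before swallowing all of $F$. Concretely, I would track the fixed coordinates of the current join and argue, using regularity (\Cref{lem:irreducible-regular}) and a counting bound on how the scarce large subcubes can abut a given face, that some face of dimension strictly between $c$ and $n$ is already tiled by a proper subfamily. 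The parity constraint of \Cref{lem:parity}, which forces any inclusion-minimal star pattern inside a reducing family to occur an even number of times, would serve to rule out the degenerate closures.

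A complementary approach is probabilistic, but one must be careful: the naive hope of bounding the minimum dimension by the average dimension $\mathbb{E}_x[\dim s(x)] = \sum_i \Pr_x[s(x)_i = *]$ (where $s(x)$ is the subcube containing a uniform random point $x$) is hopeless. In the cubic partition of \Cref{thm:cubic} a handful of codimension-$3$ subcubes carry almost all the measure, so the average dimension is $n - O(1)$ while the minimum dimension is $0$. One must therefore target the minimum directly, for instance by exhibiting a single point whose subcube is provably small. I would experiment with a random restriction of about $n/2$ coordinates: a subcube of dimension $d$ collapses in expectation to dimension roughly $d/2$, and if one can certify that the restricted partition fails tightness or irreducibility, an inductive descent on $n$ might force a small subcube to surface.

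The hard part will be converting the global condition of irreducibility into the local combinatorial leverage these plans require. Irreducibility forbids \emph{any} proper subfamily from unioning to a subcube, and there is no evident way to read off, from the mere scarcity and largeness of the subcubes, a specific offending subfamily; the cubic and Lagarias--Shor constructions show that large subcubes coexist happily with irreducibility as long as small subcubes are also present, so any argument must pin down exactly where the forced small subcube comes from. A reasonable intermediate milestone, well short of the conjectured $n/2$, would be to establish that every tight irreducible partition contains a subcube of dimension at most $(1-\varepsilon)n$ for some absolute $\varepsilon > 0$; this already excludes the extreme regime and seems the most likely target for the growing argument alone.
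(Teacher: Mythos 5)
You were asked to prove \Cref{conj:min-dim}, which the paper itself leaves open: it has no proof there at all. The paper only supplies constructions (\Cref{thm:min-dim}, built from \Cref{lem:merge-stars}) achieving $\delta(F) \approx n/2$, i.e., evidence that the conjectured upper bound, if true, would be essentially tight. So a correct argument from you would be genuinely new mathematics, and it must be judged on whether it is actually a proof. It is not: your text is a research plan, and you say so yourself --- the growing argument is ``I would try to show\dots'', the restriction argument is ``I would experiment with\dots'', and you concede that ``the hard part will be converting the global condition of irreducibility into the local combinatorial leverage these plans require.'' That hard part is the entire content of the conjecture, and it is nowhere addressed.

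Concretely, the gaps are these. First, your opening ``reduction'' is vacuous: from all dimensions exceeding $n/2 - o(n)$ you deduce via \Cref{lem:testing-partition} that $|F| \le 2^{n/2+o(n)}$, but this size bound is an automatic consequence of the hypothesis you are trying to refute, so restating the task as ``small partitions cannot consist solely of large subcubes'' adds no leverage --- the problem has been rephrased, not reduced. Second, the growing argument built on \Cref{thm:reducibility-algorithm} begs the question: irreducibility of $F$ is, by definition, the statement that the join-closure process started from any pair terminates only at $F$. To reach a contradiction you must exhibit a proper subfamily of size greater than one whose union is a subcube, and you give no mechanism that produces one from the scarcity of large subcubes; \Cref{lem:irreducible-regular} and \Cref{lem:parity} are named as tools but never assembled into an argument, and the parity lemma in particular constrains the structure of a reducing family only once such a family is known to exist. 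Third, you correctly refute your own probabilistic approach (the cubic partition of \Cref{thm:cubic} does kill the averaging idea), but the random-restriction variant is left without its key ingredient, a certificate that the restricted partition fails tightness or irreducibility. Even your proposed weaker milestone, dimension at most $(1-\varepsilon)n$, is not established. In short, no step of the proposal is wrong so much as no step is carried out; the conjecture is exactly as open at the end of your text as at the beginning.
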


One can similarly ask for the maximum value of $\Delta(F)$, which is the minimum codimension of a subcube of $F$, over all irreducible subcube partitions of length $n$. The results in \Cref{sec:maximal-subcubes} shows that when $n \geq 3$ the maximal value is $\Delta(F) = n-1$ (the case $n = 4$ is covered in \Cref{sec:homogeneous-partitions}).

\smallskip

In the remainder of this section, we give a construction matching \Cref{conj:min-dim}. The construction is based on the following \namecref{lem:merge-stars}.

\begin{lemma} \label{lem:merge-stars}
Let $F$ be a subcube partition of length $n$. Define
\[
 G = \{ *t** : t* \in F \} \cup \{ 0tb*, 1t*b : b \in \{0,1\}, tb \in F  \}.
\]
Then
\begin{enumerate}[(a)]
\item $G$ is a subcube partition of length $n + 2$.
\item If $F$ is tight then $G$ is tight.
\item If $F$ is irreducible and contains a subcube ending with a star then $G$ is irreducible and contains a subcube ending with a star.
\item We have $\delta_*(G) = \min(\delta_*(F) + 2, \delta_b(F) + 1)$ and $\delta_b(G) = \delta_b(F) + 1$, and so $\delta(G) \ge \delta(F) + 1$. Moreover, if $\delta(F) = \delta_b(F)$ then $\delta(G) = \delta_*(G) = \delta_b(G) = \delta(F) + 1$.
\end{enumerate}
\end{lemma}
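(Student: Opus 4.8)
The plan is to recognize $G$ as the output of \Cref{lem:merge}, applied to a carefully chosen pair of length-$(n+1)$ partitions, and then read off most of the claims from that lemma. Writing each subcube of $G$ as $c_1 x$, where $c_1 \in \{0,1,*\}$ is the prepended coordinate and $x$ is its length-$(n+1)$ body, the three families become $0\cdot(tb*)$, $1\cdot(t*b)$, and $*\cdot(t**)$. Matching this against the definition in \Cref{lem:merge} suggests setting
\[
 F_0 = \{ tb* : tb \in F \} \cup \{ t** : t* \in F \}, \qquad F_1 = \{ t*b : tb \in F \} \cup \{ t** : t* \in F \},
\]
so that $F_0 \setminus F_1 = \{tb* : tb \in F\}$, $F_1 \setminus F_0 = \{t*b : tb \in F\}$, and $F_0 \cap F_1 = \{t** : t* \in F\}$, whence $G$ is exactly the merge of $F_0$ and $F_1$. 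The key observation is that $F_0$ is simply $F$ with a free coordinate \emph{appended}, while $F_1$ is $F$ with a free coordinate \emph{inserted} just before the last coordinate; both are therefore subcube partitions of length $n+1$.

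Given this setup, part (a) is immediate from \Cref{lem:merge}(a). Part (b) needs a separate, direct argument, because adjoining a free coordinate destroys tightness, so \Cref{lem:merge}(b) does not apply to $F_0,F_1$. Instead I would check the $n+2$ coordinates of $G$ by hand: if $F$ is tight then its last coordinate is mentioned, so $F$ contains some bit-ending subcube $tb$, and then $0tb*$ and $1t*b$ mention coordinates $1$, $n+1$, and $n+2$ of $G$; each remaining coordinate $2,\dots,n$ is mentioned because the corresponding coordinate of $F$ is, and that symbol is copied verbatim into every lifted subcube.

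The main work is part (c), the irreducibility of $G$, and the strategy is to verify the hypotheses of \Cref{lem:merge}(c) for the decomposition above. First, $F_0 \cap F_1 = \{t** : t* \in F\}$ is nonempty precisely because $F$ contains a subcube ending with a star, which is exactly where that hypothesis is used. Next, $F_1$ is irreducible: every subcube of $F_1$ has a star in the inserted coordinate, so projecting that coordinate out is a bijection onto $F$ that turns any reducing subset of $F_1$ into a reducing subset of $F$, contradicting irreducibility. Finally, and this is the crux, $F_0 \setminus F_1 = \{tb* : tb \in F\}$ must be irreducible in the sense of \Cref{def:reducibility-partial}: a subset $\{t_i b_i *\}$ has union $(\bigcup_i t_i b_i)*$, which is a subcube different from $*^{n+1}$ only if $\{t_i b_i\} \subseteq F$ has union a subcube different from $*^n$; but by the irreducibility of $F$ any such union of more than one subcube would have to be all of $F$, which is impossible since these are all bit-ending subcubes whereas $F$ also contains a star-ending one. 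Then \Cref{lem:merge}(c) gives that $G$ is irreducible, and $G$ contains the star-ending subcube $*t**$ coming from the star-ending subcube of $F$.

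Part (d) is routine bookkeeping once one notes that $*t**$ has dimension $\dim(t*)+2$, while $0tb*$ and $1t*b$ each have dimension $\dim(tb)+1$; moreover $*t**$ and $0tb*$ end with a star whereas $1t*b$ does not. Taking minima over the appropriate families yields $\delta_*(G) = \min(\delta_*(F)+2, \delta_b(F)+1)$ and $\delta_b(G) = \delta_b(F)+1$, and hence $\delta(G) \ge \delta(F)+1$; and if the minimum dimension of $F$ is attained by a bit-ending subcube, i.e.\ $\delta(F) = \delta_b(F)$, then $\delta_*(F) + 2 \ge \delta_b(F) + 2 > \delta_b(F) + 1$, collapsing both formulas to $\delta(F)+1$. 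The only genuinely delicate point in the whole argument is identifying the right pair $F_0,F_1$ and checking the partial-partition irreducibility of $F_0 \setminus F_1$; everything else is a direct consequence of \Cref{lem:merge} or a dimension count.
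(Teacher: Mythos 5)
Your proposal is correct and takes essentially the same route as the paper: the paper's proof defines exactly the same pair $F_0 = \{ tc* : tc \in F \}$ and $F_1 = \{ t*c : tc \in F \}$ (with $c$ ranging over $\{0,1,*\}$), applies \Cref{lem:merge}, proves tightness by the same direct coordinate check, and disposes of (d) by counting star coordinates. The only difference is one of detail: where the paper simply notes that irreducibility of $F$ implies irreducibility of $F_0$ and $F_1$ (leaving implicit that a subset of an irreducible partition is irreducible as a partial partition, as needed for \Cref{lem:merge}(c)), you verify the irreducibility of $F_0 \setminus F_1$ in the sense of \Cref{def:reducibility-partial} explicitly.
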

\begin{proof}
Let $F_0 = \{ tc* : tc \in F \}$ and $F_1 = \{ t*c : tc \in F \}$, where $c \in \{0,1,*\}$ in both cases. Applying \Cref{lem:merge} to $F_0,F_1$, we obtain the subcube partition $G$.

Suppose that $F$ is tight. For every $i \in \{1,\dots,n-1\}$, some subcube of $F$ mentions coordinate $i$. The corresponding subcube or subcubes of $G$ mention coordinate $i+1$. Some subcube of $F$ mentions coordinate $n$. The corresponding subcubes of $G$ mention the remaining coordinates $1,n+1,n+2$.

Suppose that $F$ is irreducible and contains a subcube $s \in F$ ending with a star. The irreducibility of $F$ directly implies the irreducibility of $F_0$ and $F_1$. Since $s* \in F_0 \cap F_1$, \Cref{lem:merge} implies that $G$ is irreducible. Furthermore, $*s* \in G$ is a subcube ending with a star.

The remaining claims are easy to verify directly once we notice that the dimension of a subcube is the number of star coordinates.
\end{proof}

We apply the construction on three specific tight irreducible subcube partitions (one only for $n = 4$) in order to obtain the following result, which gives the best constructions we are aware of.

\begin{theorem} \label{thm:min-dim}
For every odd $n \ge 3$ there is a tight irreducible subcube partition $F$ of length $n$ with $\delta(F) = \frac{n-3}{2}$.

For $n = 4$ there is a tight irreducible subcube partition $F$ of length $n$ with $\delta(F) = \frac{n-4}{2}$.

For every even $n \ge 6$ there is a tight irreducible subcube partition $F$ of length $n$ with $\delta(F) = \frac{n-2}{2}$.
\end{theorem}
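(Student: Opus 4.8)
The plan is to obtain all three families by iterating \Cref{lem:merge-stars} from a small set of base partitions, exploiting the fact that each application raises the length by $2$ and, under the right hypothesis, raises the minimum dimension by exactly $1$. The first step is to isolate an invariant that the \namecref{lem:merge-stars} preserves and that pins down the arithmetic. The right invariant is that $F$ is a tight irreducible subcube partition which contains a subcube ending with a star and which satisfies $\delta(F) = \delta_b(F)$. Parts (b) and (c) of \Cref{lem:merge-stars} carry tightness, irreducibility, and the existence of a subcube ending in a star from $F$ to $G$. For the dimension, part (d) shows that when $\delta(F) = \delta_b(F)$ one has $\delta(G) = \delta_*(G) = \delta_b(G) = \delta(F) + 1$; in particular $G$ again satisfies $\delta(G) = \delta_b(G)$, so the invariant reproduces itself and $\delta$ grows by exactly $1$ per step. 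Consequently, starting from a base of length $n_0$ with $\delta = \delta_b = d_0$ and applying \Cref{lem:merge-stars} exactly $k$ times yields a tight irreducible subcube partition of length $n_0 + 2k$ and minimum dimension $d_0 + k$.

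For the odd case I would take the base $S_3 = \{000, *01, 1*0, 01*, 111\}$ of \Cref{thm:size-2n-1-merging}. It is tight and irreducible; the subcube $01*$ ends with a star; its points $000, 111$ give $\delta(S_3) = 0$; and its only star-terminated subcube $01*$ has dimension $1$, so $\delta_*(S_3) = 1 \ge 0 = \delta_b(S_3) = \delta(S_3)$, verifying the invariant. Hence for odd $n = 3 + 2k \ge 3$, applying \Cref{lem:merge-stars} $k = \tfrac{n-3}{2}$ times produces a tight irreducible partition of length $n$ with $\delta = \tfrac{n-3}{2}$.

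The even case $n \ge 6$ follows the same scheme but requires a stronger base: a length-$6$ tight irreducible subcube partition $F_6$ that contains a subcube ending in a star and satisfies $\delta(F_6) = \delta_b(F_6) = 2$. Granting such an $F_6$, for even $n = 6 + 2k \ge 6$ we apply \Cref{lem:merge-stars} $k = \tfrac{n-6}{2}$ times to obtain $\delta = 2 + k = \tfrac{n-2}{2}$. The single leftover length $n = 4$, which sits between the two iteration families, is handled by its own standalone base: any tight irreducible partition of length $4$ containing a point has $\delta = 0 = \tfrac{n-4}{2}$, and one such is the partition $S_4$ of \Cref{thm:size-2n-1-merging}. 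No iteration is needed here.

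The main obstacle is exhibiting and certifying the length-$6$ base $F_6$. Every one of its subcubes must have dimension at least $2$ (codimension at most $4$ out of $6$), the partition must remain tight and irreducible, and the minimum dimension must be attained by a subcube not ending in a star (so that $\delta_b = 2$) while some subcube does end in a star. I would write down an explicit candidate $F_6$ and verify its irreducibility either by running \Cref{alg:reducibility} or by a parity argument in the spirit of \Cref{lem:parity} and \Cref{cor:parity}; everything else reduces to the bookkeeping recorded above.
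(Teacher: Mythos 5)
Your scheme is exactly the paper's: the paper proves the odd case by iterating \Cref{lem:merge-stars} on $S_3$, handles $n=4$ by taking $S_4$ from \Cref{thm:size-2n-1-merging}, and proves the even case by iterating \Cref{lem:merge-stars} on an explicit length-$6$ base. Your invariant --- tight, irreducible, contains a subcube ending in a star, and $\delta(F)=\delta_b(F)$ --- is the correct bookkeeping, and you spell out its self-reproduction under part (d) more explicitly than the paper does (the paper leaves this implicit). Your verifications for $S_3$ and $S_4$ are correct.

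However, for even $n \ge 6$ there is a genuine gap: the length-$6$ base $F_6$ is never exhibited, only postulated. This is not a deferrable verification step --- the existence of a tight irreducible subcube partition of length $6$ with no subcube of dimension $0$ or $1$ is precisely the nontrivial content of the even case, and $n=6$ itself is an instance of the claimed statement, so without a concrete $F_6$ that instance is simply unproven. The paper supplies an explicit $11$-subcube partition, namely $0*0*1*$, $00**0*$, $001*1*$, $010*0*$, $0110**$, $1**0*1$, $10***0$, $10*1*1$, $11*0*0$, $1101**$, $*111**$, whose irreducibility is certified by \Cref{alg:reducibility}; it satisfies $\delta=\delta_b=2$ (e.g.\ $10*1*1$ does not end in a star and has dimension $2$) and contains star-ending subcubes, so it plugs directly into your iteration. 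Alternatively, since you were free to use other results of the paper, you could have invoked the $(6,4)$-homogeneous partition of \Cref{lem:homogeneous-6-4}: it is tight and irreducible by definition and construction, all of its subcubes have dimension $2$, it contains $0000**$ (ending in a star) and $001**1$ (not ending in a star), hence $\delta=\delta_b=2$ and your argument applies verbatim. With either base supplied, your proof is complete and coincides with the paper's.
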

\begin{proof}
The first part follows from applying \Cref{lem:merge-stars} to the tight irreducible subcube partition $S_3$ of \Cref{thm:size-2n-1-merging}. The second part follows from taking the tight irreducible subcube partition $S_4$ of the same \namecref{thm:size-2n-1-merging}. The third part follows from applying \Cref{lem:merge-stars} to the following tight irreducible subcube partition, whose irreducibility can be checked using \Cref{alg:reducibility}:
\begin{align*}
&0*0*1* & &00**0* & &001*1* & &010*0* & &0110** \\
&1**0*1 & &10***0 & &10*1*1 & &11*0*0 & &1101** \\
&&&&&&&&&*111** \qedhere
\end{align*}
\end{proof}

\subsection{Homogeneous subcube partitions}
\label{sec:homogeneous-partitions}

So far we have considered various parameters of irreducible subcube partitions, attempting to optimize them. The final question we consider concerns subcube partitions in which all subcubes have the same codimension.

\begin{definition}[Homogeneity] \label{def:homogeneous}
An \emph{($n,k)$-homogeneous} subcube partition is a tight subcube partition of length $n$ in which all subcubes have codimension $k$.
\end{definition}

In this section, we explore the following question: for which $n,k$ does there exist an irreducible $(n,k)$-homogeneous subcube partition?

Here is a table with some experimental results:
\[
\begin{array}{c|cccccc}
& n=4 & n=5 & n=6 & n=7 & n=8 & n=9 \\\hline
k=3 & \taby & \tabn & \tabn & \tabn & \tabn & \tabn \\
k=4 & & \tabn & \taby & \tabn & \tabn & \tabn \\
k=5 & & & \taby & \taby & \taby & \taby \\
k=6 & & & & \taby & \taby & \taby \\
k=7 & & & & & \taby & \taby \\
k=8 & & & & & & \taby
\end{array}
\]

In \Cref{sec:homogeneous-elementary} we prove several elementary results: an irreducible $(n,1)$-homogeneous subcube partition exists only for $n = 1$; no irreducible $(n,2)$-homogeneous partition exists; and for $k \ge 3$, if an irreducible $(n,k)$-homogeneous subcube partition exists then $k+1 \leq n \leq 2^k-3$.
In \Cref{sec:homogeneous-weights} we show that the weight distribution of an $(n,k)$-homogeneous subcube partition is binomial.

In \Cref{sec:homogeneous-perezhogin}, we describe a construction of Perezhogin~\cite{Perezhogin05}, which gives irreducible $(n,n-1)$-homogeneous subcube partitions for all $n \ge 4$ other than $n = 5$. We also show that no irreducible $(5,4)$-homogeneous subcube partition exists, and that an irreducible $(6,4)$-homogeneous subcube partition does exist.

In \Cref{sec:homogeneous-construction}, we show how the irreducible subcube partitions constructed in \Cref{sec:homogeneous-perezhogin} give rise to even more irreducible subcube partitions, using a simple inductive construction.

Finally, in \Cref{sec:homogeneous-3} we show that an irreducible $(n,3)$-homogeneous partition exists only for $n = 4$, and in \Cref{sec:homogeneous-4} we show that an irreducible $(n,4)$-homogeneous partition exists only for $n = 6$ (with the help of a computer).

\subsubsection{Elementary bounds} \label{sec:homogeneous-elementary}

We start with the following general bound.

\begin{lemma} \label{lem:homogeneous-k-2k}
Suppose that $n \ge 4$ and $k \ge 2$. If there exists an irreducible $(n,k)$-homogeneous subcube partition then $k+1 \leq n \leq 2^k - 3$.
\end{lemma}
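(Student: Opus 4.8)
The plan is to extract two facts from results already established and combine them. The crucial observation is that homogeneity completely determines the size. Since every subcube has codimension $k$, each contributes $2^{-k}$ to the volume count in \Cref{lem:testing-partition}, and the partition has volume $1$; therefore the number of subcubes is exactly $2^k$. So the whole problem reduces to bounding $2^k$ against $n$, and the two inequalities $k+1 \le n \le 2^k-3$ become, respectively, a statement ruling out the degenerate all-points case and a statement about the minimal size of a tight irreducible subcube partition.

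For the upper bound $n \le 2^k - 3$, I would simply invoke the general size lower bound. An $(n,k)$-homogeneous subcube partition is by definition tight, and here it is assumed irreducible, so \Cref{thm:size-lower-bound} applies (using the hypothesis $n \ge 4$) and gives that its size is at least $n + 3$. Since the size equals $2^k$, this yields $2^k \ge n + 3$, which rearranges to $n \le 2^k - 3$. No further work is needed for this direction.

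For the lower bound $n \ge k+1$, I would first note that a subcube of length $n$ has codimension at most $n$, so $k \le n$ is forced. It then remains to rule out $n = k$. If $n = k$ then every subcube has dimension $0$, i.e.\ is a point, so the partition is the set of all $2^n$ points of $\{0,1\}^n$. Since $k \ge 2$ gives $n = k \ge 2$, there are $2^n \ge 4$ points, and we may choose two points at Hamming distance $1$; their union is an edge, which is a subcube, and they form a proper subset of size $2 < 2^n$. This contradicts irreducibility. Hence $n \ne k$, and together with $k \le n$ we get $n \ge k+1$.

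There is no substantial obstacle here: both bounds are corollaries of previously proved facts, the only genuine step being the recognition via \Cref{lem:testing-partition} that homogeneity pins the size to $2^k$, so that the extremal size bound of \Cref{thm:size-lower-bound} translates directly into the upper bound on $n$. The degenerate-case analysis for the lower bound is elementary and relies only on the definition of reducibility in \Cref{def:reducible}.
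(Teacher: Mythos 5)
Your proposal is correct and follows essentially the same route as the paper's proof: the paper likewise observes that homogeneity forces the size to be $2^k$, rules out $n = k$ because an all-points partition is reducible, and obtains the upper bound by combining the size $2^k$ with \Cref{thm:size-lower-bound}. You merely fill in details the paper leaves implicit (the volume count via \Cref{lem:testing-partition} and the explicit adjacent-points contradiction).
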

\begin{proof}
Let $F$ be an irreducible $(n,k)$-homogeneous subcube partition. Clearly $n \geq k$. If $n = k$ then all subcubes in $F$ are points, and so $F$ is not irreducible. Hence $n \ge k+1$. Since $F$ has size $2^k$, the upper bound $n \leq 2^k - 3$ follows from \Cref{thm:size-lower-bound}.
\end{proof}

The following corollary of \Cref{lem:parity} will be useful. The corollary itself, and its applications below, were suggested to us by Kisielewicz~\cite{Kisielewicz23}.

\begin{corollary} \label{cor:parity-homogeneous}
Let $F$ be a homogeneous subcube partition, and let $S$ be a star pattern occurring in $F$. Among subcubes in $F$ whose star pattern is $S$, half have even parity and half have odd parity.
\end{corollary}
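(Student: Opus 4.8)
The plan is to deduce Corollary~\ref{cor:parity-homogeneous} directly from \Cref{lem:parity} by exploiting homogeneity to guarantee that any star pattern occurring in $F$ is automatically inclusion-minimal among those occurring in the relevant subfamily. First I would fix a star pattern $S$ that occurs in $F$, and let $G$ be the collection of all subcubes in $F$ whose star pattern is exactly $S$. The goal is to apply \Cref{lem:parity} to a subset of $F$ whose union is a subcube and whose unique inclusion-minimal star pattern is $S$; the subtlety is that \Cref{lem:parity} is stated for a subset $G$ whose \emph{union} is a subcube, whereas here $G$ is just the set of subcubes sharing a single star pattern, and its union need not be a subcube.

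The key observation that makes this work is that in a homogeneous partition all subcubes have the same codimension $k$, hence the same number $n-k$ of stars, hence all star patterns have the same cardinality $n-k$. Consequently no star pattern occurring anywhere in $F$ can be strictly contained in another: distinct patterns of equal size are incomparable under inclusion. So $S$ is automatically inclusion-minimal, not merely within $G$ but among all star patterns appearing in $F$. I would use this to reduce to the lemma as follows: take the whole partition $F$ in the role of the set $G$ in \Cref{lem:parity}. The hypotheses of that lemma require $|G|>1$ and that $\bigcup G$ be a subcube; since $\bigcup F = \{0,1\}^n$ is the full cube (a subcube with empty star pattern would be the trivial case, but here $F$ has size $2^k \ge 2$ whenever $k \ge 1$, so $|F|>1$), both hypotheses hold. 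Then $S$, being a star pattern occurring in $F$ of the common minimal size $n-k$, is an inclusion-minimal star pattern occurring in $F$, and \Cref{lem:parity} says precisely that among subcubes of $F$ with star pattern $S$, half have even parity and half have odd parity.

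I expect the main obstacle to be the degenerate edge cases where \Cref{lem:parity} does not literally apply, so I would handle them explicitly. The lemma as stated assumes $|G|>1$ and concludes via the auxiliary subcube $C = \{p \in \bigcup G : p_i = 0 \text{ for all } i \in S\}$, needing $|C|>1$; the proof of \Cref{lem:parity} derives $|C|>1$ from $|G|>1$ by noting that $|C|=1$ would force $S$ to be the star pattern of $\bigcup G$, which for $G=F$ and $\bigcup F = \{0,1\}^n$ would mean $S = [n]$, i.e.\ $n-k=n$, $k=0$. Thus the argument is valid whenever $k \ge 1$; the only truly trivial case is $k=0$, where $F = \{\text{the whole cube}\}$ has a single subcube with star pattern $[n]$ and the statement holds vacuously (or degenerately). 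I would therefore simply invoke \Cref{lem:parity} with $G = F$, after the one-line remark that equal-cardinality star patterns are pairwise incomparable so every occurring pattern is inclusion-minimal, and note that the conclusion is exactly the claim.

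\begin{proof}
Since $F$ is homogeneous, every subcube of $F$ has the same codimension $k$, and hence every star pattern occurring in $F$ has the same cardinality $n-k$. Two sets of equal cardinality are incomparable under strict inclusion, so no star pattern occurring in $F$ is strictly contained in another. In particular, the given star pattern $S$ is an inclusion-minimal star pattern occurring in $F$. As $F$ is itself a subcube partition whose union is the subcube $\{0,1\}^n$, and $|F| = 2^k > 1$, we may apply \Cref{lem:parity} with $G = F$ and the inclusion-minimal star pattern $S$. The \namecref{lem:parity} yields that among the subcubes of $F$ whose star pattern is $S$, half have even parity and half have odd parity.
\end{proof}
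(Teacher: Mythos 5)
Your proof is correct and follows essentially the same route as the paper: homogeneity forces all star patterns to have equal cardinality, hence every occurring pattern is inclusion-minimal, and \Cref{lem:parity} applied with $G = F$ gives the conclusion. The only cosmetic difference is your discussion of the $k=0$ case, which the paper silently ignores; note that it is in fact excluded because homogeneous partitions are tight by \Cref{def:homogeneous}, so $|F| = 2^k > 1$ holds automatically.
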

\begin{proof}
Since $F$ is homogeneous, all star patterns in $F$ are inclusion-minimal, and so the \namecref{cor:parity-homogeneous} follows immediately from \Cref{lem:parity} (applied with $G = F$).
\end{proof}

We now determine when an irreducible $(n,k)$-homogeneous subcube partition exists for $k = 1$ and $k = 2$.

\begin{lemma} \label{lem:homogeneous-1}
If $F$ is an irreducible $(n,1)$-homogeneous subcube partition then $n = 1$ and $F = \{0, 1\}$.
\end{lemma}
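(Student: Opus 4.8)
The plan is to analyze an irreducible $(n,1)$-homogeneous subcube partition directly. By definition, every subcube has codimension $1$, so each subcube fixes exactly one coordinate and leaves the remaining $n-1$ coordinates free; each is of the form $*^{i-1}b*^{n-i}$ for some coordinate $i$ and bit $b$. Since $F$ has size $2^k = 2$, the partition consists of exactly two such subcubes.

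First I would observe that the two codimension-$1$ subcubes must be disjoint, hence by \Cref{lem:disjoint-subcubes} they must conflict. Two subcubes of the form $*^{i-1}b*^{n-i}$ and $*^{j-1}c*^{n-j}$ conflict if and only if they fix the \emph{same} coordinate to opposite values, i.e.\ $i = j$ and $c = \bar b$. Thus, up to relabeling, the two subcubes are $b*^{n-1}$ and $\bar b *^{n-1}$ on a common coordinate $i$; that is, $F = \{0*^{n-1}, 1*^{n-1}\}$ after fixing $i$ as the first coordinate. (One can also reach this conclusion via \Cref{lem:testing-partition}: two codimension-$1$ subcubes sum to $2 \cdot 2^{-1} = 1$, so any pair of disjoint codimension-$1$ subcubes already forms a partition, forcing them to conflict on the single fixed coordinate.)

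Next I would invoke tightness: the partition $\{0*^{n-1}, 1*^{n-1}\}$ mentions only the single coordinate $i$, so for $F$ to be tight we need $n = 1$, in which case $F = \{0, 1\}$. This matches the statement. I should double check the degenerate reading of ``subcube partition'' for $n=1$: the two subcubes $0$ and $1$ are points (codimension $1$ in length $1$), they partition $\{0,1\}$, and the coordinate is mentioned, so $F = \{0,1\}$ is indeed a valid tight $(1,1)$-homogeneous partition, and it is trivially irreducible since it has only two subcubes.

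I do not expect any serious obstacle here; the argument is elementary once disjointness is converted into conflict. The one subtlety worth stating carefully is \emph{why} the two subcubes must fix the same coordinate rather than two different ones: if they fixed distinct coordinates $i \neq j$, they would not conflict (a common point with $x_i = b$, $x_j = c$ exists since $n \ge 1$ leaves those coordinates free of further constraints), hence would intersect, contradicting that $F$ is a partition of size exactly $2$. Making that conflict analysis precise is the only step requiring a line of care.
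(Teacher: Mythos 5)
Your proposal is correct and follows essentially the same route as the paper: both arguments observe that the two codimension-$1$ subcubes must fix the same coordinate (to opposite values) in order to be disjoint, and then invoke tightness to force $n = 1$. Your write-up merely makes explicit two steps the paper leaves implicit, namely that the size is $2$ (via \Cref{lem:testing-partition}) and the conflict analysis of \Cref{lem:disjoint-subcubes}; irreducibility is not actually needed in either version.
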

\begin{proof}
The two subcubes in $F$ contain a single non-star position, which must be identical. Since $F$ is tight, necessarily $n = 1$, and so $F = \{0, 1\}$.
\end{proof}

\begin{lemma} \label{lem:homogeneous-2}
There are no irreducible $(n,2)$-homogeneous subcube partitions, for any $n$.
\end{lemma}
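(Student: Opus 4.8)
The plan is to prove the stronger statement that every $(n,2)$-homogeneous subcube partition is \emph{reducible}. First I would pin down the size: since every subcube has codimension $2$, \Cref{lem:testing-partition} gives $\sum_{s \in F} 2^{-2} = 1$, so $|F| = 4$. Thus $F$ consists of exactly four subcubes, each with a star pattern of size $n-2$, equivalently with exactly two mentioned coordinates.

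Next I would invoke \Cref{cor:parity-homogeneous}: in a homogeneous partition, every star pattern that occurs does so an even number of times, since among the subcubes with a given pattern half have each parity. Because $|F| = 4$, the multiset of star patterns must have one of exactly two shapes, as $4$ admits no decomposition into positive even parts other than $4$ and $2+2$: either a single star pattern occurs four times, or two distinct star patterns each occur twice. I would handle these two cases separately, and in each exhibit a proper subset of size $2$ whose union is a subcube, thereby witnessing reducibility.

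In the first case all four subcubes mention the same pair of coordinates $i,j$ and, being a partition, realize all four value combinations there; the two subcubes fixing $x_i = 0$ then union to the codimension-$1$ subcube $\{x_i = 0\}$. In the second case I would focus on the two subcubes sharing a star pattern, with common mentioned coordinates $a,b$. Here is the crux: by \Cref{cor:parity-homogeneous} one of them has even parity and the other odd, and the parity of a codimension-$2$ subcube is simply the sum modulo $2$ of its two fixed values. Two \emph{complementary} value pairs on $\{a,b\}$ have equal parity, so these subcubes cannot be complementary and must instead agree on exactly one of $a,b$; their union is therefore a codimension-$1$ subcube, again a proper subset of size $2$.

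I expect the only subtle point to be this last parity observation — that an even/odd pair of value assignments on two coordinates can never be complementary and hence always spans a subcube; everything else is bookkeeping via \Cref{lem:testing-partition} and \Cref{cor:parity-homogeneous}. Since both cases produce a reducing subset, $F$ is reducible, and in particular there are no irreducible $(n,2)$-homogeneous subcube partitions.
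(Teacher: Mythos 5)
Your proof is correct and takes essentially the same approach as the paper: \Cref{cor:parity-homogeneous} forces a same-star-pattern companion of opposite parity, and for codimension $2$ opposite parity rules out complementary value pairs, so the two subcubes differ in exactly one mentioned coordinate and their union is a codimension-$1$ subcube, contradicting irreducibility. The paper's proof is just your second case stated directly (after normalizing the first subcube to $00*^{n-2}$ and its companion to $01*^{n-2}$); your case split on the star-pattern multiset is superfluous, since that parity argument applies verbatim when all four subcubes share a single star pattern.
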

\begin{proof}
Let $F$ be an $(n,2)$-homogeneous subcube partition.
Suppose, without loss of generality, that $00*^{n-2} \in F$. \Cref{cor:parity-homogeneous} implies that $F$ must contain a subcube with the same star pattern and odd parity, without loss of generality $01*^{n-2}$. Since $00*^{n-2} \cup 01*^{n-2} = 0*^{n-1}$ is a subcube, $F$ is reducible.
\end{proof}

Here is an alternative proof, suggested by the reviewer. As in the proof of \Cref{lem:homogeneous-k-2k}, we have $n \ge k + 1 = 3$. On the other hand, the result of Tarsi mentioned in the proof of \Cref{thm:size-lower-bound} implies that $n \leq 2^k - 1 = 3$. Hence $n = 3$, and this case can be ruled out by hand (and also follows from~\cite[Lemma~41]{KullmannZhao16}).

\subsubsection{Weight distribution} \label{sec:homogeneous-weights}

In this section we prove the following surprising property, which involves the concept of weight vector defined in \Cref{sec:minimal-weight}. 

\begin{lemma} \label{lem:homogeneous-weight}
The weight vector of any $(n,k)$-homogeneous subcube partition is
\[
 \binom{k}{0},\binom{k}{1},\dots,\binom{k}{k},0,\dots,0.
\]
\end{lemma}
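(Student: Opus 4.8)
The plan is to exploit the partition identity through a single-variable generating function that tracks the number of $1$s. Concretely, I would assign to each point $x \in \{0,1\}^n$ the weight $z^{|x|}$, where $|x| = \sum_i x_i$ is its Hamming weight and $z$ is a formal indeterminate, and then compute the total weight of the cube in two different ways.

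First I would compute the global total by summing over all points, which gives $\sum_{x \in \{0,1\}^n} z^{|x|} = (1+z)^n$. Next I would compute the same sum by grouping points according to the subcube of $F$ that contains them. A subcube $s$ of codimension $k$ has $k$ fixed coordinates, contributing a factor of $z^{\nones{s}}$ (recall $\nones{s}$ is its number of $1$s), and $n-k$ free coordinates, each contributing a factor $(1+z)$; hence the total weight of the points inside $s$ is $z^{\nones{s}}(1+z)^{n-k}$. Since $F$ is a partition, summing over $s \in F$ must reproduce the global total, so
\[
 (1+z)^n = (1+z)^{n-k} \sum_{s \in F} z^{\nones{s}} = (1+z)^{n-k} \sum_{h=0}^{n} w_h z^h,
\]
where I have used that $w_h$ counts the subcubes with exactly $h$ ones.

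Finally I would cancel the common nonzero factor $(1+z)^{n-k}$ — legitimate since we work in the polynomial ring $\mathbb{Z}[z]$, an integral domain — to obtain $\sum_h w_h z^h = (1+z)^k = \sum_{h=0}^{k} \binom{k}{h} z^h$. Comparing coefficients of $z^h$ then yields $w_h = \binom{k}{h}$ for $0 \le h \le k$ and $w_h = 0$ for $h > k$, which is exactly the claimed weight vector. I do not expect a genuine obstacle here: the only substantive step is spotting the generating-function encoding, after which everything reduces to a one-line polynomial manipulation. It is worth remarking that the argument uses only that $F$ is a codimension-$k$ partition of the cube — neither tightness nor irreducibility enters — so the conclusion in fact holds for every homogeneous subcube partition.
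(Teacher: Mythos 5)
Your proof is correct. It rests on the same double-counting identity as the paper's proof: grouping the points of each Hamming weight by the subcube covering them, which in your generating-function language is the statement that the coefficient of $z^\ell$ in $(1+z)^{n-k}\sum_h w_h z^h$ equals $\binom{n}{\ell}$, and in the paper is written out coefficient-by-coefficient as the system $\sum_{r=0}^\ell \binom{n-k}{\ell-r} w_r = \binom{n}{\ell}$. Where you genuinely diverge is in how this identity is resolved. The paper argues abstractly: the system is triangular, hence has a unique solution, hence every $(n,k)$-homogeneous partition has the same weight vector as the reference partition $\{x*^{n-k} : x \in \{0,1\}^k\}$, whose weight vector is visibly binomial. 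This forces the paper to observe explicitly that the argument does not use tightness, since that reference partition is \emph{not} tight (it mentions only the first $k$ coordinates), so the uniqueness claim must cover non-tight partitions for the comparison to be legitimate. Your route avoids this subtlety entirely: you solve for the weight vector in closed form by cancelling $(1+z)^{n-k}$ in the integral domain $\mathbb{Z}[z]$, needing neither a uniqueness argument nor a model partition. The result is a shorter and more self-contained proof; your closing remark that tightness and irreducibility play no role is the same observation the paper makes, but in your argument it is a free corollary rather than a load-bearing step.
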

\begin{proof}
Let $F$ be an $(n,k)$-homogeneous subcube partition, and let $w$ be its weight vector. Considering the number of points of weight $\ell$ which are covered, for each $\ell \in \{0,\ldots,k\}$ we have
\[
 \sum_{r=0}^\ell \binom{n-k}{\ell-r} w_r = \binom{n}{\ell}.
\]
This is a triangular system of equations, and so it has a unique solution. In other words, all $(n,k)$-homogeneous subcube partitions (if any) have the same weight vector.

The argument above applies even if we don't assume that $F$ is tight. Therefore all $(n,k)$-homogeneous subcube partitions have the same weight vector as the subcube partition $\{ x*^{n-k} : x \in \{0,1\}^k \}$, whose weight vector is the one in the statement of the \namecref{lem:homogeneous-weight}.
\end{proof}

\subsubsection{Special perfect matchings} \label{sec:homogeneous-perezhogin}

Perezhogin~\cite{Perezhogin05} defines a \emph{special perfect matching} to be a perfect matching in the hypercube graph which is irreducible (in our terminology). If the hypercube has dimension $n$, then this is the same as an irreducible $(n,n-1)$-homogeneous subcube partition. He constructs a special perfect matching for all $n \ge 4$ other than $n = 5$, and shows that no special perfect matching exists when $n = 5$. In this section, we give an exposition of his work.

We start with the construction.

\begin{theorem} \label{thm:special-perfect-matchings}
For every $n \ge 4$ other than $n = 5$ there exists an irreducible $(n,n-1)$-homogeneous subcube partition.
\end{theorem}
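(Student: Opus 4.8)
The plan is to realize the construction as a single induction driven entirely by \Cref{lem:perezhogin}. The first observation is that an $(n,n-1)$-homogeneous subcube partition is precisely a partition of $\{0,1\}^n$ into edges (subcubes of dimension $1$), i.e.\ a perfect matching of the hypercube, and ``irreducible'' is exactly Perezhogin's ``special.'' The key point is that \Cref{lem:perezhogin} preserves dimensions: by its ``Subcube counts'' clause, if every subcube of $F$ has dimension $1$ then every subcube of $F'$ also has dimension $1$. Hence the lemma sends an irreducible $(n,n-1)$-homogeneous partition containing $0^{n-2}\enc{0,1}=0^{n-1}*$ and satisfying the complementation property to an irreducible $(n+2,n+1)$-homogeneous partition containing $0^n\enc{0,1}=0^{n+1}*$ and again satisfying the complementation property. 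Since all three input hypotheses are reproduced verbatim in the output, the induction is self-sustaining, and the theorem reduces to providing suitable base cases.

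Because each step raises $n$ by $2$, I need one even and one odd base case. Starting from $n=4$ reaches every even $n\ge 4$, and starting from $n=7$ reaches every odd $n\ge 7$; together with the two bases this is exactly $\{n\ge 4 : n\neq 5\}$, matching the statement (the excluded value $n=5$ genuinely admits no such partition, which is handled separately). For $n=4$ I would exhibit the explicit edge partition
\[
 F_4 = \{\,000*,\ *010,\ 0*11,\ 01*0,\ *101,\ 1*00,\ 10*1,\ 111*\,\}.
\]
It contains $000*=0^{2}\enc{0,1}$, and the complementation property is immediate: condition (a) holds because an edge has a single star and so never ends with $**$, while condition (b) is verified edge by edge (for each $s\enc{\phi}\in F_4$ the partner $s\enc{\phi+2}$ is absent). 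Irreducibility follows from \Cref{alg:reducibility}, or directly: the only proper subcubes that could be tiled by matching edges have dimension $2$ or $3$, and within each star-direction the two parallel edges of $F_4$ differ in all of their fixed coordinates (so no two tile a $2$-face), while every $3$-dimensional subcube contains a vertex matched to a point in its complement.

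The real work is the odd base case at $n=7$: an irreducible $(7,6)$-homogeneous partition—a $64$-edge special perfect matching—containing $0^{6}*$ and satisfying the complementation property. I would produce such a matching explicitly, either from Perezhogin's construction or from a computer search, and then verify irreducibility with \Cref{alg:reducibility} and the two complementation conditions by inspection. This is the main obstacle. Unlike the inductive step, which is handed to us, there is no length-$(+1)$ operation preserving the ``all edges'' property: for instance, \Cref{lem:merge} applied to two edge partitions turns each common edge $*x$ into a $2$-dimensional subcube, and its irreducibility clause forces a nonempty intersection, so the odd sequence cannot be bootstrapped from the even one and must be seeded by a genuinely $7$-dimensional example.

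Finally, with both base cases in hand, a single induction on $n$ in steps of $2$ using \Cref{lem:perezhogin} produces an irreducible $(n,n-1)$-homogeneous subcube partition for every $n\ge 4$ with $n\neq 5$, completing the proof.
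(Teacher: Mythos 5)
Your overall strategy coincides with the paper's: induct in steps of two via \Cref{lem:perezhogin}, which preserves irreducibility, the complementation property, membership of $0^{n-1}*$, and (by its subcube-count clause) the property of consisting only of edges, so that base cases at $n=4$ and $n=7$ suffice. Your $n=4$ base case is correct: $F_4$ is a perfect matching containing $000*$, your edge-by-edge check of the complementation property is right, and your irreducibility argument (the two parallel edges in each star direction differ in all three fixed coordinates, so no two edges tile a $2$-face, and every $3$-dimensional subcube contains a vertex matched to a point outside it) is sound, if slightly different in presentation from simply invoking \Cref{alg:reducibility}.

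The genuine gap is the $n=7$ base case, which you defer to ``Perezhogin's construction or a computer search.'' That is not a minor omission: by your own account it is ``the real work,'' and without it your argument establishes only the even values $n \ge 4$. Moreover, the reasoning by which you conclude that the odd case ``must be seeded by a genuinely $7$-dimensional example'' is too narrow: you only rule out applying \Cref{lem:merge} to two \emph{edge} partitions. The paper closes the gap with \Cref{lem:merge} applied to the length-$6$ partition $F_0$ of \Cref{thm:maximal-construction}, which consists of $16$ points and $24$ edges, and to $F_1$, its image under flipping the first two coordinates. The point set $\{abcdef : a \oplus b = c \oplus f = \overline{d \oplus e}\}$ is invariant under this flip (since $\bar{a} \oplus \bar{b} = a \oplus b$), so all $16$ points lie in $F_0 \cap F_1$, get a star prepended, and become edges; on the other hand, the flip of each of the $24$ edges lies outside $F_0$ (matching the flipped edge against its family's template forces $a = \bar{a}$), so the edges lie in the symmetric difference and keep dimension $1$. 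The result is a $64$-edge irreducible partition of $\{0,1\}^7$ containing $0^6*$ (arising from $00000* \in F_0 \setminus F_1$), i.e., exactly the missing base case, obtained from a $6$-dimensional object with no new search. If you want a self-contained proof along your lines, you must either reproduce this merge argument or exhibit and verify an explicit $64$-edge matching of the $7$-cube.
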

\begin{proof}
Using \Cref{lem:perezhogin}, it suffices to construct irreducible $(n,n-1)$-homogeneous subcube partitions of lengths $n \in \{4,7\}$ containing $0^{n-1}*$ and satisfying the complementation property in the lemma.

For $n = 4$, we can take
\[ 0*10, 01*1, 000*, 1*01, 10*0, 111*, *100, *011. \]

For $n = 7$, we can take the subcube partition obtained by applying \Cref{lem:merge} with $F_0$ being the subcube partition constructed in \Cref{thm:maximal-construction} for $n = 6$, and with $F_1$ being obtained from $F_0$ by flipping the first two coordinates.
\end{proof}

The following result shows that \Cref{thm:special-perfect-matchings} cannot be extended to $n = 5$.

\begin{lemma} \label{lem:homogeneous-5-4}
There is no irreducible $(5,4)$-homogeneous subcube partition.
\end{lemma}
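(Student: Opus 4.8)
The plan is to reformulate the statement graph-theoretically and then obstruct every candidate matching. A $(5,4)$-homogeneous subcube partition has all subcubes of dimension $5-4=1$, so it is exactly a perfect matching $M$ of the hypercube $Q_5$ on $\{0,1\}^5$, consisting of $2^4=16$ edges; irreducibility means that no sub-collection of more than one edge tiles a proper subcube. The first step is a reduction principle: two \emph{disjoint} edges have union a subcube if and only if they are parallel and adjacent, i.e.\ they point in the same coordinate direction $i$ and their other four coordinates agree in all but one place (their union is then a $2$-dimensional face). Hence, grouping the edges of $M$ by their direction $i\in[5]$ and recording for each $i$ the set $W_i\subseteq\{0,1\}^4$ of locations of the direction-$i$ edges, irreducibility forces each $W_i$ to be an \emph{independent set} of $Q_4$ — otherwise two parallel adjacent edges would already tile a $2$-face, a reduction.

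Next I would pin down the direction multiplicities $m_i=|W_i|$. By \Cref{cor:parity-homogeneous} applied to the single-coordinate star pattern $\{i\}$, each $m_i$ is even; tightness (\Cref{def:homogeneous}) gives $m_i\ge 1$, hence $m_i\ge 2$; and $\sum_i m_i=16$. I would then rule out $m_i=8$ for every $i$: an independent set of size $8$ in $Q_4$ is necessarily one of the two weight-parity classes (its complement carries no edges, hence is also independent, and $Q_4$ is connected so its bipartition is unique), and slicing $Q_5$ along direction $i$ shows that the remaining $8$ points on each side, which lie in the \emph{other} parity class, cannot be matched within their slice without landing on a point already covered by a direction-$i$ edge. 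With each $m_i\in\{2,4,6\}$ even and summing to $16$, the only distributions up to relabeling coordinates are $(6,4,2,2,2)$ and $(4,4,4,2,2)$.

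The remaining work is to derive a reduction in each of these two distributions. The tool I would use is the slice decomposition along a coordinate $i$ with $m_i=2$ (such a coordinate exists in both distributions): writing $U=\{0,1\}^4\setminus W_i$, the edges in directions other than $i$ restrict to two perfect matchings $M_0,M_1$ of the subgraph of $Q_4$ induced on $U$, one per slice, and a second reduction principle applies — if $M_0$ and $M_1$ share an edge (same direction and same location), then their two parallel copies tile a $2$-face across the slices, so $M_0,M_1$ must be edge-disjoint and each individually free of $2$-faces. One then argues that a pair of edge-disjoint, $2$-face-free perfect matchings of $Q_4$ with two non-adjacent vertices deleted cannot exist; equivalently, that every such configuration contains a sub-collection tiling a $2$-, $3$-, or $4$-dimensional subcube. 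The binomial weight profile $1,4,6,4,1$ of \Cref{lem:homogeneous-weight} together with the parity corollary can be invoked repeatedly to cut the possibilities down to a short finite list.

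The main obstacle is precisely this last case analysis. Unlike the $m_i=8$ situation, when $m_i\in\{4,6\}$ the independent set $W_i$ may mix the two parities, so there is no single clean parity contradiction; instead one must track the two slice-matchings $M_0,M_1$ carefully to locate a tiled subcube. As a fallback, since the whole problem lives on the finite cube $\{0,1\}^5$, the finitely many configurations surviving the above constraints can be checked exhaustively (for instance with \Cref{alg:reducibility}), in the same spirit as the companion $(n,4)$ classification.
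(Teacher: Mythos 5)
Your setup is sound and in fact matches the paper's in spirit: viewing the partition as a perfect matching of $Q_5$, grouping edges by direction, getting each $m_i$ even from \Cref{cor:parity-homogeneous}, and deriving independence of each location set $W_i$ from irreducibility; ruling out $m_i=8$ via the uniqueness of maximum independent sets in $Q_4$ is also correct. Two local problems, though. First, $m_i\ge 1$ does \emph{not} follow from tightness: in a partition of codimension $4$ every coordinate is mentioned automatically, so tightness is vacuous here; what gives $m_i\ge 1$ is irreducibility (if $m_i=0$, the edges lying in the half-cube $x_i=0$ tile it). Second, you never rule out $m_i=6$. The paper does, by re-running the proof of \Cref{lem:length-4} on the slice along direction $i$: the locations $W_i$ form a set of pairwise non-adjacent points of $Q_4$ with equally many points of each parity, and that argument shows such a set has size $2$ or $4$ and pins down its shape --- a $2$-set has exactly one unbalanced coordinate, a $4$-set has none. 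Skipping this costs you an extra distribution $(6,4,2,2,2)$ and, more importantly, discards the balance information that powers the paper's actual contradiction.

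The serious gap is that your proof stops exactly where the work begins. Everything above is bookkeeping; the content of the lemma is your unproven claim that no pair of edge-disjoint, $2$-face-free perfect matchings of $Q_4$ minus two vertices at distance $3$ exists. That claim is essentially equivalent to the lemma itself: a $2$-face-free perfect matching of $Q_5$ is irreducible as soon as every direction is used (a tiled $3$-cube would contain a $2$-face, and a tiled $4$-cube forces an unused direction), so proving your claim \emph{is} proving the lemma, and ``track the two slice-matchings carefully'' or ``check the surviving configurations by computer'' are placeholders, not arguments --- the whole point of this lemma in the paper is that $n=5$ admits a hand proof. For comparison, the paper's endgame runs as follows: since every coordinate of $F$ is balanced and the three multiplicity-$4$ classes are balanced in every coordinate, the two multiplicity-$2$ directions must share their single unbalanced coordinate; writing $F_1=\{*0x_0,*1x_1\}$ and $F_2=\{0*y_0,1*y_1\}$, in each $3$-cube slice obtained by fixing the first two coordinates the two point-covered vertices have opposite parity, and enumerating the perfect matchings of $Q_3$ minus two adjacent vertices shows every one of them contains a $2$-face; hence the two special vertices in each slice are antipodal, which gives $y_\alpha=\bar{x}_\beta$ for all $\alpha,\beta$, forcing $x_0=x_1$, i.e.\ the two direction-$1$ edges themselves form a $2$-face. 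You need to supply an argument of this kind (or a concretely specified finite check) before your proposal counts as a proof.
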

\begin{proof}
Suppose that $F$ is an irreducible $(5,4)$-homogeneous subcube partition. Every subcube in $F$ contains a single $*$. For $i \in \{1,\dots,5\}$, let $F_i \subset F$ consist of those subcubes $s \in F$ with $s_i = *$. According to \Cref{lem:parity}, $|F_i|$ is even. If $|F_i| = 0$ then the union of all subcubes $s \in F$ with $s_i = 0$ is the subcube $*^{i-1}0*^{5-i}$, contradicting irreducibility, and so $|F_i| > 0$.

The proof of \Cref{lem:length-4} implies that for each $i \in \{1,\dots,5\}$, either $|F_i| = 2$ or $|F_i| = 4$. Moreover, if $|F_i| = 4$ then each coordinate in $F_i$ is \emph{balanced} ($0$ appears the same number of times as $1$), whereas if $|F_i| = 2$ then exactly one coordinate is unbalanced. In the entire formula, each coordinate is balanced, since the union of subcubes with $s_i = 0$ is the same as the union of subcubes with $s_i = 1$, and each subcube contains exactly two points.

\smallskip

Without loss of generality, $|F_1| \le \cdots \le |F_5|$. Since 
$|F_1| + \cdots + |F_5| = 16$, it follows that $|F_1| = |F_2| = 2$ and $|F_3| = |F_4| = |F_5| = 4$.
Recall that each of $F_1$ and $F_2$ contains exactly one unbalanced coordinates. Since all coordinates in $F_3,F_4,F_5,F$ are balanced, $F_1$ and $F_2$ have the same unbalanced coordinate. In particular, the second coordinate in $F_1$ and the first coordinate in $F_2$ are balanced.

Let $F_1 = \{*0x_0, *1x_1\}$ and $F_2 = \{0*y_0,1*y_1\}$. For $\alpha,\beta \in \{0,1\}$, we have $\{ s : \alpha\beta s \in F_1 \cup F_2 \} = \{x_\alpha,y_\beta\}$. The set $\{ s : \alpha\beta s \in F_3 \cup F_4 \cup F_5 \}$ consists of three edges, each covering a point of even parity and a point of odd parity. Therefore $x_\alpha$ and $y_\beta$ have opposite parity.

Suppose that $x_\alpha$ and $y_\beta$ differ in a single bit, say $x_\alpha = 000$ and $y_\beta = 001$. The set $\{ s : \alpha\beta s \in F_3 \cup F_4 \cup F_5 \}$ must be a matching of the remaining points. There are only four possibilities:
\begin{align*}
&01*,10*,11* \\
&01*,1*0,1*1 \\
&*10,*01,*11 \\
&*10,0*1,1*1
\end{align*}
In each of them, $F$ is reducible. Therefore $y_\beta$ must be the negation of $x_\alpha$. However, this implies that $x_0 = x_1$, contradicting irreducibility.
\end{proof}

In contrast, there does exist an irreducible $(6,4)$-homogeneous subcube partition.

\begin{lemma} \label{lem:homogeneous-6-4}
There exists an irreducible $(6,4)$-homogeneous subcube partition.
\end{lemma}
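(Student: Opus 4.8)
The plan is to prove existence by exhibiting one explicit $(6,4)$-homogeneous subcube partition and verifying its three defining properties directly. Since codimension $4$ in length $6$ means every subcube is a two-dimensional ``square'' covering four points, such a partition must consist of exactly $16$ subcubes: indeed $16\cdot 2^{-4}=1$, as demanded by \Cref{lem:testing-partition}. As a sanity check on the search, \Cref{lem:homogeneous-weight} determines in advance that the weight vector of \emph{any} $(6,4)$-homogeneous partition is forced to be $\binom40,\binom41,\binom42,\binom43,\binom44,0,0 = 1,4,6,4,1,0,0$ (the trailing zeros because a weight-$h$ square has $h$ ones among its four fixed coordinates, so $h\le 4$). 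Thus I would look for one square of weight $0$, four of weight $1$, six of weight $2$, four of weight $3$, and one of weight $4$, chosen so that several distinct star patterns occur and hence all six coordinates are mentioned.

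First I would fix a concrete list of $16$ squares (obtained by computer search). Checking that it is a subcube partition is then a finite, mechanical task: pairwise disjointness follows from \Cref{lem:disjoint-subcubes} by confirming that any two of the listed squares conflict in some coordinate, and the covering property is either the identity $\sum_{s} 2^{-\operatorname{codim}(s)} = 16\cdot 2^{-4} = 1$ of \Cref{lem:testing-partition} or, equivalently, a direct check that the $16\cdot 4 = 64$ covered points are all distinct. Tightness is immediate provided the chosen list fixes each of the six coordinates to $0$ or $1$ in at least one square, which the weight profile above essentially guarantees.

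The crux is irreducibility, and here I would invoke \Cref{alg:reducibility}: by \Cref{thm:reducibility-algorithm} it correctly decides reducibility of any subcube partition in polynomial time, so running it on our $16$-element list is a bounded computation that certifies irreducibility. I expect this to be the main obstacle, for two reasons. First, producing a single list that is simultaneously a partition, tight, \emph{and} irreducible is genuinely constrained---this is exactly why the lemma is proved with the help of a computer. Second, the elegant parity shortcut used elsewhere in the paper is unavailable here: because the partition is homogeneous, all star patterns are inclusion-minimal, so \Cref{cor:parity-homogeneous} forces every \emph{occurring} star pattern to carry equally many even- and odd-parity squares, hence to occur an even number of times. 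Consequently the hypothesis of \Cref{cor:parity}---one star pattern occurring twice while every other occurs at most once---cannot be met, so that corollary cannot be used to rule out a reducing subfamily. Irreducibility therefore genuinely requires verifying that no proper subcollection of more than one square has a subcube as its union, which is precisely the check that \Cref{alg:reducibility} performs.
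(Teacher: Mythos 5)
Your verification framework is sound and matches what the paper would implicitly rely on --- \Cref{lem:testing-partition} for the covering check, \Cref{lem:disjoint-subcubes} for disjointness, and \Cref{alg:reducibility} with \Cref{thm:reducibility-algorithm} for irreducibility --- and your side remarks are correct: the size must be $16$, the weight vector is forced to be $1,4,6,4,1,0,0$ by \Cref{lem:homogeneous-weight}, and \Cref{cor:parity} is indeed unusable here because \Cref{cor:parity-homogeneous} forces every occurring star pattern to occur an even number of times, so its hypothesis can never be met in a homogeneous partition.

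The genuine gap is that you never produce the object whose existence the lemma asserts. The statement is a pure existence claim, and the paper's entire proof \emph{is} the witness: an explicit list of sixteen codimension-$4$ subcubes (for instance $0000**, 001**1, 01*01*, \dots, *1*001$), whose three defining properties are then a finite mechanical check of exactly the kind you describe. Your proposal instead says ``I would fix a concrete list of $16$ squares (obtained by computer search)'' and then explains how to verify it --- but a plan to search, together with necessary conditions (size $16$, binomial weight vector, tightness), does not establish that the search succeeds. Nothing in your argument rules out the possibility that every $16$-square partition of $\{0,1\}^6$ satisfying these constraints is reducible; indeed, the neighboring results \Cref{lem:homogeneous-5-4} and \Cref{thm:homogeneous-4} show that for $n=5,7,8$ the analogous search comes up empty, so existence at $n=6$ is genuinely nontrivial and cannot be inferred from the constraints alone. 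To close the gap you must either write down an explicit partition and certify it (by hand or by stating that \Cref{alg:reducibility} was run on it), or give some other constructive argument; without the witness, the proof is incomplete.
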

\begin{proof}
Here is such a subcube partition:
\begin{gather*}
0000**, 001**1, 01*01*, 01**00, 0*01*1, 0**110, *010*0, *0*100, \\
1101**, 111**0, 10*11*, 10**01, 1*00*0, 1**011, *111*1, *1*001. \qedhere
\end{gather*}
\end{proof}

Computer search reveals that up to permutation and flipping of coordinates, the irreducible $(6,4)$-homogeneous subcube partition is unique.

\subsubsection{More infinite families} \label{sec:homogeneous-construction}

In this section we show how any irreducible homogeneous subcube partition gives rise to an infinite family.

\begin{lemma} \label{lem:homogeneous-pump}
Let $k \ge 2$. If there exists an irreducible $(n,k)$-homogeneous subcube partition then there exists an irreducible $(3n,2k)$-homogeneous subcube partition.
\end{lemma}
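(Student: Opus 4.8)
The plan is to build the $(3n,2k)$-homogeneous subcube partition by replacing each original subcube with a block of subcubes obtained through a tripling construction on the alphabet, so that codimension $k$ over length $n$ becomes codimension $2k$ over length $3n$. Concretely, I would think of a length-$3n$ word as three length-$n$ words $x^{(1)}, x^{(2)}, x^{(3)}$, each attached to a copy of the original coordinate set. Given an original subcube $s \in \{0,1,*\}^n$ of codimension $k$ (so $|P(s)| = n-k$, with $k$ fixed positions), I would like each of the three copies to constrain the \emph{same} positions — the non-star positions of $s$ — but with the constrained values coordinated across the three copies in a redundant ``parity''-style way, so that exactly $2k$ of the $3k$ total constrained symbols are independent. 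The natural choice is: on each non-star position $i$ of $s$, record values $(a,b,c)$ in the three copies subject to a single linear relation determined by $s_i$, for instance $a \oplus b \oplus c = s_i$. This fixes $2$ degrees of freedom per original non-star position, giving codimension $2k$, while the $n-k$ star positions of $s$ remain fully free in all three copies.

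First I would make this precise by defining, for each $s \in F$, the family $\widehat{s}$ of all length-$3n$ subcubes whose copy-$i$ restriction agrees with $s$ on the star pattern $P(s)$ (all three copies star exactly on $P(s)$) and whose constrained triples $(a_i,b_i,c_i)$ satisfy $a_i \oplus b_i \oplus c_i = s_i$ for each constrained position $i$. Each such subcube has codimension $2k$ and there are $2^k$ of them per original $s$ (two free choices per constrained coordinate). Second I would verify that $\bigcup_{s \in F} \widehat{s}$ is a partition of $\{0,1\}^{3n}$: a point $(x^{(1)}, x^{(2)}, x^{(3)})$ determines, via the diagonal $z_i := x^{(1)}_i \oplus x^{(2)}_i \oplus x^{(3)}_i$, a unique point $z \in \{0,1\}^n$; since $F$ partitions $\{0,1\}^n$, there is a unique $s \in F$ containing $z$, and then a unique member of $\widehat{s}$ containing the original point. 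This both establishes the partition property and shows homogeneity and tightness (tightness is inherited because every coordinate mentioned by $F$ is mentioned in all three copies). I would use \Cref{lem:testing-partition} as a sanity check on the codimension count: $\sum 2^{-2k}$ over $2^{k}|F| = 2^k \cdot 2^{n-k} \cdot \text{(something)}$ subcubes should equal $1$; more carefully $|F| = 2^k$-weighted, so the new size is $2^k \cdot |F|$ and each has codimension $2k$, giving $2^k |F| \cdot 2^{-2k} = |F| 2^{-k} = 1$ since $|F| = 2^k$.

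The main obstacle, and the real content of the lemma, is irreducibility. I would prove it by contradiction: suppose $G' \subseteq \bigcup_s \widehat{s}$ with $|G'| > 1$ has a subcube union $U$. Apply the diagonal projection $\pi(x^{(1)},x^{(2)},x^{(3)}) = x^{(1)} \oplus x^{(2)} \oplus x^{(3)}$, which maps each $\widehat{s}$-member onto the original subcube $s$ and maps subcubes to subcubes (because the parity constraint projects cleanly). Then $\pi(U)$ is a union of original subcubes of $F$, and I would argue it is itself a subcube; by irreducibility of $F$, $\pi(U)$ is either a single $s$ or all of $\{0,1\}^n$. The delicate case is when $\pi(U)$ collapses to a single original subcube $s$: here I must show that $U$ cannot be a proper non-singleton union of members of $\widehat{s}$, i.e.\ that within a single block $\widehat{s}$, the construction is itself irreducible. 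This is where I expect to spend the most effort — it amounts to checking that the $2^k$ subcubes coming from the parity-coordinated coding form an irreducible partition of the fibre over $s$, which should reduce to a self-contained statement about the XOR/parity construction (closely analogous to \Cref{lem:xor-subst-2} and \Cref{cor:xor-subst}), and I would likely prove it by invoking exactly that corollary or a direct parity argument via \Cref{lem:parity}. Once both the ``whole cube'' case and the ``single block'' case are dispatched, irreducibility follows and the proof is complete.
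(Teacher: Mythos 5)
Your construction does not have the parameters you claim, and the gap is fatal. The parity constraint $a \oplus b \oplus c = s_i$ cannot be imposed by a subcube that leaves any of the three coordinates free: flipping the starred coordinate flips the parity, so a subcube with a star in a constrained triple cannot lie inside the set $\{a \oplus b \oplus c = s_i\}$. Hence every member of $\widehat{s}$ must fix all three copies of every constrained position of $s$, so it has codimension $3k$, not $2k$, and there are $4^k$ members per block (four parity-respecting triples per constrained position), not $2^k$. You are conflating affine codimension, where one linear relation costs one dimension, with subcube codimension, which counts fixed coordinates; the set $\{(a,b,c) : a \oplus b \oplus c = 0\} = \{000,011,101,110\}$ is an affine plane of $\{0,1\}^3$ but contains no edge, so it can only be partitioned into its four points. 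A correct sanity check via \Cref{lem:testing-partition} would have exposed this: you get $2^k \cdot 4^k$ subcubes of measure $2^{-3k}$ each, summing to $1$, i.e.\ a $(3n,3k)$-homogeneous partition rather than a $(3n,2k)$-homogeneous one.

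Moreover, this cannot be repaired within your framework: by the same parity-flip argument, \emph{any} subcube contained in a diagonal fibre $\pi^{-1}(s)$ must fix all $3k$ coordinates lying over the constrained positions of $s$, so every subcube partition refining the fibres $\{\pi^{-1}(s)\}_{s \in F}$ has codimension at least $3k$, and a $(3n,2k)$-homogeneous partition can never arise this way. The paper instead uses a local substitution with the right ratio, namely \Cref{lem:merge-stars} applied $n$ times with a rotation so that each original coordinate is processed exactly once: a starred coordinate becomes three stars, while a subcube ending in a bit $b$ splits into two subcubes in which $b$ is replaced by $b*0$ and by $*b1$ respectively --- two fixed symbols plus one star, so codimension grows by exactly $1$ per constrained coordinate, giving $2k$ over length $3n$. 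Irreducibility survives because each application of \Cref{lem:merge-stars} only requires some subcube to end with a star, which follows from irreducibility of $F$ (otherwise the subcubes with $s_i = 0$ would union to $*^{i-1}0*^{n-i}$). Note that the XOR substitution of \Cref{cor:xor-subst}, which you hoped to invoke for the single-block case, has the same defect for this purpose: it turns one fixed bit into several fixed bits and so also inflates codimension too fast.
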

\begin{proof}
Let $F$ be an irreducible $(n,k)$-homogeneous subcube partition. We start by observing that for each coordinate $i$, there must be some subcube $s \in F$ with $s_i = *$. Otherwise, the union of the subcubes $s \in F$ with $s_i = 0$ will be $*^{i-1} 0 *^{n-i}$, which contradicts irreducibility.

Repeat the following operation $n$ times to $F$: apply \Cref{lem:merge-stars}, and rotate the result twice to the right (equivalently, replace $t* \in F$ with $***t$ and $tb \in F$ with $b*0t,*b1t$, where $b \neq *$). The observation in the preceding paragraph ensures that the resulting subcube partition $G$ is tight and irreducible. By construction, $G$ has length $n + 2n = 3n$ and codimension $k + k = 2k$.
\end{proof}

Applying this to the results of \Cref{sec:homogeneous-perezhogin}, we obtain the following infinite families.

\begin{corollary} \label{cor:homogeneous-infinite}
For every $t \geq 0$ and every $n \geq 4$ other than $n = 5$ there exists an irreducible $(3^t \cdot n, 2^t \cdot (n-1))$-homogeneous subcube partition.

For every $t \ge 0$ there exists an irreducible $(3^t \cdot 6, 2^t \cdot 4)$-homogeneous subcube partition.
\end{corollary}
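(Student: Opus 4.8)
The plan is to prove both statements by induction on $t$, using \Cref{lem:homogeneous-pump} for the inductive step. For the first family, the base case $t = 0$ asserts the existence of an irreducible $(n, n-1)$-homogeneous subcube partition for every $n \ge 4$ other than $n = 5$; this is exactly \Cref{thm:special-perfect-matchings}. For the second family, the base case $t = 0$ asks for an irreducible $(6, 4)$-homogeneous subcube partition, which is provided by \Cref{lem:homogeneous-6-4}.

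For the inductive step of the first family, I would assume we have already constructed an irreducible $(3^t n, 2^t(n-1))$-homogeneous subcube partition and apply \Cref{lem:homogeneous-pump} to it. The \namecref{lem:homogeneous-pump} triples the length and doubles the codimension, yielding an irreducible subcube partition of length $3 \cdot 3^t n = 3^{t+1} n$ and codimension $2 \cdot 2^t (n-1) = 2^{t+1}(n-1)$, which is precisely the object claimed for $t + 1$. The same computation handles the second family, where the length $3^t \cdot 6$ triples to $3^{t+1} \cdot 6$ and the codimension $2^t \cdot 4$ doubles to $2^{t+1} \cdot 4$.

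The only hypothesis of \Cref{lem:homogeneous-pump} requiring verification is $k \ge 2$, where $k$ is the codimension of the partition being pumped, so I would check that this holds at every stage of the induction. In the first family the codimension at stage $t$ is $2^t(n-1)$, and since $n \ge 4$ gives $n - 1 \ge 3$, this is always at least $3 \ge 2$. In the second family the codimension $2^t \cdot 4$ is always at least $4 \ge 2$. Hence the hypothesis holds throughout and both inductions go through. There is no substantive obstacle: all the work is contained in \Cref{thm:special-perfect-matchings}, \Cref{lem:homogeneous-6-4}, and \Cref{lem:homogeneous-pump}, and this corollary merely iterates the pumping lemma from these base cases.
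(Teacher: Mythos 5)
Your proof is correct and follows exactly the paper's (implicit) argument: the paper derives this corollary by iterating \Cref{lem:homogeneous-pump} starting from the base cases in \Cref{thm:special-perfect-matchings} and \Cref{lem:homogeneous-6-4}, just as you do. Your additional verification that the codimension hypothesis $k \ge 2$ is preserved at every stage of the induction is a worthwhile detail that the paper leaves unstated.
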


\subsubsection{Codimension 3} \label{sec:homogeneous-3}

\Cref{thm:special-perfect-matchings} shows that an irreducible $(4,3)$-homogeneous subcube partition exists. In this section, we show that an irreducible $(n,3)$-homogeneous subcube partition exists only for $n = 4$, and that it is unique up to permutation and flipping of coordinates.

\begin{theorem} \label{thm:homogeneous-3}
If there exists an irreducible $(n,3)$-homogeneous subcube partition then $n = 4$. Moreover, the irreducible $(4,3)$-homogeneous subcube partition is unique up to permutation and flipping of coordinates.
\end{theorem}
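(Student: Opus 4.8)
The plan is to prove two things: first, that $n=4$ is the only length admitting an irreducible $(n,3)$-homogeneous subcube partition, and second, that such a partition is unique up to symmetry. For the range restriction, \Cref{lem:homogeneous-k-2k} with $k=3$ already forces $4 \le n \le 2^3 - 3 = 5$, so the only candidates are $n=4$ and $n=5$. The existence for $n=4$ is provided by \Cref{thm:special-perfect-matchings}, so the real content is to rule out $n=5$ and to establish uniqueness for $n=4$.

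First I would dispose of $n=5$. Here each subcube has codimension $3$, hence dimension $2$, and there are $2^3 = 8$ subcubes covering $2^5 = 32$ points. I would argue via the slicing sets $F_i = \{ s \in F : s_i = * \}$ used repeatedly above (e.g.\ in \Cref{lem:homogeneous-5-4}): as in those proofs, each $F_i$ must be nonempty (else the subcubes with $s_i = 0$ union to $*^{i-1}0*^{5-i}$, contradicting irreducibility). By \Cref{cor:parity-homogeneous}, for each star pattern $S$ occurring in $F$, the subcubes with star pattern $S$ split evenly between even and odd parity; since each subcube has exactly two stars, this forces the number of subcubes with any fixed star pattern to be even, and more globally constrains how the eight subcubes distribute their star positions among the $\binom{5}{2}=10$ possible two-element star patterns. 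I expect to combine a counting identity (the $\sum_i |F_i|$ counts star-incidences, and each subcube contributes two, giving $\sum_i |F_i| = 16$) with the parity and irreducibility constraints to derive a contradiction, likely by a short case analysis on the possible multiset of star patterns, in the same spirit as \Cref{lem:homogeneous-5-4}.

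Next I would prove uniqueness for $n=4$. Here \Cref{lem:length-4} is the key input: an irreducible $(4,3)$-homogeneous partition has dimension $1$, so it consists entirely of edges and contains no points, and \Cref{lem:homogeneous-weight} pins down its weight vector to be $\binom{3}{0},\binom{3}{1},\binom{3}{2},\binom{3}{3},0 = 1,3,3,1,0$. I would set up the eight edges and use the balance/parity conditions together with the structural analysis of the slices $F_i$ (now in dimension $4$) to show that, after fixing a normalization by permuting and flipping coordinates, the partition is forced edge by edge, matching the explicit $n=4$ partition exhibited in \Cref{thm:special-perfect-matchings}. The main obstacle is the uniqueness argument: ruling out $n=5$ is a finite parity-and-counting contradiction, but showing that only one labelled structure survives up to the symmetry group requires carefully organizing the case analysis so that each branch either reconstructs the known partition or collapses by producing a reducible pair (an nfs-flip or a pair of adjacent edges whose union is a face). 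I would lean on \Cref{cor:parity-homogeneous} and the incidence counting to keep the branching small, and verify the final reconstruction against the explicit partition.
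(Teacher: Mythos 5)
Your framing is sound, and the reduction to $n \in \{4,5\}$ via \Cref{lem:homogeneous-k-2k} (which for $k=3$ gives $4 \le n \le 2^3-3=5$) is correct and genuinely different from the paper, which never invokes the upper bound $n \le 2^k-3$. But the two steps that carry all the content --- ruling out $n=5$ and proving uniqueness for $n=4$ --- are not actually carried out. For $n=5$ you list the available constraints ($\sum_i |F_i| = 16$, each $F_i$ nonempty, each star-pattern class of even size by \Cref{cor:parity-homogeneous}) and then say you ``expect'' a short case analysis to produce a contradiction; for $n=4$ you say the partition should be ``forced edge by edge'' after normalization. Neither the contradiction nor the forcing is exhibited, so as it stands this is a plan rather than a proof, and the case analysis you defer is exactly where the work lies: parity and counting alone do not suffice, since you must also exploit pairwise conflicts and irreducibility to kill configurations. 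A minor additional point: \Cref{lem:length-4} is not a useful input for the $n=4$ case, since a $(4,3)$-homogeneous partition consists entirely of edges, so its set of points is empty and that lemma says nothing; the edge-only structure follows from homogeneity alone.

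For comparison, the paper avoids both case splits by first deriving three structural claims from \Cref{cor:parity-homogeneous} and irreducibility, valid for every $n$: (i) $s \in F$ implies $\bar{s} \in F$; (ii) $s$ and $\bar{s}$ are the only members of $F$ with star pattern $P(s)$; (iii) any two members of $F$ share at least two non-star coordinates. Normalizing $000*^{n-3}, 111*^{n-3} \in F$, and then (using (iii) and conflicts) $01*0*^{n-4}, 10*1*^{n-4} \in F$, claim (iii) forces every further subcube to mention exactly the coordinate set $\{1,3,4\}$ or $\{2,3,4\}$; hence no subcube mentions any coordinate beyond the fourth, and tightness immediately gives $n=4$ --- the case $n=5$ never needs separate treatment. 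The same analysis then pins down the remaining four subcubes as $0*11, 1*00, *010, *101$, which is the uniqueness statement. If you want to rescue your structure, the most economical fix is to prove claims (i)--(iii) first; with them in hand, both your $n=5$ elimination and your $n=4$ reconstruction become short, and without something like claim (iii) you should expect the branching in both steps to be substantially larger than your sketch anticipates.
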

\begin{proof}
Let $F$ be an irreducible $(n,3)$-homogeneous subcube partition.
We start by proving the following claims:
\begin{enumerate}[(i)]
\item If $s \in F$ then $\bar{s} \in F$, where $\bar{s}$ is obtained by flipping all bits in $s$. \label{itm:homo-3-neg}
\item If $s \in F$ then $s$ and $\bar{s}$ are the only subcubes in $F$ of the star pattern $P(s)$. \label{itm:homo-3-pair}
\item Every two subcubes in $F$ have at least two non-star coordinates in common. \label{itm:homo-3-int}
\end{enumerate}

Suppose that $s \in F$, without loss of generality $s = 000*^{n-3}$. \Cref{cor:parity-homogeneous} implies that $F$ contains another subcube having the same star pattern but with opposite parity. Since $F$ is irreducible, this cannot be one of $100*^{n-3},010*^{n-3},001*^{n-3}$, and so $111*^{n-3} \in F$, proving \Cref{itm:homo-3-neg}. Irreducibility implies that no other subcube with the same star pattern can belong to $F$, proving \Cref{itm:homo-3-pair}. Any other subcube in $F$ must conflict with both $000*^{n-3}$ and $111*^{n-3}$, hence must mention at least two coordinates among the first three, proving \Cref{itm:homo-3-int}.

\smallskip

Suppose now without loss of generality that $000*^{n-3} \in F$. According to \Cref{itm:homo-3-neg}, also $111*^{n-3} \in F$. According to \Cref{itm:homo-3-pair}, these are the only two subcubes with this star pattern. Hence $F$ must contain a subcube $s$ with a different star pattern. According to \Cref{itm:homo-3-int}, the subcube $s$ must mention at least two coordinates out of $\{1,2,3\}$. Since $s$ must conflict with both $000*^{n-3}$ and $111*^{n-3}$, without loss of generality $s = 01*0*^{n-4} \in F$. According to \Cref{itm:homo-3-neg}, also $10*1*^{n-4} \in F$, and according to \Cref{itm:homo-3-pair}, these are the only subcubes in $F$ with this star pattern.

According to \Cref{itm:homo-3-int}, every other subcube $t$ in $F$ must mention two coordinates out of $\{1,2,3\}$ and two coordinates out of $\{1,2,4\}$. If $t$ mentions coordinates $1,2$ (and a third coordinate not in $\{3,4\}$) then $t$ cannot possibly conflict with all of $000*^{n-3},111*^{n-3},01*0*^{n-4},10*1*^{n-4}$, and so $t$ must mention either $\{1,3,4\}$ or $\{2,3,4\}$. Since $F$ is tight, we deduce that $n = 4$.

According to \Cref{itm:homo-3-pair}, $F$ contains precisely two subcubes mentioning $\{1,3,4\}$ and precisely two subcubes mentioning $\{2,3,4\}$. Moreover, in each pair, one of the subcubes is the negation of the other, according to \Cref{itm:homo-3-neg}. 
The subcube of the form $0*ab$ must be $0*11$ in order to conflict with $000*$ and $01*0$, and so $0*11,1*00 \in F$. Similarly, the subcube of the form $*0ab$ must be $*010$ in order to conflict with $000*$ and $10*0$, and so $*010,*101 \in F$.
Since $F$ contains eight subcubes, this completes the description of $F$, and so $F$ is unique up to permutation and flipping of coordinates.
\end{proof}

\subsubsection{Codimension 4} \label{sec:homogeneous-4}

\Cref{lem:homogeneous-6-4} shows that an irreducible $(6,4)$-homogeneous subcube partition exists. Using techniques similar to the preceding section, in this section
we show that an irreducible $(n,4)$-homogeneous subcube partition exists only for $n = 6$.

\begin{theorem} \label{thm:homogeneous-4}
If there exists an irreducible $(n,4)$-homogeneous subcube partition then $n = 6$.
\end{theorem}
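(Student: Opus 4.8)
The plan is to first pin down the range of $n$ and then eliminate the remaining values by a structural reduction feeding a computer search. By \Cref{lem:homogeneous-k-2k} with $k = 4$, any irreducible $(n,4)$-homogeneous subcube partition satisfies $5 \le n \le 2^4 - 3 = 13$. The value $n = 5$ is already excluded by \Cref{lem:homogeneous-5-4}, and $n = 6$ is realized by \Cref{lem:homogeneous-6-4}, so the task reduces to showing that no irreducible $(n,4)$-homogeneous subcube partition exists for $7 \le n \le 13$.

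The next step is to extract strong local structure. Such a partition $F$ has exactly $16$ subcubes, each with exactly four non-star coordinates (its \emph{support}), and by \Cref{lem:homogeneous-weight} its weight vector is $1,4,6,4,1,0,\dots,0$. By \Cref{lem:irreducible-regular}, $F$ is regular, so every coordinate is mentioned by at least four subcubes. The key observation concerns the subcubes sharing a fixed star pattern $S$: restricted to the four non-star coordinates $[n]\setminus S$, their value-tuples form a subset of $\{0,1\}^4$ which is balanced (equally many of each parity, by \Cref{cor:parity-homogeneous}) and independent in the $4$-dimensional hypercube graph (no two value-tuples may differ in a single coordinate, since otherwise those two subcubes would reduce). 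A short analysis of the $4$-dimensional hypercube shows that a balanced independent set there has size $0$, $2$, or $4$, and that the nontrivial ones are, up to permuting and flipping coordinates, exactly $\{0000,1110\}$ and $\{0000,1110,1101,0011\}$ --- the same configurations as in \Cref{lem:length-4}. Consequently the $16$ subcubes partition into star-pattern classes of size $2$ or $4$, each realizing one of these two rigid patterns.

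With this in hand I would set up an exhaustive search. Normalizing so that the unique weight-$0$ subcube is $0^4*^{n-4}$, one builds $F$ class by class: each class is specified by choosing a $4$-element support, one of the two value-patterns on it, and a flip/permutation placing it, subject to pairwise disjointness (\Cref{lem:disjoint-subcubes}), the covering identity (\Cref{lem:testing-partition}), the weight-vector budget, and regularity. These constraints prune the search drastically, and since $n \le 13$ the space is finite; running the search for each $7 \le n \le 13$ is expected to return no partition, completing the proof. The extreme values $n = 12,13$ may additionally be dispatched by hand, since there the forced behaviour around $0^n$ and $1^n$ (the weight-$0$ and weight-$4$ subcubes together with the four weight-$1$ subcubes, whose single $1$'s must sit on the support of $0^4*^{n-4}$) leaves almost no freedom.

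The main obstacle is the middle range, roughly $n = 8,\dots,11$, where the class structure is most flexible and the number of ways to place and orient the classes is largest; here the argument genuinely relies on the computer, and the crucial point is that the balanced-independent-set classification together with regularity and the weight budget makes the search tree small enough to traverse. A secondary subtlety is that one must derive the two rigid class-patterns directly from parity and irreducibility of $F$ rather than by restricting $F$ to a subcube and invoking \Cref{lem:length-4}, because restricting an irreducible partition need not preserve irreducibility.
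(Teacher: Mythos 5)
Your structural claims are all correct, but you take a genuinely different route from the paper, and the difference matters for how much is left to the machine. The paper does not search the whole range $5 \le n \le 13$ given by \Cref{lem:homogeneous-k-2k}. Instead it proves two extra structural facts: \Cref{lem:homogeneous-4-aux} (any two subcubes of an irreducible $(n,4)$-homogeneous partition share at least \emph{two} non-star coordinates, proved by playing \Cref{cor:parity-homogeneous} against conflict requirements) and \Cref{lem:homogeneous-6-times} (every coordinate is mentioned by at least \emph{six} subcubes, proved via \Cref{lem:irreducible-regular} and the nfs-pair analysis of \Cref{lem:nfs-pair}). Normalizing $0000*^{n-4} \in F$, the first lemma forces each of the other $15$ subcubes to mention at most two coordinates outside the first four, so at most $2 \cdot 15/6 = 5$ outside coordinates can reach the six mentions required by the second lemma, giving $n \le 9$; a refinement (the four subcubes covering the weight-one points $1000\,0^{n-4},\dots,0001\,0^{n-4}$ are distinct and pairwise conflicting, which forces one of them to mention at most one outside coordinate) improves this to $n \le 8$, so the computer is needed only for $n \in \{5,7,8\}$. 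Your alternative reduction --- each star-pattern class, read on its four non-star coordinates, is a balanced independent set in the $4$-dimensional hypercube, hence of size $2$ or $4$ and isomorphic to $\{0000,1110\}$ or $\{0000,1110,1101,0011\}$ --- is sound (and you are right that it must be derived directly rather than by restricting $F$ and citing \Cref{lem:length-4}; the paper reuses the same classification in exactly this way inside \Cref{lem:homogeneous-5-4}). What the paper's lemmas buy is a tiny, certifiably feasible residual computation; what your classification buys is search-tree structure, but nothing that shrinks the range of $n$: your only per-coordinate bound is the four mentions coming from regularity, and since the $16$ subcubes contribute $64$ mentions in total, that alone only gives $n \le 16$, weaker even than $2^4-3=13$. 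Consequently the entire weight of the theorem rests on an unexecuted search up to $n = 13$, whose tractability at the top of the range is asserted rather than established, and whose promised hand-arguments for $n = 12,13$ are only sketched. To make your proof complete you should either actually run (and symmetry-break) that search, or first import counting arguments in the spirit of \Cref{lem:homogeneous-4-aux} and \Cref{lem:homogeneous-6-times} to cut the range down to something comparable to the paper's.
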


Since the proof is a bit long, we break it into three parts, starting with the following \namecref{lem:homogeneous-4-aux}.

\begin{lemma} \label{lem:homogeneous-4-aux}
If $F$ is an irreducible $(n,4)$-homogeneous subcube partition and $s,t \in F$ are two different subcubes, then $s,t$ have at least two non-star coordinates in common.
\end{lemma}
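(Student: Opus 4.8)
The plan is to argue by contradiction, supposing that two subcubes $s,t \in F$ share at most one non-star coordinate, and then to exhibit a proper sub-partition of $F$ whose union is a subcube. Since $F$ is $(n,4)$-homogeneous, each of $s,t$ has exactly four non-star coordinates. Two subcubes in a partition must conflict (they are disjoint, by \Cref{lem:disjoint-subcubes}), so there is at least one coordinate where both are non-star and they disagree; hence the shared non-star coordinates number at least one. The case to rule out is therefore exactly \emph{one} shared non-star coordinate, and at that coordinate $s,t$ must conflict. Without loss of generality I would normalize so that the shared coordinate is the first one, with $s_1 = 0$ and $t_1 = 1$, and arrange the remaining non-star coordinates of $s$ and $t$ to occupy disjoint blocks of three coordinates each.

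The key step is to build a small set $G \subseteq F$ whose union is a subcube, starting from $\{s,t\}$ and closing under the join operation exactly as in \Cref{alg:reducibility}. First I would compute the join $s \lor t$: since $s,t$ agree only on coordinate~$1$ (where they disagree) and are otherwise supported on disjoint coordinate blocks, their join fixes only coordinate~$1$ and is free (starred) everywhere else, i.e.\ $s \lor t$ is a subcube of codimension~$1$. The idea is that this half-cube $\{x : x_1 = *\}$-type region is already quite large, and all subcubes of $F$ meeting it are forced into $G$; I would then show that the closure either stabilizes at a proper subcube (giving reducibility, the desired contradiction) or sweeps up all of $F$. To push this through cleanly, I expect to invoke the weight/parity structure: by \Cref{cor:parity-homogeneous}, among subcubes with any fixed star pattern, exactly half have even and half odd parity, which constrains how subcubes can be packed into the region $s \lor t$ and prevents the closure from escaping to all of $*^n$ prematurely.

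The main obstacle will be controlling the closure process: once we adjoin all subcubes meeting $s \lor t$, their pairwise joins may enlarge the candidate subcube, and I must show this enlargement halts before reaching $*^n$. The cleanest route is probably to track the set $S$ of coordinates fixed by $\bigvee G$ and argue that, because $s$ and $t$ pin down coordinate~$1$ while freeing six other coordinates on which they were the sole constraints, any subcube forced into $G$ must itself leave coordinate~$1$ fixed (else it would conflict-check against the structure forced by homogeneity). Thus $\bigvee G$ retains coordinate~$1$ as a fixed coordinate, so $\bigvee G \neq *^n$ and $G \neq F$, witnessing reducibility. I would likely need the observation, already used repeatedly in \Cref{sec:homogeneous-3}, that a subcube forced to conflict with two complementary neighbors must mention several of their shared coordinates; here the homogeneity (codimension exactly~$4$) plus the parity count is what forbids the runaway growth, so verifying that interplay carefully is where the real work lies.
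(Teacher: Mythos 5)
There is a genuine gap, and it sits at the very center of your plan: the claim that $s \lor t$ ``fixes only coordinate~$1$'' and has codimension~$1$ is false. Since $s$ and $t$ are distinct members of a partition, they are disjoint, so by \Cref{lem:disjoint-subcubes} they must conflict; as coordinate~$1$ is their only common non-star coordinate, the conflict occurs exactly there, i.e.\ $s_1 \neq t_1$. But the join stars out every coordinate where the two subcubes disagree (as well as every coordinate where either has a star), so $(s \lor t)_1 = *$ too, and in fact $s \lor t = *^n$. There is no way to repair this by renormalizing: if instead $s_1 = t_1$, then $s$ and $t$ would not conflict anywhere and hence would intersect, which is impossible. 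Consequently the closure process of \Cref{alg:reducibility} started from $\{s,t\}$ has $\bigvee G = *^n$ from the first step, every subcube of $F$ intersects it, and the loop simply terminates with $G = F$ --- no contradiction with irreducibility is produced. The later assertion that ``$\bigvee G$ retains coordinate~$1$ as a fixed coordinate'' rests on the same incorrect computation, so the whole strategy of exhibiting a reducing set containing both $s$ and $t$ collapses; indeed no reducing set containing this pair can exist, precisely because their join is already the full cube.

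The paper's proof is more local and does succeed with the parity tool you mention, but used differently. From \Cref{cor:parity-homogeneous} plus irreducibility one first extracts: (i) every $u \in F$ has a companion in $F$ with the same star pattern differing from $u$ in exactly three of its four non-star coordinates (opposite parity forces an odd number of disagreements, and a single disagreement would make the union of the two subcubes a subcube); and (ii) every coordinate is starred by some subcube of $F$ (otherwise the subcubes with a $0$ there would union to a codimension-$1$ subcube, which by irreducibility would lie in $F$, contradicting homogeneity). Normalizing $s = 0000*^{n-4}$ and $t = {*}{*}{*}1000*^{n-7}$, fact (i) applied to $s$ and to $t$ forces $1110*^{n-4} \in F$ and ${*}{*}{*}1111*^{n-7} \in F$, since only these companions conflict with the other subcube. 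Fact (ii) at the shared coordinate~$4$ yields $v \in F$ with $v_4 = *$; to conflict with the four subcubes above, $v$ must mention one $0$ and one $1$ among coordinates $\{1,2,3\}$ and likewise among $\{5,6,7\}$, say $v = 01{*}{*}01{*}\,*^{n-7}$. Finally, fact (i) applied to $v$ produces a companion each of whose four possible values fails to conflict with one of the four subcubes already placed in $F$, contradicting disjointness. If you want to salvage your approach, this is the correct way to deploy the parity count: to force explicit companion subcubes and derive a conflict violation, not as a constraint inside a join-closure argument.
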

\begin{proof}
Let $F$ be an irreducible $(n,4)$-homogeneous subcube partition. We start with the following observations:
\begin{enumerate}[(i)]
\item If $u \in F$ then $F$ contains another subcube with the same star pattern differing in exactly three coordinates. \label{itm:homo-4-3}
\item For every coordinate $i$, there is some $v \in F$ with $v_i = *$. \label{itm:homo-4-star}
\end{enumerate}

Let $u \in F$, say $u = 0000*^{n-4}$. According to \Cref{cor:parity-homogeneous}, $F$ must contain another subcube with the same star pattern and opposite parity. Since $F$ is irreducible, this cannot be one of $1000*^{n-4},\allowbreak 0100*^{n-4},\allowbreak 0010*^{n-4},\allowbreak 0001*^{n-4}$, implying \Cref{itm:homo-4-3}.

If all $v \in F$ satisfy $v_i \in \{0,1\}$ then the union of all subcubes with $v_i = 0$ is the subcube $*^{i-1}0*^{n-i}$, hence $*^{i-1}0*^{n-i} \in F$ by irreducibility; but then $F$ is not $(n,4)$-homogeneous. This proves \Cref{itm:homo-4-star}.

\smallskip

Suppose, for the sake of contradiction, that $F$ contains two subcubes $s,t$ which share fewer than two non-star coordinates. Since $s,t$ conflict, they have exactly one non-star coordinate in common. Without loss of generality, $s=0000*^{n-4}$ and $t=***1000*^{n-7}$. Applying \Cref{itm:homo-4-3}, $F$ must contain one of $0111*^{n-4},\allowbreak 1011*^{n-4},\allowbreak 1101*^{n-4},\allowbreak 1110*^{n-4}$. The only one of these which conflicts with $t$ is $1110*^{n-4}$. Applying \Cref{itm:homo-4-3} to $t$, we similarly get that $***1111 *^{n-7} \in F$. Thus $F$ contains the following subcubes:
\begin{align*}
&0000*** \, *^{n-7} \\
&1110*** \, *^{n-7} \\
&***1000 \, *^{n-7} \\
&***1111 \, *^{n-7}
\end{align*}
According to \Cref{itm:homo-4-star} with $i = 4$, the subcube partition $F$ must contain a subcube $v$ with $v_4 = *$. In order to conflict with the first two subcubes above, $v$ must contain a $0$ and a $1$ in coordinates $1,2,3$. In order to conflict with the latter two subcubes, it must contain a $0$ and a $1$ in coordinates $5,6,7$. Without loss of generality, $v$ is the following subcube:
\begin{align*}
&01**01* \, *^{n-7}
\end{align*}
Applying \Cref{itm:homo-4-3} to $v$, one of the following subcubes belongs to $F$:
\begin{align*}
&00**10* \, *^{n-7} \\
&11**10* \, *^{n-7} \\
&10**00* \, *^{n-7} \\
&10**11* \, *^{n-7}
\end{align*}
However, the $i$'th subcube in this list fails to conflict with the $i$'th subcube in the previous list, and we reach the desired contradiction.
\end{proof}

We use \Cref{lem:homogeneous-4-aux} together with the following \namecref{lem:homogeneous-6-times} to bound $n$.

\begin{lemma} \label{lem:homogeneous-6-times}
Let $F$ be an irreducible $(n,k)$-homogeneous subcube partition, where $n \ge 2$. Each coordinate is mentioned in at least six subcubes of $F$.
\end{lemma}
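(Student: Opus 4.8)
The plan is to fix a single coordinate, without loss of generality the first, and count how many subcubes mention it. Write $F_0, F_1, F_*$ for the subcubes $s \in F$ with $s_1 = 0$, $s_1 = 1$, $s_1 = *$ respectively, and set $a = |F_0|$, $b = |F_1|$; the goal is $a + b \ge 6$. Since $F$ is homogeneous it is tight, so \Cref{lem:irreducible-regular} applies and already gives $a, b \ge 2$. The first real step is to show $a = b$. Restricting the subcubes meeting the slice $x_1 = 0$, namely $F_0 \cup F_*$, to the last $n-1$ coordinates yields a subcube partition of $\{0,1\}^{n-1}$ in which each projected subcube of $F_0$ has codimension $k-1$ and each projected subcube of $F_*$ has codimension $k$; by \Cref{lem:testing-partition} this gives $a \cdot 2^{-(k-1)} + |F_*| \cdot 2^{-k} = 1$. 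The slice $x_1 = 1$ yields the identical equation with $b$ in place of $a$, and subtracting the two forces $a = b$. Hence it suffices to rule out $a = b = 2$, since otherwise $a = b \ge 3$ and $a + b \ge 6$.

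So suppose $a = b = 2$, aiming for a contradiction. The engine of the argument is that $\widehat{F_0}$ and $\widehat{F_1}$ (hats denote projection to coordinates $2,\dots,n$) both complete $\widehat{F_*}$ to a partition of $\{0,1\}^{n-1}$, so they have the same union $U := \{0,1\}^{n-1} \setminus \bigcup \widehat{F_*}$. Writing $F_0 = \{0\hat p, 0\hat q\}$ and $F_1 = \{1\hat p', 1\hat q'\}$, we obtain two disjoint pairs $\{\hat p, \hat q\}$ and $\{\hat p', \hat q'\}$ of codimension-$(k-1)$ subcubes, each with union $U$ and each genuinely a pair ($\hat p \neq \hat q$, $\hat p' \neq \hat q'$). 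If the two pairs share an element then they coincide as sets, because the shared element determines its complement within $U$; in that case some codimension-$(k-1)$ subcube $\hat c$ is the projection of both an element of $F_0$ and an element of $F_1$, so $0\hat c, 1\hat c \in F$ and their union $*\hat c$ is a proper subcube, contradicting irreducibility. If instead $U$ is itself a subcube, then $0\hat p \cup 0\hat q = 0U$ is a subcube, again contradicting irreducibility.

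The crux, and the step I expect to require the most care, is the remaining case in which the two pairs are distinct and $U$ is \emph{not} a subcube. Here \Cref{lem:nfs-pair} applies to the two representations $\hat p \cup \hat q = \hat p' \cup \hat q' = U$ and forces one of the pairs to be an nfs-pair. But by \Cref{def:nfs-flip} the two subcubes of an nfs-pair differ in a coordinate where one has a bit and the other a star, so they have \emph{different} codimensions; this contradicts the fact that $\hat p$ and $\hat q$ both have codimension $k-1$ — a rigidity that comes precisely from homogeneity. Thus every case yields a contradiction, so $a = b \ge 3$, and the chosen coordinate is mentioned by $a + b = 2a \ge 6$ subcubes, as claimed.
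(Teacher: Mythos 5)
Your proof is correct and takes essentially the same route as the paper's: regularity (\Cref{lem:irreducible-regular}) gives at least two subcubes on each side of the fixed coordinate, homogeneity forces the two sides to have equal counts, and the case of two subcubes per side is ruled out by \Cref{lem:nfs-pair} together with the observation that the two subcubes of an nfs-pair have different dimensions. The only cosmetic difference is that you derive $|F_0|=|F_1|$ from the volume identity of \Cref{lem:testing-partition}, whereas the paper reads it off directly from $\bigcup F_0 = \bigcup F_1$ and equal subcube sizes.
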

\begin{proof}
We will show that the first coordinate is mentioned at least six times. For $\sigma \in \{0,1,*\}$, let $F_\sigma = \{ x : \sigma x \in F \}$. Since $F_0 \cup F_*$ and $F_1 \cup F_*$ are both subcube partitions, $\bigcup F_0 = \bigcup F_1$, and so $|F_0| = |F_1|$.

\Cref{lem:irreducible-regular} shows that $|F_0| \geq 2$. If $|F_0| = 2$ then let $F_0 = \{s,t\}$ and $F_1 = \{s',t'\}$. Notice that $s \cup t = s' \cup t'$ and the union is not a subcube, since otherwise $F$ would contain $0s,0t$ whose union is a subcube, contradicting irreducibility. \Cref{lem:nfs-pair} shows that either $\{s,t\} = \{s',t'\}$ or $s,t$ are an nfs-pair (in some order). In the former case, $F$ contains $0s,1s$, contradicting irreducibility. The latter case is impossible by homogeneity, since the two subcubes in an nfs-pair have different dimensions.
\end{proof}

We can now prove the \namecref{thm:homogeneous-4}.

\begin{proof}[Proof of \Cref{thm:homogeneous-4}]
Let $F$ be an irreducible $(n,4)$-homogeneous subcube partition. Suppose without loss of generality that $0000*^{n-4} \in F$. According to \Cref{lem:homogeneous-4-aux}, every other subcube in $F$ mentions at most two coordinates beyond the first four, and so at most $2 \cdot 15 / 6 = 5$ of these are mentioned at least six times. \Cref{lem:homogeneous-6-times} implies that $n \leq 4 + 5 = 9$.

We can slightly improve on this, as follows. Let $u^{(1)},u^{(2)},u^{(3)},u^{(4)}$ be the subcubes containing the points $1000\,0^{n-4},0100\,0^{n-4},0010\,0^{n-4},0001\,0^{n-4}$, respectively. Each of these subcubes must be different. Indeed, if for example $u^{(1)} = u^{(2)}$ then $u^{(1)} \supseteq 1000\,0^{n-4} \lor 0100\,0^{n-4} = **00\,0^{n-4}$, which intersects with $0000*^{n-4}$.

Any two of $u^{(1)},u^{(2)},u^{(3)},u^{(4)}$ must conflict, and so for distinct $i,j \in \{1,\dots,4\}$, either $u^{(i)}_j = 0$ or $u^{(j)}_i = 0$. This means that together, $u^{(1)},u^{(2)},u^{(3)},u^{(4)}$ contain at least $\binom{4}{2} = 6$ zeroes among the first four coordinates. Therefore one of $u^{(1)},u^{(2)},u^{(3)},u^{(4)}$ must contain at least $\lceil 6/4 \rceil = 2$ zeroes among the first two coordinates, and so mentions at most one coordinate beyond the first four.

This means that strictly fewer than $2 \cdot 15 / 6 = 5$ coordinates are mentioned at least six times, and so $n \leq 4 + 4 = 8$.

Recalling that $n \geq 5$ due to \Cref{lem:homogeneous-k-2k}, we complete the proof of the theorem by checking with a computer that no irreducible $(n,4)$-homogeneous subcube partitions exist for $n=5,7,8$.
(The case $n = 5$ was also worked out by hand in \Cref{lem:homogeneous-5-4}.)
\end{proof}

\section{Nonbinary subcube partitions}
\label{sec:larger-alphabets}

So far we have considered subcube partitions of the hypercube $\{0,1\}^n$. In this section, we study subcube partitions of $\{0,\dots,q-1\}^n$ for arbitrary $q \ge 2$.

\begin{definition}[Subcube partition] \label{def:q-subcube-partition}
A \emph{subcube partition} of $\{0,\dots,q-1\}^n$ (or: a subcube partition \emph{over} $\{0,\dots,q-1\}$ of length $n$) is a partition of $\{0,\dots,q-1\}^n$ into \emph{subcubes}, which are sets of the form
\[
 \{ x \in \{0,\dots,q-1\}^n : x_{i_1} = b_1, \dots, x_{i_d} = b_d \}.
\]
\end{definition}

We identify subcubes with words over $\{0,\dots,q-1,*\}$. The definitions of the following concepts are identical to the binary case: dimension and codimension of a subcube, point, edge, size (\Cref{def:subcube-partition}); reducible subcube partition (\Cref{def:reducible}); tight subcube partition (\Cref{def:tight}); conflicting subcubes (\Cref{def:conflicting-subcubes}).

Given a collection $F$ of subcubes of $\{0,\dots,q-1\}^n$, we can determine whether they form a subcube partition using the criterion of \Cref{lem:testing-partition}, replacing $2$ with $q$. Determining whether a subcube partition of $\{0,\dots,q-1\}^n$ is tight is easy using the definition, and we can determine irreducibility using  \Cref{alg:reducibility}.

\smallskip

We start our exploration of subcube partitions over $\{0,\dots,q-1\}$ in \Cref{sec:expansion}, where we show how to convert an irreducible subcube partition of $\{0,1\}^n$ into an irreducible subcube partition of $\{0,\dots,q-1\}^n$.

We then study the minimal size of tight irreducible subcube partitions over $\{0,\dots,q-1\}$ in \Cref{sec:larger-alphabets-minimal-subcubes}. 

\subsection{Expansion}
\label{sec:expansion}

In this section we show how to convert a subcube partition of $\{0,1\}^n$ into a subcube partition of $\{0,\dots,q-1\}^n$ in a way which preserves tightness and irreducibility.

\begin{lemma} \label{lem:expansion}
Let $F$ be a subcube partition of $\{0,1\}^n$, let $q \ge 2$, and let $\phi_1,\dots,\phi_n\colon \{0,\dots,q-1\} \to \{0,1\}$ be surjective functions.

Extend the definitions of $\phi_1,\dots,\phi_n$ to $\{0,\dots,q-1,*\}$ by defining $\phi_i(*) = *$. Define a function $\phi\colon \{0,\dots,q-1,*\}^n \to \{0,1,*\}^n$ as follows: $\phi(\sigma_1 \ldots \sigma_n) = \phi_1(\sigma_1) \ldots \phi_n(\sigma_n)$. Let
\[
 G = \{ s \in \{0,\dots,q-1,*\}^n : \phi(s) \in F \}.
\]
Then
\begin{enumerate}[(a)]
\item $G$ is a subcube partition of $\{0,\dots,q-1\}^n$.
\item If $F$ is tight then so is $G$.
\item If $F$ is irreducible then so is $G$.
\end{enumerate}
\end{lemma}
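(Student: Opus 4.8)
The plan is to study the coordinatewise map $\phi$ simultaneously as the given map on subcubes and as a map on individual points $x \in \{0,\dots,q-1\}^n$ (a point being a star-free subcube), recording two structural facts up front. Since $\phi_i(*) = *$ while $\phi_i$ sends genuine symbols to genuine symbols, the map $\phi$ \emph{preserves star patterns}: $s$ and $\phi(s)$ have the same $*$-positions, hence the same dimension. And because each $\phi_i$ is surjective onto $\{0,1\}$, the image of a subcube $s$ (as a set of points) is exactly the subcube $\phi(s)$, while the full preimage of a subcube $t \in F$ is the box $B_t = \{ x : x_i \in \phi_i^{-1}(t_i) \text{ for every non-star coordinate } i \text{ of } t \}$. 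Crucially, $B_t$ is partitioned into subcubes precisely by those $s \in G$ with $\phi(s) = t$, obtained by choosing $s_i \in \phi_i^{-1}(t_i)$ at each non-star coordinate of $t$ and $s_i = *$ elsewhere.

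Given these facts, part (a) is almost immediate: as $\phi$ is surjective on points and $F$ partitions $\{0,1\}^n$, the boxes $B_t$ (for $t \in F$) partition $\{0,\dots,q-1\}^n$, and each $B_t$ is in turn partitioned by the subcubes of $G$ mapping to $t$, so $G$ is a subcube partition. Part (b) is equally short: star-pattern preservation shows that $s$ mentions a coordinate $i$ if and only if $\phi(s)$ does, and surjectivity guarantees that each $t \in F$ has at least one preimage subcube in $G$ (every $\phi_i^{-1}(t_i)$ is nonempty); so if some $t \in F$ mentions $i$, which happens for every $i$ when $F$ is tight, then some subcube of $G$ mentions $i$ as well.

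The substance of the lemma is part (c), which I would prove by contradiction. Suppose $G' \subseteq G$ satisfies $1 < |G'| < |G|$ and $\bigcup G' = c$ for some subcube $c$. Pushing forward through $\phi$, and using that the image of each $s$ is the subcube $\phi(s)$, the set $H := \{ \phi(s) : s \in G' \} \subseteq F$ satisfies $\bigcup H = \phi(c)$, which is again a subcube. The argument then splits on the size of $H$. If $|H| > 1$, then irreducibility of $F$ forces $H = F$ and $\phi(c) = *^n$; since $\phi$ preserves star patterns, $\phi(c) = *^n$ yields $c = *^n$, whence $\bigcup G'$ is the entire cube and $G' = G$ (as $G$ is a partition), contradicting $|G'| < |G|$.

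The remaining case $|H| = 1$ is the main obstacle, because there irreducibility of $F$ gives no information, and it is exactly where the expansion could conceivably manufacture a reducibility absent from $F$. Here all subcubes of $G'$ share a single image $t \in F$, so each $s \in G'$ carries $s_i = *$ at the star positions of $t$ and $s_i \in \phi_i^{-1}(t_i)$ at the others. For $\bigcup G'$ to be a subcube, at each non-star coordinate $i$ of $t$ the subcubes of $G'$ must either all agree on $s_i$ or jointly realize every one of the $q$ alphabet symbols; but surjectivity makes $\phi_i^{-1}(t_i)$ a proper subset of $\{0,\dots,q-1\}$, since it omits the nonempty preimage of the other bit, so the second alternative is impossible. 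Hence the subcubes of $G'$ agree at every coordinate, forcing $|G'| = 1$ and contradicting $|G'| > 1$. This closes the case analysis and establishes the irreducibility of $G$.
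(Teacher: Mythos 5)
Your proof is correct and follows essentially the same route as the paper's: both push $G'$ forward through $\phi$, observe that its image union is the subcube $\phi(c)$, and invoke irreducibility of $F$ to split into the cases $H = F$ (handled identically via star-pattern preservation and $c = *^n$) and $|H| = 1$. The only difference is cosmetic: in the single-image case the paper derives the contradiction from star patterns (a conflict coordinate of two distinct members of $G'$ forces a star in $\phi(c)$, contradicting $\phi(s) = \phi(c)$), whereas you use surjectivity to rule out realizing all $q$ symbols inside the proper subset $\phi_i^{-1}(t_i)$ --- both are valid one-step finishes.
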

\begin{proof}
We start by showing that $G$ is a subcube partition. Notice first that the subcubes in $G$ are disjoint. Indeed, suppose that $s,s' \in G$ are distinct. If $\phi(s) = \phi(s')$ then $s,s'$ must disagree on a non-star position, and so conflict. If $\phi(s) \neq \phi(s')$ then $\phi(s),\phi(s')$ conflict at some position $i$, and $s,s'$ conflict at the same position.

In order to show that the subcubes in $G$ cover all of $\{0,\dots,q-1\}^n$, let $x \in \{0,\dots,q-1\}^n$. Since $F$ is a subcube partition, $\phi(x)$ is covered by some subcube $t \in F$. Define a subcube $s$ as follows: if $t_i = *$ then $s_i = *$, and otherwise $s_i = x_i$. Then $\phi(s) = t$ and so $s \in G$, and $s$ covers $x$ by definition.

\smallskip

Now suppose that $F$ is tight. Then for every $i \in [n]$ there is a subcube $t \in F$ mentioning $i$. Since $\phi$ is surjective, we can find a subcube $s$ mentioning $i$ such that $\phi(s) = t$. Hence $s \in G$, and so $G$ also contains a subcube mentioning $i$. Hence $G$ is tight.

\smallskip

Finally, suppose that $F$ is irreducible. If $G$ is reducible then there is a a subset $H \subset G$, with $1 < |H| < |G|$, whose union is a subcube $r$. We claim that the union of $\phi(H) = \{ \phi(s) : s \in H \}$ is the subcube $\phi(r)$.

Indeed, on the one hand, any $s \in H$ satisfies $s \subseteq r$ and so $\phi(s) \subseteq \phi(r)$, hence $\bigcup \phi(H) \subseteq \phi(r)$. On the other hand, let $x \in \phi(r)$ be an arbitrary point. Define a point $y \in \{0,\dots,q-1\}^n$ as follows: if $r_i = *$ then $y_i$ is an arbitrary element of $\phi_i^{-1}(x_i)$, and otherwise $y_i = r_i$; in the latter case, $\phi_i(y_i) = \phi_i(r_i) = x_i$. By construction, $y \in r$, and so $y$ is covered by some $s \in H$. Since $\phi(y) = x$, it follows that $\phi(s)$ covers $x$.

Since $F$ is irreducible, either $|\phi(H)| = 1$ or $|\phi(H)| = |F|$. In the latter case, $\phi(r) = *^n$ and so $r = *^n$, implying that $H = G$, contrary to assumption.
In the former case, $\phi(s) = \phi(r)$ for all $s \in H$. Choose two distinct subcubes $s,s' \in H$. Let $i \in [n]$ be a coordinate at which $s,s'$ conflict. Since $r \supseteq s \lor s'$ we have $r_i = *$, and so $\phi(r)_i = *$. On the other hand, $s_i,s'_i \neq *$, contradicting $\phi(s) = \phi(s') = \phi(r)$.
\end{proof}

\subsection{Minimal size}
\label{sec:larger-alphabets-minimal-subcubes}

\Cref{sec:minimal-subcubes} studies the minimal size of a tight irreducible subcube partition of $\{0,1\}^n$. In this section we extend this study to tight irreducible subcube partitions of $\{0,\dots,q-1\}^n$, asking: what is the minimal size of a tight irreducible subcube partition of $\{0,\dots,q-1\}^n$?

Applying \Cref{lem:expansion} to the tight irreducible subcube partitions constructed in \Cref{thm:weight-1-n-1-n-1}, we obtain a tight irreducible subcube partition of size $(n-1)q(q-1) + 1$. We conjecture that this is optimal.

\begin{conjecture} \label{conj:minimal-size-q}
If $n \ge 3$ then for all $q \ge 2$, the minimal size of a tight irreducible subcube partition of $\{0,\dots,q-1\}^n$ is $(n-1)q(q-1) + 1$.
\end{conjecture}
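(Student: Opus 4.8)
The plan splits, as usual, into a matching upper and lower bound, and the two halves are of wildly different difficulty. For the upper bound I would simply invoke expansion: apply \Cref{lem:expansion} to the tight irreducible subcube partition $A_n$ of \Cref{thm:weight-1-n-1-n-1}, taking each $\phi_i$ to be the surjection with $\phi_i^{-1}(0) = \{0\}$ and $\phi_i^{-1}(1) = \{1,\dots,q-1\}$. A subcube $s$ with $\nones{s} = h$ then lifts to exactly $(q-1)^h$ subcubes of $G$, so from the weight vector $1,n-1,n-1,0,\dots,0$ of $A_n$ we get $|G| = 1 + (n-1)(q-1) + (n-1)(q-1)^2 = (n-1)q(q-1)+1$. \Cref{lem:expansion} guarantees that $G$ is tight and irreducible, establishing the upper bound for all $n \ge 3$ and $q \ge 2$.

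The content is the lower bound. My first step would be to isolate a $q$-ary analogue of \Cref{lem:irreducible-regular}. Splitting $F$ on the first coordinate into $F_* $ and $F_0,\dots,F_{q-1}$, where $F_\sigma = \{x : \sigma x \in F\}$, each $F_b \cup F_*$ is a length-$(n-1)$ subcube partition, so all the $\bigcup F_b$ coincide with a common region $R$. Tightness forces $R \neq \emptyset$, and a short argument using \Cref{def:reducible} shows $|F_b| \ge 2$ for every $b$: if some $F_b$ were a single subcube $x$ then $R$ would equal the subcube $x$, every $F_{b'}$ would tile $x$, and one either finds a reducible block $\{b'y : y \in F_{b'}\}$ or is forced back to the non-tight partition of $[q]^n$ by the first coordinate. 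This yields $|F| \ge 2q$, which is the correct order in $q$ but far from the target.

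The main obstacle is bridging the large gap between this regularity bound and $(n-1)q(q-1)+1$. Over $q \ge 3$ irreducibility is genuinely restrictive: since subcube sizes are powers of $q$, \Cref{lem:testing-partition} shows that a union of two or more subcubes can be a subcube only if those subcubes tile a positive-dimensional subcube. The difficulty is that the slices $F_b \cup F_*$ need not themselves be irreducible, so a naive induction on $n$ collapses. I would emphasize that the case $q = 2$ of the statement is precisely \Cref{conj:minimal-subcubes}, which is itself open; hence no purely inductive argument can settle the full conjecture without new ideas, and I would only expect a complete proof of the base case $n = 3$.

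For $n = 3$ I would slice $[q]^3$ along the last coordinate: writing $V = \{s \in F : s_3 = *\}$ and $H_c = \{s \in F : s_3 = c\}$, each slice is a partition of $[q]^2$ by the projections of $V$ together with those of $H_c$, with the contribution of $V$ identical across all $q$ slices. Combining the per-coordinate regularity bound (now applied in all three directions) with the counting identity $\sum_{s} q^{-\operatorname{codim}(s)} = 1$, and repeatedly using that over $q \ge 3$ two subcubes can merge only as a full tiling of a $1$- or $2$-dimensional subcube, I would run a finite case analysis on the vertical part $V$ to force $|V| + \sum_c |H_c| \ge 2q(q-1)+1$. The delicate point, which I expect to consume most of the effort, is ruling out configurations with many planes (codimension-$1$ subcubes): planes are cheap in the counting identity, so tightness and irreducibility must be combined carefully to show they cannot be used to undercut the bound.
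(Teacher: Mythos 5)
Your upper bound is correct and is exactly the paper's (\Cref{thm:larger-alphabets-construction}): expand $A_n$ from \Cref{thm:weight-1-n-1-n-1} via \Cref{lem:expansion} with $\phi_i^{-1}(0)=\{0\}$, giving $1+(n-1)(q-1)+(n-1)(q-1)^2=(n-1)q(q-1)+1$. You are also right that the matching lower bound is open in general --- for $q=2$ it is precisely \Cref{conj:minimal-subcubes} --- and indeed the paper only states the lower bound as a conjecture, proving the case $n=3$ (\Cref{cor:subcube-partition-3-q}) together with a conditional statement that expansion cannot do better (\Cref{thm:larger-alphabets-expansion-optimal}).

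The gap is in your $n=3$ plan, which is a sketch rather than a proof and misallocates the difficulty. First, ruling out planes (codimension-$1$ subcubes) is not the delicate point: if $0** \in G$ then every other subcube has a non-star first coordinate, so for each $a$ the subcubes starting with $a$ form a proper subset of $G$ whose union is $a**$; irreducibility forces $G=\{a** : a \in \{0,\dots,q-1\}\}$, contradicting tightness --- a two-line argument in the paper. The actual content of the paper's proof (\Cref{lem:subcube-partition-3-q}) is structural: showing that the edges of $G$ form product sets (if $ab'*$ and $a'b*$ lie in $G$ then so does $ab*$), that $A(\pdot ? *)$ and $A(\pdot * ?)$ partition $\{0,\dots,q-1\}$, and hence that every tight irreducible subcube partition of $\{0,\dots,q-1\}^3$ arises from $S_3$ by \Cref{lem:expansion}. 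Second, even granting that $G$ consists only of points and edges, your sketch has no mechanism that produces the number $2q(q-1)+1$: the paper gets it from \Cref{thm:larger-alphabets-expansion-optimal}, whose proof needs multilinearity of the size in the fibre sizes $|\phi_i^{-1}(0)|$ together with the weight-vector majorization machinery (\Cref{thm:minimal-weight-lb}, \Cref{lem:majorization}) and the known minimum $5$ for $\{0,1\}^3$; nothing equivalent appears in your plan. Relatedly, your proposed ``finite case analysis on the vertical part $V$'' is not finite uniformly in $q$ --- the possible slice configurations grow with $q$ --- so one needs an argument, like the paper's product-structure one, that is uniform in $q$. (A small further slip: the regularity bound $|F| \ge 2q$ is not ``the correct order in $q$''; the target is quadratic in $q$.)
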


We formally describe the matching construction in \Cref{sec:larger-alphabets-construction}, where we also show that this is the minimal size that can be achieved by a direct application of \Cref{lem:expansion}, assuming \Cref{conj:minimal-subcubes}.

We prove \Cref{conj:minimal-size-q} for $n = 3$ in \Cref{sec:larger-alphabets-small-n}, where we also show that no tight irreducible subcube partition exists for $n = 2$. We have also verified the conjecture using a computer for $n = 4$ and $q \leq 6$, as well as for $n = 5$ and $q = 3$.

We close the section by proving a modest lower bound of $(q-1)n + 1$ on the size of a tight subcube partition of $\{0,\dots,q-1\}^n$, using the technique of Tarsi~\cite{AharoniLinial86}. The lower bound applies more generally to tight minimal subcube covers, where it is sharp.

\subsubsection{Construction}
\label{sec:larger-alphabets-construction}

In this section we show how to construct tight irreducible subcube partitions of $\{0,\dots,q-1\}^n$ of size $(n-1)q(q-1) + 1$ using \Cref{lem:expansion}, and explain why this is the minimal possible size when using the \namecref{lem:expansion}, assuming \Cref{conj:minimal-subcubes}. We start with the construction.

\begin{theorem} \label{thm:larger-alphabets-construction}
For each $n \ge 3$ and $q \ge 2$ there exists a tight irreducible subcube partition of $\{0,\dots,q-1\}^n$ of size $(n-1)q(q-1) + 1$.
\end{theorem}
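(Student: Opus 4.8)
The plan is to feed the minimal-weight partition $A_n$ of \Cref{thm:weight-1-n-1-n-1} into the expansion construction of \Cref{lem:expansion}, with the alphabet-enlarging maps chosen so that the resulting size collapses exactly to $(n-1)q(q-1)+1$. Recall that $A_n$ is a tight irreducible subcube partition of $\{0,1\}^n$ whose weight vector is $1,n-1,n-1,0,\dots,0$; that is, it has one subcube with no $1$s, $n-1$ subcubes with a single $1$, and $n-1$ subcubes with exactly two $1$s.

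First I would fix the surjections $\phi_1=\dots=\phi_n=\phi\colon\{0,\dots,q-1\}\to\{0,1\}$ given by $\phi(0)=0$ and $\phi(a)=1$ for $a\ge 1$; these are surjective precisely because $q\ge 2$. Applying \Cref{lem:expansion} to $F=A_n$ with these maps immediately yields a subcube partition $G$ of $\{0,\dots,q-1\}^n$ that is tight (since $A_n$ is) and irreducible (since $A_n$ is). Thus the only remaining task is to verify that $|G|=(n-1)q(q-1)+1$, and nothing about correctness is in doubt: the entire burden has been discharged by the two cited results.

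The size computation is pure bookkeeping. For a subcube $s\in A_n$, the subcubes $t\in G$ with $\phi(t)=s$ are obtained coordinatewise: a star or a $0$ of $s$ forces a single choice ($*$ or $0$, since $\phi^{-1}(0)=\{0\}$), whereas a $1$ of $s$ may be replaced by any of the $q-1$ symbols in $\phi^{-1}(1)=\{1,\dots,q-1\}$. Hence $s$ has exactly $(q-1)^{\nones{s}}$ preimages, and since distinct $s$ account for disjoint families that together exhaust $G$, we get
\[
 |G|=\sum_{s\in A_n}(q-1)^{\nones{s}}=\sum_{h=0}^n w_h(A_n)\,(q-1)^h = 1+(n-1)(q-1)+(n-1)(q-1)^2.
\]
Factoring the last expression as $1+(n-1)(q-1)\bigl(1+(q-1)\bigr)=1+(n-1)(q-1)q$ gives precisely $(n-1)q(q-1)+1$, as required.

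The step I expect to require the most care is not an obstacle so much as a design choice: the surjections must be selected so that all the multiplicity is concentrated on the symbol $1$ (i.e.\ $|\phi^{-1}(1)|=q-1$ and $|\phi^{-1}(0)|=1$), since this is exactly what turns the generic expansion count $\prod_i|\phi_i^{-1}(s_i)|$ into the weight-generating sum $\sum_h w_h(q-1)^h$ evaluated against the favorable weight vector of $A_n$. Any other allocation would inflate the contribution of the low-weight subcubes and overshoot the target; the content of the theorem is really that $A_n$'s weight vector $1,n-1,n-1,0,\dots,0$ is the one for which this sum equals $(n-1)q(q-1)+1$.
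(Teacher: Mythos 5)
Your proposal is correct and is essentially the paper's own proof: both take the partition $A_n$ of \Cref{thm:weight-1-n-1-n-1} with weight vector $1,n-1,n-1,0,\dots,0$, apply \Cref{lem:expansion} with the surjections $\phi_i(0)=0$ and $\phi_i(a)=1$ for $a\ge 1$, and compute the size as $1+(n-1)(q-1)+(n-1)(q-1)^2=(n-1)q(q-1)+1$. The only difference is that you spell out the per-subcube preimage count $(q-1)^{\nones{s}}$, which the paper already records in \Cref{sec:minimal-weight}.
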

\begin{proof}
\Cref{thm:weight-1-n-1-n-1} constructs a tight irreducible subcube partition of $\{0,1\}^n$ whose weight vector is $1,n-1,n-1,0,\dots,0$. Applying \Cref{lem:expansion} with the mappings $\phi_i$ given by $\phi_i(0) = 0$ and $\phi_i(1) = \cdots = \phi_i(q-1) = 1$ for all $i \in [n]$, we obtain a tight irreducible subcube partition of size
\[
 1 \cdot (q-1)^0 + (n-1) \cdot (q-1)^1 + (n-1) \cdot (q-1)^2 = (n-1)q(q-1) + 1. \qedhere
\]
\end{proof}

We now show that this construction is the optimal way of applying \Cref{lem:expansion}, assuming \Cref{conj:minimal-subcubes}.

\begin{theorem} \label{thm:larger-alphabets-expansion-optimal}
Assume that \Cref{conj:minimal-subcubes} holds for some $n \ge 3$. Let $F$ be a tight irreducible subcube partition of $\{0,1\}^n$. Let $G$ be a subcube partition obtained by an application of \Cref{lem:expansion} on $F$, for some $q \ge 2$. Then $G$ has size at least $(n-1)q(q-1) + 1$.
\end{theorem}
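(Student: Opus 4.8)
The plan is to compute $|G|$ explicitly, reduce to the special expansion used in \Cref{thm:larger-alphabets-construction} by a multilinearity argument combined with coordinate flips, and then invoke the majorization bound of \Cref{thm:minimal-weight-lb}.

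First I would write down the size of $G$. For each coordinate $i$, surjectivity of $\phi_i$ forces the fibers $\phi_i^{-1}(0)$ and $\phi_i^{-1}(1)$ to be nonempty, so writing $a_i = |\phi_i^{-1}(0)|$ and $b_i = |\phi_i^{-1}(1)|$ we have $a_i,b_i \ge 1$ and $a_i + b_i = q$. A subcube $t \in F$ lifts to $\prod_{i:t_i=0} a_i \prod_{i:t_i=1} b_i$ subcubes of $G$, since a star lifts uniquely and a non-star coordinate $t_i$ lifts to any element of the corresponding fiber. Introducing the per-coordinate factor $c_i(t)$ equal to $1,a_i,b_i$ according to whether $t_i$ is $*,0,1$, this gives
\[
 |G| = \sum_{t \in F} \prod_{i=1}^n c_i(t).
\]

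Next I would reduce to the extreme fibers. Regarding the right-hand side as a function of $(a_1,\dots,a_n)$ with $b_i = q - a_i$, it is affine in each $a_i$ separately: grouping the subcubes $t$ by the value of $t_i$ yields $|G| = (A_i - B_i) a_i + (qB_i + C_i)$, where $A_i,B_i,C_i$ collect the factors $\prod_{j\neq i} c_j(t)$ over the subcubes with $t_i = 0,1,*$ and are independent of $a_i$. A function affine in each variable attains its minimum over the box $[1,q-1]^n$ at a vertex, so the given value of $|G|$ is at least the minimum over all choices with each $(a_i,b_i) \in \{(1,q-1),(q-1,1)\}$; it therefore suffices to prove the bound at such a vertex. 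Flipping exactly the coordinates $i$ with $a_i = q-1$ produces a tight irreducible subcube partition $F'$ of length $n$ (flipping preserves all three properties), and a coordinatewise check shows $\prod_i c_i(t)$ equals the corresponding factor for the flipped subcube $t'$ under the standard maps $\psi_i(0)=0,\ \psi_i(1)=\dots=\psi_i(q-1)=1$. Hence $|G| = \sum_{t' \in F'} (q-1)^{\nones{t'}}$, exactly the quantity arising in \Cref{thm:larger-alphabets-construction}.

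Finally I would apply majorization. Since $F'$ is a tight irreducible subcube partition of length $n$ and \Cref{conj:minimal-subcubes} is assumed for this $n$, \Cref{thm:minimal-weight-lb} shows $w(F')$ majorizes either $1,n-1,n-1,0,\dots,0$ or $1,n,n-3,1,0,\dots,0$. As $\phi(h) = (q-1)^h$ is monotone non-decreasing for $q \ge 2$, \Cref{lem:majorization} gives $\sum_{t'}(q-1)^{\nones{t'}} \ge \sum_h m_h (q-1)^h$ for the majorized vector $m$. The first candidate evaluates to $1 + (n-1)(q-1)q = (n-1)q(q-1)+1$, while the second exceeds the first by $(q-1)\bigl(1-2(q-1)+(q-1)^2\bigr) = (q-1)(q-2)^2 \ge 0$, so both are at least $(n-1)q(q-1)+1$, completing the proof.

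I expect the reduction step to be the main obstacle: $|G|$ depends on the individual fiber sizes $a_i,b_i$ rather than only on the weight vector of $F$, so the majorization machinery cannot be applied directly. The observation that $|G|$ is affine in each coordinate (letting us push to the vertices of the box) together with the flip that turns a vertex into the standard expansion of a flipped partition is precisely what makes the weight-vector argument usable.
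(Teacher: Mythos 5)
Your proposal is correct and follows essentially the same route as the paper's proof: express $|G|$ as a function of the fiber sizes, use multilinearity to push the minimum to a vertex of the box $\{1,\dots,q-1\}^n$, flip the coordinates with large zero-fiber to reduce to the standard expansion of a tight irreducible partition $F'$, and then combine \Cref{thm:minimal-weight-lb} with \Cref{lem:majorization}, finishing with the same comparison $(q-1)(q-2)^2 \ge 0$ between the two candidate weight vectors. Your write-up just spells out the affineness and flipping steps in more detail than the paper does.
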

\begin{proof}
Let $g(z_1,\dots,z_n)$ be the size of $G$ when \Cref{lem:expansion} is applied with functions $\phi_1,\dots,\phi_n\colon \{0,\dots,q-1\} \to \{0,1\}$ such that $|\phi_i^{-1}(0)| = z_i$ for all $i \in [n]$. The function $g$ is multilinear, and so its minimal value over $\{1,\dots,q-1\}^n$ is attained at some $z \in \{1,q-1\}^n$.
Define a subcube partition $F'$ by flipping all coordinates $i$ such that $z_i = q-1$. Then
\[
 |G| \geq g(z) = \sum_{s \in F'} (q-1)^{\nones{s}}.
\]
Since $F'$ is tight and irreducible, a combination of \Cref{thm:minimal-weight-lb} and \Cref{lem:majorization} shows that
\[
 |G| \geq \min\bigl(
 1 + (n-1)(q-1) + (n-1)(q-1)^2,
 1 + n(q-1) + (n-3)(q-1)^2 + (q-1)^3.
 \bigr).
\]
If we subtract the first sum from the second then we obtain
\[
 (q-1)^3 - 2(q-1)^2 + (q-1) = (q-2)^2(q-1) \ge 0,
\]
and so the minimum equals the first sum.
\end{proof}

\subsubsection{Short length}
\label{sec:larger-alphabets-small-n}

In this section we characterize all tight irreducible subcube partitions of $\{0,\dots,q-1\}^n$ for $q \ge 2$ and $n \leq 3$.

It is easy to see that the unique tight irreducible subcube partition of $\{0,\dots,q-1\}^1$ is $\{0,\dots,q-1\}$. In contrast, there is no tight irreducible subcube partition of $\{0,\dots,q-1\}^2$.

\begin{lemma} \label{lem:subcube-partition-2-q}
There are no tight irreducible subcube partitions of $\{0,\dots,q-1\}^2$ for any $q \ge 2$.
\end{lemma}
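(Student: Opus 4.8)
The plan is to exploit the extremely restricted structure of subcubes of length~$2$. Over the alphabet $\{0,\dots,q-1\}$, a length-$2$ subcube is of exactly one of four types: the whole space $**$; a \emph{row} $a*$, constraining only the first coordinate; a \emph{column} $*b$, constraining only the second; or a \emph{point} $ab$. First I would dispose of the whole space: if $** \in F$ then $F = \{**\}$, which mentions no coordinate and so is not tight. Hence I may assume that $F$ consists solely of rows, columns, and points.

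The key observation is that a row $a*$ and a column $*b$ always intersect (in the point $ab$), so by disjointness of the subcubes in a partition, $F$ cannot contain both a row and a column. Since swapping the two coordinates preserves tightness and irreducibility, I may therefore assume without loss of generality that $F$ contains no columns (the all-points case, with no rows either, falls under this assumption as well). In this situation every cell $(a,b)$ is covered either by a full row or as an individual point, and for each fixed first coordinate $a$ these options are mutually exclusive: either $a* \in F$, in which case it covers all of row $a$ and no point $(a,b)$ can lie in $F$; or $a* \notin F$, in which case each cell $(a,b)$ must be covered by the point $ab$ itself, so all $q$ cells of row $a$ appear in $F$ as points.

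Now I would invoke tightness. Rows mention only the first coordinate, so the second coordinate is mentioned only by points; tightness thus forces $F$ to contain at least one point $(a_0,b_0)$, and by the dichotomy above this means that all $q$ points of row $a_0$ belong to $F$. Their union is exactly the subcube $a_0*$, so the sub-partition $G \subseteq F$ consisting of these $q$ points has $|G| = q \ge 2$ and a subcube as its union. To conclude reducibility I must verify that $G$ is \emph{proper}, i.e.\ $G \neq F$: this holds because $F$ partitions the $q\times q$ grid, which has $q^2 > q$ cells, so $F$ necessarily contains further subcubes beyond the $q$ points of row $a_0$. Hence $F$ is reducible, contradicting irreducibility.

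The argument is short and elementary, so there is no serious obstacle; the two steps that require genuine care are the disjointness observation ruling out coexisting rows and columns (which is what collapses the classification to a single case), and the bookkeeping that certifies the reducing set $G$ to be a proper subset of size at least~$2$. Everything else is a direct verification, and the degenerate all-points partition is handled uniformly by the same construction.
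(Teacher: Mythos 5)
Your proof is correct and follows essentially the same route as the paper: after ruling out columns (the paper instead fixes a row $0* \in F$ without loss of generality), tightness forces an entire row's worth of points, whose union is a row subcube, contradicting irreducibility. The only cosmetic difference is that the paper treats the all-points case separately and certifies properness via the row $0* \notin F_a$, whereas you handle everything uniformly via the count $q^2 > q$; both are valid.
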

\begin{proof}
Let $F$ be a tight subcube partition of $\{0,\dots,q-1\}^2$. If all subcubes in $F$ are points then $F$ is clearly reducible. Otherwise, without loss of generality $0* \in F$. For every $a \in \{1,\dots,q-1\}$, let $F_a \subset F$ consist of all subcubes of $F$ starting with $a$. Since $F$ is tight, $F_a \neq \{a*\}$ for some $a$. Since $\bigcup F_a = a*$, it follows that $F$ is reducible.
\end{proof}

Kullmann and Zhao~\cite[Lemma 41]{KullmannZhao16} showed that there is a unique tight irreducible subcube partition of $\{0,1\}^3$, up to flipping coordinates. An analogous result holds for all $q \ge 2$.

\begin{lemma} \label{lem:subcube-partition-3-q}
Every tight irreducible subcube partition of $\{0,\dots,q-1\}^3$, for any $q \ge 2$, can be obtained from $S_3 = \{ 000, 01*, 1*0, *01, 111 \}$ by \Cref{lem:expansion}.
\end{lemma}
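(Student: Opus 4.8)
The plan is to reverse-engineer the expansion: given a tight irreducible subcube partition $F$ of $\{0,\dots,q-1\}^3$, I will produce surjections $\phi_1,\phi_2,\phi_3\colon\{0,\dots,q-1\}\to\{0,1\}$ so that $F$ is exactly the expansion of $S_3$ under \Cref{lem:expansion}. Writing $A_i=\phi_i^{-1}(0)$ and $B_i=\phi_i^{-1}(1)$, the expansion of $S_3=\{000,01*,1*0,*01,111\}$ consists of the monochromatic points $A_1{\times}A_2{\times}A_3$ and $B_1{\times}B_2{\times}B_3$, together with the dimension-$1$ subcubes $a_1b_2*$ ($a_1\in A_1,b_2\in B_2$), $b_1*a_3$ ($b_1\in B_1,a_3\in A_3$), and $*a_2b_3$ ($a_2\in A_2,b_3\in B_3$). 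So the goal is to recover the bipartitions $\{A_i,B_i\}$ of the alphabet from $F$ and to check that $F$ has precisely this shape. I will also use that $S_3$ is invariant under cyclically rotating the three coordinates, which lets me prove a statement for one coordinate or star-position and invoke symmetry for the others. For $q=2$ this recovers the known uniqueness of $S_3$, and the argument below works uniformly for all $q\ge 2$.

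First I would show that every subcube of $F$ has at most one star. Since $F$ is tight, $*^3\notin F$, so the only alternative is a subcube with exactly one fixed coordinate, say $a** \in F$. Then $a**$ covers every point with first coordinate $a$, so every other subcube has a fixed first coordinate $c\neq a$. For each such $c$, the subcubes with first coordinate $c$ form a subcube partition $F_c$ of the slice $\{0,\dots,q-1\}^2$; any reduction inside $F_c$ lifts to a reduction of $F$ (prepend $c$), so each $F_c$ is irreducible, hence not tight by \Cref{lem:subcube-partition-2-q}. An irreducible, non-tight partition of $\{0,\dots,q-1\}^2$ is either $\{**\}$ (giving $c**\in F$) or a pencil of parallel lines in one direction; in the latter case that pencil's union is the subcube $c**$, a reduction of $F$. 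Hence every slice is $\{**\}$, forcing $F=\{c** : c\}$, which mentions only the first coordinate, contradicting tightness. So $F$ contains only points and dimension-$1$ subcubes.

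Next I would organize the dimension-$1$ subcubes by the position $s\in\{1,2,3\}$ of their star, recording $L_s\subseteq\{0,\dots,q-1\}^2$ as the set of values on the two fixed coordinates. Comparing two lines with stars in different positions, they conflict exactly on the single coordinate fixed in both, which yields three disjointness constraints: the coordinate-$3$ projections of $L_1,L_2$ are disjoint; the coordinate-$2$ projections of $L_1,L_3$ are disjoint; and the coordinate-$1$ projections of $L_2,L_3$ are disjoint. The heart of the proof is then a single device, used repeatedly: \emph{if an entire axis-aligned line of points is forced to consist of singleton subcubes, their union is a subcube, contradicting irreducibility}. Applying it gives (i) each $L_s$ is a product set — e.g. if $p$ occurs as a coordinate-$2$ value in $L_1$ and $q'$ as a coordinate-$3$ value, then along $\{(x_1,p,q')\}$ the disjointness constraints forbid covering by lines with star in positions $2$ or $3$, so either $*pq'\in F$ or the whole line is singletons; (ii) each product set is a proper sub-box in each direction, else a pencil of $q$ parallel lines unions to a dimension-$2$ subcube; and (iii) the complementary projection sets actually partition the alphabet, since a missing value would again force a full line of singletons. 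Setting $A_i,B_i$ to be these complementary projections, properness forces both non-empty.

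Finally, with $\{0,\dots,q-1\}=A_i\sqcup B_i$ and each $L_s$ a product set, I would color each point of $\{0,\dots,q-1\}^3$ by the pattern in $\{A,B\}^3$ of its coordinates and determine which patterns the three line-families cover; a short case check over the eight patterns shows that exactly $AAA$ and $BBB$ are uncovered, so the points of $F$ are exactly $A_1{\times}A_2{\times}A_3\cup B_1{\times}B_2{\times}B_3$, while the remaining six patterns are each covered by a unique line-family. This is precisely the expansion of $S_3$ under $\phi_i$ with $\phi_i^{-1}(0)=A_i$, $\phi_i^{-1}(1)=B_i$, completing the proof. The main obstacle is the third step: converting the global irreducibility hypothesis into the local product structure of the line sets and the bipartition of each alphabet. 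All of it rests on the ``forced line of singletons'' observation together with careful bookkeeping of the three disjointness constraints, streamlined by the cyclic symmetry of $S_3$.
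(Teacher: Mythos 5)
Your proposal is correct and takes essentially the same approach as the paper: rule out subcubes with two stars, organize the edges by star position, note the pairwise disjointness of the relevant projections, use the ``forced line of singletons'' argument both to establish the product structure of each edge family and to show the two projections on each coordinate partition the alphabet, and then read off the expansion maps $\phi_i$. The only cosmetic differences are that you eliminate two-star subcubes via slices and \Cref{lem:subcube-partition-2-q} where the paper argues directly from irreducibility, and that you track \emph{properness} of the projections where the paper tracks \emph{non-emptiness} (each yields the other once disjointness and the union condition are in place).
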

\begin{proof}
Let $G$ be a tight irreducible subcube partition of $\{0,\dots,q-1\}^3$. Since $G$ is tight, $*** \notin G$. Furthermore, no subcube in $G$ contains two stars. Indeed, suppose that $0** \in G$. Then for all $a \in \{1,\ldots,q-1\}$, the subcubes in $G$ starting with $a$ together cover $a**$. Since $G$ is irreducible, we see that $G = \bigl\{ a** : a \in \{0,\dots,q-1\} \bigr\}$, contradicting tightness.

Let $A(\pdot ? *)$ denote the projection of all subcubes of $G$ of the form $??*$ to the first coordinate, and define other $A$-sets analogously.

If $A(\pdot ? *) = \emptyset$ then no subcube of $G$ ends with $*$. Therefore the subcubes ending with $b \in \{0,1,2\}$ cover all of $**b$. Since $G$ is irreducible, $**b \in G$, which is impossible. Therefore $A(\pdot ? *) \neq \emptyset$.

We claim that $A(\pdot ? *)$ and $A(\pdot * ?)$ are disjoint. Indeed, if $a \in A(\pdot ? *) \cap A(\pdot * ?)$, then $ab*,a*c \in G$ for some $b,c \in \{0,1,2\}$, which is impossible since these subcubes intersect. 

We claim that if $a \in A(\pdot ? *)$ and $b \in A(? \pdot *)$ then $ab* \in G$. Indeed, suppose that $ab'*,a'b* \in G$ but $ab* \notin G$. Consider a point $abc \in \{0,\dots,q-1\}^3$. This point cannot be covered by $a*c$ since this subcube does not conflict with $ab'*$, and cannot be covered by $*bc$ since this subcube does not conflict with $a'b*$. Therefore $abc \in G$. Since this holds for all $c$ and $\bigcup_c abc = ab*$, we get a contradiction with the irreducibility of $G$.

It follows that $G$ is composed of points and edges, where the edges are
\[
 \{ ab* : a \in A(\pdot ? *), b \in A(? \pdot *) \} \cup
 \{ a*c : a \in A(\pdot * ?), c \in A(? * \pdot) \} \cup
 \{ *bc : b \in A(* \pdot ?), c \in A(* ? \pdot) \}.
\]

We claim that $A(\pdot ? *) \cup A(\pdot * ?) = \{0,\dots,q-1\}$, and so these two sets partition $\{0,\dots,q-1\}$. Indeed, suppose that $a$ is contained in neither set. Let $b \in A(? \pdot *)$, so that $b \notin A(* \pdot ?)$. By construction, points of the form $abc$ are not covered by any of the edges of $G$, hence all of them belong to $G$. Since $\bigcup_c abc = ab*$, this contradicts the irreducibility of $G$.

It follows that $G$ can be obtained by applying \Cref{lem:expansion} to $S_3$ with the mappings
\begin{align*}
\phi_1(a) &= 1 \leftrightarrow a \in A(\pdot * ?), &
\phi_2(b) &= 1 \leftrightarrow b \in A(? \pdot *), &
\phi_3(c) &= 1 \leftrightarrow c \in A(* ? \pdot). \end{align*}
Indeed, the edges of $G$ are
\[
 \{ ab* : \phi_1(a) = 0, \phi_2(b) = 1 \} \cup
 \{ a*c : \phi_1(a) = 1, \phi_3(c) = 0 \} \cup
 \{ *bc : \phi_2(b) = 0, \phi_3(c) = 1 \},
\]
and these cover all points $abc \in \{0,\ldots,q-1\}^3$ other than the ones satisfying $\phi_1(a) = \phi_2(b) = \phi_2(c)$.
\end{proof}

\begin{corollary} \label{cor:subcube-partition-3-q}
\Cref{conj:minimal-size-q} holds for $n = 3$ and all $q \ge 2$.
\end{corollary}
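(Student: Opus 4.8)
The plan is to derive the corollary by combining the construction of \Cref{thm:larger-alphabets-construction} with the optimality statement of \Cref{thm:larger-alphabets-expansion-optimal}, using the classification in \Cref{lem:subcube-partition-3-q} to guarantee that \emph{every} tight irreducible subcube partition of $\{0,\dots,q-1\}^3$ is an expansion of $S_3$. Since \Cref{conj:minimal-size-q} for $n=3$ asserts that the minimal size equals $2q(q-1)+1$, I would establish the two matching inequalities separately.

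For the upper bound I would simply invoke \Cref{thm:larger-alphabets-construction} with $n=3$, which produces a tight irreducible subcube partition of $\{0,\dots,q-1\}^3$ of size $(3-1)q(q-1)+1 = 2q(q-1)+1$.

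For the matching lower bound, let $G$ be an arbitrary tight irreducible subcube partition of $\{0,\dots,q-1\}^3$. By \Cref{lem:subcube-partition-3-q}, $G$ is obtained from $S_3$ by an application of \Cref{lem:expansion}. I would then apply \Cref{thm:larger-alphabets-expansion-optimal} with $F = S_3$ and $n=3$ to conclude $|G| \ge 2q(q-1)+1$. The one hypothesis to discharge is that \Cref{conj:minimal-subcubes} holds for $n=3$; this is unconditional, since (as recalled in \Cref{sec:minimal-subcubes}) $S_3$ is the unique tight irreducible subcube partition of $\{0,1\}^3$ up to flipping and rearranging coordinates, and it has size $5 = 2\cdot 3 - 1$. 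Hence the conjectural assumption inside \Cref{thm:larger-alphabets-expansion-optimal} is met and the bound is genuine.

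Because the only delicate point is checking the $n=3$ case of \Cref{conj:minimal-subcubes}, which is already settled, there is no real obstacle here; the substantive work was done in \Cref{lem:subcube-partition-3-q} and \Cref{thm:larger-alphabets-expansion-optimal}. As a self-contained alternative that avoids the expansion-optimality theorem, I could instead minimize the size of $G$ by hand: writing $z_i = |\phi_i^{-1}(0)| \in \{1,\dots,q-1\}$, the expansion of $S_3$ has size $z_1 z_2 z_3 + z_1(q-z_2) + (q-z_1)z_3 + z_2(q-z_3) + (q-z_1)(q-z_2)(q-z_3)$, which is multilinear in each $z_i$, so its minimum over the box is attained at a vertex $z \in \{1,q-1\}^3$; a finite check of the eight vertices yields the value $2q(q-1)+1$, attained for instance at $z_1=z_2=1$, $z_3=q-1$ (which is exactly the expansion underlying the construction, $S_3$ with its third coordinate flipped). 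This recovers the lower bound from first principles.
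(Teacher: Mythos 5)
Your proposal is correct and takes essentially the same route as the paper: \Cref{lem:subcube-partition-3-q} reduces every tight irreducible subcube partition of $\{0,\dots,q-1\}^3$ to an expansion of $S_3$, and \Cref{thm:larger-alphabets-expansion-optimal} (with its hypothesis discharged by the known $n=3$ case of \Cref{conj:minimal-subcubes}, since $S_3$ is the unique tight irreducible partition of $\{0,1\}^3$ and has size $5$) yields the lower bound, while \Cref{thm:larger-alphabets-construction} gives the matching construction. Your explicit handling of the upper bound and the self-contained multilinear vertex check (both of which are correct) are elaborations the paper leaves implicit, not a different argument.
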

\begin{proof}
Let $G$ be a tight irreducible subcube partition of $\{0,\dots,q-1\}^3$, where $q \ge 2$. According to the \namecref{lem:subcube-partition-3-q}, it can be obtained by applying \Cref{lem:expansion}. The result now follows from \Cref{thm:larger-alphabets-expansion-optimal}, since it is known that all tight irreducible subcube partitions of $\{0,1\}^3$ have size $5$.
\end{proof}

When $n \ge 4$, not all tight irreducible subcube partitions are obtained via \Cref{lem:expansion}. Here is an example:
\begin{align*}
& 0000,0002,0020,0022,0101,0102,0111,0122,0200,0201,0211,0220,1010,1011,1020,1021,1102,\\
& 1110,1120,1122,1201,1202,1211,1212,2011,2012,2021,2022,2110,2111,2200,2212,2220,2222,\\ 
& 2100,2101,
01*0,02*2,0*21,10*2,11*1,1*00,20*0,22*1,2*02,
\underline{001*},\underline{122*},\underline{212*},\underline{*001},*\underline{112},\underline{*210}.
\end{align*}

This is a tight irreducible subcube partition of $\{0,1,2\}^4$. The underlined subcubes show that it cannot be obtained by applying \Cref{lem:expansion}, since $001,122,212$ and $001,112,210$ are not product sets.


\subsubsection{Lower bound}
\label{sec:covers}

\Cref{thm:size-lower-bound} gives our best lower bound on the size of a tight irreducible subcube partition of $\{0,1\}^n$, slightly improving on the ``trivial'' lower bound of $n + 1$ which follows from the well-known lemma of Tarsi~\cite{AharoniLinial86} on minimally unsatisfiable CNFs.

Tarsi's lemma applies more generally to subcube covers.

\begin{definition}[Subcube cover] \label{def:subcube-cover}
A \emph{subcube cover} of $\{0,\dots,q-1\}^n$ is a collection of subcubes whose union is $\{0,\dots,q-1\}^n$.

A subcube cover is \emph{minimal} if no proper subset of it is a subcube cover.
\end{definition}

In this language, Tarsi's lemma states that a tight minimal subcube cover of $\{0,1\}^n$ has size at least $n + 1$. This bound is achieved, for example, by the subcube partition
\[
 \{ 0^i 1 *^{n-i-1} : 0 \leq i \leq n-1 \} \cup \{ 0^n \}.
\]
The analogous subcube partition for arbitrary $q \ge 2$ is
\[
 \{ 0^i b *^{n-i-1} : 0 \leq i \leq n-1, 1 \leq b \leq q-1 \} \cup \{ 0^n \},
\]
which has size $(q-1)n + 1$.

In this section, we generalize Tarsi's lemma to the setting of matroids. A special case of our generalization shows that every tight minimal subcube cover of $\{0,\dots,q-1\}^n$ (and so every tight subcube partition of $\{0,\dots,q-1\}^n$) has size at least $(q-1)n + 1$, proving the optimality of the above construction. 


There are several proofs of Tarsi's lemma~\cite{AharoniLinial86,CSz88,MaLiang97,DDKB98,Kullmann00,BET01}. We generalize the well-known proof using Hall's theorem.

\begin{definition}[Cover] \label{def:hitting}
Let $M$ be a matroid. A collection $F$ of subsets of the ground set of $M$ is an \emph{$M$-cover} if no basis of $M$ intersects all sets in $F$. An $M$-cover is \emph{minimal} if no proper subset is an $M$-cover.
\end{definition}

\begin{theorem}[Generalized Tarsi's lemma] \label{thm:tarsi-general}
Let $M$ be a matroid with rank function $r$. Every minimal $M$-cover $F$ satisfies
\[
 |F| > r\left(\bigcup F\right).
\]
\end{theorem}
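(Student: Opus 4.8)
The plan is to restate the goal as $r(U)\le |F|-1$, where $U=\bigcup F$, and to prove it by exhibiting a single \emph{independent transversal} of $F$ that spans $U$. Here an independent transversal is a set $T=\{t_1,\dots,t_k\}$ of distinct ground-set elements, independent in $M$, with each $t_j$ lying in a distinct member of $F$. This mirrors the classical Hall-theorem proof of Tarsi's lemma: the injection of variables into clauses becomes a spanning independent transversal (yielding $r(U)\le |F|$), and the ``a perfect matching would satisfy the formula'' step becomes ``a full transversal would produce a basis meeting every set,'' which contradicts the cover property and gives the extra $-1$.

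The easy half is the strictness. First I would observe that $F$ admits no independent transversal using \emph{all} $m=|F|$ sets: such a transversal, being independent, extends to a basis $B$ of $M$, and $B$ would then meet every $S_i\in F$ through its representative, contradicting the assumption that no basis meets all sets. Consequently every independent transversal omits at least one set, so in particular a spanning independent transversal (should one exist) has size at most $|F|-1$.

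The crux is the existence of a spanning independent transversal, i.e.\ the claim that $r(U)$ equals the maximum size $\nu$ of an independent transversal of $F$. I would view the independent transversals as the common independent sets of $M$ and the transversal matroid $\mathcal{T}(F)$ on $U$ (whose independent sets are the partial transversals of $F$). By the matroid intersection theorem, $\nu=\min_{A\subseteq U}\bigl(r_M(A)+r_{\mathcal{T}(F)}(U\setminus A)\bigr)$. If $\nu<r(U)=r_M(U)$, then submodularity forces a minimizing $A$ to satisfy $r_{\mathcal{T}(F)}(U\setminus A)<r_M(U\setminus A)$; by the defect form of Hall's theorem this exposes a subfamily of sets confined to $A$, and I would then assemble a basis of $M$ meeting every set in $F$ (combining the spare $M$-independence available in $U\setminus A$ with representatives drawn from inside $A$), contradicting the cover property. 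Hence $r(U)=\nu$, and combining with the strictness step gives $r(U)=\nu\le |F|-1<|F|$.

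The main obstacle is precisely this last step: the cover hypothesis is a statement about arbitrary intersections (no basis meets all sets), whereas a spanning transversal is about \emph{distinct} representatives, and a basis meeting all sets need not supply an independent system of distinct representatives. Bridging the two — via the matroid-intersection min--max, or equivalently via an augmenting-path analysis showing that a maximum independent transversal that fails to span $U$ can be rerouted into a basis meeting every set — is where the real work lies. I would finish by disposing of the degenerate cases (a set equal to $\emptyset$ is met by no basis, so minimality forces $F=\{\emptyset\}$ with $r(U)=0<1$, and loops contribute nothing to rank), and by noting that minimality supplies, for each $i$, a basis $B_i$ with $B_i\cap S_i=\emptyset$ and $B_i\cap S_j\neq\emptyset$ for $j\neq i$; these witnesses both guide the augmenting-path construction and show that the bound $|F|=r(U)+1$ is attained, so the inequality is sharp for minimal covers.
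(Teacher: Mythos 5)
Your easy half is fine: the cover property alone rules out an independent transversal with a representative in every member of $F$, since such a set would extend to a basis meeting all of $F$. The genuine gap is in the crux, the claim that $\nu = r\left(\bigcup F\right)$, where $\nu$ is the maximum size of an independent transversal. By Rado's defect formula, this claim is equivalent to saying that $F$ itself minimizes $G \mapsto r\left(\bigcup G\right) - |G|$ over all subfamilies $G \subseteq F$, and that statement is \emph{false} for covers that are not minimal. For example, let $M$ have ground set $\{a,b,c_1,\dots,c_5\}$ with $a,b$ parallel and $c_1,\dots,c_5$ free (so $r(M)=6$, and every basis consists of all the $c_i$ plus exactly one of $a,b$), and let $F = \{\{a\},\{b\},\{c_1,\dots,c_5\}\}$. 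Every basis misses $\{a\}$ or $\{b\}$, so $F$ is a cover; yet $\nu = 2 < 6 = r\left(\bigcup F\right)$. So any correct proof of your crux must use minimality in an essential way. But your sketch of the crux --- ``defect Hall exposes a subfamily confined to $A$, then assemble a basis of $M$ meeting every set in $F$, contradicting the cover property'' --- appeals only to the cover hypothesis; as the example shows, a cover with $\nu < r\left(\bigcup F\right)$ need not admit any such basis, so this step cannot be completed as written. Minimality appears in your proposal only as an afterthought (``these witnesses guide the augmenting-path construction''), with no indication of how it enters.

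What is missing is precisely the content of the paper's proof. There one takes a subfamily $G$ violating the Hall--Rado condition ($|G| > r\left(\bigcup G\right)$, which exists by exactly your strictness argument) that is \emph{inclusion-maximal}; if $G \neq F$, then minimality makes the proper subfamily $G$ a non-cover, so some basis $B$ meets every set of $G$; one then contracts $M$ by $\bigcup G$, where inclusion-maximality of $G$ yields the Hall--Rado hypothesis for $\{S \setminus \bigcup G : S \in F \setminus G\}$, and glues the resulting independent transversal to a basis of the restriction $M|\bigcup G$ containing $B \cap \bigcup G$. The union is a basis of $M$ meeting every set of $F$, contradicting the cover property; hence $G = F$, which is the theorem. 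Your bases $B_i$ (meeting all sets but $S_i$) are the right raw material, but they must be fed through this contraction step, not through the matroid-intersection bookkeeping alone. Finally, your closing remark that the witnesses show $|F| = r\left(\bigcup F\right) + 1$ is attained, i.e.\ that every minimal cover is tight for the bound, is also false: the partition of $\{0,1\}^n$ into all $2^n$ points gives a minimal cover of the associated partition matroid with $|F| = 2^n$ while $r\left(\bigcup F\right) + 1 = n+1$.
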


The statement might look opaque, so before proving the \namecref{thm:tarsi-general}, we first show how it can be used to derive the lower bound $(q-1)n + 1$.

\begin{theorem} \label{thm:size-lb-q}
Every tight minimal subcube cover of $\{0,\dots,q-1\}^n$, where $n \ge 1$ and $q \ge 2$, has size at least $(q-1)n + 1$.
\end{theorem}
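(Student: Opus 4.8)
The plan is to invoke the Generalized Tarsi lemma (\Cref{thm:tarsi-general}) with a suitable partition matroid $M$. I take the ground set $E = \{(i,a) : i \in [n],\ 0 \le a \le q-1\}$, partitioned into the $n$ blocks $E_i = \{(i,a) : 0 \le a \le q-1\}$, and let $M$ be the partition matroid whose independent sets are those meeting each block $E_i$ in at most $q-1$ elements. Then the rank of the whole ground set is $(q-1)n$, and the bases of $M$ are exactly the sets $B_x = E \setminus \{(i,x_i) : i \in [n]\}$ obtained by deleting one element from each block; these are in natural bijection with the points $x \in \{0,\dots,q-1\}^n$.

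To each subcube $s$ I associate its literal set $A_s = \{(i,s_i) : s_i \neq *\} \subseteq E$; this assignment is injective, since a subcube is determined by its constraints. The key observation is that $B_x \cap A_s = \emptyset$ if and only if $x \in s$: indeed $B_x$ omits $(i,s_i)$ precisely when $x_i = s_i$, so $B_x$ is disjoint from $A_s$ exactly when $x$ satisfies every constraint of $s$. Consequently ``no basis of $M$ meets all of the $A_s$'' translates into ``every point $x$ lies in some subcube of $F$'', i.e.\ $\{A_s : s \in F\}$ is an $M$-cover if and only if $F$ is a subcube cover. Because $s \mapsto A_s$ is a bijection, a proper subset of $F$ covers iff the corresponding proper subset of $\{A_s\}$ is an $M$-cover, so $F$ is a minimal subcube cover iff $\{A_s\}$ is a minimal $M$-cover. \Cref{thm:tarsi-general} then yields $|F| = |\{A_s\}| > r\!\left(\bigcup_{s \in F} A_s\right)$.

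It remains to show that $\bigcup_{s} A_s$ is spanning, i.e.\ that $r\!\left(\bigcup_s A_s\right) = (q-1)n$; this is where both tightness and minimality enter, and I expect it to be the main obstacle. In the partition matroid, spanning is equivalent to each block $E_i$ containing at least $q-1$ of the chosen literals, so I must prove that in a tight minimal cover every coordinate $i$ is constrained to at least $q-1$ distinct values. Suppose not, so that at least two values $a \neq a'$ never occur as a constraint on coordinate $i$. Any point whose $i$-th coordinate equals $a$ must then be covered by a subcube that does not mention $i$, since every subcube constraining $i$ conflicts with it; hence the subcubes of $F$ with $s_i = *$ already cover the whole $(n-1)$-dimensional cube $\{0,\dots,q-1\}^{n-1}$ on the remaining coordinates. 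Tightness supplies at least one subcube $t$ that does constrain coordinate $i$, but every point of $t$ projects into that $(n-1)$-cube and is therefore covered by some $s \neq t$ with $s_i = *$, so $t$ is redundant, contradicting minimality. Thus each block carries at least $q-1$ literals, $r\!\left(\bigcup_s A_s\right) = (q-1)n$, and the displayed inequality gives $|F| \ge (q-1)n + 1$, as required.
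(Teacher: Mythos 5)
Your proposal is correct and takes essentially the same route as the paper: the identical partition matroid on $[n] \times \{0,\dots,q-1\}$ (the paper calls it $H(n,q)$), the identical encoding of subcubes as literal sets, the same verification that minimal subcube covers correspond to minimal $M$-covers, and the same application of \Cref{thm:tarsi-general}. The only (immaterial) difference is in the final rank computation: the paper shows that \emph{every} literal $(i,b)$ appears in $\bigcup_s A_s$ by picking a point covered uniquely by a subcube mentioning $i$, whereas you show the weaker but sufficient fact that at most one value per coordinate can be missing, via a redundancy argument in which a missing value forces the star-subcubes to cover the whole slice and makes any subcube mentioning that coordinate removable; both variants use tightness and minimality in the same way.
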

\begin{proof}
Let $H(n,q)$ be the matroid over the ground set $[n] \times \{0,\dots,q-1\}$ in which a set is independent if for every $i \in [n]$, it doesn't contain all elements of the form $(i,?)$. A basis of $H(n,q)$ is any set of the form $B(a_1,\dots,a_n) := \{(i,j) : i \in [n], j \in [q], j \neq a_i\}$, where $a_1,\dots,a_n \in \{0,\dots,q-1\}$.

Let $F$ be a tight minimal subcube cover of $\{0,\dots,q-1\}^n$. We can represent every subcube in $s \in F$ as the following subset of the ground set of $H(n,q)$:
\[
 \phi(s) = \{ (i,s_i) : i \in [n], s_i \neq * \}.
\]

Let $\phi(F) = \{ \phi(s) : s \in F\}$.
We claim that $\phi(F)$ is an $H(n,q)$-cover. Indeed, let $B(a_1,\dots,a_n)$ be any basis of $H(n,q)$. Since $F$ is a subcube cover, the point $a_1\ldots a_n$ is covered by some subcube $s$. If $(i,s_i) \in \phi(s)$ then $s_i = a_i$, and so $\phi(s)$ is disjoint from $B(a_1,\dots,a_n)$.

A similar argument shows that $\phi(F)$ is a minimal $H(n,q)$-cover. Indeed, any proper subset of $\phi(F)$ has the form $\phi(G)$ for some proper subset $G \subset F$. Since $F$ is a minimal subcube cover, some point $a_1\ldots a_n$ is not covered by $G$. The corresponding basis $B(a_1,\dots,a_n)$ intersects all sets in $\phi(G)$. Indeed, if $\phi(s) \in \phi(G)$ then $s$ doesn't cover $a$, and so $s_i \neq a_i,*$ for some $i \in [n]$. Consequently, $\phi(s)$ contains $(i,s_i) \in B(a_1,\dots,a_n)$.

\smallskip

Since $\phi(F)$ is a minimal $H(n,q)$-cover,
\Cref{thm:tarsi-general} shows that $|F| = |\phi(F)|$ exceeds the rank of $\bigcup \phi(s)$. We will show that $\bigcup \phi(s) = [n] \times \{0,\dots,q-1\}$, a set whose rank is $(q - 1)n$, completing the proof.

Let $i \in [n]$. Since $F$ is tight, some subcube $s \in F$ mentions $i$. Since $F$ is minimal, there exists a point $x \in \{0,\dots,q-1\}^n$ which is only covered by $s$. In particular, no subcube of $F$ contains $x^{i \to *}$, the subcube obtained from $x$ by changing the $i$'th coordinate to a star. This implies that for every $b \in \{0,\dots,q-1\}$, every subcube of $F$ containing $x^{i\to b}$ must contain $b$ in its $i$'th coordinate. Therefore $\bigcup \phi(F)$ contains all elements of the form $(i,b)$, for any $b \in \{0,\dots,q-1\}$, as promised.
\end{proof}

The proof of \Cref{thm:tarsi-general} uses a generalization of Hall's theorem to matroids.

\begin{proposition}[{Hall--Rado~\cite{Rado67,Welsh71}}] \label{pro:hall-rado}
Let $M$ be a matroid with rank function $r$, and let $F$ be a collection (multiset) of subsets of the ground set of $M$.

If each subset $G \subseteq F$ satisfies $|G| \leq r(\bigcup G)$ then we can choose an element $e_s$ from each set $s \in F$ such that the elements $e_s$ are distinct, and $\{e_s : s \in F\}$ is an independent set of $M$.
\end{proposition}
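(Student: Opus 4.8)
The statement is the (harder) sufficiency direction of Rado's theorem on independent transversals, so the plan is to prove it by induction on the total size $\sum_{s \in F} |s|$. Write $F = \{A_1, \dots, A_m\}$ as a multiset, and for an index set $J \subseteq [m]$ abbreviate $U(J) = \bigcup_{j \in J} A_j$; the hypothesis then reads $r(U(J)) \ge |J|$ for every $J$. Taking $J = \{i\}$ shows each $A_i$ is nonempty, so $\sum_i |A_i| \ge m$. For the base case $\sum_i |A_i| = m$, every $A_i = \{e_i\}$ is a singleton: if $e_i = e_j$ for distinct $i,j$ then $r(U(\{i,j\})) = r(\{e_i\}) \le 1 < 2$, contradicting the hypothesis, so the $e_i$ are distinct; and $J = [m]$ gives $r(\{e_1,\dots,e_m\}) \ge m$, forcing this $m$-element set to be independent. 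This is the desired transversal.

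For the inductive step I would reduce to a single \emph{deletion lemma}: if some $|A_k| \ge 2$, then there is $x \in A_k$ such that replacing $A_k$ by $A_k \setminus x$ preserves the hypothesis. Granting this, the reduced family has the same number of sets but strictly smaller total size, so by induction it has an independent transversal, which is also one for $F$ since its $k$-th representative lies in $A_k \setminus x \subseteq A_k$. To prove the lemma, fix distinct $x, y \in A_k$ and suppose neither deletion works. Deleting $x$ alters only index $k$, so a violation occurs at some $J_x \ni k$; since the old union had rank $\ge |J_x|$ and deleting one element lowers rank by at most one, one checks that $x \notin \bigcup_{j \in J_x \setminus k} A_j$, that $r(U(J_x)) = |J_x|$, and that $x$ is a \emph{bridge} of $U(J_x)$, meaning $r(U(J_x) \setminus x) = r(U(J_x)) - 1$, equivalently $x \notin \operatorname{cl}(U(J_x) \setminus x)$. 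Symmetrically there is $J_y \ni k$ with $y \notin \bigcup_{j \in J_y \setminus k} A_j$, $r(U(J_y)) = |J_y|$, and $y$ a bridge of $U(J_y)$.

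The crux, and the step I expect to be the main obstacle, is extracting a contradiction via submodularity of the matroid rank. Using $r(U(J_x)) + r(U(J_y)) \ge r\bigl(U(J_x) \cup U(J_y)\bigr) + r\bigl(U(J_x) \cap U(J_y)\bigr)$ together with the identities $U(J_x) \cup U(J_y) = U(J_x \cup J_y)$ and $U(J_x \cap J_y) \subseteq U(J_x) \cap U(J_y)$, the hypothesis $r(U(J_x \cup J_y)) \ge |J_x \cup J_y|$, $r(U(J_x \cap J_y)) \ge |J_x \cap J_y|$, and the inclusion–exclusion identity $|J_x \cup J_y| + |J_x \cap J_y| = |J_x| + |J_y|$, every inequality must collapse to equality; in particular $r(U(J_x \cap J_y)) = |J_x \cap J_y|$. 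Set $D = U(J_x \cap J_y)$, which contains $A_k$ and hence both $x$ and $y$. Since $D \subseteq U(J_x)$ and $x \notin \operatorname{cl}(U(J_x)\setminus x)$, monotonicity of closure gives $x \notin \operatorname{cl}(D \setminus x)$, so $x$ is a bridge of $D$; symmetrically $y$ is a bridge of $D$.

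To finish, let $J' = (J_x \cap J_y) \setminus \{k\}$, so that $U(J') = \bigcup_{j \in J_x \cap J_y,\, j \ne k} A_j$ satisfies $x, y \notin U(J')$ and $D = A_k \cup U(J')$. The hypothesis gives $r(U(J')) \ge |J'| = |J_x \cap J_y| - 1 = r(D) - 1$. Since $U(J') \subseteq D \setminus x$ and $r(D \setminus x) = r(D) - 1$, monotonicity forces $r(U(J')) = r(D \setminus x)$, hence $\operatorname{cl}(U(J')) = \operatorname{cl}(D \setminus x)$; as $y \in D \setminus x$, this yields $y \in \operatorname{cl}(U(J'))$. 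But $U(J') \subseteq D \setminus y$, so $y \in \operatorname{cl}(U(J')) \subseteq \operatorname{cl}(D \setminus y)$, contradicting that $y$ is a bridge of $D$. This contradiction proves the deletion lemma and completes the induction. The only matroid facts needed beyond the definitions are submodularity of the rank function, monotonicity of rank and closure, the rank-drop-by-at-most-one bound, and the closure characterization of a rank-decreasing element, all of which are standard.
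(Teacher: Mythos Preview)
The paper does not prove this proposition at all: it is stated as a known result with citations to Rado and Welsh, and is then used as a black box in the proofs of \Cref{thm:tarsi-general} and \Cref{thm:avsp-lb}. So there is no ``paper's own proof'' to compare against.

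Your argument is correct and is essentially the standard inductive proof of Rado's theorem. The base case and the reduction to the deletion lemma are fine. In the deletion lemma, your derivation that $k\in J_x$, that $x\notin\bigcup_{j\in J_x\setminus\{k\}}A_j$, and that $r(U(J_x))=|J_x|$ with $x$ a coloop of $U(J_x)$ is sound (and likewise for $y$). The submodularity chain collapses as you say, yielding $r(U(J_x\cap J_y))=|J_x\cap J_y|$. The final step is also fine: with $D=U(J_x\cap J_y)$ and $J'=(J_x\cap J_y)\setminus\{k\}$, the inclusions $J'\subseteq J_x\setminus\{k\}$ and $J'\subseteq J_y\setminus\{k\}$ justify $x,y\notin U(J')$, and then $U(J')\subseteq D\setminus\{x\}$ together with $r(U(J'))=r(D\setminus\{x\})$ indeed gives $\operatorname{cl}(U(J'))=\operatorname{cl}(D\setminus\{x\})$ (equal rank plus containment), whence $y\in\operatorname{cl}(D\setminus\{y\})$, a contradiction. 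One tiny stylistic point: you might state explicitly that the matroid fact ``$A\subseteq B$ and $r(A)=r(B)$ imply $\operatorname{cl}(A)=\operatorname{cl}(B)$'' is what you are invoking, since ``equal rank implies equal closure'' is false without the containment hypothesis.
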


We can now prove \Cref{thm:tarsi-general}.

\begin{proof}[Proof of \Cref{thm:tarsi-general}]
Let $F$ be a minimal $M$-cover, and suppose that $|F| \leq r(\bigcup F)$. We will show that this assumption leads to a contradiction.

If every subset $G \subseteq F$ satisfies $|G| \leq r(\bigcup G)$ then \Cref{pro:hall-rado} shows that $F$ intersects the independent set $\{e_s : s \in F\}$. Since every independent set can be completed to a basis, this contradicts the assumption that $F$ is an $M$-cover.

We conclude that some subset $G \subset F$ satisfies $|G| > r(\bigcup G)$. Among all such subsets, choose one which is inclusion-maximal. By assumption, $G \neq F$, and so by minimality, $G$ is not an $M$-cover, say the basis $B$ intersects all sets in $G$.

Let $M' = M/\bigcup G$ be the contraction of $M$ by $\bigcup G$. The ground set of $M'$ is the ground set of $M$ with $\bigcup G$ removed, and its rank function is $r'(S') = r(S' \cup \bigcup G) - r(\bigcup G)$.

Let $F' = \{ S \setminus \bigcup G : S \in F \setminus G \}$. Suppose that $H'$ is a non-empty subset of $F'$, say $H' = \{ S \setminus \bigcup G : S \in H \}$.
Since $G$ is inclusion-maximal,
\[
 r'(H') = r\left(\bigcup H' \cup \bigcup G\right) - r\left(\bigcup G\right) =
 r\left(\bigcup (H \cup G)\right) - r\left(\bigcup G\right)
 > |H \cup G| - |G| = |H| = |H'|.
\]
Applying \Cref{pro:hall-rado}, we obtain a basis $B'$ of $F'$ which intersects all sets in $F'$, and so all sets in $F \setminus G$.

The set $B \cap \bigcup G$ is an independent subset of $\bigcup G$. Complete it to a basis $B_G$ of $M|\bigcup G$. Since $B'$ is a basis of $M'$, $B' \cup B_G$ is a basis of $M$. By construction, $B$ intersects all subsets in $F$, contradicting the assumption that $F$ is an $M$-cover.
\end{proof}

\section{Affine vector space partitions}
\label{sec:affine-vector-space-partitions}

\Cref{sec:subcube-partitions} considers partitions of $\{0,1\}^n$ into subcubes. In this section, we consider partitions of $\{0,1\}^n$ into affine subspaces. The companion work~\cite{BFIK22} considers the more general case of partitions of $\mathbb{F}_q^n$ into affine subspaces.

\begin{definition}[Affine vector space partition] \label{def:avsp}
An \emph{affine vector space partition} of length $n$ is a partition of $\{0,1\}^n$ into \emph{affine subspaces}, that is, sets of the form $x + V$, where $x \in \{0,1\}^n$ and $V$ is a subspace of $\{0,1\}^n$ (identified with $\mathbb{F}_2^n$). The \emph{size} of an affine vector space partition is the number of affine subspaces.

The \emph{linear part} of an affine subspace $U = x + V$ is the subspace $V$. The \emph{dimension} of an affine subspace is the dimension of its linear part, and codimension is defined analogously.
\end{definition}

The notion of reducibility is defined just as in \Cref{def:reducible} and \Cref{def:reducibility-partial}, replacing \emph{subcube} with \emph{affine subspace}.

\begin{definition}[Reducibility] \label{def:avsp-reducibility}
A collection $F$ of disjoint affine subspaces of $\{0,1\}^n$ is \emph{reducible} if there exists a subset $G \subseteq F$, with $|G| > 1$, whose union is an affine subspace of $\{0,1\}^n$ other than $\{0,1\}^n$. If no such $G$ exists then $F$ is \emph{irreducible}.
\end{definition}

The definition of tightness is perhaps less obvious. A subcube partition of length $n$ is not tight if it arises from a subcube partition of length $n-1$ via an embedding of $\{0,1\}^{n-1}$ inside $\{0,1\}^n$. If this is the case, then there is a direction $i$ which is ``ignored'' by all subcubes, in the sense that $s_i = *$. This definition generalizes to our setting, where an affine subspace $x + V$ ``ignores'' a direction $y \in \{0,1\}^n \setminus \{0^n\}$ if $y \in V$.
The same definition was proposed by Agievich~\cite{Agievich08}, under the name \emph{primitivity}, and was dubbed \emph{A-primitivity} by Tarannikov~\cite{Tarannikov22}.

\begin{definition}[Tightness] \label{def:avsp-tightness}
An affine vector space partition $F$ of length $n$ is \emph{tight} if the intersection of the linear parts of all affine subspaces in $F$ is $\{0^n\}$.
\end{definition}

We can determine whether two affine subspaces intersect by solving linear equations. Using this, we can determine whether a collection of affine subspaces forms an affine vector space partition as in \Cref{lem:testing-partition}, by checking that
\[
 \sum_s 2^{-\operatorname{codim}(s)} = 1.
\]

We can check tightness using the definition, and irreducibility using \Cref{alg:reducibility}, suitably generalized. For this we need to be able to compute the join of two affine subspaces, which is the minimal affine subspace containing their union.

\begin{lemma} \label{lem:avsp-join}
The minimal affine subspace containing $a + V$ and $b + W$ is $a + \operatorname{span}(V, W, b-a)$.
\end{lemma}

We leave the straightforward proof to the reader.

\smallskip

We commence the study of affine vector space partitions in \Cref{sec:compression}, where we show how to convert an irreducible subcube partition to an irreducible affine vector space partition. We use this technique in \Cref{sec:avsps-minimal-subcubes} to construct tight irreducible affine vector space partitions of length $n$ and size $\tfrac32n - O(1)$.
In the same section we also prove a lower bound of $n+1$ on the size of a tight irreducible affine vector space partition of length $n$.

\subsection{Compression}
\label{sec:compression}

Every subcube partition of length $n$ can be viewed as an affine vector space partition of length $n$. Furthermore, if the subcube partition is tight, then so is the affine vector space partition. However, irreducibility is not maintained in this conversion. For example,
\[
 000, 111, 01*, 1*0, *01
\]
is irreducible as a subcube partition but reducible as an affine vector space partition, since $000 \cup 111$ is an affine subspace, which we can represent by $aaa$. If we merge these two points, we get the tight irreducible affine vector space partition
\[
 aaa, 01*, 1*0, *01.
\]

In this section we generalize this process of merging for arbitrary irreducible subcube partitions, using the concept of \emph{star pattern} introduced in \Cref{def:star-pattern}: the star pattern of a subcube $s \in \{0,1,*\}^n$ is $P(s) := \{ i \in [n] : s_i = * \}$.

\begin{lemma} \label{lem:compression}
Let $F$ be an irreducible subcube partition. For $S \subseteq [n]$, let $F_S$ consist of all subcubes in $F$ whose star pattern is $S$. For each $S \subseteq [n]$, choose a partition of $F_S$ in which the union of each part is an affine subspace, and let $G_S$ be the corresponding collection of affine subspaces. (If $F_S = \emptyset$, take $G_S = \emptyset$.)

If all $G_S$ are irreducible then $G = \bigcup_S G_S$ is also irreducible.
\end{lemma}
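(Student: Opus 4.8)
The plan is to first record that $G$ is an affine vector space partition: each element of each $G_S$ is by construction an affine subspace, and since every subcube of $F$ lies in exactly one $F_S$ and then in exactly one part of the chosen partition of $F_S$, the elements of $G$ partition $\{0,1\}^n$. Now suppose for contradiction that $G$ is reducible, witnessed by $H \subseteq G$ with $|H| > 1$ whose union $A := \bigcup H$ is an affine subspace different from $\{0,1\}^n$. Let $H^F \subseteq F$ be the set of all subcubes of $F$ contained in some element of $H$; then $\bigcup H^F = A$ and, since distinct elements of $H$ contain distinct (disjoint) subcubes, $2 \le |H| \le |H^F|$, while $A \neq \{0,1\}^n$ forces $H^F \neq F$. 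I would first observe that $A$ cannot be a subcube: otherwise $H^F$ would be a subfamily of $F$ with $1 < |H^F| < |F|$ whose union is a subcube, contradicting the irreducibility of $F$.

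Next I would split according to the star patterns of the elements of $H$. If all elements of $H$ share a single star pattern $S$, then $H \subseteq G_S$, and since $\bigcup H = A$ is an affine subspace other than $\{0,1\}^n$ with $|H| > 1$, the collection $G_S$ is reducible, contradicting the hypothesis that all $G_S$ are irreducible. Hence at least two distinct star patterns occur among the elements of $H$; write $T^{*} = \bigcup_{s \in H^F} P(s)$ for the union of all star patterns appearing.

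The mixed case is where the real work lies. Writing $V_S = \operatorname{span}\{e_i : i \in S\}$, each subcube $s \in H^F$ is a coset of $V_{P(s)}$ lying inside the affine subspace $A = a + W$, so $V_{P(s)} \subseteq W$ and therefore $V_{T^{*}} \subseteq W$; thus $A$ is a union of full cosets of the coordinate subspace $V_{T^{*}}$. Projecting onto the coordinates $[n] \setminus T^{*}$ via a map $\pi$, the set $\bar A := \pi(A)$ is affine with $A = \pi^{-1}(\bar A)$, every $s \in H^F$ (having all its stars inside $T^{*}$) projects to a single point of $\bar A$, and over each point $\bar p \in \bar A$ the subcubes of $H^F$ lying in the fibre $\pi^{-1}(\bar p)$ restrict, on the coordinates $T^{*}$, to a partition of $\{0,1\}^{T^{*}}$ that is irreducible, since any subcube union among more than one of them would lift to a subcube union of a proper subfamily of $F$, contradicting \Cref{thm:reducibility-algorithm}'s characterization of irreducibility. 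Because at least two patterns occur and $A$ is not a subcube, $\bar A$ is a proper, non-subcube affine subspace of dimension at least $1$.

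To finish, I would pick an inclusion-minimal star pattern $S_0$ among the elements of $H$ and consider $A_{S_0} := \bigcup (H \cap G_{S_0})$, the union of the pattern-$S_0$ pieces. The goal is to show that $A_{S_0}$ is itself an affine subspace: granting this, $H \cap G_{S_0}$ is a subfamily of $G_{S_0}$ with affine union other than $\{0,1\}^n$, so $G_{S_0}$-irreducibility forces $|H \cap G_{S_0}| = 1$, after which one argues that the minimal pattern can be peeled off and the number of distinct patterns strictly decreased, eventually returning to the monochromatic case already handled. The main obstacle is precisely proving that the minimal-pattern slice $A_{S_0}$ is flat. This is the affine analogue of the parity phenomenon in \Cref{lem:parity}: the pattern-$S_0$ subcubes are exactly the $V_{S_0}$-cosets of $A$ that constitute a single subcube, and I expect the minimality of $S_0$, together with the affineness of $A$ and the fibrewise irreducible structure above, to pin this set down as the $\pi$-preimage of an affine subspace, via an $\mathbb{F}_2$-linear counting argument on the fibre partitions. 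Verifying this flatness, and checking that the pattern-count reduction terminates, is the crux of the argument.
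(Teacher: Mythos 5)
There is a genuine gap, and it sits exactly where you flagged it. Your setup is sound and initially parallels the paper's proof: you let $T^{*}$ be the union of the star patterns occurring in $H^F$, observe via \Cref{lem:avsp-join} that the linear part of $A$ contains $e_i$ for every $i \in T^{*}$, and conclude that $A = \pi^{-1}(\bar A)$ is a union of full fibres of the projection $\pi$ onto the coordinates outside $T^{*}$. But you stop one step short of the decisive observation. Fix $\bar p \in \bar A$ and let $F_{\bar p} \subseteq H^F$ be the subcubes lying in the fibre $\pi^{-1}(\bar p)$. Their union is the entire fibre, and the entire fibre is itself a \emph{subcube} of $\{0,1\}^n$ (with star pattern exactly $T^{*}$). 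Since $\bigcup H^F = A \neq \{0,1\}^n$, we have $F_{\bar p} \subseteq H^F \subsetneq F$; so if $|F_{\bar p}| > 1$, then $F_{\bar p}$ is a proper subfamily of $F$ of size greater than $1$ whose union is a subcube, contradicting the irreducibility of $F$. Hence each fibre contains exactly one subcube of $F$, namely the full fibre itself, whose star pattern is exactly $T^{*}$. Consequently every subcube in $H^F$ has star pattern $T^{*}$, every element of $H$ lies in $G_{T^{*}}$, and your ``mixed case'' is vacuous: the monochromatic case you already handled finishes the proof. This is precisely the paper's argument.

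Instead of drawing this conclusion, you assert that the fibre families form ``irreducible partitions'' of $\{0,1\}^{T^{*}}$ --- which misapplies the ingredient you had in hand: the union of each \emph{whole} fibre family is itself a subcube, so irreducibility of $F$ collapses the family to a singleton rather than merely constraining its proper sub-unions. You then pivot to a peeling argument over an inclusion-minimal star pattern $S_0$, whose crux (flatness of the slice $A_{S_0}$, plus termination of the peeling) you explicitly leave unproven (``I expect\dots'', ``the crux of the argument''). As written, the proposal is therefore incomplete, and the unresolved step is not merely technical: nothing in the outline indicates how the claimed $\mathbb{F}_2$-linear counting argument would go. The fix is not to do more work inside the mixed case but to see that it cannot occur at all.
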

\begin{proof}
If $G$ is reducible then there exists a subset $G' \subset G$, with $|G'| > 1$, whose union is an affine subspace $U$ other than $\{0,1\}^n$.
Each affine subspace in $G'$ is a union of subcubes of $F$. Let $F'$ be the collection of all such subcubes, so that $\bigcup F' = U$.

If $F'$ contains a subcube $s$ with $s_i = *$ then according to \Cref{lem:avsp-join}, the linear part of $U$ contains $0^{i-1}10^{n-i}$. This motivates defining $S$ as the set of coordinates $i \in [n]$ such that $s_i = *$ for some $s \in F'$. Note that $S \neq [n]$, since otherwise $U = \{0,1\}^n$.

Let $U|_{\bar{S}}$ be the projection of $U$ into the coordinates outside of $S$, so that
\[
 U = \{ x \in \{0,1\}^n : x|_{\bar{S}} \in U|_{\bar{S}} \}.
\]
Let $y \in U|_{\bar{S}}$. Every $x \in \{0,1\}^n$ such that $x|_{\bar{S}} = y$ is covered by some subcube $s \in F'$. The definition of $S$ implies that $s_i = y_i$ for all $i \in \bar{S}$. Consequently the union of all subcubes $s \in F'$ such that $s|_{\bar{S}} = y$ is the subcube $s_y := \{ x \in \{0,1\}^n : x|_{\bar{S}} = y \}$.
Since $F$ is irreducible, $s_y \in F$ and so $s_y \in F'$. Since $s_y \in F_S$, it follows that $F' \subseteq F_S$, and so $G' \subseteq G_S$. This contradicts the irreducibility of $G'$.
\end{proof}

In general, $F$ being tight doesn't guarantee that $G$ is tight. For example, applying \Cref{lem:compression} to the tight subcube partition
\[
 *000, *111, 001*, 0*01, 01*0, 110*, 1*10, 10*1
\]
results in the non-tight affine vector space partition
\[
 *aaa, aa\bar{a}*, a*a\bar{a}, a\bar{a}*a,
\]
in which all linear parts contain the non-zero vector $1111$.

\smallskip

The following \namecref{lem:compression-easy} is a simplification of \Cref{lem:compression} which also includes a criterion for tightness.

\begin{lemma} \label{lem:compression-easy}
Let $F$ be an irreducible subcube partition. For $S \subseteq [n]$, let $F_S$ consist of all subcubes in $F$ whose star pattern is $S$.

Suppose that whenever $F_S$ is non-empty, the union of all subcubes in $F_S$ is an affine subspace $g_S$ (this is always the case when $|F_S| \le 2$). Then $G = \{ g_S : F_S \neq \emptyset \}$ is an irreducible affine vector space partition.

Furthermore, $G$ is tight if
\begin{equation} \label{eq:tightness-condition}
 \bigcap_{S\colon F_S \neq \emptyset} P\left(\bigvee F_S\right) = \emptyset,
\end{equation}
where the join is taken in the sense of subcubes.
\end{lemma}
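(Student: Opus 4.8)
The plan is to deduce both main assertions from the general compression lemma, \Cref{lem:compression}, and then add a short linear-algebra computation for tightness. First I would check that $G$ really is an affine vector space partition: every subcube of $F$ has a unique star pattern and hence lies in exactly one $F_S$, so the subcubes comprising distinct $g_S = \bigcup F_S$ are disjoint; since $F$ covers $\{0,1\}^n$, so do the $g_S$; and by hypothesis each $g_S$ is an affine subspace. The parenthetical claim that affineness is automatic when $|F_S| \le 2$ is routine: a single subcube is an affine subspace, and the union of two disjoint subcubes sharing the star pattern $S$ differs only on the coordinates outside $S$, where it consists of two distinct points, i.e.\ an affine line; extending by the free coordinates in $S$ keeps it affine.

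For irreducibility I would apply \Cref{lem:compression} with the trivial choice: for each non-empty $F_S$, partition $F_S$ into the single block $F_S$ itself, whose union $g_S$ is an affine subspace by hypothesis. Then each $G_S = \{g_S\}$ is a singleton, hence vacuously irreducible, since \Cref{def:avsp-reducibility} requires a subset of size $> 1$. \Cref{lem:compression} then yields that $G = \bigcup_S G_S = \{g_S : F_S \neq \emptyset\}$ is irreducible.

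The substantive part is tightness. Writing $V_S$ for the linear part of $g_S$, the goal, by \Cref{def:avsp-tightness}, is to show $\bigcap_S V_S = \{0^n\}$. The key step is the inclusion $V_S \subseteq \langle e_i : i \in P(\bigvee F_S)\rangle$, where $e_i = 0^{i-1}10^{n-i}$ and the join is taken in the sense of subcubes. To see this, fix a coordinate $i \notin P(\bigvee F_S)$; then $i \notin S$ and every subcube in $F_S$ takes the same fixed value $b_i$ at position $i$, so $g_S$ lies in the hyperplane $\{x : x_i = b_i\}$, whence every $v \in V_S$ satisfies $v_i = 0$. Thus each $v \in V_S$ is supported on $P(\bigvee F_S)$. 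Intersecting over all $S$ with $F_S \neq \emptyset$, any $v \in \bigcap_S V_S$ is supported on $\bigcap_S P(\bigvee F_S)$, which is empty by the hypothesis \eqref{eq:tightness-condition}; hence $v = 0^n$ and $G$ is tight.

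The first two assertions are essentially bookkeeping on top of \Cref{lem:compression}, so the only place demanding care is the tightness computation—specifically, correctly identifying which coordinates can appear in the support of the linear part of $g_S$ and matching them to the star pattern of the subcube join $\bigvee F_S$. I expect the inclusion $V_S \subseteq \langle e_i : i \in P(\bigvee F_S)\rangle$ to be the crux; once it is in hand, intersecting over $S$ and invoking \eqref{eq:tightness-condition} is immediate.
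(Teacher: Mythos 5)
Your proposal is correct and takes essentially the same route as the paper's own proof: irreducibility is obtained by applying \Cref{lem:compression} with the trivial one-block partition of each non-empty $F_S$ (so each $G_S$ is a vacuously irreducible singleton), and tightness follows from the observation that the linear part of $g_S$ vanishes on every coordinate $i \notin P\bigl(\bigvee F_S\bigr)$, so that \Cref{eq:tightness-condition} forces the intersection of the linear parts to be $\{0^n\}$. The only difference is cosmetic: you verify the partition property and the two-subcube affineness claim a bit more explicitly, while the paper records the latter by writing down the explicit vector added to the linear part.
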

\begin{proof}
The irreducibility of $G$ follows directly from \Cref{lem:compression}. Indeed, if for every non-empty $F_S$ we take the partition consisting of a single part then the affine vector space partition $G$ in this \namecref{lem:compression-easy} coincides with that in \Cref{lem:compression}. Moreover, if $F_S = \{ a \}$ then $a$ is itself an affine subspace, and if $F_S = \{ a, b \}$ then $a \cup b$ is the affine subspace obtained from $a$ by adding the following vector to the linear part: $v_i = 1$ if $a_i \neq b_i$ and $v_i = 0$ otherwise.

We proceed to show that if \Cref{eq:tightness-condition} holds then $G$ is tight.
Let $S \subseteq [n]$ be such that $F_S$ is non-empty. If $i \notin P(\bigvee F_S)$ then all $x \in g_S$ have the same value of $x_i$, and so $y_i = 0$ for all $y$ in the linear part of $g_S$. Therefore if $y_i = 1$ for some $y$ in the linear part of $g_S$ then $i \in P(\bigvee F_S)$.
\Cref{eq:tightness-condition}
thus guarantees that the only vector in the intersection of the linear parts of all $g_S$ is the zero vector, and so $G$ is tight.
\end{proof}

\subsection{Minimal size}
\label{sec:avsps-minimal-subcubes}

In \Cref{sec:minimal-subcubes} we conjectured that the minimal size of a tight irreducible subcube partition of length $n$ is $2n-1$. Using a computer, we have determined the minimal size of a tight irreducible affine vector space partition of length $n$ for small $n$~\cite{BFIK22}:
\[
 \begin{array}{r|ccccc}
 n & 3 & 4 & 5 & 6 & 7 \\\hline
 \text{scp} & 5 & 7 & 9 & 11 & 13 \\
 \text{avsp} & 4 & 6 & 7 & 8 & 10
 \end{array}
\]
The first row is the minimal size of a tight irreducible subcube partition of length $n$, and the second row is the minimal size of a tight irreducible affine vector space partition of length $n$.

The constructions presented later in this section suggest the following conjecture.

\begin{conjecture} \label{conj:avsp-min-size}
The minimal size of a tight irreducible affine vector space partition is $\frac{3}{2} n - o(n)$.
\end{conjecture}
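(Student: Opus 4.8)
The conjecture asserts an asymptotic equality, so establishing it splits into a construction (upper bound) and a matching lower bound; the former can be described concretely, while the latter is where the real difficulty lies. For the upper bound, the plan is to apply the compression lemma \Cref{lem:compression-easy} to a tight irreducible subcube partition of length $n$ and size $2n-1$ that has been engineered to have many repeated star patterns. The key observation is the one already recorded in \Cref{lem:compression-easy}: whenever a star pattern $S$ occurs exactly twice, the two subcubes in $F_S = \{a,b\}$ automatically have an affine union (a coset of a subspace of dimension $|S|+1$), so compression merges them with no further conditions, reducing the size by one. Thus if I can build a size-$(2n-1)$ tight irreducible subcube partition in which roughly $n/2$ star patterns each occur exactly twice while the remaining $n-1$ subcubes have pairwise-distinct star patterns, compression will produce an irreducible affine vector space partition of size $(2n-1) - (\tfrac{n}{2} - O(1)) = \tfrac32 n + O(1)$.

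The design work for the construction is twofold. First, I would exhibit such a subcube partition, most naturally by a variant of the inductive constructions behind \Cref{thm:size-2n-1-merging} or \Cref{lem:twist}, arranging that each pair of inductive steps contributes one new doubled star pattern; the arithmetic $n + (n-1) = 2n-1$ shows that $\lfloor n/2 \rfloor$ doubled patterns together with singletons accounts exactly for a minimal partition. Second, I must guarantee that the compressed partition remains tight. Here one must be careful: tightness of $F$ is equivalent to $\bigcap_{s \in F} P(s) = \emptyset$, whereas the compressed tightness condition \eqref{eq:tightness-condition} involves the strictly larger star patterns $P(\bigvee F_S)$ of the merged parts. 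I would therefore arrange that every coordinate $i$ is still ``pinned'' after compression --- for instance, that each $i$ is mentioned by some subcube lying in a singleton part, or by a merged pair whose two members agree on coordinate $i$. With this in hand, irreducibility of the output is free from \Cref{lem:compression-easy} (equivalently \Cref{lem:compression}), and one could even allow parts with $|F_S| > 2$ to merge into several affine subspaces for additional savings, though plain pairing already suffices to reach $\tfrac32 n$.

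The hard part, and the reason this remains a conjecture, is the lower bound: one must show that every tight irreducible affine vector space partition of length $n$ has size at least $\tfrac32 n - o(n)$. The only unconditional bound available is the weak estimate $n+1$ (the analogue of Tarsi's lemma), which falls short of the conjectured leading constant $\tfrac32$. Closing this gap appears to require an argument specific to affine subspaces rather than to general minimal covers: intuitively, the excess factor of $\tfrac32$ over the trivial bound should arise from the interaction between tightness (the linear parts intersect trivially, so each of the $n$ directions is ``cut'' by some part) and a regularity phenomenon forcing low-dimensional parts to appear in sufficient numbers. A promising route is to adapt the regularity argument behind \Cref{thm:size-lower-bound}, namely \Cref{lem:irreducible-regular} together with the structural results on minimally unsatisfiable CNFs, to the affine setting. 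The obstacle is that this CNF machinery does not transfer transparently, since affine subspaces do not correspond to clauses in any obvious way, so a genuinely new counting or dimension argument on the linear parts seems to be needed. I expect this lower bound to be the decisive difficulty.
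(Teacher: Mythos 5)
Your proposal is correct and takes essentially the same route as the paper: the upper bound is obtained exactly as you describe, by compressing (via \Cref{lem:compression-easy}) inductively constructed tight irreducible subcube partitions in which every star pattern occurs at most twice and roughly half of them are doubled, with the tightness condition \eqref{eq:tightness-condition} maintained as an invariant of the induction (this is the content of \Cref{lem:avsp-inductive} and \Cref{thm:avsp-construction}). You also correctly identify why the statement remains a conjecture: the paper's only lower bound is the Tarsi-style $n+1$ of \Cref{thm:avsp-lb}, and closing the gap to $\tfrac32 n - o(n)$ is open.
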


We give a matching construction in \Cref{sec:avsp-construction}. The best lower bound we are aware of is $n + 1$, which we prove in \Cref{sec:avsp-lb} using an argument similar to the proof of \Cref{thm:tarsi-general}.

\subsubsection{Lower bound} \label{sec:avsp-lb}

In this section, we adapt the proof of \Cref{thm:tarsi-general} to the setting of affine vector space partitions.

\begin{theorem} \label{thm:avsp-lb}
Every tight affine vector space partition of length $n \ge 1$ has size at least $n + 1$.
\end{theorem}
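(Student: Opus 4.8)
The plan is to mimic the proof of Theorem~\ref{thm:tarsi-general} (generalized Tarsi's lemma), replacing the subcube-covering matroid $H(n,q)$ by an appropriate linear-algebraic structure. The key observation is that a tight affine vector space partition behaves like a minimal cover with respect to the lattice of linear functionals: tightness says precisely that the linear parts have trivial common intersection, which dualizes to the statement that the ``directions constrained'' by the partition span the whole dual space $(\mathbb{F}_2^n)^*$.

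First I would set up the matroid. Each affine subspace $U = x + V$ in the partition $F$ is cut out by linear equations; the natural object to associate to $U$ is the annihilator $V^\perp \subseteq (\mathbb{F}_2^n)^*$, a subspace of dimension $\operatorname{codim}(U)$. The relevant matroid $M$ is the free matroid (or the linear matroid) on the ground set $(\mathbb{F}_2^n)^*$, whose rank function is linear-algebraic dimension, $r(S) = \dim \operatorname{span}(S)$. The analogue of ``hitting a basis'' needs care: I want to define, for each $U \in F$, a set $\phi(U) \subseteq (\mathbb{F}_2^n)^*$ (the nonzero functionals in $V^\perp$, say, or a spanning subset thereof) so that $\phi(F)$ forms a minimal $M$-cover in the sense of Definition~\ref{def:hitting}, and so that $r(\bigcup \phi(F)) = n$, which is exactly where tightness enters.

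The core of the argument will then be threefold. (i) \emph{Tightness $\Rightarrow$ full rank:} because the linear parts $V$ of the affine subspaces satisfy $\bigcap V = \{0^n\}$, their annihilators satisfy $\sum V^\perp = (\mathbb{F}_2^n)^*$, so $\bigcup \phi(F)$ spans the $n$-dimensional dual space and $r(\bigcup \phi(F)) = n$. (ii) \emph{Cover property:} I must exhibit what plays the role of a ``basis'' of $M$ that a genuine cover cannot hit, encoding the fact that $F$ actually partitions (hence covers) $\{0,1\}^n$; the partition condition should translate into: no single point $y \in \{0,1\}^n$ can avoid the constraints of every piece. (iii) \emph{Minimality:} since $F$ is a partition (not merely a cover), every piece is genuinely needed — removing any $U$ uncovers the points of $U$ — which gives minimality of the associated $M$-cover. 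Once $\phi(F)$ is shown to be a minimal $M$-cover with $r(\bigcup \phi(F)) = n$, Theorem~\ref{thm:tarsi-general} yields $|F| = |\phi(F)| > n$, i.e.\ $|F| \ge n+1$, as desired.

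\textbf{The main obstacle} I anticipate is pinning down the precise matroid and the precise notion of ``cover'' so that Theorem~\ref{thm:tarsi-general} applies verbatim. Unlike the subcube case, an affine subspace is cut out by a \emph{system} of equations rather than a single coordinate constraint, so $\phi(U)$ is naturally a whole subspace $V^\perp$, not a single ground-set element; I will need to either work with the multiset of all nonzero functionals in each $V^\perp$ and verify the Hall--Rado hypothesis carefully, or reformulate the cover condition so that hitting one functional of $V^\perp$ suffices. Getting the cover/minimality dictionary exactly right — so that a basis of $M$ corresponds to a point of $\{0,1\}^n$ whose coordinate functionals miss every piece's constraint — is the delicate bookkeeping step; everything else is a direct transcription of the contraction-plus-Hall--Rado argument used for Theorem~\ref{thm:tarsi-general}.
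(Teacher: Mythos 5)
Your plan correctly identifies the ingredients that the paper's proof also uses (the annihilators $V^\perp$, the linear matroid in which tightness gives $r\bigl(\bigcup_{x+V \in F} V^\perp\bigr) = n$, and Hall--Rado), but the step you defer as ``delicate bookkeeping'' --- setting things up so that \Cref{thm:tarsi-general} applies \emph{verbatim} --- is the actual mathematical content, and it provably cannot be done in the form you propose. The covering property of $F$ is an \emph{affine} condition and is not a function of the sets $\phi(U) = V^\perp$ at all: the partition $\{x : x_1 = 0\}, \{x : x_1 = 1\}$ and the non-covering family consisting of two copies of $\{x : x_1 = 0\}$ have identical annihilator multisets. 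Concretely, take the paper's own tight irreducible affine vector space partition of $\{0,1\}^3$ of size $n+1 = 4$, namely $aaa, 01*, 1*0, *01$; the nonzero parts of its annihilators are $\{110,101,011\}$, $\{100,010,110\}$, $\{100,001,101\}$, $\{010,001,011\}$, and the basis $\{110,100,001\}$ of the dual space intersects all four of them. So with the linear matroid and $\phi(U) = V^\perp$, the family $\phi(F)$ is \emph{not} an $M$-cover in the sense of \Cref{def:hitting}, and \Cref{thm:tarsi-general} says nothing about it.

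Your fallback --- enlarging the ground set to constraint pairs $(y,b)$, with $\phi(x+V) = \{ (y, \langle x,y\rangle) : y \in V^\perp \setminus \{0\} \}$, so that ``$z$ is covered by $x+V$'' becomes disjointness of $\phi(x+V)$ from $D(z) = \{ (y, 1 + \langle z,y\rangle) : y \neq 0 \}$ --- also fails, because the sets $D(z)$, which would have to be the bases of $M$ for the dictionary ``basis $\leftrightarrow$ point'' to hold, are not the bases of any matroid: they all have size $2^n - 1$, yet $\lvert D(z) \mathbin{\triangle} D(z')\rvert = 2^n$ for $z \neq z'$, so a single-element exchange can never stay inside the family, violating basis exchange once $n \ge 2$. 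This is exactly why the paper does not invoke \Cref{thm:tarsi-general} as a black box but instead adapts its \emph{proof}: it applies \Cref{pro:hall-rado} directly to the multiset of annihilators, and the affine shifts $x$ enter only at the point where the contradiction is derived --- if one can choose linearly independent $y_{x+V} \in V^\perp$, then the system $\langle z, y_{x+V}\rangle \neq \langle x, y_{x+V}\rangle$ is solvable and yields an uncovered point $z$; in the complementary case one takes an inclusion-maximal subfamily violating the Hall--Rado condition, contracts the matroid by its span, and uses a point $z_0$ not covered by that subfamily (here minimality/partition is used) to again manufacture an uncovered point. Any repair of your proposal would have to reproduce this two-case argument in the affine language, which is not a reduction to \Cref{thm:tarsi-general} but a reproof of it.
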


As in \Cref{thm:size-lb-q}, the lower bound holds more generally for every tight minimal affine vector space cover, a concept we do not define formally.

\begin{proof}

\newcommand{\varF}{\mathbf F}
\newcommand{\varG}{\mathbf G}
\newcommand{\VG}{\operatorname{span}(\varG)}

Let $M$ be the matroid over $\{0,1\}^n$ in which a subset is independent if it is linearly independent, and let $r$ be its rank function.

Suppose that $F$ a tight irreducible affine vector space partition of length $n \ge 1$, and
let $\varF = \{ V^\perp : x + V \in F \}$ (which we consider as a multiset).

We claim that $r(\varF) = n$. Indeed, since $F$ is tight,
\[
 \operatorname{span}(\{V^\perp : x + V \in F\}) = 
 \left(\bigcap \{ V : x + V \in F \}\right)^\perp = \{0^n\}^\perp = \{0,1\}^n,
\]
and so $r(\varF) = n$.

Suppose that every $\varG \subseteq \varF$ satisfies $|\varG| \leq r(\bigcup \varG)$. According to \Cref{pro:hall-rado}, we can choose an element $y_{x+V} \in V^\perp$ for each $x+V \in F$ such that the elements $y_{x+V}$ form an independent set. In particular, we can find an element $z \in \{0,1\}^n$ such that $\langle z,y_{x+V} \rangle \neq \langle x,y_{x+V} \rangle$ for all $x + V \in F$. By construction, $z \notin x + V$ for all $x + V \in F$, contradicting the assumption that $F$ is an affine vector space partition.

It follows that there exists some subset $\varG \subseteq \varF$ satisfying $|\varG| > r(\bigcup \varG)$. Among all such subsets, choose one which is inclusion-maximal. If $\varG = \varF$ then we are done, so suppose that $\varG \neq \varF$. Since $\varF$ is an affine vector space partition, there is a point $z_0$ which is not covered by any subspace in $\varG$. 

Let $M' = M/\VG$, and let $r'$ be its rank function. Let $\varF' = \{ V^\perp \setminus \VG : V^\perp \in \varF \setminus \varG \}$. If $\varG' \subseteq \varF'$ then
\[
 r'(\varG') = r(\varG' \cup \VG) - r(\VG) = r(\varG' \cup \varG) - r(\varG) > |\varG' \cup \varG| - |\varG| = |\varG'|,
\]
using the inclusion-maximality of $\varG'$. Hence \Cref{pro:hall-rado} allows us to choose $y'_{x+V} \in V^\perp \setminus \VG$ for all $x + V \in F \setminus G$ such that these vectors are independent in $M'$, which means that no linear combination of them lies in $\VG$ (and in particular, they are linearly independent).

Let $z$ be a point such that $\langle z, y'_{x+V} \rangle \neq \langle x, y'_{x+V} \rangle$ for all $x+V \in F \setminus G$ and $\langle z,y \rangle = \langle z_0,y \rangle$ for all $y \in \VG$. 
By construction, $z$ is not covered by any of the subspaces of $F \setminus G$. It is also not contained in any $x + V \in G$, since $x + V = \{ w : \langle w,y \rangle = \langle x,y \rangle \text{ for all } y \in V^\perp\}$ and $\langle z,y \rangle = \langle z_0,y \rangle$ for all $y \in V^\perp$ (recalling that $z_0$ is not covered by $G$). This contradicts the assumption that $F$ is an affine vector space partition.
\end{proof}






It is tempting to conjecture a common generalization of \Cref{thm:size-lb-q} and \Cref{thm:avsp-lb}, namely that a tight affine vector space partition of $\mathbb{F}_q^n$ has size at least $(q-1)n + 1$. 
Unlike \Cref{thm:size-lb-q} and \Cref{thm:avsp-lb}, this cannot be 
true for tight minimal affine vector space covers for $q \geq 4$ (hence,
the proof above cannot generalize): 
we can construct tight minimal affine vector space covers of size $(q-1)(n-3) + \frac32 (q+1)-1$ for $q$ odd,
and we can construct tight minimal affine vector space covers of 
size $(q-1)(n-3) + q+q/p$ for $q=p^h$, where $p$ is a prime.
These constructions derive 
from the two examples of minimal blocking sets in a projective 
plane described in \cite{BB1986}. We leave the details
for elsewhere.

\subsubsection{Construction} \label{sec:avsp-construction}

In this section, we construct tight irreducible affine vector space partitions of length $n$ and size $\tfrac32 n - O(1)$ for all $n \ge 3$, using \Cref{lem:compression-easy}. To construct the underlying subcube partitions, we use an inductive approach in the style of the constructions in \Cref{sec:minimal-subcubes,sec:minimal-weight}.

\begin{lemma} \label{lem:avsp-inductive}
Let $F,H$ be irreducible subcube partitions of length $n \ge 2$ satisfying the following conditions, where $F_S$ consists of all subcubes in $F$ whose star pattern in $S$:
\begin{enumerate}[(i)]
\item $\{ s \in F : s_1 = * \} = \{ s \in H : s_1 = * \}$.
\item $|F_S|,|H_S| \leq 2$ for all $S$.
\item \Cref{eq:tightness-condition} holds for $H$.
\end{enumerate}

Let $m = m(H)$ be the number of star patterns $S$ such that $H_S$ is non-empty, and let $m' = m'(H)$ be the number of those star patterns where $1 \notin S$ (that is, the first coordinate is not a star).


For every $k \ge 0$ there exists a tight irreducible affine vector space partition of length $n + 2k$ and size $m + km'$.
\end{lemma}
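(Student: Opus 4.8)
The plan is to induct on $k$, maintaining a pair $(F,H)$ of irreducible subcube partitions of length $n+2k$ that satisfies hypotheses (i)--(iii), together with the bookkeeping invariants $m(H)=m+km'$ and $m'(H)=m'$; at each level the desired affine vector space partition is obtained by compressing $H$.

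For the base case $k=0$ I would apply \Cref{lem:compression-easy} directly to $H$. Hypothesis (ii) guarantees $|H_S|\le 2$ for every star pattern $S$, so each nonempty $H_S$ has union $\bigvee H_S$ equal to an affine subspace, and the compression $\{\bigvee H_S : H_S\neq\emptyset\}$ is a well-defined irreducible affine vector space partition of size $m(H)=m$. Hypothesis (iii) is precisely \eqref{eq:tightness-condition}, so this partition is tight. This gives the statement for $k=0$.

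The inductive step is the heart of the argument: from $(F,H)$ I would build a new pair of length $n+2$ by a length-two merge in the style of \Cref{lem:merge-stars}, realized as an application of \Cref{lem:merge} to a one-coordinate expansion of $F$ and a one-coordinate expansion of $H$. Hypothesis (i) --- that $F$ and $H$ share \emph{exactly} their star-starting subcubes --- is what makes the relevant intersection $F_0\cap F_1$ nonempty, which is exactly the hypothesis \Cref{lem:merge} needs to conclude irreducibility; this is also the reason two partitions, rather than one, are required. The construction should leave the shared star-starting subcubes as star-starting subcubes of both new partitions (re-establishing (i)), send each class of size $\le 2$ to classes of size $\le 2$ (re-establishing (ii)), and spread the coordinate-$1$-mentioning subcubes across the two new coordinates so that every star pattern of $H$ mentioning coordinate $1$ contributes one surviving coordinate-$1$-mentioning pattern and one new coordinate-$1$-star pattern. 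Writing $m=m'+m''$, where $m''$ counts the patterns with coordinate $1$ a star, the total count then becomes $m''+2m'=m+m'$, while the number of coordinate-$1$-mentioning patterns stays $m'$; this is what keeps the per-step increment equal to $m'$ and yields $m(H^{(k)})=m+km'$ after $k$ steps.

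The main obstacle is the simultaneous verification, in the inductive step, of three facts about the new $H$: irreducibility (via \Cref{lem:merge} and hypothesis (i)); preservation of the tightness condition \eqref{eq:tightness-condition}, where one must check that the two freshly inserted coordinates are mentioned by some compressed join, so that they are not ``ignored'' by every linear part; and the exact star-pattern count just described. I expect the tightness-condition bookkeeping to be the most delicate point, since it is precisely the place where a naive one-partition version of \Cref{lem:merge-stars} would \emph{double} the coordinate-$1$-mentioning patterns instead of keeping their number fixed at $m'$ --- and where the two-partition design, forced by (i), is exactly what is needed. Once these invariants are checked, compressing $H^{(k)}$ via \Cref{lem:compression-easy} produces the required tight irreducible affine vector space partition of length $n+2k$ and size $m+km'$.
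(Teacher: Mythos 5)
Your skeleton---induct two coordinates at a time, maintain irreducibility, the class-size bound and \Cref{eq:tightness-condition}, and compress at the end with \Cref{lem:compression-easy}---is exactly the paper's plan, and your base case is the paper's. The gap is the inductive step, which is where all the content lies: you describe it only by properties it ``should'' have (you never even say what the new \emph{pair} is, since \Cref{lem:merge} outputs a single partition), and the accounting you give is inconsistent with the construction you propose. If you apply \Cref{lem:merge} to a one-coordinate expansion of $F$ and a one-coordinate expansion of $H$ with $F \neq H$, then by hypothesis (i) the intersection consists of (expansions of) the common star-starting subcubes, and each coordinate-$1$-mentioning subcube of $F$ appears exactly once (prefixed by $0$, say) while each coordinate-$1$-mentioning subcube of $H$ appears exactly once (prefixed by $1$): nothing splits in two. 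With expansions chosen so the two sides' patterns do not collide, the pattern count of the merge is $m'' + m'(F) + m'(H) = m(H) + m'(F)$, where $m''$ counts patterns containing coordinate $1$; so the per-step increment is $m'(F)$, governed by the partition you merge in, not $m'(H)$. The ``each mentioning pattern of $H$ contributes one surviving mentioning pattern and one new star pattern'' mechanism occurs only when the two merged partitions are two \emph{different expansions of the same partition}---that is \Cref{lem:merge-stars}, i.e., the case $F = H$. (Your aside about the one-partition version is also backwards: \Cref{lem:merge-stars} doubles mentioning \emph{subcubes}, but one of the two copies acquires a star at the new coordinate, so the number of mentioning \emph{patterns} stays fixed.) The paper's own final application confirms the increment is $m'(F)$: with $F = S_3*$ and $H = S_4$ it gets size $6 + 3k$, where $3 = m'(S_3*)$ and $m'(S_4) = 5$.

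Relatedly, the bookkeeping invariant you plan to carry, that the evolving partition keeps $m'(H^{(k)}) = m'$, is false for this kind of merge: in the paper's realization of the step with $F = H = S_3$, one iteration already produces a partition with five coordinate-$1$-mentioning patterns instead of three. The key idea you are missing is that the paper \emph{freezes} the original $F$: at step $k$ it merges $*F^{(k)}$ (the evolving partition with a star prefixed) with $F*^{2k+1}$ (the original $F$ padded with stars), so the source of the new patterns is the same fixed partition at every step and no invariant about the evolving partition's own mentioning-pattern count is needed. What must be maintained instead is that the padded common star-starting subcubes $F^**^{2k+1}$ remain members of $F^{(k)}$ and have star patterns distinct from all other members of $F^{(k)}$---this is what keeps the \Cref{lem:merge} intersection non-empty (giving irreducibility) and makes the pattern count additive---together with the class-size bound and \Cref{eq:tightness-condition} for $F^{(k)}$. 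Verifying this list of invariants through the induction, including the delicate check that the two fresh coordinates are fixed by some class join so that \Cref{eq:tightness-condition} survives, is the actual proof, and it is absent from your proposal.
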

\begin{proof}
For $N \geq 0$, let $F*^N = \{ s*^N : s \in F \}$, and define $*^NF$ similarly.

Let $F^* = \{ s \in \{0,1\}^{n-1} : *s \in F \}$. If $F^*$ is empty then the union of the subcubes starting with $b \in \{0,1\}$ is $b*^{n-1}$. Since $F$ is irreducible, $F = \{0*^{n-1}, 1*^{n-1}\}$. However, this contradicts \Cref{eq:tightness-condition}, using $n \ge 2$. Therefore $F^*$ is non-empty.

We will construct an infinite sequence of subcube partitions $F^{(k)}$ such that the following hold:

\begin{enumerate}[(i)]
\item $F^{(k)}$ is an irreducible subcube partition of length $n + 2k$.
\item $F^**^{2k+1} \subseteq F^{(k)}$.
\item Subcubes in $F^**^{2k+1}$ have different star patterns from subcubes in $F^{(k)} \setminus F^**^{2k+1}$.
\item $|F^{(k)}_S| \leq 2$ for all $S$.
\item $m(F^{(k)}) = m + k m'$.
\item \Cref{eq:tightness-condition} holds for $F^{(k)}$.
\end{enumerate}

\noindent The result then follows by applying \Cref{lem:compression-easy} to $F^{(k)}$.

\smallskip

The starting point is $F^{(0)} = \{ sa : as \in H, |a| = 1, |s|=n-1 \}$.
By assumption, $F^{(0)}$ is irreducible and $F^** = H^**$. By construction, $F^** \subseteq F^{(0)}$, and all subcubes in $F^{(0)} \setminus F^**$ end with a non-star. The remaining properties are by assumption.

Given $F^{(k)}$, we construct $F^{(k+1)}$ as follows. Apply \Cref{lem:merge} with $F_0 = *F^{(k)}$ and $F_1 = F*^{2k+1}$ to obtain a subcube partition $G^{(k)}$. We define $F^{(k+1)} = \{tab : abt \in G^{(k)}, |a|=|b|=1, |t|=n+2k \}$. Since $F^**^{2k+1} \subseteq F^{(k)}$, we can explicitly write
\[
 F^{(k+1)} =
 \{ t** : t \in F^**^{2k+1} \} \cup
 \{ t0* : t \in F^{(k)} \setminus F^**^{2k+1} \} \cup
 \{ t1b : bt \in F*^{2k+1}, b \neq * \}.
\]

We now verify the properties of $F^{(k+1)}$ one by one:
\begin{enumerate}[(i)]
\item By the induction hypothesis, $F^**^{2k+1} \subseteq F^{(k)}$, and so $*F^**^{2k+1} \subseteq *F^{(k)}$. Since $F^*$ is non-empty, $F_0 \cap F_1 = *F^**^{2k+1}$ is non-empty. Therefore \Cref{lem:merge} shows that $G^{(k)}$ is irreducible, and it follows that $F^{(k+1)}$ is irreducible.
\item The formula for $F^{(k+1)}$ immediately implies that $F^**^{2k+3} \subseteq F^{(k+1)}$.
\item The formula for $F^{(k+1)}$ shows that all $s \in F^{(k+1)} \setminus F^**^{2k+3}$ satisfy $s_{n+2k+1} \neq *$, and so have different star patterns from any subcube in $F^**^{2k+3}$.
\item The subcubes in each of the three sets in the formula for $F^{(k+1)}$ have different star patterns. Since $|F^*_S| \leq 2$ for all $S$ and $|F^{(k)}_S| \leq 2$ for all $S$, it follows that $|F^{(k+1)}_S| \leq 2$ for all $S$.
\item Denote the three parts in the formula for $F^{(k+1)}$ by $A,B,C$. Clearly $m(A) = m(F^*)$. Since the star patterns of the subcubes in $F^**^{2k+1}$ are different from the star patterns of the subcubes in $F^{(k)} \setminus F^**^{2k+1}$, we have $m(B) = m(F^{(k)}) - m(F^*)$. Finally, $m(C) = m'$. We conclude that $m(F^{(k+1)}) = m(F^{(k)}) + m' = m + (k + 1) m'$.
\item Since the star patterns of the subcubes in $F^**^{2k+1}$ are different from the star patterns of the subcubes in $F^{(k)} \setminus F^**^{2k+1}$, the induction hypothesis implies that the intersection of $P(\bigvee F_S)$ for all star patterns $S$ appearing in $A \cup B$ is contained in $\{n+2k+1,n+2k+2\}$.

\Cref{eq:tightness-condition} for $F$ implies that some $s \in F$ satisfies $s_1 \neq *$, and so $C$ is non-empty. All star patterns of subcubes in $S$ do not contain $n+2k+1$ or $n+2k+2$, and so \Cref{eq:tightness-condition} holds for $F^{(k+1)}$. \qedhere
\end{enumerate}
\end{proof}

Using this \namecref{lem:avsp-inductive}, we construct tight irreducible affine vector space partitions of length $n$ and size $\tfrac32 n - O(1)$ for all $n \ge 3$. Our construction matches the optimal values in the table appearing in the beginning of the section.

\begin{theorem} \label{thm:avsp-construction}
For all odd $n \ge 3$ there is a tight irreducible affine vector space partition of length $n$ and size $\frac32n-\frac12$.

There is a tight irreducible affine vector space partition of length $4$ and size $6$.

For all even $n \ge 6$ there is a tight irreducible affine vector space partition of length $n$ and size $\frac32n - 1$.
\end{theorem}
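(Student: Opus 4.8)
The plan is to deduce all three statements from \Cref{lem:avsp-inductive} by choosing, in each of the three regimes, a single base subcube partition and applying the \namecref{lem:avsp-inductive} with $F = H$. Taking $F = H$ makes hypothesis (i) of the \namecref{lem:avsp-inductive} hold trivially, so for each base I only need to check hypothesis (ii) (every star pattern occurs at most twice) and hypothesis (iii) (the tightness condition \Cref{eq:tightness-condition}), and then read off the parameters $m$ and $m'$. Recall that the \namecref{lem:avsp-inductive} then produces, for every $k \ge 0$, a tight irreducible affine vector space partition of length $n_0 + 2k$ and size $m + k m'$, where $n_0$ is the length of the base. Since both length and size are affine in $k$, matching the size against the target is a one-line linear computation; the content lies in verifying the hypotheses and counting star patterns.

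For odd $n$ I would take $F = H = S_3 = \{000, 111, 01*, 1*0, *01\}$ of \Cref{thm:size-2n-1-merging}. Its distinct star patterns are $\emptyset$ (occurring twice, for $000$ and $111$) together with $\{1\}, \{2\}, \{3\}$, so $m = 4$, and exactly one of them, $\{1\}$, contains the first coordinate, giving $m' = 3$. Hypothesis (ii) is immediate, and hypothesis (iii) holds because $\bigvee\{000,111\} = ***$ has star pattern $\{1,2,3\}$, and $\{1,2,3\} \cap \{1\} \cap \{2\} \cap \{3\} = \emptyset$. With $n_0 = 3$ the \namecref{lem:avsp-inductive} yields for each $k \ge 0$ a partition of length $3 + 2k$ and size $4 + 3k$; writing $n = 3 + 2k$ this is exactly $\tfrac32 n - \tfrac12$. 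The length-$4$ case is the special instance $k = 0$, where only $m$ matters: I would take $F = H = S_4$ of \Cref{thm:size-2n-1-merging}, whose six distinct star patterns $\emptyset$ (twice), $\{2\}, \{3\}, \{4\}, \{3,4\}, \{1,4\}$ each occur at most twice and for which \Cref{eq:tightness-condition} is easily checked, giving a partition of length $4$ and size $m = 6$.

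For even $n \ge 6$ I would use the length-$6$ tight irreducible subcube partition exhibited in the proof of \Cref{thm:min-dim} (its irreducibility is confirmed by \Cref{alg:reducibility}). Its eleven subcubes have exactly eight distinct star patterns, namely $\{2,4,6\}, \{3,4,6\}, \{4,6\}, \{5,6\}, \{2,3,5\}, \{3,4,5\}, \{3,5\}, \{1,5,6\}$, of which $\{4,6\}$, $\{5,6\}$, $\{3,5\}$ occur twice and the rest once, so $m = 8$ and hypothesis (ii) holds. To obtain $m' = 3$ I would first relabel coordinates so that the designated first coordinate lies in exactly five of these eight patterns; both the original coordinate $5$ and coordinate $6$ qualify, and since a coordinate permutation preserves irreducibility, partition-hood, and the intersection appearing in \Cref{eq:tightness-condition}, all hypotheses are retained. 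Hypothesis (iii) is then verified by computing the joins of the three repeated patterns, whose star patterns are $\{2,3,4,5,6\}$, $\{1,3,4,5,6\}$, $\{2,3,4,5,6\}$, and intersecting with the singletons; for instance $\{2,4,6\} \cap \{2,3,5\} \cap \{3,4,5\} = \emptyset$, so the whole intersection is empty. With $n_0 = 6$, $m = 8$, $m' = 3$ the \namecref{lem:avsp-inductive} gives for each $k \ge 0$ a partition of length $6 + 2k$ and size $8 + 3k$, i.e.\ $\tfrac32 n - 1$ when $n = 6 + 2k$.

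The main obstacle is not a deep step but the bookkeeping in the even case: correctly enumerating the star patterns of the eleven subcubes, confirming that precisely eight arise with the right multiplicities, and arranging by relabeling that the first coordinate meets exactly five of them so that $m' = 3$ rather than the larger value suggested by the original coordinate order. The tightness check \Cref{eq:tightness-condition} additionally requires computing the three subcube joins correctly; once these computations are in place, everything else is routine linear arithmetic in $k$.
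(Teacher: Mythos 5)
Your proposal is correct and is, in structure, the paper's own proof: all three cases are deduced from \Cref{lem:avsp-inductive} with $F=H$, the odd case from $S_3$ (with $m=4$, $m'=3$) and the $n=4$ case from $S_4$ (size $m=6$; the paper phrases this as a direct application of \Cref{lem:compression-easy}, which is exactly your $k=0$ instance). The single divergence is the seed for even $n\ge 6$: the paper introduces a purpose-built length-$6$ partition $T_6$ whose star patterns already give $m(T_6)=8$ and $m'(T_6)=3$ relative to the first coordinate, whereas you recycle the length-$6$ partition from the proof of \Cref{thm:min-dim} and permute coordinates so that the old coordinate $5$ (or $6$) becomes the first. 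Your bookkeeping for that partition is accurate: it has eight distinct star patterns, the three repeated ones being $\{4,6\},\{5,6\},\{3,5\}$ with joins $0*****$, $*1****$, $1*****$, so \Cref{eq:tightness-condition} holds, and coordinates $5$ and $6$ each lie in exactly five of the eight patterns, giving $m'=3$ after relabeling; since every hypothesis and conclusion of \Cref{lem:avsp-inductive} other than the designation of the first coordinate is invariant under coordinate permutations, the relabeling step is sound. The trade-off between the two seeds is minor: yours reuses a partition whose irreducibility the paper already certifies via \Cref{alg:reducibility} in the proof of \Cref{thm:min-dim}, while the paper's $T_6$ requires its own (routine) irreducibility check but comes with the first coordinate already in the right position, so no relabeling is needed.
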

\begin{proof}
Consider the following subcube partitions:
\begin{align*}
S_3 &= \{*01, 000, 111, 1*0, 01*\}, \\
S_4 &= \{*01*, 1000, 1111, 11*0, 1*01, 000*, 01**\}, \\
T_6 &= \{*0110*, *1101*, *001*1, *010*0, *00**0, *0*0*1, **111*, 011*0*, 110*1*, 010***, 11**0*\}.
\end{align*}

Using \Cref{alg:reducibility}, one can check that they are irreducible. One checks directly that the prerequisites of \Cref{lem:avsp-inductive} hold in all cases (with $H = F$).

Since $m(S_3) = 4$ and $m'(S_3) = 3$, \Cref{lem:avsp-inductive} with $F = H = S_3$ constructs tight irreducible affine subspace partitions of length $3 + 2k$ and size $4 + 3k = \tfrac{3}{2} (3 + 2k) - \tfrac12$.

Applying \Cref{lem:compression-easy} directly to $S_4$, we obtain a tight irreducible affine subspace partition of length $4$ and size $6$.

Since $m(T_6) = 8$ and $m'(T_6) = 3$, \Cref{lem:avsp-inductive} with $F = H = T_6$ constructs tight irreducible affine subspace partitions of length $6 + 2k$ and size $8 + 3k = \tfrac{3}{2} (6 + 2k) - 1$.
\end{proof}

Applying \Cref{lem:avsp-inductive} with $F = S_3*$ and $H = S_4$ constructs tight irreducible affine subspace partitions of length $4 + 2k$ and size $6 + 3k = \frac{3}{2}(4 + 2k)$, which is slightly worse than what we get using $F = H = T_6$.

\bibliographystyle{alpha}
\bibliography{ascp}

\end{document}